\renewcommand{\phi}{\varphi}
\renewcommand{\ker}{\Ker}
\def\ch{\mathrm{ch}}
\def\cl{\mathrm{cl}}
\def\al{\alpha}
\def\be{\beta}
\def\ga{\gamma}
\def\la{\lambda}
\def\La{\Lambda}
\newcommand{\mc}[1]{\mathcal{#1}}
\newcommand{\mf}[1]{\mathfrak{#1}}
\newcommand{\mb}[1]{\mathbb{#1}}
\newcommand{\id}{\mathrm{id}} 
\newcommand{\tint}{{\textstyle\int}}
\DeclareMathOperator{\Hom}{Hom}
\DeclareMathOperator{\End}{End}
\DeclareMathOperator{\ad}{ad}
\DeclareMathOperator{\rank}{rank}
\DeclareMathOperator{\Der}{Der}
\DeclareMathOperator{\cent}{Z}
\DeclareMathOperator{\Inder}{Inder}
\DeclareMathOperator{\Cas}{Cas}
\DeclareMathOperator{\Ker}{Ker}
\DeclareMathOperator{\gr}{gr}
\DeclareMathOperator{\cur}{Cur}
\newcommand{\vir}{\mathrm{Vir}}
\newcommand{\LC}{\mathop{\rm LC }}
\newcommand{\PV}{\mathop{\rm PV }}
\newcommand{\PVir}{\mc Vir} 
\newcommand{\Vir}{\mathop{\rm Vir }{}}
\def\smallunderbrace#1{\mathop{\vtop{\m@th\ialign{##\crcr
   $\hfil\displaystyle{#1}\hfil$\crcr
   \noalign{\kern3\p@\nointerlineskip}%
   \tiny\upbracefill\crcr\noalign{\kern3\p@}}}}\limits}
\theoremstyle{plain}
\newtheorem{theorem}{Theorem}[section]
\newtheorem{lemma}[theorem]{Lemma}
\newtheorem{proposition}[theorem]{Proposition}
\newtheorem{corollary}[theorem]{Corollary}
\theoremstyle{definition}
\newtheorem{definition}[theorem]{Definition}
\newtheorem{example}[theorem]{Example}
\theoremstyle{remark}
\newtheorem{remark}[theorem]{Remark}
\numberwithin{equation}{section}
\definecolor{light}{gray}{.9}
\begin{document}

\title[Computation of LCA and PVA cohomology]{Computation of cohomology of Lie conformal and Poisson vertex algebras}

\author{Bojko Bakalov}
\address{Department of Mathematics, North Carolina State University,
Raleigh, NC 27695, USA}
\email{bojko\_bakalov@ncsu.edu}
\author{Alberto De Sole}
\address{Dipartimento di Matematica, Sapienza Universit\`a di Roma,
P.le Aldo Moro 2, 00185 Rome, Italy}
\email{desole@mat.uniroma1.it}
\urladdr{www1.mat.uniroma1.it/$\sim$desole}
\author{Victor G. Kac}
\address{Department of Mathematics, MIT,
77 Massachusetts Ave., Cambridge, MA 02139, USA}
\email{kac@math.mit.edu}
\subjclass[2010]{
Primary
17B69, 
Secondary 
17B63; 
17B56} 


\begin{abstract}
We develop methods for computation of Poisson vertex algebra cohomology.
This cohomology is computed for the free bosonic and fermionic Poisson vertex (super)algebras, as well as for the universal affine and Virasoro Poisson vertex algebras.
We establish finite dimensionality of this cohomology for conformal Poisson vertex (super)algebras that are finitely and freely generated by elements of positive conformal weight.
\end{abstract}
\keywords{
Lie conformal (super)algebras, 
Poisson vertex (super)algebras, 
affine Lie algebras, 
Virasoro algebra, 
basic cohomology, 
LCA cohomology, 
variational PVA cohomology, 
energy operator.
}

\maketitle


\pagestyle{plain}


\section{Introduction}\label{sec:1}

In the papers \cite{BDSHK18,BDSHK19,BDSHKV19},
we laid down, with our collaborators,
the foundations of the cohomology theory of vertex algebras.
Recall that, 
to any linear symmetric (super)operad $\mc P$ over a field $\mb F$,
one canonically associates a $\mb Z$-graded Lie superalgebra 
\begin{equation}\label{eq:1.1}
W_{\mc P}=\bigoplus_{k=-1}^\infty W^k_{\mc P}
\,,\quad\text{ where }\,\,
W^k_{\mc P}=\mc P(k+1)^{S_{k+1}}
\,.
\end{equation}
The Lie bracket of $W_{\mc P}$ is defined via the $\circ_i$-products of the operad $\mc P$,
see \cite{Tam02} or \cite{BDSHK18} for details.
An odd element $X\in W^1_{\mc P}$ satisfying $[X,X]=0$
defines a cohomology complex $(W_{\mc P},\ad X)$,
which is a differential graded Lie superalgebra.

The most well-known example of this construction is the Lie (super)algebra cohomology.
In this case one takes the operad $\mc Hom(V)$,
for which $\mc Hom(V)(n)=\Hom(V^{\otimes n},V)$,
where $V$ is a fixed vector superspace,
with the action of $S_n$ permuting the factors of $V^{\otimes n}$,
and the well-known $\circ_i$-products,
see e.g.\ \cite{BDSHK18}.
Then $W_{\mc Hom(V)}$ is the Lie superalgebra of polynomial vector fields on $V$.
Furthermore, odd elements $X\in W^1_{\mc Hom(\Pi V)}$,
where $\Pi$ stands for reversing the parity,
such that $[X,X]=0$,
correspond bijectively to Lie superalgebra structures on $V$,
by letting
\begin{equation}\label{eq:1.2}
[a,b]=(-1)^{p(a)}X(a\otimes b)
\,,\qquad a,b\in V
\,.
\end{equation}
The complex $(W_{\mc Hom(\Pi V)},\ad X)$
is then the Chevalley--Eilenberg
cohomology complex of the Lie superalgebra \eqref{eq:1.2}
with coefficients in the adjoint module.
Moreover, given a $V$-module $M$,
we extend the Lie superalgebra structure on $V$
to $V\oplus M$ by making $M$ to be an abelian ideal.
Then the natural reduction of the complex $\big(W_{\mc Hom(\Pi(V\oplus M))},\ad X\big)$
produces the Chevalley--Eilenberg cohomology complex of $V$
with coefficients in $M$, see e.g.\ \cite{DSK13}.
Note that, although the cohomology of $V$ with coefficients in its adjoint module inherits
the Lie superalgebra structure from $W_{\mc Hom(\Pi V)}$,
this is not the case for the reduction.

The next example is the Lie conformal superalgebra cohomology
developed in \cite{BKV99,DSK09,DSK13}.
In this case, one considers the operad $\mc Chom(V)$,
where $V$ is a vector superspace with an even endomorphism $\partial$.
Introduce the vector superspaces
\begin{equation}\label{eq:1.3}
V_n=V[\lambda_1,\dots,\lambda_n]/\langle\partial+\lambda_1+\dots+\lambda_n\rangle
\,,
\end{equation}
where all $\lambda_i$ have even parity and $\langle\Phi\rangle$
stands for the image of the endomorphism $\Phi$.
Then 
\begin{equation}\label{eq:1.4}
\mc Chom(V)(n)
\subset\Hom_{\mb F}(V^{\otimes n},V_n)
\end{equation}
consists of all maps $Y_{\lambda_1,\dots,\lambda_n}\colon V^{\otimes n}\to V_n$
satisfying the sesquilinearity property ($1\leq i\leq n$):
\begin{equation}\label{eq:1.5}
Y_{\lambda_1,\dots,\lambda_n}(v_1\otimes\dots\otimes\partial v_i \otimes\dots\otimes v_n)
=
-\lambda_i \,
Y_{\lambda_1,\dots,\lambda_n}(v_1\otimes\dots\otimes v_n)
\,.
\end{equation}
The action of $S_n$ on $\mc Chom(V)(n)$
is given by the simultaneous permutation of the factors of $V^{\otimes n}$
and the $\lambda_i$'s.
The construction of the products $\circ_i$
can be found in \cite{BDSHK18}.

Then 
odd elements $X\in W^1_{\mc Chom(\Pi V)}$ bijectively correspond
to \emph{skewsymmetric} $\lambda$-\emph{brackets} on $V$,
i.e., maps $[\cdot\,_\lambda\,\cdot]\colon  V^{\otimes2}\to V[\lambda]$
satisfying sesquilinearity
\begin{equation}\label{eq:1.7}
[\partial a_\lambda b]=-\lambda[a_\lambda b]
\,\,,\qquad
[a_\lambda\partial b]=(\partial+\lambda)[a_\lambda b]
\,,
\end{equation}
and skewsymmetry
\begin{equation}\label{eq:1.8}
[a_\lambda b]=-(-1)^{p(a)p(b)}[b_{-\lambda-\partial}a]
\,.
\end{equation}
Explicitly, this bijection is given by
\begin{equation}\label{eq:1.6}
[a_\lambda b]=(-1)^{p(a)}X_{\lambda,-\lambda-\partial}(a\otimes b)
\,.
\end{equation}
Finally, the condition $[X,X]=0$ is equivalent to the Jacobi identity
\begin{equation}\label{eq:1.9}
[a_\lambda[b_\mu c]]
-(-1)^{p(a)p(b)}
[b_\mu[a_\lambda c]]
=
[[a_\lambda b]_{\lambda+\mu}c]
\,.
\end{equation}

Recall that an $\mb F[\partial]$-module $V$,
endowed with a map $V\otimes V\to V[\lambda]$, $a\otimes b\mapsto[a_\lambda b]$,
satisfying conditions \eqref{eq:1.7}, \eqref{eq:1.8}, \eqref{eq:1.9},
is called a \emph{Lie conformal superalgebra} (LCA) \cite{K96}.
Thus, taking for $X\in W^1_{\mc Chom(\Pi V)}$ the map corresponding to the LCA structure on $V$
defined by \eqref{eq:1.6},
we obtain the cohomology complex $(W_{\mc Chom(\Pi V)},\ad X)$,
with the structure of a differential graded Lie superalgebra.
The cohomology of this complex is the LCA cohomology complex with coefficients in the adjoint
module.
By a reduction, mentioned above,
one defines the LCA cohomology complex of $V$ with coefficients in an arbitrary $V$-module.

Yet another important to us example 
is the \emph{variational Poisson vertex (super)algebra} (PVA) cohomology \cite{DSK13}.
Recall that a PVA $\mc V$ is a vector superspace with an even endomorphism $\partial$,
equipped with a structure of a unital commutative associative differential superalgebra,
and a structure of an LCA,
such that the Leibniz rule holds:
\begin{equation}\label{eq:1.10}
[a_\lambda bc]
=
[a_\lambda b]c+(-1)^{p(a)p(b)}b[a_\lambda c]
\,,\qquad a,b,c\in\mc V
\,.
\end{equation}
The variational PVA cohomology complex is constructed for a unital commutative associative differential
superalgebra $\mc V$
by considering the subalgebra 
\begin{equation}\label{eq:1.11}
W_{\PV}(\Pi \mc V)
=
\bigoplus_{k=-1}^\infty W^k_{\PV}(\Pi \mc V)
\end{equation}
of the Lie superalgebra $W_{\mc Chom(\Pi\mc V)}$,
consisting of all maps $Y$ satisfying, besides the sesquilinearity property \eqref{eq:1.5}
and the $S_{k+1}$-invariance,
the Leibniz rule \eqref{eq:leib} below.
Then odd elements $X\in W^1_{\PV}(\Pi \mc V)$
correspond bijectively via \eqref{eq:1.6}
to skewsymmetric $\lambda$-brackets on $V$ satisfying the Leibniz rule \eqref{eq:1.10}.
The condition $[X,X]=0$ is again equivalent to the Jacobi identity \eqref{eq:1.9};
hence such $X$ correspond bijectively to PVA structures on the differential algebra $\mc V$.
The resulting complex $(W_{\PV}(\Pi \mc V),\ad X)$
is called the \emph{variational PVA cohomology complex} of $\mc V$
with coefficients in the adjoint module.
As explained above, given a $\mc V$-module $M$
one defines the corresponding variational PVA cohomology complex
with coefficients in $M$
by a simple reduction procedure.
The corresponding cohomology is denoted by
\begin{equation}\label{eq:1.12}
H_{\PV}(\mc V,M)
=
\bigoplus_{n=0}^\infty 
H^n_{\PV}(\mc V,M)
\,.
\end{equation}
We shift the indices by $1$ as compared with \eqref{eq:1.11}
in order to keep the traditional notation.

The main motivation for the present paper is the computation 
of the vertex algebra cohomology introduced in \cite{BDSHK18}.
It is defined by considering the operad $\mc P_{\ch}(V)$,
which is a local version of the chiral operad of Beilinson and Drinfeld \cite{BD04},
associated to a $\mc D$-module on a smooth algebraic curve $X$,
in the case when $X=\mb F$ and the $\mc D$-module is translation equivariant.
We showed that in this case the operad $\mc P_{\ch}(V)$ 
admits a simple description,
which is an enhancement of the operad $\mc Chom(V)$ described above.

In order to describe this construction,
let $\mc O^{\star,T}_n=\mb F[z_i-z_j,(z_i-z_j)^{-1}]_{1\leq i<j\leq n}$.
For a vector superspace $V$ with an even derivation $\partial$,
the superspace $\mc P_{\ch}(V)(n)$
is defined as the set of all linear maps
\begin{equation}\label{eq:1.13}
Y\colon V^{\otimes n}\otimes\mc O^{\star,T}_n\to V_n
\,\,,\qquad
v_1\otimes\dots\otimes v_n\otimes f\mapsto
Y_{\lambda_1,\dots,\lambda_n}(v_1\otimes\dots\otimes v_n\otimes f)
\,,
\end{equation}
satisfying the following two sesquilinearity properties ($1\leq i\leq n$):
\begin{equation}\label{eq:1.14}
Y_{\lambda_1,\dots,\lambda_n}(v_1\otimes\dots\otimes (\partial+\lambda_i)v_i \otimes\dots\otimes v_n\otimes f)
=
Y_{\lambda_1,\dots,\lambda_n}\Bigl(v_1\otimes\dots\otimes v_n\otimes \frac{\partial f}{\partial z_i}\Bigr)
\,,
\end{equation}
and
\begin{equation}\label{eq:1.15}
Y_{\lambda_1,\dots,\lambda_n}(v_1\otimes\dots\otimes v_n\otimes (z_i-z_j)f)
=
\Bigl(
\frac{\partial}{\partial\lambda_j}
-
\frac{\partial}{\partial\lambda_i}
\Bigr)
Y_{\lambda_1,\dots,\lambda_n}(v_1\otimes\dots\otimes v_n\otimes f)
\,.
\end{equation}
(Note that \eqref{eq:1.14} turns into \eqref{eq:1.5} if $f=1$.)
In \cite{BDSHK18} we also defined the action of $S_n$ on $\mc P_{\ch}(V)(n)$
and the $\circ_i$-products, making $\mc P_{\ch}(V)$ an operad.

As a result,
we obtain the Lie superalgebra 
$$
W_{\ch}(V)=W_{\mc P_{\ch}(V)}=\bigoplus_{k=-1}^\infty W_{\ch}^k(V)
\,,
$$
see \eqref{eq:1.1}.
We show in \cite{BDSHK18}
that odd elements $X\in W_{\ch}^1(\Pi V)$
such that $[X,X]=0$
correspond bijectively to vertex algebra structures on the $\mb F[\partial]$-module $V$,
such that $\partial$ is the translation operator.
As before, this leads to the vertex algebra cohomology
$$
H_{\ch}(V,M)=\bigoplus_{n=0}^\infty H_{\ch}^n(V,M)
\,,
$$ 
for any $V$-module $M$.

Now suppose that the $\mb F[\partial]$-module $V$
is equipped with an increasing $\mb Z_+$-filtration by $\mb F[\partial]$-submodules.
Taking the increasing filtration of $\mc O^{\star,T}_n$ by the number of divisors,
we obtain an increasing filtration of $V^{\otimes n}\otimes\mc O^{\star,T}_n$.
This filtration induces a decreasing filtration of the superspace $\mc P_{\ch}(V)(n)$.
The associated graded spaces $\gr\mc P_{\ch}(V)(n)$
form a graded operad.

On the other hand, in \cite{BDSHK18}
we introduced the closely related operad $\mc P_{\cl}(V)$,
which ``governs'' the Poisson vertex algebra structures on the $\mb F[\partial]$-module $V$.
The vector superspace $\mc P_{\cl}(V)(n)$
is the space of linear maps (cf. \eqref{eq:1.13})
\begin{equation}\label{eq:1.16}
Y\colon \mb F\mc G(n)\otimes V^{\otimes n}
\to V_n \,,
\qquad \Gamma\otimes v \mapsto Y^\Gamma(v)
\,,
\end{equation}
where $\mb F\mc G(n)$ is the vector superspace with even parity 
spanned by oriented graphs with $n$ vertices,
subject to certain conditions.
The corresponding $\mb Z$-graded Lie superalgebra 
$W_{\cl}(\Pi V)=\bigoplus_{k=-1}^\infty W_{\cl}^k(\Pi V)$
is such that odd elements $X\in W_{\cl}^1(\Pi V)$ with $[X,X]=0$
parameterize the PVA structures on the $\mb F[\partial]$-module $V$
by (cf.\ \eqref{eq:1.6}):
\begin{equation}\label{eq:1.17}
ab=(-1)^{p(a)}X^{\bullet\to\bullet}(a\otimes b) 
\,,\qquad
[a_\lambda b]=
(-1)^{p(a)}X^{\bullet\,\,\,\bullet}_{\lambda,-\lambda-\partial}(a\otimes b)
\,.
\end{equation}
This leads to the definition of the classical PVA cohomology.

Assuming that $V$ is endowed with an increasing $\mb Z_+$-filtration by $\mb F[\partial]$-submodules,
we have a canonical linear map of graded operads
\begin{equation}\label{eq:1.18}
\gr\mc P_{\ch}(V)
\to
\mc P_{\cl}(\gr V)
\,.
\end{equation}
We proved in \cite{BDSHK18}
that the map \eqref{eq:1.18} is injective.
The main result of \cite{BDSHK19} is that this map is an isomorphism provided that the filtration of $V$
is induced by a grading by $\mb F[\partial]$-modules.
If, in addition, this filtration of $V$ is such that $\gr V$ inherits from the vertex algebra structure of $V$ a PVA structure, and $\gr M$ inherits a structure of a PVA module over $\gr V$ (see \cite{Li04, DSK05}),
this allows us to compare the vertex algebra cohomology and the classical PVA cohomology 
via a spectral sequence \cite{BDSK19}.

Finally, the obvious inclusion of Lie superalgebras 
$W_{\PV}(\Pi \mc V) \hookrightarrow W_{\cl}(\Pi \mc V)$
induces an injective map in cohomology, and 
we prove in \cite{BDSHKV19} that this map is
an isomorphism between the variational PVA cohomology and the classical PVA cohomology
if, as a differential algebra, 
$\mc V$ is an algebra of differential polynomials. 
This allows us to relate the vertex algebra cohomology $H_{\ch}(V,M)$ 
to the variational PVA cohomology $H_{\PV}(\gr V,\gr M)$ provided that, as a differential algebra, 
$\gr V$ is an algebra of differential polynomials.

This is one of our motivations to study and compute variational PVA cohomology. Another motivation comes from the theory of integrable systems of Hamiltonian PDE. As explained in the introduction to \cite{DSK13}, given a differential algebra $\mc V$ endowed with two compatible PVA structures, the Lenard--Magri scheme of integrability can be infinitely extended, provided that $H^1_{\PV}(\mc V,\mc V)=0$ for one of the PVA structures. Furthermore, the whole cohomology $H_{\PV}(\mc V,\mc V)$ is computed in \cite{DSK13} in the case when the PVA structure is ``quasiconstant.''
The main tool for this computation is the \emph{basic complex}, which is a covering complex of the complex $W_{\PV}(\Pi \mc V)$. The cohomology of the basic complex is easier to compute; then, using the cohomology long exact sequence, one derives information on the cohomology in question. This idea has been utilized already in \cite{BKV99,BDSK09,DSK09}.

In the present paper, we use this idea to compute the variational PVA cohomology 
of the most important examples of PVA's arising in conformal field theory.
All these examples are conformal PVA's, i.e., there exists a Virasoro element $L$,
so that 
$$
[L_\lambda L]=(\partial+2\lambda)L+\frac{c}{12}\lambda^3\,,
$$
for some $c\in\mb F$ (called the central charge),
with the properties that 
$$
[L_\lambda\,\cdot\,]|_{\lambda=0}=\partial
\quad\text{and}\quad
\frac{d}{d\lambda}[L_\lambda\,\cdot\,]|_{\lambda=0} \;\text{ is diagonalizable.}
$$
This allows us to construct the diagonalizable energy operators $\widetilde E$ on the basic complex
and $E$ on the variational PVA complex, compatible with the maps of the long exact sequence connecting them.
Based on this, we prove the main theorem of the paper (Theorem \ref{thm:Delta})
stating that the eigenvalues of the energy operator $E$ on $H_{\PV}(\mc V,M)$
can be only $0$ or $1$,
provided that the conformal PVA $\mc V$, as a differential algebra, is an algebra of differential polynomials.
This theorem, along with formula \eqref{eq:DY} for the eigenvalues of $E$
(conformal weights),
puts stringent conditions on the variational PVA cohomology.

Using this, we prove, for example, that the variational PVA cohomology of free fermions $\mc F_{\mf h}$
is trivial (Theorem \ref{thm:coh-fer}):
\begin{equation}\label{eq:1.21}
\dim H^n_{\PV}(\mc F_{\mf h},\mc F_{\mf h})=\delta_{n,0}
\,,\qquad n\geq0\,.
\end{equation}
Another result is the computation of the variational PVA cohomology of the affine PVA $\mc V^k_{\mf g}$
of nonzero level $k\in\mb F$,
associated to an arbitrary finite-dimensional Lie algebra $\mf g$
with a non-degenerate invariant symmetric bilinear form (Theorem \ref{thm:coh-aff}):
\begin{equation}\label{eq:1.22}
H^n_{\PV}(\mc V^k_{\mf g},\mc V^k_{\mf g})
\simeq
H^n(\mf g,\mb F)\oplus H^{n+1}(\mf g,\mb F)
\,,\qquad n\geq0\,.
\end{equation}
In particular, taking for $\mf g$ an abelian Lie algebra,
we recover the variational PVA cohomology of the free boson,
described separately for pedagogical reasons in Theorem \ref{thm:coh-bos},
and previously computed in \cite{DSK12}.

In a similar way, we compute the variational PVA cohomology of the Virasoro PVA $\PVir^c$ for any central charge $c\in\mb F$
(Theorem \ref{thm:coh-vir}):
\begin{equation}\label{eq:1.23}
\dim H_{\PV}^n(\PVir^c, \PVir^c) 
= 
\begin{cases} \, 1 \,, \quad\text{for } \; n=0,2,3, 
\\ \, 0 \,, \quad\;\;\; \text{otherwise.}
\end{cases}
\end{equation}
Note that our Theorem \ref{prop}
relates the cohomology of an LCA $R$ and the variational PVA cohomology of the associated PVA $S(R)$.
The LCA cohomology of the main examples was computed already in \cite{BKV99}
for centerless LCA's, and in Section \ref{sec:2} we derive it for the central extensions
using Proposition \ref{prop:central-ext}.
This is used in the proof of Theorem \ref{thm:coh-vir}.

The interpretation of the LCA cohomology and the variational PVA cohomology
in degrees $0$, $1$ and $2$,
given by Theorems \ref{thm:lowcoho} and \ref{thm:lowcoho2} respectively,
allows us to compute the Casimirs, derivations and first-order deformations.
For example, formulas \eqref{eq:1.21}, \eqref{eq:1.22} and \eqref{eq:1.23}
show that the Casimirs are trivial for the PVA's $\mc F_{\mf h}$, $\mc V^k_{\mf g}$
with $k\neq0$ and $\mf g$ simple,
and for $\PVir^c$ for any $c$;
all their derivations are innner;
the first-order deformations are trivial for $\mc F_{\mf h}$
and are the obvious ones for $\mc V^k_{\mf g}$ with $k\neq0$ and $\mf g$ simple,
and for $\PVir^c$.


From Theorem \ref{thm:Delta} and formula \eqref{eq:DY},
we deduce Theorem \ref{thm:Delta2},
which states that for a conformal PVA $\mc V$ which,
as a differential algebra,
is an algebra of differential polynomials on generators of positive conformal weight,
all cohomology spaces $H^n_{\PV}(\mc V,\mc V)$
are finite dimensional.

In our next paper \cite{BDSK19}, we address the problem of exact
computation of cohomology of freely generated conformal vertex algebras.

Throughout the paper, the base field $\mb F$ is a field of characteristic $0$,
and, unless otherwise specified, all vector (super)spaces, their tensor products and Hom's 
are over $\mb F$; the parity of a vector superspace will be denoted by $p$.
We denote by $\mb Z_+$ the set of non-negative integers.

\subsubsection*{Acknowledgments} 

This research was partially conducted during the authors' visits
to the University of Rome La Sapienza and to MIT.
The first author was supported in part by a Simons Foundation grant 584741.
The second author was partially supported by the national PRIN fund n. 2015ZWST2C$\_$001
and the University funds n. RM116154CB35DFD3 and RM11715C7FB74D63.
All three authors were supported in part by the Bert and Ann Kostant fund.

\section{Lie conformal algebra cohomology}
\label{sec:2}
In this section, we review the definitions of a Lie conformal superalgebra, a module over it, and its cohomology. We also calculate the cohomology in the main examples, based on the results of \cite{BKV99}.

\subsection{Lie conformal algebras}
\label{sec:2.1}
In this subsection, we recall the definitions of a Lie conformal superalgebra (henceforth abbreviated LCA) and a module over it \cite{K96, DAK98}. We also recall the main examples.

\begin{definition}\label{def:lca}
Let $R$ be a vector superspace with parity $p$,
endowed with an even endomorphism $\partial$.
A \emph{Lie conformal superalgebra} (LCA) structure on $R$
is a bilinear, parity preserving $\lambda$-\emph{bracket}
$R\otimes R\to R[\lambda]$, $a\otimes b\mapsto[a_\lambda b]$,
satisfying ($a,b,c\in R$):
\begin{enumerate}[L1\,]
\item
$[\partial a_\lambda b]=-\lambda[a_\lambda b]$, \;
$[a_\lambda\partial b]=(\lambda+\partial)[a_\lambda b]$ \;
(sesquilinearity);
\item
$[a_\lambda b]=-(-1)^{p(a)p(b)}[b_{-\lambda-\partial}a]$ \;
(skewsymmetry);
\item
$[a_\lambda[b_\mu c]]-(-1)^{p(a)p(b)}[b_\mu[a_\lambda c]]
=
[[a_\lambda b]_{\lambda+\mu}c]$ \;
(Jacobi identity).
\end{enumerate}
A \emph{module} over the LCA $R$
is a vector superspace $M$ with an even endomorphism $\partial$,
endowed with a bilinear, parity preserving $\lambda$-action 
$R\otimes M\to M[\lambda]$, $a\otimes m\mapsto a_\lambda m$,
satisfying ($a,b\in R$, $m\in M$):
\begin{enumerate}[M1]
\item
$(\partial a)_\lambda m=-\lambda a_\lambda m$, \;
$a_\lambda(\partial m)=(\lambda+\partial)(a_\lambda m)$;
\item
$a_\lambda(b_\mu m)-(-1)^{p(a)p(b)}b_\mu(a_\lambda m)
=
[a_\lambda b]_{\lambda+\mu}m$.
\end{enumerate}
\end{definition}

Throughout the paper, for an $\mb F[\partial]$-module $R$, we will denote by $\tint\colon R\to R/\partial R$ the canonical quotient map. Recall that when $R$ is an LCA,
the vector superspace $R/\partial R$ carries a canonical Lie superalgebra structure with bracket
$$
\bigl[ \tint a, \tint b \bigr] = \tint [a_\la b] |_{\la=0} 
\,, \qquad a,b \in R \,.
$$
Moreover, any $R$-module $M$ has the structure of a Lie superalgebra module over $R/\partial R$ given by
$$
\bigl( \tint a \bigr) m = a_\la m |_{\la=0}
\,, \qquad a \in R \,, \; m\in M \,.
$$

\begin{example}[Free superboson LCA]\label{ex:boson-lca}
Let $\mf h$ be a finite-dimensional superspace, with parity $p$, and a supersymmetric nondegenerate bilinear form $(\cdot|\cdot)$. By supersymmetry of the form we mean that $(a|b)=(-1)^{p(a)p(b)} (b|a)$ for $a,b\in\mf h$
and $(a|b)=0$ whenever $p(a)\ne p(b)$.
The \emph{free superboson} LCA corresponding to $\mf h$ is the $\mb F[\partial]$-module
$$
R^b_{\mf h}=\mb F[\partial]\mf h\oplus\mb FK
\,,\qquad\text{where}\quad
\partial K=0 \,, \;\; p(K)=\bar0
\,,
$$
endowed with the $\lambda$-bracket
\begin{equation}\label{eq:boson}
[a_\lambda b]=\lambda (a|b) K
\,\,\text{ for }\,\, a,b\in\mf h\,,\qquad
K \;\text{ central}
\end{equation}
(uniquely extended to $R^b_{\mf h}\otimes R^b_{\mf h}$ by the sesquilinearity axioms). 
In the case when $\mf h$ is purely even, i.e., $p(a)=\bar0$ for all $a\in\mf h$, 
the LCA $R^b_{\mf h}$ is called the \emph{free boson} LCA.
\end{example}
\begin{example}[Free superfermion LCA]\label{ex:fermion-lca}
Let $\mf h$ be a finite-dimensional superspace, with parity $p$, and a super-skewsymmetric nondegenerate bilinear form $(\cdot|\cdot)$. Now we have $(a|b)=-(-1)^{p(a)p(b)} (b|a)$ for $a,b\in\mf h$
and $(a|b)=0$ whenever $p(a)\ne p(b)$.
The \emph{free superfermion} LCA corresponding to $\mf h$ is the $\mb F[\partial]$-module
$$
R^f_{\mf h}=\mb F[\partial]\mf h\oplus\mb F K
\,,\qquad\text{where}\quad
\partial K=0 \,, \;\; p(K)=0
\,,
$$
endowed with the $\lambda$-bracket
\begin{equation}\label{eq:fermion}
[a_\lambda b]=(a|b) K
\,,\qquad
K \;\text{ central}
\end{equation}
(uniquely extended to $R^f_{\mf h}\otimes R^f_{\mf h}$ by the sesquilinearity axioms). 
In the case when $p(a)=\bar1$ for all $a\in\mf h$, the LCA $R^f_{\mf h}$ is called the \emph{free fermion} LCA.
\end{example}
\begin{example}[Affine LCA]\label{ex:affine-lca}
Let $\mf g$ be a Lie algebra with a nondegenerate invariant symmetric bilinear form $(\cdot\,|\,\cdot)$.
The corresponding \emph{affine} LCA is the purely even $\mb F[\partial]$-module
$$
\cur\mf g=\mb F[\partial]\mf g\oplus\mb FK
\,,\;\;
\text{ where }\;
\partial K=0
\,,
$$
endowed with the $\lambda$-bracket
given on the generators by
\begin{equation}\label{eq:current}
[a_\lambda b]=[a,b]+\lambda (a|b) K
\,,\qquad
a,b\in\mf g
\,,\quad
K \text{ central.}
\end{equation}
\end{example}
\begin{example}[Virasoro LCA]\label{ex:virasoro-lca}
The \emph{Virasoro} LCA is the purely even $\mb F[\partial]$-module
$$
R^{\vir}=\mb F[\partial]L\oplus\mb FC
\,,\;\;
\text{ where }\;
\partial C=0
\,,
$$
endowed with the $\lambda$-bracket
\begin{equation}\label{eq:vir}
[L_\lambda L]=(\partial+2\lambda)L+\frac{1}{12}\lambda^3 C
\,,\qquad
C \text{ central.}
\end{equation}
\end{example}

The importance of the last two examples stems from the fact that the LCA's $\overline{\cur}\,\mf g = \cur\mf g / \mb F K$ for $\mf g$ simple and $\bar R^{\vir} = R^{\vir} / \mb F C$ exhaust all simple LCA's, which are finitely generated as $\mb F[\partial]$-modules \cite{DAK98}.

\subsection{LCA cohomology}
\label{sec:2.2}

Throughout the paper, we shall use the following notation:
for a vector superspace with parity $p$,
we denote by $\bar p=1-p$ the opposite parity.
Given a module $M$ over the LCA $R$,
the corresponding cohomology complex is constructed as follows \cite{BKV99, DSK13}.
For $n\geq0$, an $n$-\emph{cochain} of $R$ with coefficients in $M$ 
is a linear map
\begin{equation}\label{eq:lca-maps}
Y\colon
R^{\otimes n}
\longrightarrow
M[\lambda_1,\dots,\lambda_n]/\langle\partial+\lambda_1+\cdots+\lambda_n\rangle
\,,
\end{equation}
where $\langle\Phi\rangle$ denotes the image of the endomorphism $\Phi$,
satisfying the \emph{sesquilinearity} conditions ($1\leq i\leq n$):
\begin{equation}\label{eq:sesq}
Y_{\lambda_1,\dots,\lambda_n}(a_1\otimes\cdots\otimes (\partial a_i) 
\otimes\cdots\otimes a_n)
=
-\lambda_iY_{\lambda_1,\dots,\lambda_n}(a_1\otimes\cdots\otimes a_n)
\,,
\end{equation}
and the \emph{symmetry} conditions ($1\leq i< n$):
\begin{equation}\label{eq:skew}
\begin{split}
& Y_{\lambda_1,\dots,\lambda_i,\lambda_{i+1},\dots,\lambda_n}
(a_1\otimes\cdots\otimes a_i\otimes a_{i+1} \otimes\cdots\otimes a_n) \\
& =
(-1)^{\bar p(a_i)\bar p(a_{i+1})}
Y_{\lambda_1,\dots,\lambda_{i+1},\lambda_i,\dots,\lambda_n}
(a_1\otimes\cdots\otimes a_{i+1}\otimes a_i \otimes\cdots\otimes a_n)
\,.
\end{split}
\end{equation}
(Note that \eqref{eq:skew} is indeed a symmetry condition with respect to the parity $\bar p$,
but in the purely even case $p=\bar0$ it is in fact \emph{skewsymmetry}.)

We let $C_{\LC}^n(R,M)$ be the vector superspace of $n$-cochains,
with parity $\bar p$ induced by the parity $\bar p=1-p$ of $R$ and $M$, letting
all $\la_i$ be even.
For example, 
\begin{equation}\label{eq:W-ksmall}
C_{\LC}^{0}(R,M)=M/\partial M
\,,\qquad
C_{\LC}^{1}(R,M)=\Hom_{\mb F[\partial]}(R,M)
\,,
\end{equation}
while $C_{\LC}^2(R,M)$ can be identified with the space of $\lambda$-brackets
$Y\colon R\otimes R\to M[\lambda]$
that satisfy the sesquilinearity L1
and symmetry with respect to $\bar p$
(cf.\ L2).
We let 
\begin{equation}\label{eq:W-partial}
C_{\LC}(R,M)=\bigoplus_{n \geq0}C_{\LC}^n(R,M)
\,.
\end{equation}
Note that in the case when $R$ and $M$ are purely even, 
the parity $\bar p$ of $C_{\LC}^n(R,M)$ is $n-1$ mod $2\mb Z$.

The LCA cohomology differential
$d\colon C_{\LC}^{n}(R,M)\to C_{\LC}^{n+1}(R,M)$, for $n\geq0$,
is defined by (cf.\ \cite[Eq. (4.19)]{DSK13})
\begin{equation}\label{eq:lca-d}
\begin{split}
(dY&)_{\lambda_0,\dots,\lambda_n}
(a_0\otimes\cdots\otimes a_n) 
=
\sum_{i=0}^n
(-1)^{\gamma_i}
{a_i}_{\lambda_i}
Y_{\lambda_0,\stackrel{i}{\check{\dots}},\lambda_n}
(a_0\otimes\stackrel{i}{\check{\dots}}\otimes a_n) 
\\
& +
\sum_{0\leq i<j\leq n}
(-1)^{\gamma_{ij}}
Y_{\lambda_i+\lambda_j,\lambda_0,
\stackrel{i}{\check{\dots}}\stackrel{j}{\check{\dots}},\lambda_n}
([{a_i}_{\lambda_i}a_j]\otimes a_0\otimes
\stackrel{i}{\check{\dots}}\stackrel{j}{\check{\dots}}
\otimes a_n) 
\,,
\end{split}
\end{equation}
where
\begin{equation}\label{eq:gammai}
\begin{split}
\gamma_i &=
\bar p(a_i)
(\bar p(Y)+\bar p(a_0)+\dots+\bar p(a_{i-1})+1)+1
\\
\gamma_{ij} &=
\bar p(Y)+ 
\bar p(a_i) (\bar p(a_0)+\dots+\bar p(a_{i-1})+1)
+ \bar p(a_j) (\bar p(a_0)+\stackrel{i}{\check{\dots}}+\bar p(a_{j-1})\bigr)
\,.
\end{split}
\end{equation}
In particular, if $R$ is purely even, then
\begin{equation}\label{eq:gammai-even}
\gamma_i=n+i+1
\,\,\;\text{ and }\;\,\,
\gamma_{ij}=n+i+j+1
\,,
\end{equation}
(note that formulas \cite[(2.18) and (4.20)]{DSK13} are not quite correct:
the overall factor there should be $(-1)^{n+1}$ instead of $(-1)^n$).
If all $a_i$'s are odd, then
\begin{equation}\label{eq:gammai-odd}
\gamma_i=1
\,\,\;\text{ and }\;\,\,
\gamma_{ij}=\bar p(Y)
\,.
\end{equation}
\begin{proposition}[{\cite{BKV99,BDAK01,DSK09}}]\label{prop:LCAd}
Eq.\ \eqref{eq:lca-d}
defines an odd endomorphism of the vector superspace\/ $C_{\LC}(R,M)$ of degree\/ $1$,
such that\/ $d^2=0$. 
\end{proposition}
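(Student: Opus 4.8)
The plan is to verify three things: first, that $dY$ as defined by \eqref{eq:lca-d} actually lands in $C^{n+1}_{\LC}(R,M)$, i.e.\ that it is well defined modulo $\langle\partial+\lambda_0+\cdots+\lambda_n\rangle$ and satisfies the sesquilinearity \eqref{eq:sesq} and symmetry \eqref{eq:skew} conditions; second, that $d$ is odd and raises the degree by $1$; and third, that $d^2=0$. The degree statement is immediate since $dY$ has one more argument than $Y$, and the parity statement follows by inspection once one checks that each monomial on the right-hand side of \eqref{eq:lca-d} carries parity $\bar p(Y)+1$ --- this is exactly what the signs $\gamma_i,\gamma_{ij}$ of \eqref{eq:gammai} are engineered to record, and is the reason for the correction flagged after \eqref{eq:gammai-even}.

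For well-definedness I would argue as follows. In the first sum, the inner cochain value lies in $M[\lambda_0,\check\lambda_i,\dots,\lambda_n]$ modulo $\langle\partial+\sum_{k\neq i}\lambda_k\rangle$; applying the action ${a_i}_{\lambda_i}(\cdot)$ and using M1 shows that $(\partial+\sum_{k\neq i}\lambda_k)$ becomes $(\partial+\sum_k\lambda_k)$, so the result is well defined in the target quotient. In the second sum one uses that $[{a_i}_{\lambda_i}a_j]\in R[\lambda_i]$ is inserted into $Y$ at the spectral variable $\lambda_i+\lambda_j$, and L1 together with the sesquilinearity of $Y$ matches the total $\partial$ acting on the bracket with $-(\lambda_i+\lambda_j)$. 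The sesquilinearity \eqref{eq:sesq} of $dY$ in each $\lambda_k$ then follows from the same properties applied to the remaining arguments, and the symmetry \eqref{eq:skew} follows from the symmetry of $Y$ together with the skewsymmetry L2; it suffices to check invariance under a transposition of two adjacent arguments $a_k,a_{k+1}$ and match the resulting sign against the change in $\gamma_i,\gamma_{ij}$.

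The cleanest route to $d^2=0$ is the operadic one recalled in the introduction. Extend $R$ to the LCA $R\oplus M$ with $M$ an abelian ideal, and let $X\in W^1_{\mc Chom(\Pi(R\oplus M))}$ be the odd element corresponding under \eqref{eq:1.6} to this $\lambda$-bracket. As recalled around \eqref{eq:1.9}, the Jacobi identity L3 is equivalent to $[X,X]=0$; since $X$ is odd, $(\ad X)^2=\tfrac12\ad[X,X]=0$, so $(W_{\mc Chom(\Pi(R\oplus M))},\ad X)$ is a complex. The differential \eqref{eq:lca-d} is precisely the reduction of $\ad X$ to the subquotient computing cohomology with coefficients in $M$, and a subquotient of a complex is again a complex, whence $d^2=0$. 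Alternatively, one can verify $d^2=0$ directly: expanding $d(dY)$ via the two summands of \eqref{eq:lca-d} produces terms quadratic in the module action, terms quadratic in the bracket, and mixed terms, which recombine and cancel using the module axiom M2, the Jacobi identity L3, and the sesquilinearity and skewsymmetry relations L1--L2.

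I expect the main obstacle to be the sign bookkeeping, whichever route is taken: in the direct computation one must track the factors $(-1)^{\gamma_i},(-1)^{\gamma_{ij}}$ through the reindexing of the double sums in $d(dY)$, while in the operadic route one must confirm that the reduction of $\ad X$ reproduces \eqref{eq:lca-d} with exactly the signs \eqref{eq:gammai}, and not a global sign off by $(-1)$ --- precisely the discrepancy noted in the remark after \eqref{eq:gammai-even}.
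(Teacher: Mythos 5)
Your proposal is correct. The paper gives no proof of Proposition \ref{prop:LCAd} at all --- it is quoted from \cite{BKV99,BDAK01,DSK09} --- but the route you take for $d^2=0$ (pass to the split extension $R\oplus M$ with $M$ an abelian ideal, encode the bracket as an odd element $X$ with $[X,X]=0$, and recover \eqref{eq:lca-d} as the reduction of $\ad X$ to the cochains with arguments in $R$ and values in $M$) is exactly the mechanism the paper itself records in Remark \ref{rem:LCAoperad} and in the reduction procedure described in the introduction, so your argument and the paper's framework coincide, with the sign bookkeeping you flag being precisely the content delegated to the cited references.
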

\begin{definition}\label{def:lca-coho}
Given a module $M$ over the LCA $R$,
the cohomology of the complex $(C_{\LC}(R,M),d)$
is called the \emph{LCA cohomology} of $R$ with coefficients in $M$:
\begin{equation}\label{eq:h-lca}
\begin{split}
H_{\LC}(R,M)
&=
\bigoplus_{n\geq0}
H_{\LC}^n(R,M)
\,,\\
H_{\LC}^n(R,M)
&=
\ker\big(d|_{C_{\LC}^{n}(R,M)}\big)
/
d\big(C_{\LC}^{n-1}(R,M)\big)
\,.
\end{split}
\end{equation}
\end{definition}
\begin{remark}\label{rem:LCAoperad}
Recall that, for an operad $\mc P$, the subspace $W_{\mc P}\subset\mc P$ of symmetric elements
carries the structure of a Lie superalgebra \cite{Tam02}.
As explained in the introduction and in \cite{BDSHK18},
for $M=R$ the space $\mc Chom(\Pi R)$ of maps \eqref{eq:lca-maps}
satisfying the sesquilinearity conditions \eqref{eq:sesq}
has the structure of an operad,
with parity $\bar p$ induced by the opposite parity of $R$,
with some natural actions of the symmetric groups,
and with some natural composition maps.
The space $W_{\LC}(\Pi R)=\bigoplus_{k\geq-1}W_{\LC}^k(\Pi R)$,
where $W_{\LC}^k(\Pi R)$
is the subspace of symmetric elements 
in $\mc Chom(\Pi R)({k+1})$,
is naturally a $\mb Z$-graded Lie superalgebra.
Note that $W_{\LC}^k(\Pi R)=C_{\LC}^{k+1}(R,R)$.
%
%
LCA structures on $R$ bijectively correspond to odd (with respect to the parity $\bar p$)
elements $X\in W_{\LC}^1(\Pi R)$
such that $[X,X]=0$,
and the cohomology differential \eqref{eq:lca-d} is $d=\ad X$.
%
\end{remark}

\subsection{Low degree cohomology}
\label{sec:2.3}

Let $R$ be an LCA and $M$ be an $R$-module.
A \emph{Casimir element} is an element $\int m\in M/\partial M$
such that  $R_{-\partial}m =0$.
Denote by $\Cas(R,M)\subset M/\partial M$ the space of Casimir elements.
Note that $\Cas(R,R)=\{\int a\in R/\partial R\,|\,[a_{\lambda}R]|_{\lambda=0}=0\}$.

A \emph{derivation} from $R$ to $M$
is an $\mb F[\partial]$-module homomorphism $D\colon R\to M$ such that 
\begin{equation}\label{eq:der}
D[{a}_\lambda{b}]
=
(-1)^{p(D) p(a)}
\,{a}_\lambda{D(b)}
+
(-1)^{1+(p(D)+p(a))p(b)}
\,b_{-\lambda-\partial}{D(a)}
\,,
\end{equation}
for all $a,b \in R$.
We say that a derivation is \emph{inner} if it has the following form:
\begin{equation}\label{eq:inner}
D_{\tint m}(a)=(-1)^{1+p(m)p(a)}a_{-\partial}m
\,\quad\text{ for some }\quad \tint m\in M/\partial M
\,.
\end{equation}
Denote by $\Der(R,M)$ the space of derivations from $R$ to $M$,
and by $\Inder(R,M)$ the subspace of inner derivations.
In the special case when $M=R$,
we have the usual definition of a derivation and an inner derivation of the LCA $R$
as an $\mb F[\partial]$-module endomorphism $D$ such that
\begin{equation}\label{eq:der2}
D[{a}_\lambda{b}]
= [(Da)_\lambda{b}]
+ (-1)^{p(D) p(a)} [{a}_\lambda (Db)]
\,,
\end{equation}
and, respectively,
\begin{equation}\label{eq:inner2}
(\ad a)b = a_{(0)} b = [{a}_\lambda{b}] |_{\la=0}
\,.
\end{equation}

\begin{theorem}[{\cite{BKV99,DSK09}}]\label{thm:lowcoho}
Let\/ $R$ be a Lie conformal algebra, and\/ $M$ be an\/ $R$-module. Then:
\begin{enumerate}[(a)]
\item
$H^0_{\LC}(R,M)=\Cas(R,M)$.
\item
$H^1_{\LC}(R,M)=\Der(R,M)/\Inder(R,M)$.
\item
$H^2_{\LC}(R,M)$ is the space of isomorphism classes of\/ $\mb F[\partial]$-split extensions
of the LCA\/ $R$ by the\/ $R$-module\/ $M$, where\/ $M$ is
viewed as an LCA with zero\/ $\lambda$-bracket.
In particular, $H^2_{\LC}(R,R)$ parameterizes the equivalence classes of first-order deformations of\/ $R$ that preserve the\/ $\mb F[\partial]$-module structure.
\end{enumerate}
\end{theorem}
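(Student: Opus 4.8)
The plan is to verify the three statements by direct computation of the low-degree cochain spaces and the differential, exactly as one does in the classical Chevalley--Eilenberg setting, adapted to the $\lambda$-bracket formalism. The key inputs are the explicit descriptions \eqref{eq:W-ksmall} of $C_{\LC}^0$ and $C_{\LC}^1$, the identification of $C_{\LC}^2$ as the space of $\lambda$-brackets satisfying sesquilinearity and $\bar p$-symmetry, and the explicit differential \eqref{eq:lca-d} together with the sign conventions \eqref{eq:gammai}. I would treat each part in turn.

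For part (a), I would start from $C_{\LC}^0(R,M) = M/\partial M$ and compute $d$ on a $0$-cochain $\tint m$. Since a $0$-cochain has no arguments, only the first sum in \eqref{eq:lca-d} contributes, giving $(d\tint m)_{\lambda_0}(a_0) = (-1)^{\gamma_0}\,{a_0}_{\lambda_0} m$ up to sign. Imposing the sesquilinearity \eqref{eq:sesq} forces this to be expressed through the $\lambda$-action, and after passing to the quotient by $\langle\partial+\lambda_0\rangle$ one reads off that $d\tint m = 0$ is equivalent to ${a}_{-\partial} m = 0$ for all $a\in R$, which is precisely the Casimir condition $R_{-\partial}m=0$. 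Since there are no $(-1)$-cochains, $H^0 = \ker(d|_{C^0}) = \Cas(R,M)$.

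For part (b), a $1$-cochain is $D\in\Hom_{\mb F[\partial]}(R,M)$. Computing $(dD)_{\lambda_0,\lambda_1}(a_0\otimes a_1)$ from \eqref{eq:lca-d} produces two terms from the first sum, namely ${a_0}_{\lambda_0}D(a_1)$ and ${a_1}_{\lambda_1}D(a_0)$, and one term from the second sum, namely $D([{a_0}_{\lambda_0}a_1])$, each with the signs dictated by \eqref{eq:gammai}. Setting $dD=0$ and rewriting ${a_1}_{\lambda_1}D(a_0)$ via module-skewsymmetry (M1--M2) as a $\lambda$-action of the form $a_{-\lambda-\partial}$ should reproduce exactly the derivation identity \eqref{eq:der}; so $\ker(d|_{C^1}) = \Der(R,M)$. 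The coboundaries $d(C^0_{\LC})$ are the images $d\tint m$ of $0$-cochains, and a matching computation identifies these with the inner derivations $D_{\tint m}$ of \eqref{eq:inner}; hence $H^1 = \Der/\Inder$.

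For part (c), I would interpret a $2$-cocycle $Y$, which is a $\lambda$-bracket $R\otimes R\to M[\lambda]$ satisfying sesquilinearity and $\bar p$-symmetry, as the data of an $\mb F[\partial]$-split extension $\widehat R = R\oplus M$: one extends the bracket of $R$ by declaring $M$ to be an abelian ideal and setting the ``cross term'' $[{a}_\lambda m]$ via the $R$-module action and the mixed bracket via $Y$. The cocycle condition $dY=0$ should be exactly the Jacobi identity L3 for this extended bracket, so $2$-cocycles correspond to LCA extension structures; the coboundaries $dD$ correspond to extensions equivalent to the split one via the $\mb F[\partial]$-module automorphism $\id + D$, giving the classification up to isomorphism. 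The final sentence, $H^2_{\LC}(R,R)$ parameterizing first-order deformations, follows by reading the same correspondence with $M=R$ and interpreting a cocycle as an infinitesimal deformation of the bracket modulo trivial deformations. \emph{The main obstacle} I anticipate is bookkeeping: getting every Koszul sign in \eqref{eq:gammai} to land correctly so that the abstract cocycle/coboundary conditions match the concrete identities \eqref{eq:der}, \eqref{eq:inner}, and the Jacobi identity, especially in the super case where the opposite parity $\bar p$ governs the symmetry. For the deformation interpretation in (c), the only conceptual point beyond signs is checking that the equivalence of deformations preserving the $\mb F[\partial]$-module structure coincides precisely with cohomologous $2$-cocycles, which again reduces to the coboundary computation already done.
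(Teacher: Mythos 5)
The paper itself offers no proof of this theorem: it is quoted from \cite{BKV99,DSK09}, so there is no internal argument to compare against. Your proposal reconstructs the standard direct verification from those references, and its outline is correct: in degree $0$ the differential of $\tint m$ reduces, under $M[\lambda_0]/\langle\partial+\lambda_0\rangle\simeq M$, to $a_{-\partial}m$, giving the Casimir condition; in degree $1$ the cocycle condition is the derivation identity \eqref{eq:der} and coboundaries are the inner derivations \eqref{eq:inner}; in degree $2$ cocycles encode $\mb F[\partial]$-split abelian extensions (equivalently, first-order deformations when $M=R$), with coboundaries corresponding to changes of splitting by $\id+D$, $D\in\Hom_{\mb F[\partial]}(R,M)$. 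One misstatement worth correcting in part (b): the passage from ${a_1}_{\lambda_1}D(a_0)$ to ${a_1}_{-\lambda_0-\partial}D(a_0)$ is not an application of any ``module-skewsymmetry'' --- the module axioms M1--M2 contain no skewsymmetry --- it is simply the substitution $\lambda_1\mapsto-\lambda_0-\partial$ coming from the identification $M[\lambda_0,\lambda_1]/\langle\partial+\lambda_0+\lambda_1\rangle\simeq M[\lambda_0]$, i.e.\ the same quotient mechanism you invoked correctly in part (a). With that repaired, the remaining work is indeed only the Koszul-sign bookkeeping via \eqref{eq:gammai} that you anticipate, and the argument goes through.
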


\subsection{Central extensions}
\label{sec:2.4}

An element $C\in R$ is called \emph{central} if $[C_\lambda R]=0$
(or, equivalently, $[R_\lambda C]=0$).
In particular, $\mb F[\partial]C\subset R$ is an LCA ideal
and one can consider the quotient LCA $\bar{R}=R/\mb F[\partial]C$.
The element $C\in R$ is called \emph{torsion} if $P(\partial)C=0$
for some nonzero polynomial $P(\partial)$.
\begin{lemma}\label{lem:tor}
Let\/ $R$ be an LCA and\/ $M$ be an\/ $R$-module.
Suppose $C\in R$ is a torsion element and consider $\bar{R}=R/\mb F[\partial]C$. Then{\rm{:}}
\begin{enumerate}[(a)]
\item
$C$ acts trivially on any $R$-module $M$. In particular, $C$ is central in $R$.
\item
$\bar R$ is an LCA, and an $R$-module is the same as an $\bar{R}$-module.
\item
For\/ $n\ge2$, any\/ $n$-cochain\/ $Y\in C_{\LC}^n(R,M)$ vanishes when one of its arguments lies in\/
$\mb F[\partial]C$.
\end{enumerate}
\end{lemma}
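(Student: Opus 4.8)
The plan is to derive all three parts from the sesquilinearity axioms together with the torsion hypothesis $P(\partial)C=0$, where the only nontrivial input is a structural fact about the target space of a cochain.

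For part (a), I would first record the elementary consequence of M1 that $(Q(\partial)a)_\lambda m = Q(-\lambda)\,a_\lambda m$ for every polynomial $Q$ and all $a\in R$, $m\in M$, obtained by iterating $(\partial a)_\lambda m=-\lambda\,a_\lambda m$. Applying this with $a=C$ and $Q=P$, the torsion relation $P(\partial)C=0$ gives $P(-\lambda)\,C_\lambda m=0$ in $M[\lambda]$. Since $M[\lambda]=M\otimes_{\mb F}\mb F[\lambda]$ is a free, and therefore torsion-free, module over the integral domain $\mb F[\lambda]$, and $P(-\lambda)$ is a nonzero element of $\mb F[\lambda]$, multiplication by it is injective; hence $C_\lambda m=0$. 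Centrality of $C$ is then the special case $M=R$.

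Part (b) is essentially formal once $C$ is known to be central. The discussion opening this subsection already records that $\mb F[\partial]C$ is an ideal and $\bar R$ an LCA; I would only spell out that $[a_\lambda\partial^kC]=(\lambda+\partial)^k[a_\lambda C]=0$ and $[\partial^kC_\lambda a]=(-\lambda)^k[C_\lambda a]=0$, so the ideal condition holds. For the module statement, part (a) gives $(\mb F[\partial]C)_\lambda M=0$, so the $R$-action factors through the quotient map $\pi\colon R\to\bar R$; conversely any $\bar R$-action pulls back along the LCA homomorphism $\pi$, the two assignments are mutually inverse, and axioms M1--M2 transport verbatim since the underlying action map is unchanged.

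Part (c) is the heart of the matter, and the main obstacle is that the target of a cochain is the \emph{quotient} $M[\lambda_1,\dots,\lambda_n]/\langle\partial+\lambda_1+\cdots+\lambda_n\rangle$, so torsion-freeness is not immediate. By the symmetry \eqref{eq:skew} I may assume $C$ occupies some slot $i$; iterating the cochain sesquilinearity \eqref{eq:sesq} exactly as in part (a) yields $P(-\lambda_i)\,Y_{\lambda_1,\dots,\lambda_n}(\cdots\otimes C\otimes\cdots)=0$ in this quotient. The key structural fact I would establish is that eliminating a single variable $\lambda_j$ (with $j\ne i$) via the relation $\partial\equiv-\sum_k\lambda_k$ gives an isomorphism $M[\lambda_1,\dots,\lambda_n]/\langle\partial+\sum_k\lambda_k\rangle\cong M[\lambda_1,\stackrel{j}{\check{\dots}},\lambda_n]$: the composite $M[\lambda_1,\stackrel{j}{\check{\dots}},\lambda_n]\hookrightarrow M[\lambda_1,\dots,\lambda_n]\twoheadrightarrow Q$ is bijective, surjectivity following by repeatedly substituting $\lambda_j\equiv-\partial-\sum_{k\ne j}\lambda_k$ to lower the $\lambda_j$-degree, and injectivity by a leading-$\lambda_j$-degree count on $(\partial+\sum_k\lambda_k)u$. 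Under this identification $\lambda_i$ is a genuine free polynomial variable, so $Q$ is a free $\mb F[\lambda_i]$-module and multiplication by the nonzero polynomial $P(-\lambda_i)$ is injective, forcing $Y(\cdots\otimes C\otimes\cdots)=0$. This is precisely where $n\ge2$ enters: a second variable must survive the elimination to supply the free $\lambda_i$. For $n=1$ the quotient collapses to $M$ with $\lambda_1$ acting as $-\partial$, and $P(\partial)$ need not be injective on $M$, so the statement genuinely fails in that degree.
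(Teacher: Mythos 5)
Your proposal is correct and follows essentially the same route as the paper's proof: iterated sesquilinearity yields $P(-\lambda)$ (resp.\ $P(-\lambda_i)$) annihilating the bracket or cochain value, and the vanishing then follows from injectivity of multiplication by a nonzero polynomial after identifying $M[\lambda_1,\dots,\lambda_n]/\langle\partial+\lambda_1+\cdots+\lambda_n\rangle$ with $M[\lambda_1,\dots,\lambda_{n-1}]$. The only difference is that you spell out what the paper leaves implicit — the elimination isomorphism, the need to eliminate a variable $\lambda_j$ with $j\neq i$ so that $\lambda_i$ stays free, and hence the role of the hypothesis $n\ge2$ — which is a welcome level of care but not a different argument.
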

\begin{proof}
Part (a) follows from the sesquilinearity axiom (cf.\ \cite{DAK98}), and part (b) follows immediately from (a). Similarly,
if $P(\partial)C=0$, by the sesquilinearity condition \eqref{eq:sesq},
we have
\begin{align*}
P(-\lambda_i)
Y_{\lambda_1,\dots,\lambda_n}(a_1\otimes\dots\otimes C \otimes\dots\otimes a_n)
=0 
\,&\in
M[\lambda_1,\dots,\lambda_n]/\langle\partial+\lambda_1+\dots+\lambda_n\rangle
\\
&\simeq
M[\lambda_1,\dots,\lambda_{n-1}]
\,.
\end{align*}
This proves (c).
\end{proof}

\begin{proposition}\label{prop:central-ext}
Let\/ $R$ be an LCA, let\/ $C\in R$ be a torsion element,
and consider the quotient LCA\/ $\bar{R}=R/\mb F[\partial] C$.
Let\/ $M$ be a module over the LCA\/ $R$.
Then:
\begin{enumerate}[(a)]
\item
We have canonical linear maps
\begin{equation}\label{eq:w-maps}
\begin{split}
& C_{\LC}^1(\bar{R},M)\hookrightarrow C_{\LC}^1(R,M)
\,,\\
& C_{\LC}^n(\bar{R},M)\stackrel{\sim}{\longrightarrow}C_{\LC}^n(R,M)
\,\;\text{ for }\; n\neq1
\,.
\end{split}
\end{equation}
\item
We have canonical linear maps
\begin{equation}\label{eq:h-maps}
\begin{split}
& H_{\LC}^1(\bar{R},M)\hookrightarrow H_{\LC}^1(R,M)
\,,\\
& H_{\LC}^2(\bar{R},M)\twoheadrightarrow H_{\LC}^2(R,M)
\,,\\
& H_{\LC}^n(\bar{R},M)\stackrel{\sim}{\longrightarrow}H_{\LC}^n(R,M)
\;\text{ for }\; n\neq1,2
\,.
\end{split}
\end{equation}
\item
Let\/ $P(\partial)\in\mb F[\partial]$ be the minimal monic polynomial that
annihilates\/ $C$, and let
$$
U=\big\{u\in M\,\big|\,P(\partial)u=0\big\}
\,.
$$
If\/ $R$ splits, as an\/ $\mb F[\partial]$-module, as\/ $R\simeq \bar{R}\oplus\mb F[\partial]C$,
then
\begin{equation}\label{eq:H01}
\begin{split}
& \bigl( \dim H_{\LC}^1(R,M)-\dim H_{\LC}^1(\bar{R},M) \bigr) \\
& +\bigl( \dim H_{\LC}^2(\bar{R},M) - \dim H_{\LC}^2(R,M) \bigr)
=
\dim U
\,.
\end{split}
\end{equation}
Note that, in the left-hand side of \eqref{eq:H01}, both summands are non-negative by part (b).
\end{enumerate}
\end{proposition}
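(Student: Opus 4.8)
The plan is to deduce all three parts from a single short exact sequence of cochain complexes. The canonical projection $\pi\colon R\to\bar R$ is a surjective LCA homomorphism, and by Lemma \ref{lem:tor}(a),(b) the $R$-action on $M$ factors through $\bar R$; hence pullback
$$
\iota\colon C_{\LC}^n(\bar R,M)\longrightarrow C_{\LC}^n(R,M)\,,\qquad \bar Y\longmapsto\bar Y\circ\pi^{\otimes n}\,,
$$
is compatible with the differential \eqref{eq:lca-d} (functoriality of $d$ under LCA morphisms) and is therefore a chain map. Since $\pi$ is surjective, $\iota$ is injective in every degree, so I obtain a short exact sequence of complexes $0\to C_{\LC}(\bar R,M)\xrightarrow{\ \iota\ }C_{\LC}(R,M)\to Q\to0$, where $Q$ denotes the cokernel complex. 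Everything will follow from analyzing $Q$ and invoking the long exact cohomology sequence.

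For part (a), injectivity is the above observation. In degree $0$ both spaces equal $M/\partial M$, so $\iota$ is the identity. For $n\ge2$, Lemma \ref{lem:tor}(c) asserts that every $Y\in C_{\LC}^n(R,M)$ vanishes when an argument lies in $\mb F[\partial]C$, so $Y$ factors through $\bar R^{\otimes n}$; this is exactly surjectivity of $\iota$, giving the isomorphism for $n\ne1$. Consequently the cokernel complex $Q$ is concentrated in a single degree, namely $Q^1=C_{\LC}^1(R,M)/\iota\bigl(C_{\LC}^1(\bar R,M)\bigr)$, and its differential vanishes since the neighboring terms are zero. Hence $H^1(Q)=Q^1$ and $H^n(Q)=0$ for $n\ne1$.

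Part (b) is then immediate from the long exact sequence associated to the short exact sequence above. Plugging in $H^n(Q)=0$ for $n\ne1$ collapses it to
\begin{equation*}
0\to H_{\LC}^1(\bar R,M)\xrightarrow{\iota_*}H_{\LC}^1(R,M)\to H^1(Q)\xrightarrow{\delta}H_{\LC}^2(\bar R,M)\xrightarrow{\iota_*}H_{\LC}^2(R,M)\to0\,,
\end{equation*}
while for $n\ne1,2$ each connecting map $H_{\LC}^n(\bar R,M)\to H_{\LC}^n(R,M)$ is sandwiched between the vanishing terms $H^{n-1}(Q)$ and $H^n(Q)$ and is thus an isomorphism (in degree $0$ one also uses $H^0(Q)=0$). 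The displayed five-term sequence supplies the injection in degree $1$ and the surjection in degree $2$.

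For part (c), the five-term exact sequence has vanishing alternating sum of dimensions, which rearranges precisely into
\begin{equation*}
\bigl(\dim H_{\LC}^1(R,M)-\dim H_{\LC}^1(\bar R,M)\bigr)+\bigl(\dim H_{\LC}^2(\bar R,M)-\dim H_{\LC}^2(R,M)\bigr)=\dim Q^1\,.
\end{equation*}
It remains to compute $Q^1$. Since $C_{\LC}^1(R,M)=\Hom_{\mb F[\partial]}(R,M)$ and $\im\iota^1$ is exactly the set of homomorphisms vanishing on $\mb F[\partial]C$, the splitting hypothesis $R\simeq\bar R\oplus\mb F[\partial]C$ yields $Q^1\cong\Hom_{\mb F[\partial]}(\mb F[\partial]C,M)$. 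As $P(\partial)$ is the minimal monic annihilator of $C$, one has $\mb F[\partial]C\cong\mb F[\partial]/(P(\partial))$, so evaluation $\phi\mapsto\phi(C)$ identifies $\Hom_{\mb F[\partial]}(\mb F[\partial]C,M)$ with $U=\{u\in M\mid P(\partial)u=0\}$, whence $\dim Q^1=\dim U$ and \eqref{eq:H01} follows. The only step demanding genuine care is the reduction to $Q$ being concentrated in degree $1$, which rests entirely on Lemma \ref{lem:tor}(c); everything past that point is formal homological algebra.
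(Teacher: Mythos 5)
Your proposal is correct, and part (a) is essentially identical to the paper's argument: the same pullback map $\bar Y\mapsto\bar Y\circ\pi$, the same use of Lemma \ref{lem:tor}(c) for surjectivity in degrees $n\ge2$, and the same description of the image in degree $1$. Where you genuinely diverge is in the packaging of (b) and (c). The paper deduces (b) directly from (a) by a diagram chase (the cochain maps commute with $d$, are isomorphisms in degrees $\ne1$, and injective in degree $1$), and proves (c) by a hands-on count: under the splitting hypothesis it decomposes $C_{\LC}^1(R,M)\simeq C_{\LC}^1(\bar R,M)\oplus U$ and computes $\dim\ker d$ and $\dim (\im d)$ on both sides, obtaining two identities (\eqref{0111:eq2} and \eqref{0111:eq3} in the paper) whose combination is \eqref{eq:H01}. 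You instead form the cokernel complex $Q$, observe that it is concentrated in degree $1$ with $Q^1\simeq\Hom_{\mb F[\partial]}(\mb F[\partial]C,M)\simeq U$ (the splitting being needed exactly for surjectivity of the restriction map $Q^1\to\Hom_{\mb F[\partial]}(\mb F[\partial]C,M)$), and then read off (b) from the long exact sequence and (c) from the five-term piece of it; more precisely, (c) follows from the short exact sequence $0\to\mathrm{coker}(\iota_*^1)\to Q^1\to\ker(\iota_*^2)\to0$ extracted from exactness. Your route makes (c) nearly automatic and is the cleaner, more conceptual argument; the paper's route is more elementary and self-contained, avoiding the long-exact-sequence machinery. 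Note also that both proofs treat possibly infinite dimensions at the same informal level: strictly speaking \eqref{eq:H01} should be read as $\dim\mathrm{coker}+\dim\ker=\dim U$, and your exact-sequence formulation actually delivers that rigorous version most directly, whereas the literal statement about a ``vanishing alternating sum of dimensions'' presumes finiteness. One small terminological slip: the maps $H_{\LC}^n(\bar R,M)\to H_{\LC}^n(R,M)$ sandwiched between vanishing terms of $H^\bullet(Q)$ are the maps induced by $\iota$, not connecting maps; this does not affect the argument.
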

\begin{proof}
For every $n\geq0$, we have the canonical injective map
\begin{equation}\label{eq:W-map}
C_{\LC}^n(\bar{R},M)\hookrightarrow C_{\LC}^n(R,M)
\,\,,\quad
\bar Y\mapsto Y=\bar Y\circ\pi
\,,
\end{equation}
obtained by composing with the quotient map 
$\pi\colon R^{\otimes n}\twoheadrightarrow \bar{R}^{\otimes n}$:
$$
\begin{tikzcd}
R^{\otimes n} \arrow[->>]{r}{\pi} \arrow[rr, "{Y}", bend left=30]
& \bar{R}^{\otimes n} \arrow{r}{\bar Y} &\phantom{\Bigg(}&\!\!\!\!\!\!\!\!\!\!\!\!\!\!\!\!\!\!\!\!\!
M[\lambda_1,\dots,\lambda_n]/\langle\partial+\lambda_1+\dots+\lambda_n\rangle \,.
\end{tikzcd}
$$
For $n=0$, this map is obviously a bijection, since 
$C_{\LC}^{0}(R,M)=C_{\LC}^{0}(\bar{R},M)\simeq M/\partial M$.
Let $n\geq1$ and let $Y\in C_{\LC}^{n}(R,M)$.
For $n\geq2$, Lemma \ref{lem:tor}(c) implies that $Y$ factors to a map $\bar Y\in C_{\LC}^n(\bar{R},M)$,
hence \eqref{eq:W-map} is surjective.
For $n=1$, the image of the map \eqref{eq:W-map} is
$$
\big\{Y\in \Hom_{\mb F[\partial]}(R,M)\,\big|\,Y(\mb F[\partial]C)=0\big\}
\subset
\Hom_{\mb F[\partial]}(R,M)
=
C_{\LC}^1(R,M)
\,.
$$
This proves part (a).

It is immediate to check that the action of the differential $d$ given by \eqref{eq:lca-d}
commutes with the map \eqref{eq:W-map}.
Claim (b) is then an obvious consequence of (a).

Finally, we prove (c).
Under the assumption that $R\simeq \bar{R}\oplus\mb F[\partial]C$ as $\mb F[\partial]$-modules,
we have
\begin{equation}\label{0111:eq1}
\begin{split}
C_{\LC}^1&(R,M)
=
\Hom_{\mb F[\partial]}(R,M) \\
& =
\Hom_{\mb F[\partial]}(\bar{R},M)
\oplus
\Hom_{\mb F[\partial]}(\mb F[\partial]C,M)
\simeq
C_{\LC}^1(\bar{R},M)\oplus U
\,.
\end{split}
\end{equation}
By looking at the kernel of $d$ on both sides of \eqref{0111:eq1}, we get
\begin{align*}
& \dim\bigl(\ker d\big|_{C_{\LC}^1(R,M)}\bigr)
=
\dim
\big\{
Y-u\,\big|\,
Y\in C_{\LC}^1(\bar{R},M),\,u\in U,\,dY=du
\big\} \\
& =
\dim\bigl(\ker d\big|_{C_{\LC}^1(\bar{R},M)}\bigr)
+
\dim\bigl(\ker d|_U\bigr)
+
\dim\bigl(d C_{\LC}^1(\bar{R},M)
\cap
d U
\bigr)
\,.
\end{align*}
Hence,
\begin{equation}\label{0111:eq2}
\dim H_{\LC}^1(R,M)-\dim H_{\LC}^1(\bar{R},M)
=
\dim
\ker\bigl(d|_U\bigr)
+
\dim\bigl(
dC_{\LC}^1(\bar{R},M)
\cap
dU
\bigr)
\,.
\end{equation}
By looking at the image of $d$ on both sides of \eqref{0111:eq1}, we get
\begin{align*}
&\dim(d C_{\LC}^1(R,M))
=
\dim\bigl(d C_{\LC}^1(\bar{R},M) + dU\bigr) \\
& =
\dim\bigl(d C_{\LC}^1(\bar{R},M)\bigr) 
+
\dim(dU)
-
\dim\bigl(
d C_{\LC}^1(\bar{R},M)
\cap
d U
\bigr)
\,.
\end{align*}
Hence,
\begin{equation}\label{0111:eq3}
\dim H_{\LC}^2(R,M)-\dim H_{\LC}^2(\bar{R},M)
=
-
\dim(dU)
+
\dim\bigl(
d C_{\LC}^1(\bar{R},M)
\cap
d U
\bigr)
\,.
\end{equation}
Combining equations \eqref{0111:eq2} and \eqref{0111:eq3}, we get \eqref{eq:H01},
thus completing the proof.
\end{proof}

\begin{corollary}\label{cor:central-ext}
Consider an LCA\/ $R$ that splits, as an\/ $\mb F[\partial]$-module, as\/ $R\simeq \bar{R}\oplus\mb F[\partial]C$,
where\/ $C\in R$ is a torsion element. Let\/ $M$ be an\/ $R$-module, which is torsion-free as an\/ $\mb F[\partial]$-module.
Then\/ $H_{\LC}^n(R,M) \simeq H_{\LC}^n(\bar{R},M)$ for all\/ $n\ge 0$.
\end{corollary}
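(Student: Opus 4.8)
The plan is to observe that the torsion-freeness hypothesis on $M$ collapses the one place where Proposition \ref{prop:central-ext}(a) fails to be an isomorphism, namely degree $1$, and then to run the whole argument at the level of cochains so as to avoid any finite-dimensionality assumption. Concretely, since $C$ is a torsion element its minimal monic annihilating polynomial $P(\partial)$ is nonzero; because $M$ is torsion-free over $\mb F[\partial]$, no nonzero element of $M$ is killed by $P(\partial)$, so the space $U=\{u\in M\mid P(\partial)u=0\}$ of Proposition \ref{prop:central-ext}(c) is zero.

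Next I would upgrade the injection $C_{\LC}^1(\bar R,M)\hookrightarrow C_{\LC}^1(R,M)$ of Proposition \ref{prop:central-ext}(a) to a bijection. Recall from the proof of that part that its image is $\{Y\in\Hom_{\mb F[\partial]}(R,M)\mid Y(\mb F[\partial]C)=0\}$. But any $Y\in\Hom_{\mb F[\partial]}(R,M)$ sends the torsion element $C$ to a torsion element $Y(C)\in M$, and since $M$ is torsion-free this forces $Y(C)=0$, whence $Y(\mb F[\partial]C)=\mb F[\partial]Y(C)=0$. Thus every cochain lies in the image, and the degree-$1$ map is an isomorphism; equivalently, this is the statement $U=0$ read through the identification $C_{\LC}^1(R,M)\simeq C_{\LC}^1(\bar R,M)\oplus U$.

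Combining this with Proposition \ref{prop:central-ext}(a), which already gives isomorphisms $C_{\LC}^n(\bar R,M)\stackrel{\sim}{\longrightarrow}C_{\LC}^n(R,M)$ for all $n\neq1$, I conclude that the natural map $\bar Y\mapsto\bar Y\circ\pi$ is a bijection of cochains in every degree $n\ge0$. Since this map commutes with the differential $d$ (as established in the proof of Proposition \ref{prop:central-ext}(b)), it is an isomorphism of cochain complexes, and hence induces isomorphisms $H_{\LC}^n(\bar R,M)\simeq H_{\LC}^n(R,M)$ for all $n\ge0$, as claimed.

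There is no real obstacle here beyond isolating the correct consequence of torsion-freeness; all the structural work is already done in Proposition \ref{prop:central-ext}. I would stress one point of method: one is tempted to instead substitute $\dim U=0$ into \eqref{eq:H01} and use the non-negativity of its two summands to conclude that both cohomology-dimension differences vanish, then appeal to the injection and surjection of part (b). That shortcut, however, silently assumes the relevant cohomology spaces are finite dimensional, so I prefer the cochain-level isomorphism argument above, which needs no such hypothesis.
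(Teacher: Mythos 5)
Your proof is correct and is essentially the intended derivation of the corollary from Proposition \ref{prop:central-ext}: torsion-freeness of $M$ gives $U=0$, so the degree-one injection of part (a) becomes a bijection (equivalently, via \eqref{0111:eq1}), the cochain complexes $C_{\LC}(\bar R,M)$ and $C_{\LC}(R,M)$ are isomorphic as complexes, and the cohomology isomorphisms follow in every degree. Your closing methodological point is also apt: arguing at the cochain level, rather than plugging $\dim U=0$ into \eqref{eq:H01}, is exactly what avoids any hidden finite-dimensionality assumption.
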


\subsection{Basic LCA cohomology complex}\label{sec:bas-lca}

In this subsection, we review the basic LCA cohomology complex, which was introduced in \cite{BKV99} in the purely even case.
As before, let $R$ be an LCA and $M$ be an $R$-module. 

For $n\geq0$, a \emph{basic} $n$-\emph{cochain} of $R$ with coefficients in $M$ 
is a linear map (cf.\ \eqref{eq:lca-maps})
\begin{equation}\label{eq:lca-maps-b}
\widetilde Y\colon
R^{\otimes n}
\longrightarrow
M[\lambda_1,\dots,\lambda_n]
\,,
\end{equation}
satisfying the sesquilinearity conditions \eqref{eq:sesq}
and the symmetry conditions \eqref{eq:skew}.
We let $\widetilde C_{\LC}^n(R,M)$ be the superspace of basic $n$-cochains,
with parity $\bar p$ induced by the opposite parities $\bar p=1-p$ of $R$ and $M$, and let
\begin{equation}\label{eq:tildeW-lc}
\widetilde C_{\LC}(R,M)=\bigoplus_{n\geq0} \widetilde C_{\LC}^n(R,M)
\,.
\end{equation}

For a basic $n$-cochain $\widetilde{Y}$, we define
\begin{equation}\label{eq:tilde-partial}
(\partial\widetilde{Y})_{\lambda_1,\dots,\lambda_n}
(a_1\otimes\dots\otimes a_n)
=
(\partial+\lambda_1+\dots+\lambda_n)
\widetilde{Y}_{\lambda_1,\dots,\lambda_n}(a_1\otimes\dots\otimes a_n)
\,,
\end{equation}
which is obviously again a basic $n$-cochain. 
Thus, we have even endomorphisms $\partial$ of $\widetilde{C}_{\LC}^n(R,M)$ for all $n\ge0$.

\begin{lemma}[\cite{BKV99}]\label{lem:partial-inj}
The endomorphism\/ $\partial$ of\/ $\widetilde{C}_{\LC}^n(R,M)$ is injective for all\/ $n\ge1$.
\end{lemma}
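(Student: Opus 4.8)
The plan is to prove injectivity pointwise, reducing the statement to an elementary fact about multiplication on a polynomial module. First I would observe that a basic $n$-cochain $\widetilde Y$ is determined by its values $\widetilde Y_{\lambda_1,\dots,\lambda_n}(a_1\otimes\dots\otimes a_n)\in M[\lambda_1,\dots,\lambda_n]$, so that the hypothesis $\partial\widetilde Y=0$ is equivalent to
$$
(\partial+\lambda_1+\dots+\lambda_n)\,\widetilde Y_{\lambda_1,\dots,\lambda_n}(a_1\otimes\dots\otimes a_n)=0
$$
for every choice of arguments $a_1,\dots,a_n\in R$. It therefore suffices to establish the following purely algebraic claim: for $n\ge1$ and any $P\in M[\lambda_1,\dots,\lambda_n]$, the equality $(\partial+\lambda_1+\dots+\lambda_n)P=0$ forces $P=0$. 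Note that neither the sesquilinearity \eqref{eq:sesq} nor the symmetry \eqref{eq:skew} is used here; the claim holds for arbitrary $M$-valued polynomials.

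To prove the claim I would argue by degree. Set $\La=\lambda_1+\dots+\lambda_n$ and filter $M[\lambda_1,\dots,\lambda_n]$ by total degree in the variables $\lambda_i$. The operator $\partial$ acts only on coefficients, through the $\mb F[\partial]$-module structure of $M$, and so does not raise this degree, whereas multiplication by $\La$ raises it by exactly one. Hence, if $P$ is nonzero of degree $d$ with leading homogeneous component $P_d$, then the degree-$(d+1)$ part of $(\partial+\La)P$ is precisely $\La\,P_d$. The key input is that $\La$ is a non-zero-divisor on $M[\lambda_1,\dots,\lambda_n]$ whenever $n\ge1$: identifying $M[\lambda_1,\dots,\lambda_n]\cong M\otimes_{\mb F}\mb F[\lambda_1,\dots,\lambda_n]$ and choosing an $\mb F$-basis of $M$ exhibits it as a direct sum of copies of the integral domain $\mb F[\lambda_1,\dots,\lambda_n]$, on each of which multiplication by the nonzero polynomial $\La$ is injective. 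Thus $(\partial+\La)P=0$ gives $\La\,P_d=0$, hence $P_d=0$, contradicting $\deg P=d$; so $P=0$. Equivalently, one may run a downward induction on the degree, peeling off one homogeneous layer at a time.

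I do not expect a genuine obstacle here. The only point requiring care is that $\partial$ and multiplication by $\La$ interact solely through the degree filtration, so that the top layer of $(\partial+\La)P$ isolates $\La\,P_d$. It is also worth recording why the hypothesis $n\ge1$ is essential: it is exactly what makes $\La$ a non-zero-divisor. For $n=0$ there is no variable, $\La=0$, and the operator reduces to $\partial$ acting on $\widetilde C_{\LC}^0(R,M)=M$, which is in general not injective --- indeed its cokernel is $C_{\LC}^0(R,M)=M/\partial M$.
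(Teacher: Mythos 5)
Your proof is correct. The paper gives no argument of its own here---it simply cites \cite[Proposition 2.1]{BKV99}---and your leading-term argument (the top homogeneous component of $(\partial+\lambda_1+\dots+\lambda_n)P$ is $(\lambda_1+\dots+\lambda_n)P_d$, and $\lambda_1+\dots+\lambda_n$ is a non-zero-divisor on $M[\lambda_1,\dots,\lambda_n]$ when $n\ge1$) is exactly the standard proof of that cited fact, including the right explanation of why the hypothesis $n\ge1$ is essential.
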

\begin{proof}
See \cite[Proposition 2.1]{BKV99}.
\end{proof}

Consider the quotient map
\begin{equation}\label{eq:piM}
\pi\colon M[\lambda_1,\dots,\lambda_n]\twoheadrightarrow
M[\lambda_1,\dots,\lambda_n]/\langle\partial+\lambda_1+\dots+\lambda_n\rangle
\,.
\end{equation}
Clearly, if $\widetilde Y$ satisfies equations \eqref{eq:sesq} and \eqref{eq:skew}
in $M[\lambda_1,\dots,\lambda_n]$,
then so does $\pi\circ\widetilde Y$ in
$M[\lambda_1,\dots,\lambda_n]/\langle\partial+\lambda_1+\dots+\lambda_n\rangle$.
Hence, composing with $\pi$
gives a well-defined linear map 
\begin{equation}\label{eq:piYtilde}
\widetilde{C}_{\LC}^n(R,M)\to {C}_{\LC}^n(R,M) \,,
\qquad \widetilde Y\mapsto Y=\pi\circ\widetilde Y \,.
\end{equation}
Since $\pi\circ(\partial\widetilde{Y})=0$, it induces a linear map
\begin{equation}\label{eq:piYtilde2}
\widetilde{C}_{\LC}^n(R,M) / \partial\widetilde{C}_{\LC}^n(R,M) \to {C}_{\LC}^n(R,M) \,,
\qquad \widetilde Y+\langle\partial\rangle\mapsto Y=\pi\circ\widetilde Y \,.
\end{equation}

\begin{lemma}[\cite{DSK13}]\label{lem:Ytilde-lca}
\begin{enumerate}[(a)]
\item
The map \eqref{eq:piYtilde2} is injective for all\/ $n\ge0$.
\item
Suppose that, as an\/ $\mb F[\partial]$-module, $R$ is a direct sum of a torsion module and a free module.
Then the map \eqref{eq:piYtilde2} is surjective for\/ $n=0$ and all\/ $n\ge2$.
\item
If $R$ is free as an\/ $\mb F[\partial]$-module, then the map \eqref{eq:piYtilde2} is surjective for\/ $n=1$ as well.
\end{enumerate}
\end{lemma}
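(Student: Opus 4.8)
The plan is to reduce everything to two facts: the injectivity of the operator $\partial+\lambda_1+\dots+\lambda_n$ on $\widetilde C^n_{\LC}(R,M)$ for $n\ge1$ (Lemma \ref{lem:partial-inj}), and the pointwise surjectivity of the quotient map $\pi$ of \eqref{eq:piM}. Since the projection $\widetilde C^n_{\LC}(R,M)\twoheadrightarrow\widetilde C^n_{\LC}(R,M)/\partial\widetilde C^n_{\LC}(R,M)$ is onto, surjectivity of \eqref{eq:piYtilde2} is equivalent to surjectivity of \eqref{eq:piYtilde}, and I will work with the latter. For the injectivity statement (a), suppose $\pi\circ\widetilde Y=0$; for $n=0$ the map is the identity of $M/\partial M$, so take $n\ge1$. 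Then for every $a_1\otimes\dots\otimes a_n$ the value $\widetilde Y_{\lambda_1,\dots,\lambda_n}(a_1\otimes\dots\otimes a_n)$ lies in $\langle\partial+\lambda_1+\dots+\lambda_n\rangle$, and by Lemma \ref{lem:partial-inj} there is a \emph{unique} $\widetilde Z_{\lambda_1,\dots,\lambda_n}(a_1\otimes\dots\otimes a_n)$ with $\widetilde Y=(\partial+\lambda_1+\dots+\lambda_n)\widetilde Z$. By uniqueness $\widetilde Z$ is linear in its arguments, and since the operator $\partial+\lambda_1+\dots+\lambda_n$ is symmetric in the $\lambda_i$ and commutes with the $S_n$-action, $\widetilde Z$ inherits the sesquilinearity \eqref{eq:sesq} and symmetry \eqref{eq:skew} of $\widetilde Y$; hence $\widetilde Z\in\widetilde C^n_{\LC}(R,M)$ and $\widetilde Y=\partial\widetilde Z$, proving (a).

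For surjectivity I first record the crucial vanishing property. Fix $n\ge2$ and $Y\in C^n_{\LC}(R,M)$, and suppose one argument $a_i$ is annihilated by a nonzero $P(\partial)$. Sesquilinearity \eqref{eq:sesq} gives $P(-\lambda_i)Y(\dots a_i\dots)=0$ in $M[\lambda_1,\dots,\lambda_n]/\langle\partial+\lambda_1+\dots+\lambda_n\rangle\simeq M[\lambda_1,\dots,\lambda_{n-1}]$, the identification sending $\lambda_n\mapsto-(\partial+\lambda_1+\dots+\lambda_{n-1})$. In this quotient, multiplication by $P(-\lambda_i)$ for $i<n$, and by $P(\partial+\lambda_1+\dots+\lambda_{n-1})$ for $i=n$, is injective, since its leading coefficient in the surviving variable $\lambda_{n-1}$ is a nonzero scalar; hence $Y$ vanishes on every tensor having a torsion argument. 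This is exactly where $n\ge2$ is used, and it generalizes Lemma \ref{lem:tor}(c) to a torsion submodule.

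Now write $R=R_{\mathrm{tor}}\oplus R_{\mathrm{fr}}$ with $R_{\mathrm{fr}}$ free on a homogeneous basis $\{u_\alpha\}$, and let $n\ge2$. By the vanishing just proved I may seek $\widetilde Y$ vanishing on every tensor with a torsion argument, so it remains to define it on $R_{\mathrm{fr}}^{\otimes n}$, where a sesquilinear map is freely determined by its values on the $u_{\alpha_1}\otimes\dots\otimes u_{\alpha_n}$. Lifting each $Y(u_{\alpha_1}\otimes\dots\otimes u_{\alpha_n})$ to $M[\lambda_1,\dots,\lambda_n]$ via the pointwise surjectivity of $\pi$ and extending by sesquilinearity yields a \emph{sesquilinear} lift $\widetilde Y_0$ of $Y$, not yet symmetric. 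Because $\pi$ intertwines the (parity-twisted) $S_n$-actions and $Y$ is symmetric, averaging $\widetilde Y_0$ over $S_n$ with the signs of \eqref{eq:skew} produces a genuine basic cochain $\widetilde Y$ with $\pi\circ\widetilde Y=Y$; this is the only place where $\mathrm{char}\,\mb F=0$ is needed. For $n=0$ the map is the surjection $M\to M/\partial M$, completing (b). For (c), with $R$ free and $n=1$, symmetry is vacuous and $M[\lambda_1]/\langle\partial+\lambda_1\rangle\simeq M$ via $\lambda_1\mapsto-\partial$, so $C^1_{\LC}(R,M)=\Hom_{\mb F[\partial]}(R,M)$; setting $\widetilde Y(u_\alpha)=Y(u_\alpha)\in M\subset M[\lambda_1]$ and extending by sesquilinearity gives $\pi\circ\widetilde Y=Y$, as one checks on each $\partial^k u_\alpha$.

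The main obstacle, and the real content of the lemma, is the interplay between the torsion of $R$ and the degree $n$: a cochain lifts to a polynomial-valued one precisely when its values on torsion arguments are forced to vanish. For $n\ge2$ this is automatic by the vanishing argument above, whereas for $n=1$ the surviving operator in the quotient is $P(\partial)$ acting on $M$, which need not be injective, so $Y$ may be nonzero on torsion elements while any $\widetilde Y$ is forced to vanish there; this is why (c) requires $R$ free and why $n=1$ is excluded from (b). Matching the symmetry of the lift is comparatively soft and is dispatched by averaging in characteristic $0$.
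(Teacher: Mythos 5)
Your proof is correct. Note that the paper does not actually prove this lemma: it defers to \cite{DSK13} (Proposition 6.5 and Remark 6.6), so the natural comparison is with that argument and with the paper's proof of the PVA analogue, Lemma \ref{lem:Ytilde}. Your core steps are the standard ones and are sound: injectivity in (a) by pointwise division by $\partial+\lambda_1+\dots+\lambda_n$, with sesquilinearity and symmetry descending to the quotient cochain $\widetilde Z$ because that operator is $S_n$-invariant and commutes with the relevant substitutions; the torsion obstruction quantified correctly, namely that $P(-\lambda_i)$ (resp.\ $P(\partial+\lambda_1+\dots+\lambda_{n-1})$ when the eliminated variable is $\lambda_i=\lambda_n$) acts injectively on $M[\lambda_1,\dots,\lambda_{n-1}]$ as soon as a free variable survives, i.e.\ $n\ge2$, which both generalizes Lemma \ref{lem:tor}(c) and explains why $n=1$ is excluded from (b) and requires freeness in (c). Where you genuinely diverge is the construction of the symmetric lift: the paper (in Lemma \ref{lem:Ytilde}(b), following \cite{DSK13}) builds a \emph{canonical} symmetric lift on generators, $\widetilde Y(u)=\frac1n\sum_{\ell=1}^n\iota_\ell\bigl(Y(u)\bigr)$, averaging the $n$ substitution maps $\lambda_\ell\mapsto-\partial-\lambda_1-\stackrel{\ell}{\check{\dots}}-\lambda_n$, which is symmetric ``for free'' whenever $Y$ is; you instead take an arbitrary pointwise lift on a free basis, extend by sesquilinearity, and then symmetrize over $S_n$ with Koszul signs. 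Both devices require characteristic $0$ and both are correct; the $\iota_\ell$-average is more economical ($n$ rather than $n!$ terms, no auxiliary choice to be corrected), while your $S_n$-average is arguably more transparent and separates cleanly the lifting problem from the symmetry problem. One small imprecision worth fixing: in part (a) you invoke Lemma \ref{lem:partial-inj} for the uniqueness of the pointwise preimage $\widetilde Z_{\lambda_1,\dots,\lambda_n}(a_1\otimes\dots\otimes a_n)$, but that lemma asserts injectivity of $\partial$ on cochains, whereas what you need is injectivity of multiplication by $\partial+\lambda_1+\dots+\lambda_n$ on $M[\lambda_1,\dots,\lambda_n]$ itself; this is elementary (the same leading-coefficient argument you spell out in (b)) and is what underlies the cited lemma, so nothing breaks, but the citation should be replaced by that one-line argument.
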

\begin{proof}
See Proposition 6.5 and Remark 6.6 in \cite{DSK13}.
\end{proof}

The basic LCA cohomology \emph{differential} $\widetilde d$ is defined again by \eqref{eq:lca-d},
viewed as an equation in $M[\lambda_0,\dots,\lambda_n]$.
\begin{lemma}\label{lem:basic-d}
\begin{enumerate}[(a)]
\item
Formula \eqref{eq:lca-d}
defines a map 
$\widetilde d\colon \widetilde C_{\LC}^{n}(R,M)\to \widetilde C_{\LC}^{n+1}(R,M)$.
\item
$\widetilde{d}(\partial\widetilde{Y})=\partial(\widetilde{d}\,\widetilde{Y})$.
\item
$\widetilde{d}^2=0$.
\item
$\pi\circ(\widetilde{d}\,\widetilde{Y})=d(\pi\circ\widetilde{Y})$.
\end{enumerate}
\end{lemma}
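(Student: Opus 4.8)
The plan is to prove the four assertions in the order (a), (b), (d), (c), since (d) is the key compatibility statement that lets me deduce (c) cheaply from Proposition \ref{prop:LCAd}. For part (a), I would verify that the right-hand side of \eqref{eq:lca-d}, viewed now as an element of $M[\lambda_0,\dots,\lambda_n]$ rather than of the quotient by $\langle\partial+\lambda_0+\dots+\lambda_n\rangle$, again satisfies sesquilinearity \eqref{eq:sesq} and the symmetry \eqref{eq:skew}. Both checks are essentially the same computations that prove Proposition \ref{prop:LCAd}, except that I must be careful that each term genuinely lands in $M[\lambda_0,\dots,\lambda_n]$ and not merely in the quotient. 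The two summands of $\widetilde d$ are the ``action'' term $a_{i\,\lambda_i}\widetilde Y(\cdots)$ and the ``bracket'' term $\widetilde Y_{\lambda_i+\lambda_j,\dots}([a_{i\,\lambda_i}a_j]\otimes\cdots)$; since the LCA bracket and the $M$-action produce polynomials in the $\lambda$'s, and $\widetilde Y$ takes values in $M[\lambda_1,\dots,\lambda_n]$ by definition \eqref{eq:lca-maps-b}, each term is manifestly polynomial, so no passage to the quotient is needed. The sesquilinearity and symmetry of $\widetilde{d}\,\widetilde Y$ then follow by the same index bookkeeping as in the quotient case.

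For part (b), I would compute directly from the definition \eqref{eq:tilde-partial}. Applying $\widetilde d$ to $\partial\widetilde Y$ multiplies the action term by $(\partial+\lambda_0+\stackrel{i}{\check\dots}+\lambda_n)$ acting on the appropriate variables, and the bracket term by $(\partial+\lambda_i+\lambda_j+\lambda_0+\stackrel{i}{\check\dots}\stackrel{j}{\check\dots}+\lambda_n)$; in both cases the sesquilinearity relations M1 and L1 let me reorganize the $\lambda_i$-dependence so that the total factor becomes $(\partial+\lambda_0+\dots+\lambda_n)$ pulled out in front, which is exactly $\partial$ applied to $\widetilde{d}\,\widetilde Y$ in the sense of \eqref{eq:tilde-partial}. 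The point is that the operator $\partial+\sum\lambda_i$ is ``conserved'' by the differential, which is the algebraic reflection of the fact that $\widetilde d$ descends to the quotient complex.

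Part (d) is immediate once (a) is established: applying the quotient map $\pi$ of \eqref{eq:piM} to the expression $\widetilde{d}\,\widetilde Y$ gives, term by term, exactly the formula \eqref{eq:lca-d} defining $d(\pi\circ\widetilde Y)$, because $\pi$ is $M$-linear and commutes with the $\lambda$-action and the bracket insertions. Finally, for part (c) I would argue that $\widetilde d^2=0$ follows from the injectivity of $\pi$ modulo the image of $\partial+\sum\lambda_i$. Concretely, by (d) we have $\pi\circ(\widetilde d^2\widetilde Y)=d^2(\pi\circ\widetilde Y)=0$ using Proposition \ref{prop:LCAd}, so $\widetilde d^2\widetilde Y$ lies in $\langle\partial+\lambda_0+\dots+\lambda_{n+1}\rangle$, i.e.\ $\widetilde d^2\widetilde Y=(\partial+\sum\lambda_i)Z$ for some basic cochain $Z$; using (b) together with the observation that $\widetilde d^2$ commutes with $\partial$, plus Lemma \ref{lem:partial-inj} (injectivity of $\partial$ on basic cochains), I expect to conclude that the only such element compatible with the grading is $0$. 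The main obstacle is precisely this last step: establishing $\widetilde d^2=0$ as an \emph{exact} identity in $M[\lambda_0,\dots,\lambda_{n+1}]$ rather than merely modulo $\langle\partial+\sum\lambda_i\rangle$. I would resolve it by combining the factorization $\widetilde d^2\widetilde Y\in\langle\partial+\sum\lambda_i\rangle$ coming from (d) with the injectivity of $\partial$ from Lemma \ref{lem:partial-inj}, so that the quotient-level vanishing lifts to vanishing on the nose.
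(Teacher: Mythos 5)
Your plan for (a), (b) and (d) is fine: those are polynomial-level repetitions of the quotient-level computations, and in fact the paper does not write them out either (its proof of the lemma consists of citing \cite[Lemma 2.1]{BKV99} for the even case and \cite{DSK13} for the super case for (a)--(c), and declaring (d) obvious). For (b), note that only the action terms require the module sesquilinearity M1; the bracket terms carry the factor $(\partial+\lambda_i+\lambda_j+\lambda_0+\stackrel{i}{\check{\dots}}\stackrel{j}{\check{\dots}}+\lambda_n)$ out unchanged.

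Part (c), however, has a genuine gap, exactly at the step you flag as the main obstacle. From (d) and Proposition \ref{prop:LCAd} you correctly get $\pi\circ(\widetilde d^{\,2}\widetilde Y)=0$, and then, by Lemma \ref{lem:Ytilde-lca}(a), $\widetilde d^{\,2}\widetilde Y=\partial\widetilde Z$ for some basic cochain $\widetilde Z$ (unique by Lemma \ref{lem:partial-inj}). But injectivity of $\partial$ cannot promote this to $\widetilde d^{\,2}\widetilde Y=0$: injectivity says precisely that $\partial\widetilde Z\neq0$ whenever $\widetilde Z\neq0$, i.e., that the kernel of $\widetilde Y\mapsto\pi\circ\widetilde Y$, namely $\partial\widetilde C_{\LC}^{n+2}(R,M)$, is as large as $\widetilde C_{\LC}^{n+2}(R,M)$ itself --- so injectivity works against you, not for you (uniqueness of $\widetilde Z$ is irrelevant; what you would need is $\widetilde Z=0$). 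Nor is there any grading to invoke: $\widetilde d$ does not preserve the polynomial degree in the $\lambda_i$'s (the brackets $[{a_i}_{\lambda_i}a_j]$ have arbitrary $\lambda$-degree), and conformal weights as in \eqref{eq:euler-k} require a Virasoro element, which a general LCA does not have. Indeed, $d^{\,2}=0$ on the quotient complex is strictly weaker information than $\widetilde d^{\,2}=0$; in the literature the implication runs in the opposite direction. There are two correct ways to close the gap. One is the direct computation, as in \cite[Lemma 2.1]{BKV99}: the pairwise cancellation of terms in $\widetilde d^{\,2}$ uses only the Jacobi identity L3 and the module axioms M1, M2, all of which hold as identities in $M[\lambda_0,\dots,\lambda_{n+1}]$, so the same computation proving $d^{\,2}=0$ proves $\widetilde d^{\,2}=0$. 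The other, closer in spirit to your reduction, is to change the module rather than the complex: set $M'=M[x]$ with $\partial^{M'}=\partial^{M}+x$ and $a_\lambda(mx^k)=(a_\lambda m)x^k$; one checks that $M'$ is an $R$-module, that $M'[\lambda_1,\dots,\lambda_n]/\langle\partial^{M'}+\lambda_1+\dots+\lambda_n\rangle\simeq M[\lambda_1,\dots,\lambda_n]$, and that under this identification $C_{\LC}(R,M')$ with its differential becomes $\widetilde C_{\LC}(R,M)$ with $\widetilde d$; then Proposition \ref{prop:LCAd} applied to $M'$ yields $\widetilde d^{\,2}=0$, since the image of $\partial^{M'}+\lambda_0+\dots+\lambda_{n+1}$ intersects $M[\lambda_0,\dots,\lambda_{n+1}]$ trivially.
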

\begin{proof}
Parts (a), (b) and (c) are proved in \cite[Lemma 2.1]{BKV99} in the even case,
and in \cite{DSK13} in the super case.
Part (d) is obvious.
\end{proof}

\begin{definition}\label{def:lca-bas}
Given a module $M$ over the LCA $R$,
the cohomology of the complex $\bigl(\widetilde C_{\LC}(R,M),\widetilde d\bigr)$
is called the \emph{basic LCA cohomology} of $R$ with coefficients in $M$:
\begin{equation}\label{eq:h-lca-bas}
\begin{split}
\widetilde H_{\LC}(R,M)
&=
\bigoplus_{n\geq0}
\widetilde H_{\LC}^n(R,M)
\,,\\
\widetilde H_{\LC}^n(R,M)
&=
\ker\big(\widetilde d|_{\widetilde C_{\LC}^{n}(R,M)}\big)
/
\widetilde d\big(\widetilde C_{\LC}^{n-1}(R,M)\big)
\,.
\end{split}
\end{equation}
\end{definition}
%
Due to Lemma \ref{lem:basic-d},
we have a short exact sequence of complexes
\begin{equation}\label{eq:lca-ses}
0 \to \partial\widetilde C_{\LC}(R,M) \to \widetilde C_{\LC}(R,M) \to \widetilde C_{\LC}(R,M) / \partial\widetilde C_{\LC}(R,M) \to 0 \,,
\end{equation}
which leads to a long exact sequence of cohomology. 
By Lemma \ref{lem:Ytilde-lca}, in the case when $R$ is free as an $\mb F[\partial]$-module, we obtain the long exact sequence \cite{BKV99}:
\begin{equation}\label{eq:lca-les}
\begin{split}
0 \to H^0\big(\partial\widetilde C_{\LC}(R,M)\big)
&\to \widetilde H_{\LC}^0(R,M) \to H_{\LC}^0(R,M) \to
\\
\to H^1\big(\partial\widetilde C_{\LC}(R,M)\big) 
&\to \widetilde H_{\LC}^1(R,M) \to H_{\LC}^1(R,M) \to
\\
\to H^2\big(\partial\widetilde C_{\LC}(R,M)\big)
&\to \widetilde H_{\LC}^2(R,M) \to H_{\LC}^2(R,M) \to\cdots
\,.
\end{split}
\end{equation}
Note that, by Lemma \ref{lem:partial-inj}, we also have
$H^n\big(\partial\widetilde C_{\LC}(R,M)\big) \simeq \widetilde H_{\LC}^n(R,M)$
for all $n\ge 1$.
As a consequence, we obtain the following result.

\begin{proposition}[cf.\ \cite{BKV99}]\label{prop:Ytilde-lca}
Let\/ $R$ be an LCA, which is free as an\/ $\mb F[\partial]$-module, and let\/ $M$ be an\/ $R$-module. Suppose that\/ $\widetilde H_{\LC}^n(R,M)=0$ for all\/ $n\ge0$. Then\/
$H_{\LC}^n(R,M)=0$ for all\/ $n\ge0$.
\end{proposition}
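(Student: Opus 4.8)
The plan is to read off the vanishing of $H^n_{\LC}(R,M)$ directly from the long exact sequence \eqref{eq:lca-les}, which is available \emph{precisely because} $R$ is free as an $\mb F[\partial]$-module. Recall that this long exact sequence arises from the short exact sequence of complexes \eqref{eq:lca-ses}, together with the identification (valid for free $R$) of the quotient complex $\widetilde C_{\LC}(R,M)/\partial\widetilde C_{\LC}(R,M)$ with the ordinary complex $C_{\LC}(R,M)$ through the map \eqref{eq:piYtilde2}. That this map is a degreewise isomorphism follows from all three parts of Lemma \ref{lem:Ytilde-lca} (injectivity from (a), surjectivity from (b) and (c), with part (c) supplying degree $1$), while Lemma \ref{lem:basic-d}(d) ensures that it is a chain map, so the third column of \eqref{eq:lca-les} is genuinely the LCA complex of $R$ with coefficients in $M$.

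Next I would translate the hypothesis into a vanishing statement for the subcomplex $\partial\widetilde C_{\LC}(R,M)$. By Lemma \ref{lem:partial-inj}, the endomorphism $\partial$ is injective on $\widetilde C^n_{\LC}(R,M)$ for every $n\ge1$; as noted just before the statement, this gives $H^n\big(\partial\widetilde C_{\LC}(R,M)\big)\simeq \widetilde H^n_{\LC}(R,M)$ for all $n\ge1$. Since by assumption $\widetilde H^n_{\LC}(R,M)=0$ for all $n\ge0$, I obtain $H^n\big(\partial\widetilde C_{\LC}(R,M)\big)=0$ for all $n\ge1$.

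Finally I would conclude by exactness of \eqref{eq:lca-les}. The group $H^n_{\LC}(R,M)$ sits in the segment $\widetilde H^n_{\LC}(R,M)\to H^n_{\LC}(R,M)\to H^{n+1}\big(\partial\widetilde C_{\LC}(R,M)\big)$. The left-hand term vanishes by hypothesis, and the right-hand term vanishes by the previous step (as $n+1\ge1$); exactness then squeezes $H^n_{\LC}(R,M)$ between two zeros and forces $H^n_{\LC}(R,M)=0$ for every $n\ge0$, which is the claim.

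The argument is essentially formal once the long exact sequence is set up, so the one substantive point — and the reason the freeness assumption cannot be dropped — is the degree-$1$ surjectivity of \eqref{eq:piYtilde2} provided by Lemma \ref{lem:Ytilde-lca}(c): without it the quotient complex would fail to reproduce $C_{\LC}(R,M)$ in degree $1$, and the sequence \eqref{eq:lca-les} in its stated form would be unavailable. I anticipate no computational difficulty; the only care needed is in correctly matching the three cohomologies and in invoking injectivity of $\partial$ only in positive degrees.
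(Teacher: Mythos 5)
Your proof is correct and is essentially the paper's own argument: the paper derives Proposition \ref{prop:Ytilde-lca} directly from the long exact sequence \eqref{eq:lca-les} (available because $R$ is free, via Lemma \ref{lem:Ytilde-lca}) together with the isomorphism $H^n\big(\partial\widetilde C_{\LC}(R,M)\big)\simeq \widetilde H_{\LC}^n(R,M)$ for $n\ge1$ coming from Lemma \ref{lem:partial-inj}, exactly as you do. Your write-up simply makes explicit the exactness squeeze that the paper leaves as "an immediate consequence."
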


\subsection{Lie derivatives, contractions and Cartan's formula}\label{sec:lie-der}

For $a\in R$ and $\widetilde Y\in\widetilde C_{\LC}^n(R,M)$, 
we define the \emph{Lie derivative} of $\widetilde Y$ by $a$,
as the linear map 
$$
a_\la\widetilde Y\colon R^{\otimes n}\to M[\lambda_1,\dots,\lambda_n,\la]
\,,
$$
given by the formula (cf.\ \cite[Section 5]{BKV99}):
\begin{equation}\label{eq:lca-modstr}
\begin{split}
(a_\la\widetilde Y&)_{\lambda_1,\dots,\lambda_n}
(a_1\otimes\cdots\otimes a_n) 
=
a_{\lambda}
\bigl(\widetilde Y_{\lambda_1,\dots,\lambda_n}
(a_1\otimes\cdots\otimes a_n) \bigr)
\\
& +
\sum_{i=1}^n
(-1)^{\delta_i}
\widetilde Y_{\lambda_1,\dots,\lambda+\lambda_i,\dots,\lambda_n}
(a_1\otimes\cdots\otimes [a_{\lambda}a_i]\otimes\cdots\otimes a_n) 
\,,
\end{split}
\end{equation}
where
\begin{equation}\label{eq:deltai}
\delta_i =
p(a) \bigl(\bar p(\widetilde Y)+\bar p(a_1)+\dots+\bar p(a_{i-1})+1\bigr)+1
\,.
\end{equation}
In particular,
\begin{equation}\label{eq:deltai-even}
\delta_i=1 \quad\text{if}\quad p(a)=\bar0
\,,
\end{equation}
and
\begin{equation}\label{eq:deltai-odd}
\delta_i=\bar p(\widetilde Y) \quad\text{if}\quad p(a)=p(a_1)=\dots=p(a_n)=\bar1
\,.
\end{equation}
It is easy to check that $a_\la\widetilde Y$ satisfies the sesquilinearity \eqref{eq:sesq} 
and symmetry \eqref{eq:skew};
hence, $a_\la\widetilde Y$ can be viewed as an element of $\widetilde C_{\LC}^n(R,M)[[\la]]$.
Note that the linear map $a_\lambda\colon \widetilde C_{\LC}^n(R,M)\to\widetilde C_{\LC}^n(R,M)[[\la]]$
has parity $p(a)$.

\begin{proposition}\label{pmodstr}
For\/ $a\in R$ and\/ $\widetilde Y\in\widetilde C_{\LC}^n(R,M)$, we have{\rm:}
\begin{enumerate}[(a)]
\item 
$(\partial a)_\lambda\widetilde Y=-\lambda \, a_\lambda\widetilde Y$ \; and \;
$a_\lambda(\partial\widetilde Y)=(\partial+\lambda) (a_\lambda\widetilde Y)$ \;
in \; $\widetilde C_{\LC}^n(R,M)[[\la]];$
\item
$a_\lambda(b_\mu\widetilde Y)
-(-1)^{p(a)p(b)} b_\mu(a_\lambda\widetilde Y)
=[a_\lambda b]_{\lambda+\mu}\widetilde Y$ \;
in \; $\widetilde C_{\LC}^n(R,M)[[\la,\mu]]$.
\end{enumerate}
In other words, 
formula \eqref{eq:lca-modstr} endows\/ $\widetilde C_{\LC}^n(R,M)$ 
with the structure of an $R$-module, for every\/ $n\ge0$,
if we allow formal power series in\/ $\lambda$ for the action.
\end{proposition}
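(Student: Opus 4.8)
The plan is to verify the two identities directly from the defining formula \eqref{eq:lca-modstr}, reducing (a) to the sesquilinearity axioms and (b) to the Jacobi identity, with the module axiom M2 and Leibniz-type sign bookkeeping doing the rest. The formula \eqref{eq:lca-modstr} is nothing but the natural ``conformal adjoint'' action: $a$ acts on the $M$-valued output of $\widetilde Y$ through the given $R$-module structure, and on each argument $a_i$ through the bracket $[a_\lambda a_i]$, with the spectral shift $\lambda_i\mapsto\lambda+\lambda_i$ inserted exactly so as to stay inside $\widetilde C_{\LC}^n(R,M)[[\la]]$. Consequently the module axioms should follow formally from the module axioms for $M$ together with the LCA axioms L1--L3 for the adjoint action of $R$ on itself.

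For part (a), consider first $(\partial a)_\lambda\widetilde Y$. Since $p(\partial a)=p(a)$, the exponents $\delta_i$ of \eqref{eq:deltai} are unchanged when $a$ is replaced by $\partial a$. Applying \eqref{eq:lca-modstr} and using the module sesquilinearity $(\partial a)_\lambda m=-\lambda\,a_\lambda m$ (axiom M1) on the output term, together with the LCA sesquilinearity $[(\partial a)_\lambda a_i]=-\lambda[a_\lambda a_i]$ (axiom L1) on each summand, shows that every term acquires an overall factor $-\lambda$, giving the first identity. For the second identity I would substitute the definition \eqref{eq:tilde-partial} of $\partial\widetilde Y$ into \eqref{eq:lca-modstr}: on the output term axiom M1 gives $a_\lambda(\partial m)=(\lambda+\partial)(a_\lambda m)$, while on the $i$-th summand the built-in spectral shift $\lambda_i\mapsto\lambda+\lambda_i$ turns the prefactor $(\partial+\lambda_1+\dots+\lambda_n)$ coming from \eqref{eq:tilde-partial} into $(\partial+\lambda+\lambda_1+\dots+\lambda_n)$. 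Since this same factor then appears uniformly on all terms, it may be pulled out, yielding $(\partial+\lambda)(a_\lambda\widetilde Y)$.

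For part (b), I would expand both $a_\lambda(b_\mu\widetilde Y)$ and $b_\mu(a_\lambda\widetilde Y)$ by applying \eqref{eq:lca-modstr} twice and classify the resulting terms by where $a$ and $b$ act: (i) both on the $M$-valued output of $\widetilde Y$; (ii) one on the output and the other on an argument; (iii) both on arguments, subdivided into the same argument and two distinct arguments. In case (i) the two output--output contributions combine, via the module axiom M2, into $[a_\lambda b]_{\lambda+\mu}\cdot\widetilde Y(\cdots)$, which is precisely the output term of the right-hand side $[a_\lambda b]_{\lambda+\mu}\widetilde Y$. In case (iii) with a single repeated argument $a_k$, the graded combination of $\widetilde Y(\cdots[a_\lambda[b_\mu a_k]]\cdots)$ and $\widetilde Y(\cdots[b_\mu[a_\lambda a_k]]\cdots)$ is rewritten, by the LCA Jacobi identity L3, as $\widetilde Y(\cdots[[a_\lambda b]_{\lambda+\mu}a_k]\cdots)$, matching the remaining summands of $[a_\lambda b]_{\lambda+\mu}\widetilde Y$. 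It then remains to check that the terms of type (ii), and those of type (iii) with two distinct arguments, cancel in the graded commutator $a_\lambda(b_\mu\,\cdot\,)-(-1)^{p(a)p(b)}b_\mu(a_\lambda\,\cdot\,)$.

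The main obstacle is exactly this last cancellation, which is purely super-sign bookkeeping. The subtlety is that applying one Lie derivative shifts the parity of the cochain, $\bar p(a_\lambda\widetilde Y)=\bar p(\widetilde Y)+p(a)$, so the exponent $\delta_j$ of \eqref{eq:deltai} used when $b_\mu$ acts on $a_\lambda\widetilde Y$ differs from the one used when $b_\mu$ acts on $\widetilde Y$ directly; this discrepancy must be reconciled against the Koszul sign produced by moving the action of $a$ past that of $b$ and past the intervening arguments. I would handle this by writing out $\delta_i$, $\delta_j$ and the prefactor $(-1)^{p(a)p(b)}$ explicitly for each paired term and checking that the two contributions to each type-(ii) pair, and to each distinct-argument type-(iii) pair, are equal and opposite. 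Once these cancellations are confirmed, cases (i) and (iii) account for all surviving terms and reproduce $[a_\lambda b]_{\lambda+\mu}\widetilde Y$, establishing (b); the purely even version of this computation is carried out in \cite[Section~5]{BKV99}, and the super case differs from it only in the signs tracked above.
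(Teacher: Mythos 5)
Your proposal is correct and takes essentially the same route as the paper, whose proof simply states that the result "can be checked by a straightforward computation, which is left to the reader" (mentioning only as an alternative the relation to the annihilation Lie algebra cohomology); your outline is that computation, correctly organized by where $a$ and $b$ act. In particular, your identification of the key sign subtlety is right: the parity shift $\bar p(a_\lambda\widetilde Y)=\bar p(\widetilde Y)+p(a)$ entering $\delta_j$ exactly offsets the Koszul prefactor $(-1)^{p(a)p(b)}$, which is what makes the cross terms cancel in the graded commutator.
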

\begin{proof}
This can be checked by a straightforward computation, which is left to the reader. Another proof can be obtained by using the relationship to the cohomology of the annihilation Lie algebra and the well-known action of a Lie algebra on its cohomology complex (see \cite{BKV99}).
\end{proof}

We also define the \emph{contraction} of $\widetilde Y\in\widetilde C_{\LC}^n(R,M)$ 
by $a\in R$, as the linear map 
$$
\iota_\la(a)\widetilde Y\colon R^{\otimes (n-1)} \to M[\lambda_1,\dots,\lambda_{n-1},\lambda] \,,
$$
given by
\begin{equation}\label{eq:lca-iota2}
\begin{split}
\bigl(\iota_\la(a)\widetilde Y &\bigr)_{\lambda_1,\dots,\lambda_{n-1}}
(a_1\otimes\cdots\otimes a_{n-1}) 
\\
&= (-1)^{\bar p(a)\bar p(\widetilde Y)} \,
\widetilde Y_{\la,\lambda_1,\dots,\lambda_{n-1}}
(a\otimes a_1 \otimes\cdots\otimes a_{n-1})
\,.
\end{split}
\end{equation}
As before, it is easy to check that $\iota_\la(a)\widetilde Y$ satisfies
the sesquilinearity \eqref{eq:sesq} 
and symmetry \eqref{eq:skew};
hence, we can view $\iota_\lambda(a)$ as a map
\begin{equation}\label{eq:lca-iota1}
\iota_\la(a) \colon \widetilde C_{\LC}^n(R,M) \to \widetilde C_{\LC}^{n-1}(R,M)[[\la]]
\,, \qquad a\in R\,,
\end{equation}
Note that $\iota_\lambda(a)$ is a linear map of parity $\bar p(a)$.

\begin{proposition}[cf.\ \cite{BKV99}]\label{pcartan}
On the basic complex\/ $\widetilde C_{\LC}(R,M)$, we have
Cartan's formula
\begin{equation}\label{eq:cartan-lc}
a_\la = \bigl[\iota_\la(a), \widetilde d \bigr]
:= \iota_\la(a) \, \widetilde d - (-1)^{\bar p(a)} \widetilde d \, \iota_\la(a)
\,, \qquad a\in R\,.
\end{equation}
\end{proposition}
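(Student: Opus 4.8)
The plan is to verify the operator identity \eqref{eq:cartan-lc} by expanding both compositions on an arbitrary basic cochain $\widetilde Y\in\widetilde C_{\LC}^n(R,M)$ and arbitrary arguments, and checking equality term by term. A useful simplification is that basic cochains take values in the honest polynomial module $M[\lambda_1,\dots,\lambda_n]$, with no quotient by $\langle\partial+\lambda_1+\dots+\lambda_n\rangle$, so it suffices to compare the two sides as explicit elements of $M[\lambda_1,\dots,\lambda_n][[\la]]$ after evaluating on $a_1\otimes\cdots\otimes a_n$; there is no hidden relation among the $\lambda_i$ to track. Both $\iota_\la(a)\,\widetilde d$ and $\widetilde d\,\iota_\la(a)$ send $\widetilde C_{\LC}^n(R,M)$ into $\widetilde C_{\LC}^n(R,M)[[\la]]$ by Lemma \ref{lem:basic-d}(a) and \eqref{eq:lca-iota1}, and by Proposition \ref{pmodstr} so does $a_\la$, so all three terms are genuine basic cochains and the identity makes sense.

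First I would expand $\iota_\la(a)\,\widetilde d\widetilde Y$. By \eqref{eq:lca-iota2} it equals, up to the sign $(-1)^{\bar p(a)\bar p(\widetilde d\widetilde Y)}$, the value $(\widetilde d\widetilde Y)_{\la,\lambda_1,\dots,\lambda_n}(a\otimes a_1\otimes\cdots\otimes a_n)$, which I expand by \eqref{eq:lca-d} with the $0$-th slot occupied by $a$ and carrying the parameter $\la$. The resulting terms fall into four groups: (i) the $i=0$ term of the first sum, in which $a_\la$ acts on $\widetilde Y_{\lambda_1,\dots,\lambda_n}(a_1\otimes\cdots\otimes a_n)$; (ii) the terms $i\ge1$ of the first sum, in which ${a_i}_{\lambda_i}$ acts on a value of $\widetilde Y$ still containing $a$ as its first argument; (iii) the $i=0$ terms of the second sum, which produce $\widetilde Y(\dots[a_\la a_j]\dots)$; and (iv) the $1\le i<j$ terms of the second sum, which produce $\widetilde Y(a\otimes\cdots\otimes[{a_i}_{\lambda_i}a_j]\otimes\cdots)$ with $a$ still a spectator argument. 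I would then expand $\widetilde d\,\iota_\la(a)\widetilde Y$, where $\iota_\la(a)\widetilde Y$ is the $(n-1)$-cochain $(-1)^{\bar p(a)\bar p(\widetilde Y)}\widetilde Y(a\otimes-)$: applying \eqref{eq:lca-d}, its first sum gives terms in which ${a_i}_{\lambda_i}$ acts on $\widetilde Y(a\otimes\cdots)$, and its second sum gives terms $\widetilde Y(a\otimes\cdots\otimes[{a_i}_{\lambda_i}a_j]\otimes\cdots)$; these match, argument for argument, the spectator groups (ii) and (iv).

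The key structural point is then that, in the graded commutator $\iota_\la(a)\,\widetilde d-(-1)^{\bar p(a)}\widetilde d\,\iota_\la(a)$, groups (ii) and (iv) cancel exactly against the two sums of $\widetilde d\,\iota_\la(a)\widetilde Y$, while the surviving groups (i) and (iii) reproduce precisely the two terms of the Lie-derivative formula \eqref{eq:lca-modstr}: group (i) gives the module-action term $a_\la\bigl(\widetilde Y_{\lambda_1,\dots,\lambda_n}(a_1\otimes\cdots\otimes a_n)\bigr)$, and group (iii), after reindexing and using the symmetry \eqref{eq:skew} to move each bracket $[a_\la a_j]$ into the $j$-th slot, gives $\sum_j(-1)^{\delta_j}\widetilde Y_{\dots,\la+\lambda_j,\dots}(\cdots[a_\la a_j]\cdots)$.

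The main obstacle is the sign bookkeeping, which is essentially the whole content of the lemma. Concretely, I must check that each spectator term of $\iota_\la(a)\,\widetilde d\widetilde Y$ equals $(-1)^{\bar p(a)}$ times the matching term of $\widetilde d\,\iota_\la(a)\widetilde Y$, so that the two cancel in the commutator. This requires comparing $(-1)^{\gamma_i}$ and $(-1)^{\gamma_{ij}}$ computed from \eqref{eq:gammai} for the $(n+1)$-cochain $\widetilde d\widetilde Y$ (whose parity is $\bar p(\widetilde Y)+1$) against the corresponding signs computed for the $(n-1)$-cochain $\iota_\la(a)\widetilde Y$, together with the two Koszul signs $(-1)^{\bar p(a)\bar p(\widetilde d\widetilde Y)}$ and $(-1)^{\bar p(a)\bar p(\widetilde Y)}$ coming from \eqref{eq:lca-iota2}; the shift in the running parity sums inside $\gamma_i,\gamma_{ij}$ is accounted for solely by the extra argument $a$ in slot $0$. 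I would likewise match the residual signs in groups (i) and (iii) against those in \eqref{eq:lca-modstr}, using \eqref{eq:deltai}, and I would use the purely even and purely odd specializations \eqref{eq:gammai-even}, \eqref{eq:gammai-odd}, \eqref{eq:deltai-even}, \eqref{eq:deltai-odd} as consistency checks. A conceptually cleaner route, indicated already in the proof of Proposition \ref{pmodstr}, is to pass to the annihilation Lie algebra of $R$: there $a_\la$, $\iota_\la(a)$ and $\widetilde d$ become the Lie derivative, the contraction and the Chevalley--Eilenberg differential, with $\la$ serving as a generating parameter for elements of the annihilation algebra, and \eqref{eq:cartan-lc} reduces to the classical Cartan magic formula (cf.\ \cite{BKV99}).
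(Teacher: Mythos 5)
Your proposal is correct and matches the paper's approach: the paper's proof simply defers to "a straightforward computation" (or, alternatively, to the annihilation Lie algebra and the classical Cartan formula), and your term-by-term expansion — with groups (ii), (iv) cancelling in the graded commutator and groups (i), (iii) reassembling the Lie derivative \eqref{eq:lca-modstr} — is exactly that computation, organized correctly, with the sign bookkeeping identified as the only remaining work. Your closing remark about the annihilation Lie algebra is precisely the paper's suggested alternative proof.
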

\begin{proof}
Same as the proof of Proposition \ref{pmodstr}.
\end{proof}

\begin{corollary}\label{ccartan}
The action of the LCA $R$ on the basic complex\/ $\widetilde C_{\LC}(R,M)$
given by the Lie derivatives commutes with the differential\/ $\widetilde d$, and it induces a trivial action on its cohomology. In other words, if we write\/ $a_\la = \sum_{n\ge0} a_{(n)} \la^n/n!$ for\/ $a\in R$, then all linear operators\/ $a_{(n)}$ act as zero on\/ $\widetilde H_{\LC}(R,M)$.
\end{corollary}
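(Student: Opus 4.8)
The plan is to derive Corollary~\ref{ccartan} directly from Cartan's formula \eqref{eq:cartan-lc} together with the fact that $\widetilde d^2=0$ (Lemma~\ref{lem:basic-d}(c)), treating the formal parameter $\la$ as a bookkeeping device. First I would establish that each Lie derivative $a_\la$ commutes with $\widetilde d$ as an operator on $\widetilde C_{\LC}(R,M)[[\la]]$. This is the standard consequence of Cartan's formula: since $a_\la = \iota_\la(a)\,\widetilde d - (-1)^{\bar p(a)}\widetilde d\,\iota_\la(a)$ is (up to sign) the graded commutator $[\iota_\la(a),\widetilde d]$, and $\widetilde d$ is an odd derivation squaring to zero, the graded Jacobi identity gives
\begin{equation*}
[\,\widetilde d, a_\la\,] = [\,\widetilde d,[\iota_\la(a),\widetilde d]\,] = [[\widetilde d,\iota_\la(a)],\widetilde d] \pm [\iota_\la(a),[\widetilde d,\widetilde d]] = 0\,,
\end{equation*}
because $[\widetilde d,\widetilde d]=2\widetilde d^2=0$ and the two double-commutator terms cancel. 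I would spell this out with explicit signs matching the parity conventions already fixed in \eqref{eq:cartan-lc}, so that $a_\la\,\widetilde d = \widetilde d\,a_\la$ on the basic complex.

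Next I would observe that commuting with $\widetilde d$ means $a_\la$ is a chain map, hence descends to a well-defined operator on the cohomology $\widetilde H_{\LC}(R,M)$. The key point is then that this induced operator is \emph{zero}: if $\widetilde Y$ is a cocycle, so $\widetilde d\,\widetilde Y=0$, then Cartan's formula collapses to
\begin{equation*}
a_\la \widetilde Y = \iota_\la(a)\,\widetilde d\,\widetilde Y - (-1)^{\bar p(a)}\widetilde d\bigl(\iota_\la(a)\widetilde Y\bigr) = -(-1)^{\bar p(a)}\widetilde d\bigl(\iota_\la(a)\widetilde Y\bigr)\,,
\end{equation*}
which is exact. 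Thus $a_\la$ acts as zero on cohomology classes. I would then extract the claim about the individual operators $a_{(n)}$ by expanding $a_\la = \sum_{n\ge0} a_{(n)}\la^n/n!$: since the displayed identity holds as an equality of formal power series in $\la$ with coefficients that are exact cochains, each coefficient $a_{(n)}\widetilde Y$ is individually exact, so every $a_{(n)}$ annihilates $\widetilde H_{\LC}(R,M)$.

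A minor technical point to address carefully is that $\iota_\la(a)\widetilde Y$ and $a_\la\widetilde Y$ a priori live in the spaces with formal power series in $\la$ appended, namely $\widetilde C_{\LC}^{n-1}(R,M)[[\la]]$ and $\widetilde C_{\LC}^n(R,M)[[\la]]$ respectively, as noted after \eqref{eq:lca-iota1} and \eqref{eq:deltai-odd}. I would therefore phrase the whole argument coefficient-by-coefficient in $\la$, or equivalently work in the complex obtained by extending scalars to $\mb F[[\la]]$, where $\widetilde d$ acts $\mb F[[\la]]$-linearly; the differential does not involve $\la$, so extending scalars is harmless and all the exactness statements are preserved upon taking $\la^n$-coefficients. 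The main (and essentially only) obstacle is keeping the Koszul signs consistent between the graded-commutator manipulation and the parity conventions $\bar p$ built into \eqref{eq:cartan-lc} and \eqref{eq:deltai}; once the sign in the graded Jacobi identity for the odd operators $\widetilde d$ and $\iota_\la(a)$ is pinned down, the vanishing on cohomology is immediate and requires no further computation.
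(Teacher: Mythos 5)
Your proposal is correct and is precisely the argument the paper intends: Corollary \ref{ccartan} is stated as an immediate consequence of Cartan's formula (Proposition \ref{pcartan}), namely the standard argument that $a_\la=[\iota_\la(a),\widetilde d\,]$ supercommutes with $\widetilde d$ because $\widetilde d^{\,2}=0$, that on a cocycle $a_\la\widetilde Y=-(-1)^{\bar p(a)}\widetilde d\bigl(\iota_\la(a)\widetilde Y\bigr)$ is exact, and that extracting $\la^n$-coefficients (legitimate since $\widetilde d$ is $\la$-independent) gives the vanishing of each $a_{(n)}$ on $\widetilde H_{\LC}(R,M)$. The only detail to adjust when you pin down signs is that the relation coming from the graded Jacobi identity is $\widetilde d\, a_\la=(-1)^{p(a)}a_\la\,\widetilde d$ rather than literal commutation, which affects nothing in the conclusion.
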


By Proposition \ref{pmodstr}(a), for $a\in R$ we have $[\partial,a_\lambda]=-\lambda a_\lambda$.
Hence, 
the zero mode $a_{(0)} = a_\la |_{\la=0}\,\in\End\widetilde C_{\LC}^n(R,M)$
commutes with $\partial$.
In fact, the formula \eqref{eq:lca-modstr} for the $0$-th mode
gives a well-defined linear operator,
of parity $p(a)$, on the complex $C_{\LC}(R,M)$:
\begin{equation}\label{eq:lca-modstr0}
\begin{split}
(a_{(0)} Y&)_{\lambda_1,\dots,\lambda_n}
(a_1\otimes\cdots\otimes a_n) 
=
a_{(0)}
\bigl(Y_{\lambda_1,\dots,\lambda_n}
(a_1\otimes\cdots\otimes a_n) \bigr)
\\
& +
\sum_{i=1}^n
(-1)^{\delta_i}
Y_{\lambda_1,\dots,\lambda_n}
(a_1\otimes\cdots\otimes (a_{(0)}a_i)\otimes\cdots\otimes a_n) 
\,,
\end{split}
\end{equation}
where $\delta_i$ are as in \eqref{eq:deltai}.
By Proposition \ref{pmodstr}(b),
we also have $[a_{(0)},b_{(0)}] = (a_{(0)}b)_{(0)}$.
Hence, \eqref{eq:lca-modstr0}
gives a representation of the Lie superalgebra $R/\partial R$ on 
the complex $C_{\LC}(R,M)$.

For $a\in R$, we also have a well-defined contraction operator
$\iota_0(a) \colon C_{\LC}^n(R,M) \to C_{\LC}^{n-1}(R,M)$,
of parity $\bar p(a)$,
given by (cf.\ \eqref{eq:lca-iota2})
\begin{equation}\label{eq:lca-iota3}
\begin{split}
\bigl(\iota_0(a) Y &\bigr)_{\lambda_1,\dots,\lambda_{n-1}}
(a_1\otimes\cdots\otimes a_{n-1}) 
\\
&= (-1)^{\bar p(a)\bar p(Y)} \,
Y_{0,\lambda_1,\dots,\lambda_{n-1}}
(a\otimes a_1 \otimes\cdots\otimes a_{n-1})
\,.
\end{split}
\end{equation}

\begin{proposition}\label{pcartan2}
We have Cartan's formula on\/ $C_{\LC}(R,M)${\rm{:}}
\begin{equation}\label{eq:cartan2}
a_{(0)} = [\iota_0(a), d]
:= \iota_0(a) \, d - (-1)^{\bar p(a)} d \, \iota_0(a)
\,, \qquad a\in R\,.
\end{equation}
Consequently, the action of the Lie superalgebra\/ $R/\partial R$ on\/ $C_{\LC}(R,M)$ 
by zero modes commutes with the differential\/ $d$ and induces the
trivial action on the cohomology $H_{\LC}(R,M)$.
\end{proposition}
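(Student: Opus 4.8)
The plan is to deduce Cartan's formula \eqref{eq:cartan2} on the reduced complex from the already-established Cartan formula \eqref{eq:cartan-lc} on the basic complex (Proposition \ref{pcartan}), by extracting the constant term in $\la$ and pushing everything through the projection $\pi$ of \eqref{eq:piYtilde}. The key point is that the three reduced operators are exactly the $\pi$-images of the corresponding basic zero modes: writing $a_\la|_{\la=0}$ and $\iota_\la(a)|_{\la=0}$ for the $\la^0$-components of the Lie derivative \eqref{eq:lca-modstr} and the contraction \eqref{eq:lca-iota2} on $\widetilde C_{\LC}(R,M)$, a comparison of defining formulas yields the intertwining relations
\begin{equation*}
\pi\circ\bigl(a_\la|_{\la=0}\bigr)=a_{(0)}\circ\pi\,,\qquad
\pi\circ\bigl(\iota_\la(a)|_{\la=0}\bigr)=\iota_0(a)\circ\pi\,,\qquad
\pi\circ\widetilde d=d\circ\pi\,,
\end{equation*}
the last being Lemma \ref{lem:basic-d}(d), and the first two following by matching \eqref{eq:lca-modstr0} with \eqref{eq:lca-modstr} and \eqref{eq:lca-iota3} with \eqref{eq:lca-iota2}. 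Here one uses that $\pi$ commutes with the $R$-action on $M$ (the zero mode $a_{(0)}$ on $M$ commutes with $\partial$ by M1, so it preserves $\langle\partial+\lambda_1+\dots+\lambda_n\rangle$) and with the specialization $\la=0$, the latter because $\widetilde Y_{\lambda_0,\dots,\lambda_{n-1}}$ may itself be used as a lift when evaluating $(\pi\circ\widetilde Y)_{0,\lambda_1,\dots,\lambda_{n-1}}$.

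First I would take the constant term in $\la$ of \eqref{eq:cartan-lc}, giving $a_\la|_{\la=0}=[\iota_\la(a)|_{\la=0},\widetilde d]$ on $\widetilde C_{\LC}(R,M)$; composing on the right with $\pi$ and feeding in the three intertwining relations yields $a_{(0)}\circ\pi=[\iota_0(a),d]\circ\pi$, i.e.\ \eqref{eq:cartan2} holds on the image of $\pi$. The main obstacle is upgrading this to all of $C_{\LC}(R,M)$, since $\pi$ need not be surjective without extra hypotheses on $R$ (cf.\ Lemma \ref{lem:Ytilde-lca}). I expect the cleanest fix to be the observation that \eqref{eq:lca-d}, \eqref{eq:lca-modstr} and \eqref{eq:lca-iota2}, and hence the computation proving \eqref{eq:cartan-lc}, never invoke the sesquilinearity \eqref{eq:sesq} or symmetry \eqref{eq:skew} of their argument: the basic Cartan identity is in fact an identity of operators on the space of \emph{all} linear maps $R^{\otimes n}\to M[\lambda_1,\dots,\lambda_n]$, which \emph{does} surject onto $C_{\LC}^n(R,M)$ under $\pi$. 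Granting this, an arbitrary $Y\in C_{\LC}^n(R,M)$ is lifted to such a (not necessarily sesquilinear or symmetric) linear map, the unrestricted basic identity is applied, and the result is projected; alternatively, one repeats the direct computation proving \eqref{eq:cartan-lc} verbatim in the quotient.

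The ``consequently'' part is then formal, exactly as in Corollary \ref{ccartan}. Since $d^2=0$, the graded Jacobi identity applied to $[[\iota_0(a),d],d]$ forces $2[a_{(0)},d]=0$, so $[a_{(0)},d]=0$ in characteristic $0$; thus each $a_{(0)}$ supercommutes with $d$, and combined with the relation $[a_{(0)},b_{(0)}]=(a_{(0)}b)_{(0)}$ already recorded above, the zero modes act on the complex $C_{\LC}(R,M)$ as a representation of $R/\partial R$ by chain maps. Finally, for a cocycle $Y\in C_{\LC}^n(R,M)$, formula \eqref{eq:cartan2} gives $a_{(0)}Y=\iota_0(a)\,dY-(-1)^{\bar p(a)}d\,\iota_0(a)Y=-(-1)^{\bar p(a)}d\bigl(\iota_0(a)Y\bigr)$, a coboundary; hence $a_{(0)}$ acts as zero on $H_{\LC}(R,M)$, which proves the triviality of the induced $R/\partial R$-action on cohomology.
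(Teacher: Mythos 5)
Your intertwining relations and the projection argument are correct as far as they go, but they only establish \eqref{eq:cartan2} on the image of $\pi$, and the repair you propose for upgrading this to all of $C_{\LC}(R,M)$ does not work. The claim that \eqref{eq:cartan-lc} holds as an operator identity on the space of \emph{all} linear maps $R^{\otimes n}\to M[\lambda_1,\dots,\lambda_n]$ is false: the proof of Cartan's formula uses the symmetry condition \eqref{eq:skew} in an essential way. The reason is a mismatch of slot placements. The differential \eqref{eq:lca-d} inserts every bracket $[{a_i}_{\lambda_i}a_j]$ into the \emph{first} slot of the cochain, whereas the Lie derivative \eqref{eq:lca-modstr} inserts $[a_\la a_i]$ into the \emph{$i$-th} slot. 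Consequently, in expanding $\iota_\la(a)\,\widetilde d\,\widetilde Y$ one gets terms of the form $\pm\widetilde Y_{\lambda_i+\lambda_j,\la,\dots}\bigl([{a_i}_{\lambda_i}a_j]\otimes a\otimes\cdots\bigr)$, which must cancel against the terms $\pm\widetilde Y_{\la,\lambda_i+\lambda_j,\dots}\bigl(a\otimes[{a_i}_{\lambda_i}a_j]\otimes\cdots\bigr)$ coming from $\widetilde d\,\iota_\la(a)\widetilde Y$, and the terms $\pm\widetilde Y_{\la+\lambda_j,\dots}\bigl([a_\la a_j]\otimes\cdots\bigr)$ must be converted into the Lie-derivative terms with $[a_\la a_j]$ in the $j$-th slot; both steps require permuting arguments via \eqref{eq:skew}. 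Already for $n=2$ in the purely even case, if $\widetilde Y$ is not skewsymmetric, the sum $\widetilde Y_{\lambda_1+\lambda_2,\la}\bigl([{a_1}_{\lambda_1}a_2]\otimes a\bigr)+\widetilde Y_{\la,\lambda_1+\lambda_2}\bigl(a\otimes[{a_1}_{\lambda_1}a_2]\bigr)$ survives and the identity fails. So lifting $Y\in C_{\LC}^n(R,M)$ to an arbitrary, non-symmetric linear map and applying an ``unrestricted'' basic identity is not an available move; and without some such device your main route proves the proposition only when $\pi$ is surjective, i.e., under the freeness hypotheses of Lemma \ref{lem:Ytilde-lca}, while the proposition is stated for arbitrary $R$ and $M$.

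What does work is precisely the alternative you mention only in passing, and it is the paper's own proof (which is recorded simply as ``straightforward from the definitions''): carry out the computation proving \eqref{eq:cartan-lc} directly on the reduced complex $C_{\LC}(R,M)$. All three operators $d$, $a_{(0)}$ and $\iota_0(a)$ are defined there from the start --- the zero modes commute with $\partial$, so they descend to $M[\lambda_1,\dots,\lambda_n]/\langle\partial+\lambda_1+\dots+\lambda_n\rangle$, and the specialization $\lambda_0=0$ is compatible with the quotients --- and a genuine cochain $Y\in C_{\LC}^n(R,M)$ does satisfy \eqref{eq:skew}, so all cancellations go through verbatim. With that as the proof of \eqref{eq:cartan2}, the $\pi$-machinery becomes unnecessary. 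Your ``consequently'' part is correct and purely formal: $[a_{(0)},d]=0$ follows from $d^2=0$ (or graded Jacobi), and for a cocycle $Y$ one has $a_{(0)}Y=-(-1)^{\bar p(a)}d\bigl(\iota_0(a)Y\bigr)$, so the $R/\partial R$-action on $H_{\LC}(R,M)$ is trivial.
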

\begin{proof}
The proof is straightforward from the definitions. 
\end{proof}


For the rest of this section, we discuss the cohomology of the Virasoro and the affine LCA's.


\subsection{Cohomology of the Virasoro LCA}\label{sec:2.5}
In this subsection, we will compute the cohomology of the Virasoro LCA $R^\vir$ from Example \ref{ex:virasoro-lca}.
First, consider the Virasoro LCA at central charge zero, namely $\bar R^\vir = R^{\vir}/\mb F C = \mb F[\partial]L$ with the $\lambda$-bracket $[L_\lambda L]=(\partial+2\lambda)L$. Its cohomology was computed in \cite{BKV99}.
The coefficients will be taken in the trivial $\bar R^\vir$-module $\mb F$ (where both $L$ and $\partial$ act by zero) or in the modules $M_{\Delta}$ for $\Delta\in\mb F$. The latter are free of rank one over $\mb F[\partial]$ and are given by:
\begin{equation}\label{mdelta}
M_{\Delta} = \mb F[\partial] v \,, \quad L_\lambda v = (\partial+\Delta\lambda)v\,, \qquad \Delta\in\mb F\,.
\end{equation}

\begin{proposition}[\cite{BKV99}]\label{pbarvir}
We have
\begin{align}
\label{barvir1}
\dim H^n_{\LC}(\bar R^\vir,\mb F) &= \begin{cases} 1, \quad\text{for } \; n=0,2,3, \\ 0, \quad \text{otherwise}, \end{cases}
\\[12pt]
\label{barvir2}
\dim H^n_{\LC}(\bar R^\vir,M_{1-(3r^2\pm r)/2}) &= \begin{cases} 
2, \quad\text{for } \; n=r+1, \\ 
1, \quad\text{for } \; n=r,r+2, \\ 
0, \quad \text{otherwise}, \end{cases}
\end{align}
and\/ $H^n_{\LC}(\bar R^\vir,M_{\Delta}) = 0$ if\/ $\Delta\ne 1-(3r^2\pm r)/2$ for any\/ $r\in\mb Z$.
\end{proposition}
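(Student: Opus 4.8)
The plan is to reduce the computation of the ordinary cohomology to the basic cohomology, and then to compute the latter using the energy operator attached to $L$. Throughout, abbreviate $H^n = H_{\LC}^n(\bar R^\vir,M)$ and $\widetilde H^n = \widetilde H_{\LC}^n(\bar R^\vir,M)$ for $M$ either the trivial module $\mb F$ or $M_\Delta$. Since $\bar R^\vir = \mb F[\partial]L$ is free of rank one over $\mb F[\partial]$, Lemma~\ref{lem:Ytilde-lca} makes \eqref{eq:piYtilde2} an isomorphism in every degree, so the long exact sequence \eqref{eq:lca-les} applies; combining it with the isomorphisms $H^n(\partial\widetilde C_{\LC})\simeq\widetilde H^n$ for $n\ge1$ (Lemma~\ref{lem:partial-inj}), it reads
\[
\cdots \to \widetilde H^n \xrightarrow{\ \partial\ } \widetilde H^n \to H^n \to \widetilde H^{n+1} \xrightarrow{\ \partial\ } \widetilde H^{n+1} \to \cdots,
\]
where the repeated arrows are induced by the operator $\partial$ of \eqref{eq:tilde-partial}.

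First I would make the cochains explicit: by sesquilinearity and freeness, a basic $n$-cochain is the skew-symmetric polynomial $P(\lambda_1,\dots,\lambda_n)=\widetilde Y_{\lambda_1,\dots,\lambda_n}(L\otimes\cdots\otimes L)\in M[\lambda_1,\dots,\lambda_n]$ (the $p=\bar0$ case of \eqref{eq:skew}), hence divisible by the Vandermonde $\prod_{i<j}(\lambda_i-\lambda_j)$. On this space I would introduce the energy operator $\mathcal E:=L_{(1)}$, the $\lambda$-coefficient of the Lie derivative $L_\lambda$ of Section~\ref{sec:lie-der}. A direct computation from \eqref{eq:lca-modstr}, using $[L_\lambda L]=(\partial+2\lambda)L$ and the sesquilinearity \eqref{eq:sesq}, shows that on $M_\Delta$ it acts on a cochain of $\partial$-degree $\deg_\partial$ and $\lambda$-degree $\deg_\lambda$ as multiplication by $\Delta+\deg_\partial+\deg_\lambda-n$ (respectively by $\deg_\lambda-n$ for the trivial module, where $\partial$ acts by $0$). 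By Corollary~\ref{ccartan} every mode $L_{(k)}$, and in particular $\mathcal E$, acts as zero on $\widetilde H^\bullet$; since $\mathcal E$ is diagonalizable and commutes with $\widetilde d$, every graded piece with $\mathcal E\ne0$ has vanishing cohomology, so $\widetilde H^\bullet$ is computed by the \emph{reduced} subcomplex where $\mathcal E=0$, i.e.\ $\deg_\partial+\deg_\lambda=n-\Delta$. This subcomplex is finite-dimensional in each degree and, by the Vandermonde bound $\deg_\lambda\ge\binom n2$, nonzero for only finitely many $n$.

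Next I would feed this back into the long exact sequence. The operator $\partial$ of \eqref{eq:tilde-partial} is multiplication by $\partial+\lambda_1+\cdots+\lambda_n$, so it raises total degree, hence $\mathcal E$, by one; as $\widetilde H^\bullet$ sits entirely in $\mathcal E$-degree $0$, the induced maps $\partial\colon\widetilde H^n\to\widetilde H^n$ land in the vanishing graded piece $\mathcal E=1$ and are therefore zero. The sequence thus splits into short exact sequences, giving
\[
\dim H^n = \dim\widetilde H^n + \dim\widetilde H^{n+1}\,,\qquad n\ge0\,.
\]
With this identity the proposition becomes equivalent to the assertions that $\widetilde H^\bullet(\bar R^\vir,\mb F)$ is one-dimensional in degrees $0$ and $3$ and zero otherwise, and that $\widetilde H^\bullet(\bar R^\vir,M_\Delta)$ is one-dimensional in the two consecutive degrees $r+1,r+2$ when $\Delta=1-(3r^2\pm r)/2$ and vanishes for all other $\Delta$; a short back-substitution confirms these reproduce \eqref{barvir1} and \eqref{barvir2}.

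The remaining step — computing the cohomology of the reduced ($\mathcal E=0$) complex — is where the pentagonal numbers $(3r^2\pm r)/2$ must appear, and I expect it to be the main obstacle. For the trivial module it is harmless: the bound $\deg_\lambda=n\ge\binom n2$ forces $n\le3$, leaving a four-term complex $\mb F\to\mb F\to\mb F\to\mb F$ whose differential is elementary to evaluate and yields $\widetilde H^\bullet=(1,0,0,1)$. For the density modules $M_\Delta$ the reduced complex is larger, and its cohomology is exactly the Goncharova--Feigin--Fuks computation of the cohomology of the Lie algebra of formal vector fields with tensor-density coefficients: the reduced complex is identified with a Chevalley--Eilenberg complex for that Lie algebra, and the selection of the pentagonal weights reflects the singular-vector structure of the Virasoro Verma modules governed by the Kac determinant. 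This is the input I would take from \cite{BKV99}, which completes the determination of $\widetilde H^\bullet$ and hence, via the displayed dimension formula, of $H^\bullet$.
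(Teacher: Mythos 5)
The paper offers no proof of this Proposition at all — it is quoted from \cite{BKV99} — and your reconstruction is essentially the argument of that source, carried out with the tools this paper develops: the long exact sequence \eqref{eq:lca-les} with connecting maps induced by $\partial$, the fact that the energy operator $L_{(1)}$ is a Lie derivative and hence (Corollary \ref{ccartan}) kills basic cohomology, which both concentrates $\widetilde H^\bullet$ in energy $0$ and forces the induced maps $\partial$ (which raise energy by $1$) to vanish, yielding $\dim H^n=\dim\widetilde H^n+\dim\widetilde H^{n+1}$; the four-term complex for trivial coefficients and the Feigin--Fuks/Goncharova input for $M_\Delta$ then finish the computation. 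Your bookkeeping is correct — the stated values of $\widetilde H^\bullet$ do reproduce \eqref{barvir1} and \eqref{barvir2} — and deferring the pentagonal-number classification to \cite{BKV99} is precisely what the paper itself does.
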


The cohomology of the LCA $R^\vir$ can be obtained from Propositions \ref{prop:central-ext} and \ref{pbarvir}.

\begin{theorem}\label{tcohvir}
We have
\begin{align}
\label{barvir3}
\dim H^n_{\LC}(R^\vir,\mb F) &= \begin{cases} 1, \quad\text{for } \; n=0,3, \\ 0, \quad \text{otherwise}, \end{cases}
\\[12pt]
\label{barvir4}
\dim H^n_{\LC}(R^\vir,M_{1-(3r^2\pm r)/2}) &= \begin{cases} 
2, \quad\text{for } \; n=r+1, \\ 
1, \quad\text{for } \; n=r,r+2, \\ 
0, \quad \text{otherwise}, \end{cases}
\end{align}
and\/ $H^n_{\LC}(R^\vir,M_{\Delta}) = 0$ if\/ $\Delta\ne 1-(3r^2\pm r)/2$ for any\/ $r\in\mb Z$.
\end{theorem}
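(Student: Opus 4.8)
The plan is to deduce everything from the centerless computation in Proposition \ref{pbarvir} together with the central-extension machinery of Proposition \ref{prop:central-ext} and Corollary \ref{cor:central-ext}. The element $C\in R^\vir$ is central and torsion, annihilated by the minimal monic polynomial $P(\partial)=\partial$, and from \eqref{eq:vir} we read off the $\mb F[\partial]$-module splitting $R^\vir\simeq\bar R^\vir\oplus\mb F C=\mb F[\partial]L\oplus\mb F C$ (here $\mb F[\partial]C=\mb F C$ since $\partial C=0$). Thus the hypotheses of both cited results are met with $\bar R=\bar R^\vir$, and the trivial module $\mb F$ and the modules $M_\Delta$ are $R^\vir$-modules via the projection $R^\vir\twoheadrightarrow\bar R^\vir$ (Lemma \ref{lem:tor}(b)).

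For the coefficient modules $M_\Delta$ of \eqref{mdelta}, which are free of rank one, hence torsion-free over $\mb F[\partial]$, I would invoke Corollary \ref{cor:central-ext} directly: it gives $H^n_{\LC}(R^\vir,M_\Delta)\simeq H^n_{\LC}(\bar R^\vir,M_\Delta)$ for every $n\geq0$. Formula \eqref{barvir4}, and the vanishing of $H^n_{\LC}(R^\vir,M_\Delta)$ for $\Delta\neq1-(3r^2\pm r)/2$, then follow verbatim from \eqref{barvir2} and the last assertion of Proposition \ref{pbarvir}. So the entire $M_\Delta$ part of the theorem is formal.

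The case $M=\mb F$ is more delicate, because the trivial module is not torsion-free, so only the weaker Proposition \ref{prop:central-ext} applies. Its part (b) already yields $H^n_{\LC}(R^\vir,\mb F)\simeq H^n_{\LC}(\bar R^\vir,\mb F)$ for $n\neq1,2$, which by \eqref{barvir1} gives dimension $1$ at $n=0,3$ and $0$ for $n\geq4$. It remains to treat $n=1,2$. Here $U=\{u\in\mb F\mid\partial u=0\}=\mb F$ has dimension $1$, so the dimension identity \eqref{eq:H01}, fed with $\dim H^1_{\LC}(\bar R^\vir,\mb F)=0$ and $\dim H^2_{\LC}(\bar R^\vir,\mb F)=1$ from \eqref{barvir1}, collapses to the single relation $\dim H^1_{\LC}(R^\vir,\mb F)=\dim H^2_{\LC}(R^\vir,\mb F)$.

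This is where the main obstacle lies: the central-extension formalism only ties $H^1$ and $H^2$ together and does not by itself force either to vanish, so one genuinely new computation is unavoidable. I would supply it by computing $H^1$ directly via Theorem \ref{thm:lowcoho}(b), $H^1_{\LC}(R^\vir,\mb F)=\Der(R^\vir,\mb F)/\Inder(R^\vir,\mb F)$. Since $\mb F$ carries the trivial action, the inner derivations vanish, and any derivation $D$ is an $\mb F[\partial]$-linear map killing $\partial R^\vir$; evaluating the derivation condition \eqref{eq:der} on $[L_\lambda L]=(\partial+2\lambda)L+\frac1{12}\lambda^3C$ gives $2\lambda\,D(L)+\frac1{12}\lambda^3D(C)=0$, which forces $D(L)=D(C)=0$, i.e.\ $\Der(R^\vir,\mb F)=0$. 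Hence $\dim H^1_{\LC}(R^\vir,\mb F)=0$, and therefore $\dim H^2_{\LC}(R^\vir,\mb F)=0$ as well; assembling the three degree ranges yields \eqref{barvir3}. As a consistency check, the unique nonzero class in $H^2_{\LC}(\bar R^\vir,\mb F)$ is precisely the one classifying the central extension $R^\vir$ of $\bar R^\vir$, and it must become a coboundary once we pass to $R^\vir$ itself, which is exactly what the vanishing of $H^2_{\LC}(R^\vir,\mb F)$ records.
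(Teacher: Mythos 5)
Your proof is correct, and its overall architecture coincides with the paper's: Corollary \ref{cor:central-ext} handles the torsion-free modules $M_\Delta$, Proposition \ref{prop:central-ext}(b) handles $M=\mb F$ for $n\neq 1,2$, and the identity \eqref{eq:H01} with $\dim U=1$ collapses to the single relation $\dim H^1_{\LC}(R^\vir,\mb F)=\dim H^2_{\LC}(R^\vir,\mb F)$. The only divergence is in the one direct computation that breaks this tie. The paper computes $H^2_{\LC}(R^\vir,\mb F)=0$: by Theorem \ref{thm:lowcoho}(c) and Proposition \ref{pbarvir}, $H^2_{\LC}(\bar R^\vir,\mb F)$ is spanned by the cocycle $Y_{\la_1,\la_2}(L\otimes L)=\tfrac{1}{12}\la_1^3+\langle\la_1+\la_2\rangle$ classifying the central extension, and its image in $H^2_{\LC}(R^\vir,\mb F)$ is exhibited as a coboundary $Y=dZ$ for the explicit $1$-cochain $Z$ with $Z_\la(L)=\langle\la\rangle$, $Z_\la(C)=1+\langle\la\rangle$; the dimension identity then forces $H^1=0$. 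You instead compute $H^1_{\LC}(R^\vir,\mb F)=0$ via Theorem \ref{thm:lowcoho}(b): since the action of $R^\vir$ on $\mb F$ is trivial, inner derivations vanish, and applying \eqref{eq:der} to $[L_\la L]$ gives $2\la\, D(L)+\tfrac{1}{12}\la^3 D(C)=0$, whence $\Der(R^\vir,\mb F)=0$ (your computation is sound: $D(\partial L)=\partial^M D(L)=0$ and the right-hand side of \eqref{eq:der} dies because the module action is zero); the dimension identity then forces $H^2=0$. Both routes are valid and of comparable length; yours is slightly more elementary, needing no explicit cocycle or cochain, while the paper's makes the conceptual mechanism visible --- the obstruction class of the central extension becomes trivial over $R^\vir$ itself --- a point you correctly observe, but only as a consistency check rather than as the engine of the proof.
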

\begin{proof}
Due to Proposition \ref{prop:central-ext} and Corollary \ref{cor:central-ext}, we only need to consider the cohomology $H^n_{\LC}(R^\vir,\mb F)$ for $n=1,2$. For the $R^\vir$-module $M=\mb F$, we have $U=\mb F$ and
\begin{equation}\label{barvir5}
\begin{split}
\bigl( &\dim H_{\LC}^1(R^\vir,\mb F)-\dim H_{\LC}^1(\bar{R}^\vir,\mb F) \bigr)  \\
&+\bigl( \dim H_{\LC}^2(\bar{R}^\vir,\mb F) - \dim H_{\LC}^2(R^\vir,\mb F) \bigr)
= 1
\end{split}
\end{equation}
(see \eqref{eq:H01}). By Theorem \ref{thm:lowcoho}(c) and Proposition \ref{pbarvir}, we have $H_{\LC}^2(\bar{R}^\vir,\mb F) = \mb F Y$, where 
$$
Y_{\la_1,\la_2}(L \otimes L) = \frac{\la_1^3}{12} + \langle \la_1+\la_2\rangle
$$
is the $2$-cocycle giving the central extension $R^\vir$. However, the image of $Y$ in $H_{\LC}^2(R^\vir,\mb F)$ under the surjective map \eqref{eq:h-maps} is trivial, because $Y=dZ$ for the $1$-cochain $Z$ on $R^\vir$ defined by 
$$
Z_{\la}(L)=\langle \la\rangle \,, \qquad Z_{\la}(C)=1+\langle \la\rangle \,. 
$$
Therefore, $H_{\LC}^2(R^\vir,\mb F) =0$. Then \eqref{barvir5} gives $H_{\LC}^1(R^\vir,\mb F) = H_{\LC}^1(\bar{R}^\vir,\mb F) = 0$.
\end{proof}

\begin{remark}\label{rem:label}
By the proof of Theorem 7.1 in \cite{BKV99}, the nontrivial $3$-cocycle $Y\in H_{\LC}^3(R^\vir,\mb F)$ is given explicitly by
$$
Y_{\la_1,\la_2,\la_3}(L \otimes L \otimes L) = (\la_1-\la_2)(\la_1-\la_3)(\la_2-\la_3) + \langle \la_1+\la_2+\la_3\rangle \,.
$$
\end{remark}
As a special case of Theorem \ref{tcohvir}, 
since the adjoint representation of $\bar R^\vir$ is $M_2$, we deduce that its cohomology is trivial:
\begin{equation}\label{cvirmdelta}
H^n_{\LC}(R^\vir,\bar R^\vir) = H^n_{\LC}(\bar R^\vir,\bar R^\vir) = 0 \,, \quad \text{for all} \quad n\ge0 \,.
\end{equation}

\subsection{Cohomology of the affine LCA}\label{sec:2.6}
Throughout this subsection, $\mf g$ will be a finite-dimensional simple Lie algebra.
We will compute the cohomology of the affine LCA $\cur\mf g$ from Example \ref{ex:affine-lca}.
Denote by $\overline{\cur}\,\mf g = \mb F[\partial]\mf g$ the affine LCA at level $0$, with the $\la$-bracket $[a_\la b]=[a,b]$ for
$a,b\in\mf g$.

First, consider the trivial module $\mb F$, where $\partial$ also acts by zero. 
It is well known (see e.g.\ \cite{C55})
that the Lie algebra cohomology of $\mf g$ with coefficients in $\mb F$ is given by the $\mf g$-invariants in the exterior algebra:
\begin{equation}\label{cohg}
H^\bullet(\mf g,\mb F) = \bigl(\textstyle\bigwedge\nolimits^\bullet \mf g^* \bigr)^{\mf g} \,,
\end{equation}
and is generated as an algebra by homogeneous elements of degrees $2m_i+1$ ($i=1,\dots,\rank\mf g$), where $m_i$ are the exponents of $\mf g$.

\begin{proposition}[\cite{BKV99}]\label{pbarcurf}
If\/ $\mf g$ is a finite-dimensional simple Lie algebra,
we have 
$$H^n_{\LC}(\overline{\cur}\,\mf g,\mb F) \simeq H^n(\mf g,\mb F) \oplus H^{n+1}(\mf g,\mb F)
\,, \qquad n\ge0 \,.
$$
Explicitly, under this isomorphism, a Lie algebra\/ $n$-cocycle\/ 
$\al\in H^n(\mf g, \mb F)$ 
corresponds to the\/ $n$-cocycle\/ 
$Y \in C_{\LC}^{n}(\overline{\cur}\,\mf g,\mb F)$
defined by
$$
Y_{\la_1,\dots,\la_n}(u) 
= \al(u) + \langle \la_1+\dots+\la_n \rangle 
\,, \qquad u\in\mf g^{\otimes n} \,.
$$
A Lie algebra\/ $(n+1)$-cocycle\/ $\varphi\in H^{n+1}(\mf g, \mb F)$ 
corresponds to the unique\/ $n$-cocycle\/ 
$Y =\pi\circ\widetilde Y \in C_{\LC}^{n}(\overline{\cur}\,\mf g,\mb F)$,
where\/ 
$\widetilde Y \in \widetilde C_{\LC}^{n}(\overline{\cur}\,\mf g,\mb F)$
has the form
$$
\widetilde Y_{\la_1,\dots,\la_n}(u) 
= \sum_{j=1}^n f_j(u) \, \la_j
\,, \qquad u\in\mf g^{\otimes n} \,, \quad f_j\in (\mf g^{\otimes n})^* \,,
$$
and satisfies
$$
(\widetilde d \widetilde Y)_{\la_0,\dots,\la_n}(v)
= (\la_0+\dots+\la_n) \varphi(v)
\,, \qquad v\in\mf g^{\otimes (n+1)} \,.
$$
\end{proposition}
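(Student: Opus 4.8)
The plan is to reduce the computation to the \emph{basic} cohomology $\widetilde H_{\LC}^\bullet(\overline{\cur}\,\mf g,\mb F)$ and then feed the answer into the long exact sequence \eqref{eq:lca-les}. This is legitimate because $\overline{\cur}\,\mf g=\mb F[\partial]\mf g$ is free as an $\mb F[\partial]$-module, so \eqref{eq:lca-les} applies and, by Lemma \ref{lem:Ytilde-lca}, the quotient map induces an isomorphism of complexes $\widetilde C_{\LC}^\bullet/\partial\widetilde C_{\LC}^\bullet\xrightarrow{\sim}C_{\LC}^\bullet$. The whole computation thus rests on identifying $\widetilde H_{\LC}^\bullet$.

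First I would compute the basic cohomology. Since the coefficients are the trivial module, the $a_{i}{}_{\lambda_i}Y$ term in \eqref{eq:lca-d} vanishes and the basic differential $\widetilde d$ retains only its bracket term; because the $\lambda$-bracket of $\overline{\cur}\,\mf g$ is the constant Lie bracket $[a_\lambda b]=[a,b]$, substituting $\lambda_i+\lambda_j$ for a pair of variables shows that $\widetilde d$ \emph{preserves the total polynomial degree} in the $\lambda$'s. Hence $\widetilde C_{\LC}^\bullet(\overline{\cur}\,\mf g,\mb F)=\bigoplus_{d\ge0}\widetilde C_{[d]}^\bullet$ splits as a direct sum of subcomplexes indexed by polynomial degree $d$. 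In degree $d=0$ a basic cochain is a $\lambda$-independent skewsymmetric map $\mf g^{\otimes n}\to\mb F$, i.e.\ an element of $\bigwedge^n\mf g^*$, and $\widetilde d$ becomes, up to a standard reindexing and the overall sign in \eqref{eq:gammai-even}, the Chevalley--Eilenberg differential; thus $\widetilde H_{[0]}^n=H^n(\mf g,\mb F)$ by \eqref{cohg}.

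The technical heart is the acyclicity of the positive-degree parts, $\widetilde H_{[d]}^n=0$ for $d\ge1$, and here the reductivity of $\mf g$ is essential. The zero modes $a_{(0)}$ ($a\in\mf g$) make each $\widetilde C_{[d]}^\bullet$ a complex of finite-dimensional $\mf g$-modules with $\widetilde d$ a $\mf g$-morphism, and by Corollary \ref{ccartan} this $\mf g$-action is trivial on cohomology; since passing to isotypic components is exact, $\widetilde H_{[d]}^\bullet$ equals the cohomology of the invariant subcomplex $(\widetilde C_{[d]}^\bullet)^{\mf g}$. On the latter I would construct a contracting homotopy in positive polynomial degree, seeking an operator $k$ with $[\widetilde d,k]$ equal to the Euler operator $\sum_i\lambda_i\partial_{\lambda_i}$, which acts invertibly (as the scalar $d$) for $d\ge1$; such a $k$ can be assembled from the contractions $\iota_\lambda(a)$ of Proposition \ref{pcartan}. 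Establishing this acyclicity is the main obstacle and the only genuinely nontrivial input; it is carried out in \cite{BKV99}. Granting it, $\widetilde H_{\LC}^n(\overline{\cur}\,\mf g,\mb F)\cong H^n(\mf g,\mb F)$, concentrated in polynomial degree $0$.

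Finally I would run \eqref{eq:lca-les}. By Lemma \ref{lem:partial-inj} we have $H^n(\partial\widetilde C_{\LC})\cong\widetilde H_{\LC}^n$ for $n\ge1$ (and $H^0(\partial\widetilde C_{\LC})=0$), and precomposed with this isomorphism the first map $H^n(\partial\widetilde C_{\LC})\to\widetilde H_{\LC}^n$ in \eqref{eq:lca-les} is multiplication by $\partial+\sum_i\lambda_i=\sum_i\lambda_i$. Since $\widetilde H_{\LC}^n$ lives in polynomial degree $0$ whereas $\sum_i\lambda_i$ lands in degree $1$, where the cohomology vanishes, this map is zero. Therefore \eqref{eq:lca-les} breaks into short exact sequences
\[
0\to\widetilde H_{\LC}^n(\overline{\cur}\,\mf g,\mb F)\xrightarrow{\pi_*}H_{\LC}^n(\overline{\cur}\,\mf g,\mb F)\xrightarrow{\delta}H^{n+1}(\partial\widetilde C_{\LC})\to0\,,
\]
which yields $H_{\LC}^n(\overline{\cur}\,\mf g,\mb F)\cong H^n(\mf g,\mb F)\oplus H^{n+1}(\mf g,\mb F)$. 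The explicit cocycles are then read off from the two maps: $\pi_*$ sends the $\lambda$-independent basic cocycle $\widetilde Y_{\lambda}(u)=\alpha(u)$ attached to $\alpha\in H^n(\mf g,\mb F)$ to the first summand; for the second, given $\varphi\in H^{n+1}(\mf g,\mb F)$, acyclicity of $\widetilde C_{[1]}^\bullet$ provides a degree-one cochain $\widetilde Y_\lambda(u)=\sum_j f_j(u)\lambda_j$ with $\widetilde d\widetilde Y=(\sum_i\lambda_i)\varphi$, and $Y=\pi\circ\widetilde Y$ is the corresponding class with $\delta[Y]=[\varphi]$, exactly as in the statement.
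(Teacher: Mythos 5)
Your proposal is correct and takes essentially the same approach as the paper: the paper's entire proof is the citation of \cite[Theorem 8.1]{BKV99}, and what you reconstruct --- the basic cohomology $\widetilde H^n_{\LC}(\overline{\cur}\,\mf g,\mb F)\simeq H^n(\mf g,\mb F)$ concentrated in $\lambda$-degree zero, fed into the long exact sequence \eqref{eq:lca-les} whose connecting maps vanish by your degree argument --- is exactly the strategy underlying that cited proof. Since you defer the single hard ingredient (acyclicity of the positive $\lambda$-degree subcomplexes) to \cite{BKV99}, your argument is no less complete than the paper's one-line proof, and your identification of the explicit representatives via $\pi_*$ and the connecting homomorphism matches the statement.
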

\begin{proof}
This follows from \cite[Theorem 8.1]{BKV99} and its proof.
\end{proof}

Using Proposition \ref{prop:central-ext}, we can find the cohomology of $\cur\mf g$ with trivial coefficients.

\begin{theorem}\label{tcohcurgf}
If\/ $\mf g$ is a finite-dimensional simple Lie algebra,
then
\begin{align}
\label{curgf1}
\dim H^0_{\LC}(\cur\mf g,\mb F) &= 1 \,, \qquad
H^1_{\LC}(\cur\mf g,\mb F) = H^2_{\LC}(\cur\mf g,\mb F) = 0 
\intertext{and}
\label{curgf2}
H^n_{\LC}(\cur\mf g,\mb F) &= H^n_{\LC}(\overline{\cur}\,\mf g,\mb F) 
\simeq H^n(\mf g,\mb F) \oplus H^{n+1}(\mf g,\mb F)\,, \qquad n\ge 3\,.
\end{align}
\end{theorem}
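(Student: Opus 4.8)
The plan is to apply Proposition \ref{prop:central-ext} to the affine LCA $R=\cur\mf g$ with the torsion (in fact $\partial$-annihilated) central element $C=K$, so that $\bar R = \overline{\cur}\,\mf g$. Since $K$ satisfies $\partial K=0$, the minimal monic polynomial annihilating $K$ is $P(\partial)=\partial$. For the trivial module $M=\mb F$, on which $\partial$ acts by zero, the space $U=\{u\in\mb F \mid \partial u = 0\}$ is all of $\mb F$, so $\dim U = 1$. The $\mb F[\partial]$-module splitting $\cur\mf g \simeq \overline{\cur}\,\mf g \oplus \mb F K$ holds by construction of Example \ref{ex:affine-lca}. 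Therefore part (c) of Proposition \ref{prop:central-ext} applies.

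For the degrees $n\ge 3$ and $n=0$, the isomorphism \eqref{curgf2} and the value $\dim H^0_{\LC}(\cur\mf g,\mb F)=1$ are immediate from part (b) of Proposition \ref{prop:central-ext}, which gives $H^n_{\LC}(\cur\mf g,\mb F)\simeq H^n_{\LC}(\overline{\cur}\,\mf g,\mb F)$ for $n\ne 1,2$; combining this with Proposition \ref{pbarcurf} yields the stated formulas (noting $H^0(\mf g,\mb F)=\mb F$ and $H^1(\mf g,\mb F)=0$ for $\mf g$ simple). The only genuine work is in degrees $1$ and $2$. From Proposition \ref{pbarcurf} together with \eqref{cohg} we read off $\dim H^1_{\LC}(\overline{\cur}\,\mf g,\mb F)=\dim H^1(\mf g,\mb F)+\dim H^2(\mf g,\mb F)=0+0=0$ and $\dim H^2_{\LC}(\overline{\cur}\,\mf g,\mb F)=\dim H^2(\mf g,\mb F)+\dim H^3(\mf g,\mb F)=0+1=1$, the value $1$ coming from the unique primitive generator in degree $3$ (the Cartan $3$-form) since $\mf g$ is simple.

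Plugging these into \eqref{eq:H01}, the balance equation reads
\begin{equation*}
\bigl(\dim H^1_{\LC}(\cur\mf g,\mb F)-0\bigr)+\bigl(1-\dim H^2_{\LC}(\cur\mf g,\mb F)\bigr)=1 \,.
\end{equation*}
Since by part (b) both parenthesized summands are non-negative, and the injection $H^1_{\LC}(\overline{\cur}\,\mf g,\mb F)\hookrightarrow H^1_{\LC}(\cur\mf g,\mb F)$ forces $\dim H^1_{\LC}(\cur\mf g,\mb F)\ge 0$ while the surjection $H^2_{\LC}(\overline{\cur}\,\mf g,\mb F)\twoheadrightarrow H^2_{\LC}(\cur\mf g,\mb F)$ forces $\dim H^2_{\LC}(\cur\mf g,\mb F)\le 1$, the equation becomes $\dim H^1_{\LC}(\cur\mf g,\mb F)+\bigl(1-\dim H^2_{\LC}(\cur\mf g,\mb F)\bigr)=1$ with each term non-negative.

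The main obstacle is that \eqref{eq:H01} only fixes the sum of the two defects, not each term individually, so I must pin down one of them by an explicit argument. The cleanest route is to exhibit the $2$-cocycle on $\overline{\cur}\,\mf g$ corresponding to the Cartan $3$-form and show its image in $H^2_{\LC}(\cur\mf g,\mb F)$ vanishes — exactly as in the proof of Theorem \ref{tcohvir}, where the class generating $H^2_{\LC}(\bar R^\vir,\mb F)$ becomes a coboundary $dZ$ on $R^\vir$. Concretely, the generator of $H^2_{\LC}(\overline{\cur}\,\mf g,\mb F)$ comes via Proposition \ref{pbarcurf} from $\varphi\in H^3(\mf g,\mb F)$, i.e.\ from the invariant form $\varphi(a,b,c)=([a,b]|c)$; I would produce a $1$-cochain $Z$ on $\cur\mf g$ that trivializes the corresponding $2$-cocycle once the central direction $K$ is available, using that $[a_\lambda b]$ on $\cur\mf g$ carries the extra term $\lambda(a|b)K$. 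This shows $\dim H^2_{\LC}(\cur\mf g,\mb F)=0$, whence the balance equation forces $\dim H^1_{\LC}(\cur\mf g,\mb F)=0$ as well, completing \eqref{curgf1}.
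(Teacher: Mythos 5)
Your proposal is correct and follows essentially the same route as the paper: the paper's proof of Theorem \ref{tcohcurgf} simply says it is the same as that of Theorem \ref{tcohvir}, i.e., use the balance equation \eqref{eq:H01} with $U=\mb F$, identify the generator of $H^2_{\LC}(\overline{\cur}\,\mf g,\mb F)$ with the central-extension cocycle $\la_1(a|b)$ (coming from the Cartan $3$-form via Proposition \ref{pbarcurf}), and kill it on $\cur\mf g$ by the $1$-cochain $Z$ with $Z_\la(\mf g)=0$, $Z_\la(K)=1$, forcing $H^2_{\LC}(\cur\mf g,\mb F)=0$ and hence $H^1_{\LC}(\cur\mf g,\mb F)=0$.
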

\begin{proof}
The proof is the same as that of Theorem \ref{tcohvir}.
\end{proof}

For any $\mf g$-module $V$, we have the $\overline{\cur}\,\mf g$-module $M_V$ defined by
\begin{equation}\label{curgmv}
M_V = \mb F[\partial] V \,, \quad a_\la v = av \,, \qquad a\in\mf g\,, \; v\in V \,.
\end{equation}
In the case when $V$ is an irreducible $\mf g$-module, the cohomology of $\overline{\cur}\,\mf g$ with coefficients in $M_V$ was computed in \cite[Section 8.2]{BKV99}. In order to recall the result, we first need to remind some notation and results on affine Lie algebras (see \cite{K90}).

Let us denote by $\mf h$, $W$, $\rho$, $\theta$, $h^\vee$, respectively, a fixed Cartan subalgebra, the Weyl group, the half-sum of positive roots, the highest root, and the dual Coxeter number of $\mf g$. Let $\hat{\mf g} = \mf g[t,t^{-1}] \oplus \mb F K \oplus \mb F d$ be the affine Kac--Moody algebra associated to $\mf g$. 
Let $\hat{\mf h} = \mf h \oplus\mb F K \oplus\mb F d$, and denote by $\bar\la\in\mf h^*$ the restriction of $\la\in\hat{\mf h}^*$ to $\mf h$. 
Recall that the simple roots of $\hat{\mf g}$ are 
$\hat\al_0=\delta-\theta$ and $\hat\al_i=\al_i$ $(i=1,\dots,\rank\mf g)$, where $\al_i$ are the simple roots of $\mf g$ and $\delta$ is the null root of $\hat{\mf g}$ (which corresponds to the central element $K$ under the isomorphism $\hat{\mf h}^* \simeq \hat{\mf h}$).
Then $\hat\rho\in\hat{\mf h}^*$ is defined by the property that $\langle\hat\rho,\hat\al_i\rangle = 1$ for all $i$. We can take $\hat\rho=\rho+h^\vee\La_0$, where $\La_0$ is the $0$-th fundamental weight, defined by $\langle\La_0,\hat\al_i\rangle = \delta_{i,0}$ for all $i$.
The affine Weyl group $\hat W$ is the semidirect product of $W$ and the group of translations $t_\gamma$, where $\gamma$ is in the $\mb Z$-span of the long roots of $\mf g$. Recall that 
$\overline{t_\gamma \la} = \bar\la+\langle \la, K \rangle \ga$ 
for $\la\in\hat{\mf h}^*$. Finally, for $\hat w\in \hat W$, we denote by $\ell(\hat w)$ the length of $\hat w$. 

For $\La\in\mf h^*$, we define $\ell(\La)$ as $\ell(\hat w)$ if we can write $\La = \overline{\hat w\hat\rho - \hat\rho}$ for some $\hat w\in \hat W$, and $\ell(\La)=+\infty$ if such $\hat w$ does not exist. Note that
\begin{equation}\label{curgmv2}
\overline{\hat w\hat\rho - \hat\rho} = w\rho-\rho + h^\vee \gamma \,, \qquad\text{for}\quad \hat w = t_\gamma w \,, \; w\in W \,.
\end{equation}
An important special case is when $\La=\theta$, the highest weight of the adjoint representation. Then the simple reflection $r_{\hat\al_0} \in \hat W$ satisfies $r_{\hat\al_0} \hat\rho = \hat\rho-\hat\al_0$. Hence, $\theta=\overline{r_{\hat\al_0}\hat\rho - \hat\rho}$ and 
\begin{equation}\label{curgmv3}
\ell(\theta)=1 \,.
\end{equation}
For an irreducible $\mf g$-module $V$, we let $\ell(V)=\ell(\La)$ if $V$ is finite dimensional with highest weight $\La$.
When $V$ is an infinite-dimensional irreducible $\mf g$-module, we let $\ell(V)=+\infty$.

\begin{proposition}[\cite{BKV99}]\label{pbarcurmv}
With the above notation, for any irreducible\/ $\mf g$-mod\-ule\/ $V$, we have
$$
H^n_{\LC}(\overline{\cur}\,\mf g, M_V) \simeq H^{n-\ell(V^*)} (\mf g,\mb F) \,, \qquad n\ge0 \,,
$$
where\/
$V^*$ is the contragredient\/ $\mf g$-module,
and we let\/ $H^n=0$ for all\/ $n<0$, including\/ $n=-\infty$.
\end{proposition}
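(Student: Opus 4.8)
The plan is to reduce everything to a Lie algebra cohomology computation for the annihilation algebra of $\overline{\cur}\,\mf g$ and then to invoke an affine analogue of Kostant's theorem. First I would pass from the reduced complex to the basic complex $\widetilde C_{\LC}^\bullet(\overline{\cur}\,\mf g,M_V)$, where the translation operator $\partial$ acts and can be controlled. Since $\overline{\cur}\,\mf g=\mb F[\partial]\mf g$ is free over $\mb F[\partial]$ and $M_V=\mb F[\partial]V$ is free as well, Lemma~\ref{lem:Ytilde-lca} gives $\widetilde C_{\LC}^n/\partial\widetilde C_{\LC}^n\cong C_{\LC}^n$ for all $n\ge0$, while Lemma~\ref{lem:partial-inj} makes $\partial$ injective on $\widetilde C_{\LC}^n$. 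Using the long exact sequence \eqref{eq:lca-les} together with the identification $H^n(\partial\widetilde C_{\LC})\cong\widetilde H_{\LC}^n$, whose comparison maps are given by the action of $\partial$, the sequence breaks (for $n\ge1$, the case $n=0$ being checked directly) into short exact sequences
$$
0\to \operatorname{coker}\bigl(\partial|_{\widetilde H_{\LC}^n}\bigr)\to H_{\LC}^n(\overline{\cur}\,\mf g,M_V)\to \ker\bigl(\partial|_{\widetilde H_{\LC}^{n+1}}\bigr)\to0 \,.
$$
The whole problem is thereby reduced to computing $\widetilde H_{\LC}^\bullet(\overline{\cur}\,\mf g,M_V)$ together with the $\mb F[\partial]$-action on it.

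For the second step I would use the structural result of \cite{BKV99}, alluded to in the proof of Proposition~\ref{pmodstr}, identifying the basic complex with the (continuous) Chevalley--Eilenberg cochain complex of the annihilation Lie algebra $\mf g[[t]]$ (with bracket $[at^m,bt^n]=[a,b]t^{m+n}$ and with $\partial$ acting as the derivation $-\partial_t$) with coefficients in the module determined by $M_V$. Concretely, $M_V$ becomes the module on which $\mf g=\mf g\,t^0$ acts through $V$ while the ideal $t\mf g[[t]]$ acts through the translation; this is the co-induced, ``dual Verma'' type module attached to $V$, which is the reason the contragredient $V^*$ enters the final weight bookkeeping.

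I would then compute this Lie algebra cohomology by filtering $\mf g[[t]]$ through the ideal $t\mf g[[t]]$, with quotient $\mf g$, and running the Hochschild--Serre spectral sequence. Because $\mf g$ is semisimple the outer cohomology produces the factor $H^\bullet(\mf g,\mb F)$, and by Corollary~\ref{ccartan} and Proposition~\ref{pcartan2} the action of $\mf g=\overline{\cur}\,\mf g/\partial\,\overline{\cur}\,\mf g$ is forced to be trivial on cohomology, so only $\mf g$-invariants survive. The inner cohomology of the positive modes is governed by the affine analogue of Kostant's theorem, in the spirit of Garland--Lepowsky: it degenerates to a single class, placed in homological degree equal to the length $\ell(\hat w)$ of the unique affine Weyl group element with $\overline{\hat w\hat\rho-\hat\rho}$ equal to the highest weight of $V^*$ (and it vanishes altogether when $V$ is infinite dimensional, where $\ell(V^*)=+\infty$). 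Grading by the energy $-\partial_t$, this yields $\widetilde H_{\LC}^n(\overline{\cur}\,\mf g,M_V)\cong \mb F[\partial]\otimes H^{\,n-\ell(V^*)}(\mf g,\mb F)$ as $\mb F[\partial]$-modules, on which $\partial$ is therefore injective with cokernel $H^{\,n-\ell(V^*)}(\mf g,\mb F)$. Substituting this into the short exact sequences above, the $\ker$ term vanishes and $H_{\LC}^n(\overline{\cur}\,\mf g,M_V)\cong \operatorname{coker}(\partial)\cong H^{\,n-\ell(V^*)}(\mf g,\mb F)$, as claimed.

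The main obstacle is the inner computation in the third step: proving that the cohomology of the positive modes $t\mf g[[t]]$ with coefficients in $M_V$ collapses to a single $\mf g$-invariant class concentrated in degree $\ell(V^*)$. This is exactly the content of the affine Kostant/Garland--Lepowsky theorem, and carrying it out requires pinning down the precise $\mf g[[t]]$-module structure of $M_V$ (including the completion and continuity issues hidden in the notation $[[t]]$) and matching the generalized Verma resolution to the affine Weyl combinatorics recalled in Section~\ref{sec:2.6}. By contrast, the first and last steps are formal once the $\mb F[\partial]$-module structure of $\widetilde H_{\LC}^\bullet$ is known, and the identification in the second step is the structural theorem of \cite{BKV99}.
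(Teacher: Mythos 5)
A preliminary remark: the paper itself contains no proof of Proposition \ref{pbarcurmv} — it is quoted from \cite[Section 8.2]{BKV99}, as the attribution and the sentence preceding the statement make explicit. So the only meaningful comparison is with that source, and your skeleton does mirror its strategy: the reduction via the long exact sequence \eqref{eq:lca-les} into short exact sequences $0\to\mathrm{coker}\bigl(\partial|_{\widetilde H^n}\bigr)\to H^n_{\LC}\to\ker\bigl(\partial|_{\widetilde H^{n+1}}\bigr)\to0$ is correct (given Lemmas \ref{lem:partial-inj} and \ref{lem:Ytilde-lca}, both applicable since $\overline{\cur}\,\mf g$ and $M_V$ are free over $\mb F[\partial]$), as is the identification of the basic complex with continuous Chevalley--Eilenberg cochains of $\mf g[[t]]$ and the plan to run Hochschild--Serre for $t\mf g[[t]]\subset\mf g[[t]]$. (Minor point: what forces only $\mf g$-invariants to survive at the $E_2$ page is reductivity of $\mf g$ and Whitehead's lemma, not Corollary \ref{ccartan}, which concerns the abutment.)

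However, the two claims that carry the actual content are asserted rather than proved, and the theorem you invoke for the first one does not apply as stated. Garland--Lepowsky computes the (co)homology of nilradicals with coefficients in \emph{integrable highest-weight modules} of the affine algebra (or trivial coefficients); your coefficient module $M_V\simeq V\otimes\mb F[\partial]$, on which $at^j$ acts as $a\otimes(d/d\partial)^j$, is a co-induced-type module over $\mf g[[t]]$ alone and is not the restriction of any such module, so the concentration of $H^q\bigl(t\mf g[[t]],M_V\bigr)^{\mf g}$ in degree $q=\ell(V^*)$ is not "exactly the content" of that theorem. A missing reduction is needed — for instance, filter $M_V$ by powers of $\partial$, so that $t\mf g[[t]]$ acts trivially on the associated graded; then Garland--Lepowsky with trivial coefficients, $H^j_{\mathrm{cont}}\bigl(t\mf g[[t]],\mb F\bigr)\simeq\bigoplus_{\ell(\hat w)=j}\bigl(V_{\overline{\hat w\hat\rho-\hat\rho}}\bigr)^*$, plus taking $\mf g$-invariants of the resulting $E_1$ page, forces everything into the single degree $j$ for which $V_{\overline{\hat w\hat\rho-\hat\rho}}$ matches $V$ (whence $\ell(V^*)$ after the dualization bookkeeping) and kills all higher differentials. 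Second, and more consequentially, your final step rests on the assertion that $\widetilde H^n\simeq\mb F[\partial]\otimes H^{n-\ell(V^*)}(\mf g,\mb F)$ \emph{as $\mb F[\partial]$-modules}, i.e.\ that $\partial$ acts freely; the energy grading gives only the graded dimensions, which do not determine the $\partial$-action (on the basic complex $\partial$ is $\partial^M+\sum_i\lambda_i$, and whether it sends the energy-$k$ class to the energy-$(k{+}1)$ class or to zero requires a cocycle-level verification). This is not a formality: if $\partial$ acted as zero, your short exact sequences would instead give $H^n_{\LC}\simeq\widetilde H^n\oplus\widetilde H^{n+1}$, an infinite-dimensional and wrong answer — precisely the mechanism that, for trivial coefficients, produces the extra summand $H^{n+1}(\mf g,\mb F)$ in Proposition \ref{pbarcurf}. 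Both gaps are fillable, but they constitute the substance of the computation in \cite{BKV99}, not steps one may wave through.
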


\begin{proposition}\label{prop:barcurg}
We have
\begin{equation}\label{curgmv6}
H^n_{\LC}(\overline{\cur}\,\mf g, \overline{\cur}\,\mf g) \simeq H^{n-1} (\mf g,\mb F)
\,, \qquad n\ge0 \,,
\end{equation}
where we let\/ $H^n=0$ for\/ $n<0$.
In particular, all derivations of\/ $\overline{\cur}\,\mf g$ are inner
and all first-order deformations are trivial.

Explicitly, 
for a Lie algebra\/ $(n-1)$-cocycle\/ $\be\in\bigl( \bigwedge\nolimits^{n-1} \mf g^* \bigr)^{\mf g}$,
the corresponding\/ $n$-cocycle\/ $Y \in C_{\LC}^{n}(\overline{\cur}\,\mf g, \overline{\cur}\,\mf g)$
is given by
\begin{equation}\label{curgmv7}
Y_{\la_1,\dots,\la_{n}}(a_1 \otimes\dots\otimes a_{n}) 
= \sum_{i=1}^{n} (-1)^{i+1} 
\be(a_1\wedge\stackrel{i}{\check{\dots}}\wedge a_{n}) \la_i a_i 
\,, \qquad a_i\in\mf g \,.
\end{equation}
\end{proposition}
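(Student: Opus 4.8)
The plan is to obtain the isomorphism \eqref{curgmv6} as a special case of Proposition \ref{pbarcurmv}, and then to pin down the explicit cocycles \eqref{curgmv7} by a direct computation in the basic complex.

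First I would identify the adjoint module. For $a,b\in\mf g$ the $\la$-action of $\overline{\cur}\,\mf g$ on itself is $a_\la b=[a_\la b]=[a,b]=(\ad a)b$, which is precisely the action $a_\la v=av$ defining the module $M_V$ in \eqref{curgmv} with $V=\mf g$ the adjoint representation; moreover both are free of rank $\dim\mf g$ over $\mb F[\partial]$. Thus $\overline{\cur}\,\mf g\simeq M_{\mf g}$ as $\overline{\cur}\,\mf g$-modules. Since $\mf g$ is simple, the adjoint representation is irreducible with highest weight the highest root $\theta$, and it is self-dual via the invariant form, so $V^*\simeq\mf g$ also has highest weight $\theta$. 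Hence $\ell(V^*)=\ell(\theta)=1$ by \eqref{curgmv3}, and Proposition \ref{pbarcurmv} yields
$$
H^n_{\LC}(\overline{\cur}\,\mf g,\overline{\cur}\,\mf g)=H^n_{\LC}(\overline{\cur}\,\mf g,M_{\mf g})\simeq H^{n-\ell(V^*)}(\mf g,\mb F)=H^{n-1}(\mf g,\mb F),
$$
which is \eqref{curgmv6}. The low-degree consequences then follow from Theorem \ref{thm:lowcoho}: by Whitehead's first lemma $H^1(\mf g,\mb F)=0$, so $H^2_{\LC}(\overline{\cur}\,\mf g,\overline{\cur}\,\mf g)\simeq H^1(\mf g,\mb F)=0$ and all first-order deformations are trivial, while $H^1_{\LC}(\overline{\cur}\,\mf g,\overline{\cur}\,\mf g)\simeq H^0(\mf g,\mb F)=\mb F$ is one-dimensional, represented by the canonical derivation $\partial$ (the $n=1$ instance of \eqref{curgmv7} with $\be$ a nonzero constant), so that every derivation is inner modulo $\partial$.

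It remains to verify the explicit formula \eqref{curgmv7}. Because $R=\overline{\cur}\,\mf g$ is free over $\mb F[\partial]$ with basis $\mf g$, a cochain is determined by its restriction to $\mf g^{\otimes n}$, so \eqref{curgmv7} does define an $n$-cochain; its symmetry \eqref{eq:skew} (which for purely even $\mf g$ is skewsymmetry under the simultaneous transposition of a pair $(a_i,\la_i)$) follows from $\be$ being alternating, with the sign $(-1)^{i+1}$ accounting for the deleted slot, as one checks by comparing the $k=i,i+1$ and $k\neq i,i+1$ terms. The crux is closedness. I would compute $dY$ from \eqref{eq:lca-d}, most conveniently lifted to the basic complex where $\widetilde Y$ is given by the same polynomial formula and $\widetilde d$ is evaluated in $M[\la_0,\dots,\la_n]$. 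The first sum of \eqref{eq:lca-d}, involving the module action ${a_i}_{\la_i}(\la_k a_k)=\la_k[a_i,a_k]$, collects into the $\mf g$-invariance condition on $\be$, while the second sum, involving $Y(\dots\otimes[{a_i}_{\la_i}a_j]\otimes\dots)=Y(\dots\otimes[a_i,a_j]\otimes\dots)$, collects into the Chevalley--Eilenberg differential $d_{\mathrm{CE}}\be$; both vanish because $\be\in\bigl(\bigwedge\nolimits^{n-1}\mf g^*\bigr)^{\mf g}$ is an invariant Lie-algebra cocycle.

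Finally, to confirm that $\be\mapsto[Y]$ realizes the isomorphism \eqref{curgmv6}, I would exhibit a one-sided inverse: extracting from $Y_{\la_1,\dots,\la_n}(a_1\otimes\dots\otimes a_n)$ the coefficient of $\la_1 a_1$ (or, symmetrically, reading off $\be$ from the $\la$-linear part) recovers $\be$ up to the relations in $\bigl(\bigwedge\nolimits^{n-1}\mf g^*\bigr)^{\mf g}$, so the assignment is injective on $H^{n-1}(\mf g,\mb F)$, hence bijective by the dimension count from \eqref{curgmv6}. The main obstacle is the closedness computation: the bookkeeping of the signs $(-1)^{i+1}$, $\gamma_i$, $\gamma_{ij}$ together with the sesquilinear extension of the module action is delicate, and one must cleanly separate the contribution of the invariance of $\be$ from that of $d_{\mathrm{CE}}\be$. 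This separation is exactly the mechanism underlying the proof of \cite[Theorem 8.1]{BKV99}, which I would follow to keep the sign analysis under control.
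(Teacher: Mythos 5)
Your derivation of \eqref{curgmv6} is exactly the paper's argument: identify the adjoint module as $M_{\mf g}$, use self-duality of the adjoint representation and $\ell(\theta)=1$ from \eqref{curgmv3}, and apply Proposition \ref{pbarcurmv}. That part is correct, and your reading of the low-degree consequences is in fact more careful than the proposition's own wording: since $H^1_{\LC}(\overline{\cur}\,\mf g,\overline{\cur}\,\mf g)\simeq H^0(\mf g,\mb F)=\mb F$ is spanned by the class of $\partial$ (the $n=1$ instance of \eqref{curgmv7}), derivations are inner only modulo $\partial$. Your sketch of the closedness of $Y_\be$ --- splitting $dY$ into a $\mf g$-invariance part and a Chevalley--Eilenberg part, both of which vanish because $\be\in\bigl(\bigwedge\nolimits^{n-1}\mf g^*\bigr)^{\mf g}$ --- is also the right mechanism and follows \cite[Theorem 8.1]{BKV99}; the paper instead obtains $dY=0$ in Remark \ref{rem:barcurg} by exhibiting $Y=dZ$ for a \emph{variational PVA} cochain $Z$ built from the Virasoro element $L$ of $\mc V^k_{\mf g}$ (note $L\notin\overline{\cur}\,\mf g$, so this does not make $Y$ exact in the LCA complex).

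The genuine gap is in your final step. Injectivity of $\be\mapsto Y_\be$ at the cochain level does not give injectivity of $[\be]\mapsto[Y_\be]$ on cohomology: you must rule out that $Y_\be=dX$ for some nonzero invariant $\be$ and some $(n-1)$-cochain $X$, and the dimension count from \eqref{curgmv6} cannot substitute for this, since a linear map between spaces of equal finite dimension need not be injective. This non-exactness is precisely what the paper supplies in Remark \ref{rem:barcurg}: if $Y_\be=dX$, then (using that the energy operator commutes with $d$) one may take $X$ of conformal weight $\Delta(X)=\Delta(Y_\be)=2$, which by \eqref{eq:DY} forces $X_{\la_1,\dots,\la_{n-1}}(u)=\partial f(u)+\sum_i g_i(u)\la_i$ modulo $\langle\partial+\la_1+\dots+\la_{n-1}\rangle$; after normalizing $f=0$, a direct computation shows $dX\ne Y_\be$. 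Alternatively, you could trace the explicit isomorphism of \cite[Section 8.2]{BKV99} underlying Proposition \ref{pbarcurmv} and check that it produces \eqref{curgmv7}. Either way, some argument beyond cochain-level injectivity is needed to close the proof of the explicit formula.
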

\begin{proof}
The adjoint module is $\overline{\cur}\,\mf g = M_{\mf g}$. 
For $V=\mf g$, we have $V^*\simeq V$ and $\La=\theta$.
Then \eqref{curgmv6} follows from Proposition \ref{pbarcurmv} and \eqref{curgmv3}.
One can check that the isomorphism is given by \eqref{curgmv7} 
by a direct calculation. We will give a more conceptual proof later, by using variational PVA cohomology
(see Remark \ref{rem:barcurg} below).
\end{proof}

The following result is an immediate consequence of Corollary \ref{cor:central-ext}
and Proposition \ref{pbarcurmv}.
\begin{theorem}\label{tbarcurmv}
With the above notation, for any 
finite-dimensional simple Lie algebra\/ $\mf g$ and
an irreducible\/ $\mf g$-mod\-ule\/ $V$, we have
\begin{equation}\label{curgmv4}
H^n_{\LC}(\cur\mf g, M_V) \simeq H^{n-\ell(V^*)} (\mf g,\mb F) \,, \qquad n\ge0 \,,
\end{equation}
where we let\/ $H^n=0$ for all\/ $n<0$, including\/ $n=-\infty$. In particular,
\begin{equation}\label{curgmv5}
H^n_{\LC}(\cur\mf g, \overline{\cur}\,\mf g) \simeq H^{n-1} (\mf g,\mb F) \,, \qquad n\ge0 \,.
\end{equation}
\end{theorem}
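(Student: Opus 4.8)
The plan is to realize $\cur\mf g$ as a central extension of $\overline{\cur}\,\mf g$ by the element $K$, and then to chain together the reduction Corollary \ref{cor:central-ext} with the known computation of Proposition \ref{pbarcurmv}. The crucial structural observation is that $K$ is a \emph{torsion} central element of $\cur\mf g$: since $\partial K = 0$, it is annihilated by the polynomial $P(\partial) = \partial$. Moreover, from the very definition $\cur\mf g = \mb F[\partial]\mf g \oplus \mb F K$ in Example \ref{ex:affine-lca}, we read off the $\mb F[\partial]$-module splitting $\cur\mf g \simeq \overline{\cur}\,\mf g \oplus \mb F[\partial]K$, with quotient LCA $\overline{\cur}\,\mf g = \mb F[\partial]\mf g$. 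Thus the structural hypotheses of Corollary \ref{cor:central-ext} are satisfied.

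The next step is to check the hypothesis on the coefficients. The module $M_V = \mb F[\partial]V$ is \emph{free}, hence torsion-free, as an $\mb F[\partial]$-module; and by Lemma \ref{lem:tor}(b) it is automatically a module over $\cur\mf g$, with $K$ acting as zero, compatibly with its given $\overline{\cur}\,\mf g$-module structure. Corollary \ref{cor:central-ext} then yields $H^n_{\LC}(\cur\mf g, M_V) \simeq H^n_{\LC}(\overline{\cur}\,\mf g, M_V)$ for all $n \ge 0$. Composing this with the identification of the right-hand side furnished by Proposition \ref{pbarcurmv} immediately produces \eqref{curgmv4}.

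For the special case \eqref{curgmv5}, I would specialize to $V = \mf g$, the adjoint representation, so that $M_V = M_{\mf g} = \overline{\cur}\,\mf g$. Since $\mf g$ is simple and carries a nondegenerate invariant form, the adjoint module is self-dual, $V^* \simeq V$, and its highest weight is the highest root $\theta$; by \eqref{curgmv3} we have $\ell(\theta) = 1$, so \eqref{curgmv4} specializes to $H^n_{\LC}(\cur\mf g, \overline{\cur}\,\mf g) \simeq H^{n-1}(\mf g,\mb F)$.

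As the phrasing of the statement suggests, there is no genuine difficulty here beyond confirming that the two quoted results apply: the argument is entirely the composition of the central-extension reduction with the previously established affine computation. The one point deserving a moment's care is the compatibility of module structures under Lemma \ref{lem:tor}(b), ensuring that the coefficients $M_V$ may legitimately be regarded on both sides of the reduction isomorphism; this is the mild technical content of the step, and it is resolved by the fact that $K$ necessarily acts trivially on any $R$-module.
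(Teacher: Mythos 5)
Your proposal is correct and follows the paper's own route: the paper derives Theorem \ref{tbarcurmv} exactly as an immediate consequence of Corollary \ref{cor:central-ext} (applied with $C=K$, torsion since $\partial K=0$, and the splitting $\cur\mf g\simeq\overline{\cur}\,\mf g\oplus\mb F[\partial]K$, with $M_V$ free hence torsion-free over $\mb F[\partial]$) combined with Proposition \ref{pbarcurmv}, and the special case \eqref{curgmv5} via $V=\mf g$, $V^*\simeq V$, $\ell(\theta)=1$. Your additional care about Lemma \ref{lem:tor}(b) ensuring $M_V$ is a $\cur\mf g$-module with $K$ acting trivially is exactly the right justification.
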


\section{Variational PVA cohomology}
\label{sec:3}

In this section, we review the definitions of Poisson vertex algebra (PVA), its modules 
and the corresponding variational PVA cohomology. 
We establish a relationship between LCA and variational PVA cohomology. We prove a theorem that the Virasoro conformal weight in cohomology can be only $0$ or $1$, which is used in the next Section \ref{sec:com-pva} to compute the cohomology of our main examples.

\subsection{Poisson vertex algebras}
\label{sec:3.1}
We start by recalling the basic definitions.

\begin{definition}\label{def:pva}
Let $\mc V$ be a commutative associative unital differential superalgebra with parity $p$,
with an even derivation $\partial$.
A \emph{Poisson vertex superalgebra} (PVA) structure on $\mc V$
is an LCA $\lambda$-bracket
$\mc V\otimes \mc V\to \mc V[\lambda]$, $a\otimes b\mapsto[a_\lambda b]$,
such that the following left Leibniz rule holds ($a,b,c\in R$):
\begin{enumerate}[L1]
\setcounter{enumi}{3}
\item
$[a_\lambda bc]=[a_\lambda b] c+(-1)^{p(b)p(c)}[a_\lambda c] b$.
\end{enumerate}

By the skewsymmetry L2, this axiom is equivalent to the right Leibniz rule
\begin{enumerate}[L1']
\setcounter{enumi}{3}
\item
$[ab_\lambda c]=
(e^{\partial\partial_\lambda}a)[b_\lambda c] 
+(-1)^{p(a)p(b)}(e^{\partial\partial_\lambda}b)[a_\lambda c]$.
\end{enumerate}

A \emph{module} $M$ over the PVA $\mc V$
is a vector superspace endowed 
with a structure of a module over the differential algebra $\mc V$,
denoted by $a\otimes m\mapsto a m$,
and with a structure of a module over the LCA $\mc V$,
denoted by $a\otimes m\mapsto a_\lambda m$,
satisfying 
\begin{enumerate}[M1]
\setcounter{enumi}{2}
\item
$a_\lambda(bm)=[a_\lambda b]m+(-1)^{p(a)p(b)}b(a_\lambda m)$;
\end{enumerate}
\begin{enumerate}[M1']
\setcounter{enumi}{2}
\item
$(ab)_\lambda m=
(e^{\partial\partial_\lambda}a)(b_\lambda m) 
+(-1)^{p(a)p(b)}(e^{\partial\partial_\lambda}b)(a_\lambda m)$.
\end{enumerate}
\end{definition}
A PVA $\mc V$ is called \emph{graded}
if there is a grading by $\mb F[\partial]$-submodules
$$
\mc V=\bigoplus_{n\in\mb Z_+}\mc V[n]
\,,
$$
such that ($m,n\in\mb Z_+$)
\begin{equation}\label{eq:grading}
\mc V[m]\mc V[n]\subset\mc V[m+n]
\,\,,\qquad
[\mc V[m]_\lambda\mc V[n]]
\subset
(\mc V[m+n-1])[\lambda]
\,.
\end{equation}
If $\mc V$ is a graded PVA,
a $\mc V$-module $M$ is \emph{graded} if there is a grading by $\mb F[\partial]$-submodules
$$
M=\bigoplus_{n\in\mb Z_+}M[n]\,,
$$
such that ($m,n\in\mb Z_+$)
\begin{equation}\label{eq:graded-module}
\mc V[m]M[n]\subset M[m+n]
\,,\qquad
\mc V[m]_\lambda M[n]
\subset
(M[m+n-1])[\lambda]
\,.
\end{equation}

Notice that every PVA is a module over itself, called the adjoint module.

\subsection{Universal PVA over an LCA}
\label{sec:3.25}

Given an LCA $R$,
there is the canonical universal PVA $\mc V(R)$ over $R$ constructed as follows.
As a commutative associative superalgebra it is $\mc V(R)=S(R)$,
the symmetric superalgebra over $R$, viewed as a vector superspace.
The endomorphism $\partial\in\End R$ uniquely extends 
to an even derivation of the superalgebra $\mc V(R)$.
Moreover, the $\lambda$-bracket on $R$
extends uniquely to a PVA $\lambda$-bracket on $\mc V(R)$
by the Leibniz rules L4 and L4'.
Note that the universal PVA $\mc V(R)$ over the LCA $R$
is automatically graded, by the usual symmetric superalgebra degree.

If $C\in R$ is such that $\partial C=0$ and $c\in\mb F$, 
then $C$ is central and $\mc V(R)(C-c)\subset \mc V(R)$ is a PVA ideal,
so we can consider the quotient PVA
$$
\mc V^c(R)=\mc V(R)/\mc V(R)(C-c)
\,.
$$
Using the above constructions, we obtain,
starting from Examples \ref{ex:boson-lca}--\ref{ex:virasoro-lca},
the corresponding PVA's.
\begin{example}[Free superboson PVA]\label{ex:boson-pva}
Let $\mf h$ be a finite-dimensional superspace, with parity $p$, and a supersymmetric nondegenerate bilinear form $(\cdot|\cdot)$, as in Example \ref{ex:boson-lca}.
The universal PVA over the free superboson LCA $R^b_{\mf h}$
is the symmetric superalgebra
$$
\mc V(R^b_{\mf h})
=
S\bigl(\mb F[\partial]\mf h \oplus \mb F K\bigr)
\,,
$$
endowed with the $\lambda$-bracket defined on generators by \eqref{eq:boson}
and extended uniquely to $\mc V(R^b_{\mf h})$ by the left and right Leibniz rules L4 and L4'
and the sesquilinearity conditions L1.
This is a graded PVA by the usual polynomial degree,
where $\deg(\partial^n a)=\deg K=1$ for $a\in\mf h$.

The \emph{free superboson} PVA is the
quotient of $\mc V(R^b_{\mf h})$ 
by the ideal $\mc V(R^b_{\mf h})(K-1)$:
$$
\mc B_{\mf h}
=
\mc V^1(R^b_{\mf h})
= S\bigl(\mb F[\partial]\mf h\bigr)
\,,
$$
with the $\lambda$-bracket as in \eqref{eq:boson} with $K=1$.
Note that, since the relation $K-1$ is not homogeneous,
the free superboson PVA $\mc B_{\mf h}$ is not graded
(though $\mc V(R^b_{\mf h})$ is).

When $\mf h$ is purely even, $\mc B_{\mf h}$ is called the \emph{free boson} PVA.
In that case, it is isomorphic, as a differential algebra, to the algebra of differential polynomials in $N$ generators:
$$
\mc B_{\mf h}
=
\mb F\bigl[u_i^{(n)} \,\big|\, i=1,\dots,N,\,n\in\mb Z_+\bigr]
\,, \qquad u_i^{(n)} = \partial^n u_i \,,
$$
where $\{u_1,\dots,u_N\}$ is an $\mb F$-basis of $\mf h$.
\end{example}
\begin{example}[Free superfermion PVA]\label{ex:fermion-pva}
Let $\mf h$ be a finite-dimensional superspace, with parity $p$, and a super-skewsymmetric nondegenerate bilinear form $(\cdot|\cdot)$, as in Example \ref{ex:fermion-lca}.
The universal PVA over the free superfermion LCA $R^f_{\mf h}$
is the symmetric superalgebra
$$
\mc V(R^f_{\mf h})
=
S\bigl(\mb F[\partial]\mf h \oplus \mb F K\bigr)
\,,
$$
with the $\lambda$-bracket defined on generators by \eqref{eq:fermion}
and extended uniquely to $\mc V(R^f_{\mf h})$ by the left and right Leibniz rules L4 and L4'
and the sesquilinearity conditions L1.
This is a graded PVA by the usual polynomial degree,
where $\deg(\partial^n a)=\deg K=1$ for $a\in\mf h$.

The \emph{free superfermion} PVA is the
quotient of $\mc V(R^f_{\mf h})$ 
by the ideal $\mc V(R^f_{\mf h})(K-1)$:
$$
\mc F_{\mf h}
=
\mc V^1(R^f_{\mf h})
= S\bigl(\mb F[\partial]\mf h\bigr)
\,,
$$
with the $\lambda$-bracket as in \eqref{eq:fermion} with $K=1$.
When $\mf h$ is purely odd, $\mc F_{\mf h}$ is called just the \emph{free fermion} PVA.
In that case, it is isomorphic, as a differential algebra, 
to the algebra of differential polynomials in $N$ odd generators:
$$
\mc F_{\mf h}
=
\textstyle\bigwedge\bigl(u_i^{(n)} \,\big|\, i=1,\dots,N,\,n\in\mb Z_+ \bigr)
\,, \qquad u_i^{(n)} = \partial^n u_i
\,,
$$
where $\{u_1,\dots,u_N\}$ is an $\mb F$-basis of $\mf h$.
\end{example}
\begin{example}[Affine PVA]\label{ex:affine-pva}
As in Example \ref{ex:affine-lca},
let $\mf g$ be a Lie algebra with a nondegenerate symmetric invariant bilinear form $(\cdot\,|\,\cdot)$.
The universal PVA over the affine LCA $\cur\mf g$
is the (purely even) symmetric algebra
$$
\mc V(\cur\mf g)
=
S\bigl(\mb F[\partial]\mf g \oplus \mb F K\bigr)
\,,
$$
endowed with the $\lambda$-bracket defined on generators by \eqref{eq:current}
and extended uniquely
to $\mc V(\cur\mf g)$ by the left and right Leibniz rules L4 and L4'
and the sesquilinearity conditions L1.
This is a graded PVA by the usual polynomial degree,
where $\deg(\partial^n a)=\deg K=1$ for $a\in\mf g$.

The \emph{affine} PVA at \emph{level} $k\in\mb F$ is defined as the quotient of $\mc V(\cur\mf g)$ 
by the ideal $\mc V(\cur\mf g)(K-k)$:
$$
\mc V^k_{\mf g}
=
\mc V^k(\cur\mf g)
=
S\bigl(\mb F[\partial]\mf g \bigr)
\,,
$$
with the $\lambda$-bracket as in \eqref{eq:current} with $K=k$.
As a differential algebra, it is isomorphic to the algebra of differential polynomials
$$
\mc V^k_{\mf g}
=
\mb F\bigl[u_i^{(n)}
\,\big|\, i=1,\dots,N,\,n\in\mb Z_+\bigr] 
\,, \qquad u_i^{(n)} = \partial^n u_i
\,,
$$
where $\{u_1,\dots,u_N\}$ is an $\mb F$-basis of $\mf g$.
\end{example}
\begin{example}[Virasoro PVA]\label{ex:virasoro-pva}
The universal PVA over the Virasoro LCA $R^{\Vir}$ from Example \ref{ex:virasoro-lca}
is the (purely even) algebra of polynomials
$$
\mc V(R^{\Vir})
=
\mb F\bigl[C,L^{(n)}
\,\big|\, n\in\mb Z_+\bigr]
\,,
$$
with the even derivation $\partial$ given by
$$
L^{(n)} = \partial^n L
\,,\qquad
\partial C=0
\,,
$$
endowed with the $\lambda$-bracket defined on generators by \eqref{eq:vir}
and extended uniquely
to $\mc V(R^{\Vir})$ by the left and right Leibniz rules L4 and L4'
and the sesquilinearity conditions L1.
This is a graded PVA by the usual polynomial degree,
where $\deg L^{(n)}=\deg C=1$.

The \emph{Virasoro} PVA of \emph{central charge} $c\in\mb F$
is the quotient of $\mc V(R^{\Vir})$ 
by the ideal $\mc V(R^{\Vir})(C-c)$:
$$
\PVir^c
=
\mc V^c(R^{\Vir})
=
\mb F\bigl[L^{(n)}
\,\big|\, n\in\mb Z_+\bigr] 
\,, \qquad L^{(n)} = \partial^n L
\,,
$$
with the $\lambda$-bracket as in \eqref{eq:vir} with $C=c$.
\end{example}
\begin{proposition}
\label{prop:rep-sr}
Let\/ $R$ be an LCA and consider the universal PVA\/ $\mc V(R)$.
Let\/ $M$ be an\/ $\mb F[\partial]$-module.
\begin{enumerate}[(a)]
\item
A structure of a PVA\/ $\mc V(R)$-module on\/ $M$
is the same as 
a structure of an LCA\/ $R$-module on\/ $M$,
$R\otimes M\to M[\lambda]$, $a\otimes m\mapsto a_\lambda m$,
together with an\/ $\mb F[\partial]$-module homomorphism\/
$R\otimes M\to M$, $a\otimes m\mapsto am$,
such that\/ $(a,b\in R$, $m\in M){:}$
\begin{equation}\label{eq:symm-action}
a(bm)=(-1)^{p(a)p(b)}b(am)
\,,
\end{equation}
satisfying the compatibility condition
given by the left Leibniz rule M3 $(a,b\in R$, $m\in M){:}$
\begin{equation}\label{eq:M3}
a_\lambda(bm)=[a_\lambda b]m+(-1)^{p(a)p(b)}b(a_\lambda m)
\,.
\end{equation}
\item
Let\/ $C\in R$ be such that\/ $\partial C=0$ and let\/ $c\in\mb F$.
A structure of a PVA\/ $\mc V^c(R)$-module on\/ $M$
is the same as 
a structure of an LCA\/ $R$-module on\/ $M$,
$R\otimes M\to M[\lambda]$, $a\otimes m\mapsto a_\lambda m$,
together with an\/ $\mb F[\partial]$-module homomorphism\/ 
$R\otimes M\to M$, $a\otimes m\mapsto am$,
satisfying conditions \eqref{eq:symm-action} and \eqref{eq:M3},
and such that\/ $Cm=cm$ for every\/ $m\in M$.
\end{enumerate}
\end{proposition}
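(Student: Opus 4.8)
The plan is to prove both implications of this "same data" statement: the forward direction by restricting the module structure to the generators $R\subset\mc V(R)=S(R)$, and the reverse direction by extending the given actions to all of $S(R)$ and checking the PVA-module axioms M1--M3'. Part (b) will then follow from part (a) together with the fact that a central element acts trivially via the $\lambda$-action.

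\emph{Forward direction.} Suppose $M$ is a PVA $\mc V(R)$-module. Restricting the LCA action to $R$ gives an LCA $R$-action, and restricting the differential-algebra action to $R$ gives a map $R\otimes M\to M$, $a\otimes m\mapsto am$, which is an $\mb F[\partial]$-module homomorphism because the differential-algebra action is compatible with $\partial$. The symmetry \eqref{eq:symm-action} is immediate from associativity and supercommutativity of $S(R)$, since $a(bm)=(ab)m=(-1)^{p(a)p(b)}(ba)m=(-1)^{p(a)p(b)}b(am)$, while \eqref{eq:M3} is precisely the special case of M3 with $a,b\in R$. Thus the listed data is recovered.

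\emph{Reverse direction.} Given the data, I would first extend the commutative action: because the operators $m\mapsto am$ for $a\in R$ supercommute by \eqref{eq:symm-action}, the universal property of the symmetric superalgebra yields a unique extension to an action of $S(R)$ making $M$ a module over the commutative associative algebra $S(R)$; compatibility with $\partial$ propagates from $R$ to $S(R)$ by the Leibniz rule since $\partial$ is a derivation of $S(R)$. Next I would extend the $\lambda$-action from $R$ to $S(R)$ by induction on degree, using the right Leibniz rule M3' as the defining formula for $(fg)_\lambda m$ in terms of lower-degree actions. It then remains to verify that the extended actions satisfy all four module axioms on $S(R)$: sesquilinearity M1, the Jacobi-type identity M2, the left Leibniz rule M3, and (by construction) M3'. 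Each is proved by induction on the total degree of the $S(R)$-arguments, the base case being exactly the hypotheses on $R$, and the inductive step feeding the Leibniz rules and the known PVA structure of $\mc V(R)=S(R)$ into the lower-degree instances.

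\emph{The main obstacle is the well-definedness of the extended $\lambda$-action and the verification of M2 for general elements of $S(R)$.} For well-definedness one checks that the right-hand side of M3' is graded-symmetric under $f\leftrightarrow g$, so it is compatible with the relation $fg=(-1)^{p(f)p(g)}gf$, and that it respects associativity of $S(R)$; these reductions use M1, M2 and \eqref{eq:M3} on $R$. These computations run in complete parallel to the construction of the universal PVA $\mc V(R)$ from the LCA $R$ (whose $\lambda$-bracket on $S(R)$ is itself built from that on $R$ by the same Leibniz rules), so the same inductive bookkeeping of $e^{\partial\partial_\lambda}$-factors and signs applies essentially verbatim. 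Finally, for part (b) I would use that $\mc V^c(R)$-modules are exactly the $\mc V(R)$-modules on which the ideal $\mc V(R)(C-c)$ acts by zero. Since $\partial C=0$, sesquilinearity M1 gives $-\lambda\,(C_\lambda m)=(\partial C)_\lambda m=0$, hence $C_\lambda m=0$, and also $e^{\partial\partial_\lambda}(C-c)=C-c$. Expanding $\bigl(f(C-c)\bigr)_\lambda m$ by M3' yields $(C-c)(f_\lambda m)$, while the commutative action gives $\bigl(f(C-c)\bigr)m=f\bigl((C-c)m\bigr)$; both vanish for all $f\in\mc V(R)$ if and only if $(C-c)m=0$ for all $m$, that is, $Cm=cm$, which is the equivalence claimed in (b).
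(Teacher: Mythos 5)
The paper offers no proof to compare against: Proposition \ref{prop:rep-sr} is explicitly left unproved (``The proof is straightforward. It is omitted since we will not use the statement in the rest of the paper''), so your proposal can only be judged on its own merits, and on those it is correct. Your argument is the natural one the authors had in mind: restriction in the forward direction; in the reverse direction, extension of the commutative action via the universal property of $S(R)$ applied to the supercommuting operators from \eqref{eq:symm-action}, extension of the $\lambda$-action by the right Leibniz rule M3', and verification of M1--M3' by induction on polynomial degree, in exact parallel with the Leibniz-rule construction of the PVA structure on $\mc V(R)$ itself; and part (b) via the identification of $\mc V^c(R)$-modules with $\mc V(R)$-modules annihilated by the ideal $\mc V(R)(C-c)$. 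Two small points: the inductive checks of well-definedness and of M1, M2, M3 are described rather than executed (acceptable here, since the paper itself deems them straightforward); and in expanding $\bigl(f(C-c)\bigr)_\lambda m$ you implicitly use $(C-c)_\lambda m=0$, which requires applying sesquilinearity to the whole element $C-c\cdot 1$ (legitimate, since $\partial(C-c\cdot 1)=0$), not merely to $C$.
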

\begin{proof}
The proof is straightforward. It is omitted since we will not use the statement in the rest of the paper.
\end{proof}

\subsection{Variational PVA cohomology}
\label{sec:3.2}

As in Section \ref{sec:2.2},
for a vector superspace with parity $p$, we denote by $\bar p=1-p$ the opposite parity.
Given a module $M$ over the PVA $\mc V$,
the corresponding cohomology complex 
$(C_{\PV}(\mc V,M),d)$ is defined as follows.
We let 
\begin{equation}\label{eq:W-das}
C_{\PV}(\mc V,M)=\bigoplus_{n\geq0}C_{\PV}^n(\mc V,M)
\,,
\end{equation}
where $C_{\PV}^n(\mc V,M)\subset C_{\LC}^n(\mc V,M)$ is the subspace
of cochains $Y$ satisfying the Leibniz rules:
\begin{align}
\notag
& 
Y_{\lambda_1,\dots,\lambda_n}
(a_1\otimes\cdots\otimes b_ic_i \otimes\cdots\otimes a_n) \\
\label{eq:leib}
& =
(-1)^{p(b_i)(\bar p(Y)+\bar p(a_1)+\dots+\bar p(a_{i-1}))}
(e^{\partial\partial_{\lambda_i}}b_i)
Y_{\lambda_1,\dots,\lambda_n}
(a_1\otimes\cdots\otimes c_i \otimes\cdots\otimes a_n) \\
\notag
& +
(-1)^{p(c_i)(p(b_i)+\bar p(Y)+\bar p(a_1)+\dots+\bar p(a_{i-1}))}
(e^{\partial\partial_{\lambda_i}}c_i)
Y_{\lambda_1,\dots,\lambda_n}
(a_1\otimes\cdots\otimes b_i \otimes\cdots\otimes a_n)
\,,
\end{align}
for all $i=1,\dots,n$ and $a_j,b_i,c_i \in\mc V$.

For example (cf.\ \eqref{eq:W-ksmall}):
\begin{equation}\label{eq:W-psmall}
C_{\PV}^0(\mc V,M)=M/\partial M
\,\,,\,\,\,\,
C_{\PV}^1(\mc V,M)=\Der^\partial(\mc V,M)
\,,
\end{equation}
where the second space is the space 
of linear maps $Y\colon \mc V\to M$, commuting with $\partial$
and satisfying the Leibniz rule
\begin{equation}\label{eq:deriv}
Y(ab)=(-1)^{p(a)\bar p(Y)}a \, Y(b)+(-1)^{p(b)(p(a)+\bar p(Y))}b \, Y(a)
\,.
\end{equation}
Furthermore, 
$C_{\PV}^2(\mc V,M)$ can be identified with the space of $\lambda$-brackets
$Y\colon\mc V\otimes \mc V\to M[\lambda]$
satisfying the sesquilinearity conditions L1,
symmetry with respect to the opposite parity $\bar p=1-p$ (cf.\ L2),
and the right Leibniz rule L4'.

\begin{lemma}\label{lem:id}
\begin{enumerate}[(a)]
\item
Let\/ $R$ be a subset of a PVA\/ $\mc V$, which generates it as a differential algebra. Then any\/ $n$-cochain\/ $Y\in C_{\PV}^n(\mc V,M)$ is uniquely determined by its restriction to\/ $R^{\otimes n}$.

\item
Any\/ $n$-cochain\/ $Y\in C_{\PV}^n(\mc V,M)$, with\/ $n\ge1$, vanishes whenever one of its arguments is the unit\/ $1\in\mc V$.
\end{enumerate}
\end{lemma}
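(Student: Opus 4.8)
The plan is to prove part (b) first and then use it inside the reduction argument for part (a). Part (b) is a one-line consequence of the Leibniz rule \eqref{eq:leib}. Since $1=1\cdot1$ in $\mc V$ and $p(1)=\bar0$, I would apply \eqref{eq:leib} in the $i$-th slot with $b_i=c_i=1$. All the sign exponents vanish because $p(1)=0$, and $e^{\partial\partial_{\lambda_i}}1=1$ since $\partial1=0$. Thus the right-hand side becomes the sum of two identical copies of $Y_{\lambda_1,\dots,\lambda_n}(a_1\otimes\cdots\otimes1\otimes\cdots\otimes a_n)$, while the left-hand side is a single copy of it. Hence
$$
Y_{\lambda_1,\dots,\lambda_n}(a_1\otimes\cdots\otimes1\otimes\cdots\otimes a_n)
=2\,Y_{\lambda_1,\dots,\lambda_n}(a_1\otimes\cdots\otimes1\otimes\cdots\otimes a_n)\,,
$$
which forces it to vanish for $n\ge1$.

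For part (a), I would exploit that $\mc V$, being generated by $R$ as a differential algebra, is spanned by the unit together with monomials $\partial^{k_1}r_1\cdots\partial^{k_m}r_m$ with $r_j\in R$. By multilinearity it suffices to determine $Y$ on tensor products of such monomials, and I would reduce one slot at a time. If the monomial in slot $i$ is the unit, then part (b) gives $Y=0$. If it is a single factor $\partial^k r$, the sesquilinearity conditions L1 give $Y_{\dots}(\cdots\otimes\partial^k r\otimes\cdots)=(-\lambda_i)^k\,Y_{\dots}(\cdots\otimes r\otimes\cdots)$, placing a generator $r\in R$ in that slot. If it is a product of at least two factors, I would write it as $b_ic_i$ with $b_i$ a single factor and $c_i$ the remaining product and apply \eqref{eq:leib}: this rewrites the value as a sum of two terms in each of which one factor is pulled out as the multiplication operator $e^{\partial\partial_{\lambda_i}}(\cdot)$ acting on the coefficient space $M$, while the argument remaining in slot $i$ has strictly fewer factors.

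An induction on the number of factors in slot $i$ then shows that every evaluation reduces to a combination — with coefficients that are polynomials in the $\lambda_j$ and operators of the form $e^{\partial\partial_{\lambda_i}}(\text{element of }\mc V)$ on $M$ — of values $Y_{\mu_1,\dots,\mu_n}(r_1\otimes\cdots\otimes r_n)$ with all $r_j\in R$. Iterating this over every slot expresses $Y(a_1\otimes\cdots\otimes a_n)$ canonically in terms of the restriction of $Y$ to $R^{\otimes n}$, which is precisely the asserted uniqueness. I do not anticipate a genuine obstacle here; the only point needing care is bookkeeping, namely checking that the reduction performed in slot $i$ leaves the other slots untouched (the extracted prefactors depend only on $\lambda_i$ and act on the target $M$) and that the process terminates. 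The signs appearing in \eqref{eq:leib} play no role in the uniqueness conclusion and can be safely suppressed for the purpose of this lemma.
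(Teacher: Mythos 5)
Your proof is correct. For part (a) you do exactly what the paper does: its proof simply states that (a) ``follows immediately from the Leibniz rule \eqref{eq:leib} and the sesquilinearity conditions \eqref{eq:sesq},'' and your induction on the number of factors in each slot is the spelled-out version of that reduction. The only genuine divergence is in part (b): the paper splits into cases, invoking Lemma \ref{lem:tor}(c) for $n\ge2$ --- the point being that $1$ is a torsion element ($\partial 1=0$), so sesquilinearity alone forces any cochain with $n\ge 2$ arguments to vanish on it --- and using the derivation identity \eqref{eq:deriv} only for $n=1$. You instead run the ``$1=1\cdot 1$, hence $Y=2Y$'' argument through the Leibniz rule \eqref{eq:leib} uniformly for all $n\ge1$; this is valid, since both sign exponents vanish because $p(1)=\bar0$, and $e^{\partial\partial_{\lambda_i}}1$ acts as the identity because $\partial 1=0$. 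In effect you have promoted the paper's own $n=1$ argument to every slot. Your route is cleaner in that it avoids the case split; the paper's route for $n\ge2$ is slightly stronger in that it uses only sesquilinearity, so the vanishing on the unit holds already for arbitrary LCA cochains in $C_{\LC}^n(\mc V,M)$, not just for those satisfying the Leibniz rule.
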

\begin{proof}
Part (a) follows immediately from the Leibniz rule \eqref{eq:leib} and the sesquilinearity conditions \eqref{eq:sesq}.
For part (b), the case $n\ge2$ follows from Lemma \ref{lem:tor} since $\partial 1=0$. For $n=1$, we plug $a=b=1$ in \eqref{eq:deriv} and obtain $Y(1)=0$.
\end{proof}

\begin{proposition}[{\cite{DSK13}}]\label{prop:PVAd}
The differential\/ $d$ in equation \eqref{eq:lca-d}
preserves the subspace\/ $C_{\PV}(\mc V,M)\subset C_{\LC}(\mc V,M)$,
which then becomes a cohomology complex.
\end{proposition}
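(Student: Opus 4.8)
The assertion that $(C_{\PV}(\mc V,M),d)$ is a cohomology complex is immediate from Proposition \ref{prop:LCAd} once we know that $d$ maps $C_{\PV}^n(\mc V,M)$ into $C_{\PV}^{n+1}(\mc V,M)$: indeed $d^2=0$ already holds on the ambient complex $C_{\LC}(\mc V,M)$, so the only thing to prove is that $dY$ satisfies the Leibniz rule \eqref{eq:leib} whenever $Y$ does. The plan is to verify this directly from the explicit formula \eqref{eq:lca-d} for $d$.

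First I would reduce to checking the Leibniz rule in a single argument. Since $dY\in C_{\LC}^{n+1}(\mc V,M)$ already satisfies the symmetry condition \eqref{eq:skew} by Proposition \ref{prop:LCAd}, and transposing two adjacent arguments (with the appropriate Koszul sign and swap of the corresponding $\lambda$'s) converts the Leibniz rule in one slot into the Leibniz rule in the neighbouring slot, it suffices to verify \eqref{eq:leib} for $dY$ in the $0$-th argument only. I would therefore substitute $a_0=bc$ into \eqref{eq:lca-d} and split each of the two sums according to whether its summation index meets slot $0$. The terms in which the index does not touch slot $0$ (i.e.\ $i\ge1$ in the first sum, and $i,j\ge1$ in the second) have the product $bc$ sitting as an ordinary argument of $Y$, and there I apply the Leibniz rule \eqref{eq:leib} for $Y$ itself.

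The subtle terms are those involving slot $0$. In the first sum, the $i=0$ term is the module action ${(bc)}_{\lambda_0}Y(\cdots)$ of a product, which I expand using the right Leibniz module axiom M3'; in the second sum, the $i=0$ terms contain the bracket $[{(bc)}_{\lambda_0}a_j]$ of a product, which I expand using the right Leibniz rule L4'. The key point — and the source of all the bookkeeping — is that when I apply \eqref{eq:leib} to $Y$ in the terms with $i\ge1$, a multiplication factor $e^{\partial\partial_{\lambda_0}}b$ (resp.\ $e^{\partial\partial_{\lambda_0}}c$) is pulled out in front of $Y(\cdots)$; the outer LCA-module action ${a_i}_{\lambda_i}$ then hits this factor times an element of $M$, and by the module axiom M3 it produces an \emph{extra} bracket term $[{a_i}_{\lambda_i}(e^{\partial\partial_{\lambda_0}}b)]\,Y(\cdots)$ besides the expected $(e^{\partial\partial_{\lambda_0}}b)\,{a_i}_{\lambda_i}Y(\cdots)$. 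I would then check that these extra bracket terms match, with signs, the pieces coming from the L4'-expansion of the slot-$0$ bracket terms, so that after regrouping everything reassembles into precisely the right-hand side of \eqref{eq:leib} for $dY$, with the two multiplication factors $e^{\partial\partial_{\lambda_0}}b$ and $e^{\partial\partial_{\lambda_0}}c$ factored out.

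The main obstacle is exactly this combinatorial bookkeeping: keeping track of the signs $\gamma_i,\gamma_{ij}$ together with the nested $e^{\partial\partial_{\lambda_0}}$-shifts, and verifying the cancellation of the cross terms produced by M3 against those produced by L4'. A cleaner, more conceptual route that avoids the computation is to argue operadically: the subspace $W_{\PV}(\Pi\mc V)$ cut out by the Leibniz rule is a Lie subalgebra of $W_{\LC}(\Pi\mc V)$, since the $\circ_i$-products preserve the Leibniz condition, so for the element $X\in W_{\PV}^1(\Pi\mc V)$ encoding the PVA structure the operator $d=\ad X$ preserves $W_{\PV}$; as $C_{\PV}^{k+1}(\mc V,\mc V)=W_{\PV}^k(\Pi\mc V)$, this settles the adjoint case $M=\mc V$, and the case of a general module $M$ follows by the usual reduction applied to the semidirect-product PVA $\mc V\oplus M$ in which $M$ is an abelian ideal. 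Either way, the essential content is the stability of the Leibniz rule under the operations defining $d$.
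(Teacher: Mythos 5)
Your proposal is correct, but note that the paper itself offers no proof of this proposition: it is quoted from \cite{DSK13}, and the mechanism behind the cited result is exactly your second, ``operadic'' route. Indeed, the paper's introduction describes $W_{\PV}(\Pi\mc V)$ as a Lie subalgebra of $W_{\mc Chom(\Pi\mc V)}$, so that $d=\ad X$ automatically preserves it for the odd element $X$ encoding the PVA structure, with coefficients in a general module $M$ obtained by the reduction through the semidirect product $\mc V\oplus M$; so that part of your proposal reproduces the paper's (and \cite{DSK13}'s) approach, with the caveat that the closure of the Leibniz condition under the Lie bracket is precisely the nontrivial content being invoked rather than a freebie. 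Your primary route --- the direct verification that $dY$ satisfies \eqref{eq:leib} whenever $Y$ does --- is a more elementary alternative, and its architecture is sound: $d^2=0$ is inherited from Proposition \ref{prop:LCAd}, the symmetry \eqref{eq:skew} reduces the check to the $0$-th slot, the slot-$0$ terms of \eqref{eq:lca-d} are expanded by M3' and L4', and the extra bracket terms produced by M3 when ${a_i}_{\lambda_i}$ hits the pulled-out factor $e^{\partial\partial_{\lambda_0}}b$ must cancel, using skewsymmetry L2 and sesquilinearity to handle the $e^{\partial\partial_{\lambda}}$-shifts, against the cross terms of the L4'-expansion of $[{(bc)}_{\lambda_0}a_j]$. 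What you have there is a correct plan rather than a completed proof --- the deferred sign and shift bookkeeping is genuinely the bulk of the work --- but no step in it would fail, and either route, once executed, settles the statement; the trade-off is that the computational route is self-contained, while the operadic route outsources the same computation to the subalgebra property established in \cite{DSK13}.
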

\begin{definition}\label{def:pva-coho}
Given a module $M$ over the PVA $\mc V$,
the cohomology of the complex $(C_{\PV}(\mc V,M),d)$
is called the \emph{variational PVA cohomology} of $\mc V$ with coefficients in $M$:
\begin{equation}\label{eq:h-pva}
\begin{split}
H_{\PV}(\mc V,M)
&=
\bigoplus_{n\geq0}
H_{\PV}^n(\mc V,M)
\,,\\
H_{\PV}^n(\mc V,M)
&=
\ker\big(d|_{C_{\PV}^{n}(\mc V,M)}\big)
/
d\big(C_{\PV}^{n-1}(\mc V,M)\big)
\,.
\end{split}
\end{equation}

\end{definition}
\begin{remark}\label{rem:variational-classical}
There are three closely related types of cohomology attached to a PVA $\mc V$.
The first is the \emph{variational PVA cohomology} of Definition \ref{def:pva-coho},
which is defined in \cite{DSK13} under the name of PVA cohomology.
The second is the \emph{variational Poisson cohomology}, 
defined in \cite{DSK13} when $\mc V$ is an algebra of differential functions, 
which coincides with the variational PVA cohomology of Definition \ref{def:pva-coho} 
when $\mc V$ is an algebra of differential polynomials.
Finally, the third is
the \emph{classical Poisson cohomology}, defined in \cite{BDSHK18};
it appears naturally as a classical limit of the chiral cohomology of vertex algebras.
As shown in \cite{BDSHKV19}, all three cohomology theories are isomorphic
when $\mc V$ is an algebra of differential polynomials.
\end{remark}

\subsection{Low degree cohomology}
\label{sec:3.3}

Let $\mc V$ be a PVA and $M$ be a $\mc V$-module.
As in Section \ref{sec:2.3}, 
a \emph{Casimir element} is an element $\int m\in M/\partial M$
such that  $\mc V_{-\partial}m=0$.
Denote by $\Cas(\mc V,M)\subset M/\partial M$ the space of Casimir elements.

A \emph{derivation} from the PVA $\mc V$ to the $\mc V$-module $M$
is an $\mb F[\partial]$-module homomorphism $D\colon R\to M$
satisfying the Leibniz rule \eqref{eq:deriv},
which is also a derivation from the LCA $\mc V$ to $M$,
i.e., it satisfies \eqref{eq:der}.
We say that a derivation is \emph{inner} if it has the form \eqref{eq:inner}.
Denote by $\Der(\mc V,M)$ the space of derivations from $\mc V$ to $M$,
and by $\Inder(\mc V,M)$ the subspace of inner derivations.
Note that $D\in \Der(\mc V) = \Der(\mc V,\mc V)$ if and only if $D$ is a derivation of both the product and the $\la$-bracket of $\mc V$, commuting with $\partial$. The inner derivations of $\mc V$ are those of the form $a_{(0)} = [a_\la \,\cdot\,] |_{\la=0}$.

\begin{remark}\label{rem:der}
Writing 
\begin{equation}\label{eq:n-prod}
[a_\la b] = \sum_{n\ge0} \frac{\la^n}{n!} a_{(n)} b \,, \qquad a,b\in\mc V \,, 
\end{equation}
we see from the Leibniz rule that the linear operators $a_{(n)}$ are derivations of the product of $\mc V$, for every $n\ge0$.
\end{remark}

The following result is an exact analogue of Theorem \ref{thm:lowcoho}.

\begin{theorem}[{\cite{DSK13}}]\label{thm:lowcoho2}
Let\/ $\mc V$ be a PVA and $M$ be a\/ $\mc V$-module. Then:
\begin{enumerate}[(a)]
\item
$H^0_{\PV}(\mc V,M)=\Cas(\mc V,M)$.
\item
$H^1_{\PV}(\mc V,M)=\Der(\mc V,M)/\Inder(\mc V,M)$.
\item
$H^2_{\PV}(\mc V,M)$ is the space of isomorphism classes of\/ $\mb F[\partial]$-split extensions
of the PVA\/ $\mc V$ by the\/ $\mc V$-module\/ $M$, where\/ $M$ is
viewed as a (non-unital) PVA with zero associative product and $\lambda$-bracket.
In particular, $H^2_{\PV}(\mc V,\mc V)$ parameterizes the equivalence classes of first-order deformations of\/ $\mc V$ that preserve the product and the\/ $\mb F[\partial]$-module structure $($cf. \cite{NR67}$)$.
\end{enumerate}
\end{theorem}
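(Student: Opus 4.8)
The plan is to mirror the proof of Theorem \ref{thm:lowcoho} for LCA cohomology, exploiting that $(C_{\PV}(\mc V,M),d)$ is by construction the subcomplex of $(C_{\LC}(\mc V,M),d)$ cut out by the Leibniz rule \eqref{eq:leib}, together with the low-degree identifications \eqref{eq:W-psmall}. For part (a), since $C_{\PV}^0(\mc V,M)=M/\partial M=C_{\LC}^0(\mc V,M)$ and the differential is given by the same formula \eqref{eq:lca-d}, I unwind $d$ on a $0$-cochain $\int m$: evaluating \eqref{eq:lca-d} modulo $\langle\partial+\lambda\rangle$ gives $(dY)_\lambda(a)=\pm\,a_{-\partial}m$, so $\ker(d|_{C_{\PV}^0})=\{\int m \mid \mc V_{-\partial}m=0\}=\Cas(\mc V,M)$. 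This is identical to the LCA computation, and part (a) follows at once.

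For part (b), by \eqref{eq:W-psmall} a $1$-cochain $Y\in C_{\PV}^1(\mc V,M)$ is an $\mb F[\partial]$-module map satisfying the Leibniz rule \eqref{eq:deriv}, i.e.\ a derivation of the associative product. Writing out the cocycle condition $(dY)_{\lambda_0,\lambda_1}(a_0\otimes a_1)=0$ via \eqref{eq:lca-d} yields precisely the derivation identity \eqref{eq:der} for the $\lambda$-bracket. Hence $\ker(d|_{C_{\PV}^1})$ consists of the maps that are simultaneously derivations of the product and of the $\lambda$-bracket, which is exactly $\Der(\mc V,M)$. The image $d(C_{\PV}^0)=d(M/\partial M)$ is the space of inner derivations \eqref{eq:inner}; these lie in $C_{\PV}^1$ by the module axiom M3. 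Therefore $H_{\PV}^1(\mc V,M)=\Der(\mc V,M)/\Inder(\mc V,M)$.

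For part (c), I establish the correspondence between $2$-cocycles and $\mb F[\partial]$-split abelian extensions. Given a cocycle $Y\in C_{\PV}^2(\mc V,M)$, I form $\widetilde{\mc V}=\mc V\oplus M$ as an $\mb F[\partial]$-module, equip it with the semidirect-product differential-algebra structure (so $M$ is a square-zero ideal with the prescribed $\mc V$-action, and the product is preserved), and define the $\lambda$-bracket by adding to the split bracket the term built from $Y_{\lambda,-\lambda-\partial}$. Axioms L1 and L2 for $\widetilde{\mc V}$ follow from the sesquilinearity and $\bar p$-symmetry of $Y$; the Leibniz rule L4 follows from the Leibniz property \eqref{eq:leib} of $Y$; and the Jacobi identity L3 is equivalent to $dY=0$. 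Conversely, an extension together with an $\mb F[\partial]$-splitting $\sigma$ produces the $M$-valued $2$-cochain $Y(a,b)=[\sigma(a)_\lambda\sigma(b)]-\sigma([a_\lambda b])$, which the PVA axioms for $\widetilde{\mc V}$ force into $C_{\PV}^2$ with $dY=0$; changing $\sigma$ by a map $Z\colon\mc V\to M$ (necessarily in $C_{\PV}^1$) changes $Y$ by $dZ$, so the class in $H_{\PV}^2$ is well defined and the correspondence descends to a bijection on isomorphism classes. The final assertion is the case $M=\mc V$: a deformation $[a_\lambda b]+\epsilon\, Y_\lambda(a,b)$ preserving the product and $\mb F[\partial]$-structure satisfies Jacobi to first order iff $dY=0$, and is gauge-trivial (via $\id+\epsilon Z$) iff $Y=dZ$.

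The main obstacle is the precise verification in part (c) that the Jacobi identity L3 for the extended bracket on $\widetilde{\mc V}$ matches, term by term and with all signs, the cocycle condition coming from the explicit differential \eqref{eq:lca-d}--\eqref{eq:gammai}. The sign bookkeeping with the shifted parity $\bar p$, the identifications modulo $\langle\partial+\lambda_1+\lambda_2\rangle$, and the need to confirm that the constructed bracket genuinely obeys L4 (so that $\widetilde{\mc V}$ is a PVA and not merely an LCA) are the delicate points; everything else restricts routinely from the LCA computation underlying Theorem \ref{thm:lowcoho}.
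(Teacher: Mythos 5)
Your proposal is correct and takes essentially the same route as the paper, which states this theorem without an in-text proof by citing \cite{DSK13}: parts (a) and (b) follow by unwinding the differential \eqref{eq:lca-d} in degrees $0$ and $1$ through the identifications \eqref{eq:W-psmall}, and part (c) by the standard dictionary between $2$-cocycles and $\mb F[\partial]$-split square-zero extensions, exactly parallel to the LCA analogue (Theorem \ref{thm:lowcoho}). The one small slip is that the inner derivations \eqref{eq:inner} satisfy the product Leibniz rule \eqref{eq:deriv} by the right Leibniz axiom for the module action (the primed axiom in Definition \ref{def:pva}, giving the expansion of $(ab)_{-\partial}m$), not by M3 as you wrote.
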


\subsection{Relation between LCA cohomology and variational PVA cohomology}
\label{sec:3.4}

\begin{theorem}\label{prop}
Let\/ $R$ be an LCA and consider the universal PVA\/ $\mc V(R)$.
\begin{enumerate}[(a)]
\item
For every module\/ $M$ over the PVA\/ $\mc V(R)$,
 we have a canonical isomorphism of complexes
\begin{equation}\label{eq:wsr}
(C_{\LC}(R,M),d)
\,\stackrel{\sim}{\longrightarrow}\,
(C_{\PV}(\mc V(R),M),d)
\,.
\end{equation}
\item
Let\/ $C\in R$ be such that\/ $\partial C=0$,
let\/ $\bar R=R/\mb FC$ be the corresponding quotient LCA,
and let\/ $\mc V^c(R)=\mc V(R)/\mc V(R)(C-c)$ be the corresponding quotient PVA.
Let\/ $M$ be a module over the PVA\/ $\mc V^c(R)$.
Then, we have natural embeddings of complexes
(cf.\ Proposition \ref{prop:central-ext})
$$
C_{\LC}(\bar R,M) \subset C_{\LC}(R,M)
\,\,\text{ and }\,\,
C_{\PV}(\mc V^c(R),M)\subset C_{\PV}(\mc V(R),M)
\,,
$$
and the isomorphism \eqref{eq:wsr}
restricts to an isomorphism of complexes
\begin{equation}\label{eq:wsrc}
(C_{\LC}(\bar{R},M),d)
\,\stackrel{\sim}{\longrightarrow}\,
(C_{\PV}(\mc V^c(R),M),d)
\,.
\end{equation}
\end{enumerate}
\end{theorem}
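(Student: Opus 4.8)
The plan is to realize \eqref{eq:wsr} as the \emph{restriction} map
\[
\rho\colon C_{\PV}^n(\mc V(R),M)\to C_{\LC}^n(R,M)\,,\qquad Z\mapsto Z|_{R^{\otimes n}}\,,
\]
and to produce its inverse by extending an LCA cochain to all of $\mc V(R)^{\otimes n}=S(R)^{\otimes n}$ through the Leibniz rule \eqref{eq:leib}. The restriction of a PVA cochain to $R^{\otimes n}$ visibly inherits the sesquilinearity \eqref{eq:sesq} and the symmetry \eqref{eq:skew}, so $\rho$ is well defined, and it is injective by Lemma \ref{lem:id}(a), since $R$ generates $\mc V(R)=S(R)$ as a differential algebra.

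The heart of the matter is surjectivity. Given $Y\in C_{\LC}^n(R,M)$, I would build $Z\in C_{\PV}^n(\mc V(R),M)$ with $\rho Z=Y$ by extending one tensor slot at a time: fixing all but the $i$-th argument, the Leibniz rule \eqref{eq:leib} prescribes the value of $Z$ on a product in the $i$-th slot in terms of its values on the factors, so one extends $Y$ from the generators $R$ to all of $S(R)$ in that slot. The main obstacle is the \emph{well-definedness} of this extension, i.e.\ that the value assigned to a monomial is independent of the way it is decomposed into factors, and that the resulting $Z$ is again sesquilinear and symmetric. This is precisely the content of the PVA ``master formula'': because $S(R)$ is the free commutative unital differential superalgebra on the $\mb F[\partial]$-module $R$, the twisted Leibniz rule (with the factors $e^{\partial\partial_{\lambda_i}}$ accounting for sesquilinearity) determines a unique poly-$\lambda$-derivation extending $Y$. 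Sesquilinearity is built into the extension, and symmetry follows by uniqueness: for a transposition $\sigma$ of adjacent slots, the cochain $Z^\sigma$ again satisfies the Leibniz rule in every slot and restricts on $R^{\otimes n}$ to $Y^\sigma=Y$ (using the symmetry of $Y$), hence $Z^\sigma=Z$.

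It remains to see that $\rho$ intertwines the differentials. Since formula \eqref{eq:lca-d} evaluated on arguments $a_0,\dots,a_n\in R$ only feeds $R$-elements into the cochain (the bracket $[{a_i}_{\lambda_i}a_j]$ of elements of $R$ stays in $R[\lambda]$) and uses only the $R$-module action on $M$, one reads off directly that $\rho(dZ)=d(\rho Z)$; that $dZ$ is again a PVA cochain is Proposition \ref{prop:PVAd}. Thus $\rho$ is a bijective chain map, proving (a).

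For part (b), I would deduce everything from (a) together with two elementary facts: any $Z\in C_{\PV}^n(\mc V(R),M)$ vanishes whenever an argument equals the unit (Lemma \ref{lem:id}(b)), and $M$ being a $\mc V^c(R)$-module means $C$ acts on $M$ as the scalar $c$. The claim is that $Z$ factors through $\mc V^c(R)^{\otimes n}$ if and only if $\rho Z$ factors through $\bar R^{\otimes n}$. For the forward direction, if $Z$ kills the ideal $\mc V(R)(C-c)$ then
\[
Z(\cdots\otimes C\otimes\cdots)=Z(\cdots\otimes(C-c)\otimes\cdots)+c\,Z(\cdots\otimes 1\otimes\cdots)=0\,,
\]
so $\rho Z$ vanishes on $\mb F C$. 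Conversely, if $\rho Z$ kills $C$ then $Z(\cdots\otimes(C-c)\otimes\cdots)=0$, and applying the Leibniz rule to $v(C-c)$ in the $i$-th slot the two resulting terms vanish: one because $Z(\cdots\otimes(C-c)\otimes\cdots)=0$, the other because $e^{\partial\partial_{\lambda_i}}(C-c)=C-c$ (as $\partial C=0$) acts on $M$ by $c-c=0$; hence $Z$ descends to $\mc V^c(R)^{\otimes n}$. Since $\rho$ is a chain isomorphism carrying $C_{\PV}^n(\mc V^c(R),M)$ onto $C_{\LC}^n(\bar R,M)$, and the latter form a subcomplex of $C_{\LC}(R,M)$ by Proposition \ref{prop:central-ext}, the isomorphism \eqref{eq:wsr} restricts to the chain isomorphism \eqref{eq:wsrc}.
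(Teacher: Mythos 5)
Your proposal is correct and follows essentially the same route as the paper's proof: both realize the isomorphism as the restriction/Leibniz-extension correspondence based on the freeness of $S(R)$ over the $\mb F[\partial]$-module $R$ (injectivity from Lemma \ref{lem:id}(a)), observe that the differential \eqref{eq:lca-d} commutes with restriction to $R^{\otimes n}$, and for part (b) reduce to the equivalence between vanishing when an argument is $C$ and vanishing when an argument lies in $\mc V(R)(C-c)$, using Lemma \ref{lem:id}(b) and the fact that $C-c$ acts as zero on $M$. The differences are only presentational: you build the restriction map and invert it while the paper constructs the extension $Y\mapsto\widehat{Y}$ directly, you obtain symmetry of the extension by a uniqueness argument instead of the paper's induction on polynomial degree, and your part (b) spells out the Leibniz computation with $v(C-c)$ that the paper compresses into a citation of Lemma \ref{lem:id}(b).
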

\begin{proof}
By definition, an element $Y\in C_{\LC}^n(R,M)$ for $n\geq0$
is a linear map 
$$
Y\colon R^{\otimes n}\to 
M[\lambda_1,\dots,\lambda_n]/\langle\partial+\lambda_1+\dots+\lambda_n\rangle
$$
satisfying the sesquilinearity and symmetry conditions \eqref{eq:sesq}, \eqref{eq:skew}.
Note that the Leibniz rule \eqref{eq:leib}
is symmetric with respect to exchanging $b_i$ and $c_i$.
Moreover, if we plug in the $i$-th position in $Y$
the product $b_i(c_id_i)$ or $(b_ic_i)d_i$,
we get the same answer:
\begin{align*}
& \pm 
(e^{\partial\partial_{\lambda_i}}b_i)(e^{\partial\partial_{\lambda_i}}c_i)
Y_{\lambda_1,\dots,\lambda_n}
(a_1\otimes\cdots\otimes d_i \otimes\cdots\otimes a_n) \\
& \pm 
(e^{\partial\partial_{\lambda_i}}b_i)(e^{\partial\partial_{\lambda_i}}d_i)
Y_{\lambda_1,\dots,\lambda_n}
(a_1\otimes\cdots\otimes c_i \otimes\cdots\otimes a_n) \\
& \pm 
(e^{\partial\partial_{\lambda_i}}c_i)(e^{\partial\partial_{\lambda_i}}d_i)
Y_{\lambda_1,\dots,\lambda_n}
(a_1\otimes\cdots\otimes b_i \otimes\cdots\otimes a_n) 
\,.
\end{align*}
Hence, by the universal property of the symmetric algebra $S(R)$,
the map $Y$ uniquely extends to a linear map
$$
\widehat{Y}\colon\mc V(R)^{\otimes n}\to 
M[\lambda_1,\dots,\lambda_n]/\langle\partial+\lambda_1+\dots+\lambda_n\rangle
\,,
$$
which vanishes when one of its arguments lies in $\mb F1$
and satisfies the Leibniz rules \eqref{eq:leib}.
It is not hard to check, inductively on the polynomial degrees, 
that the resulting map $\widehat{Y}$
still satisfies the sesquilinearity and symmetry conditions \eqref{eq:sesq}, \eqref{eq:skew}.
Hence, $\widehat{Y}$ lies in $C_{\PV}^n(\mc V(R),M)$.
This gives a bijection $C_{\LC}^n(R,M)\to C_{\PV}^n(\mc V(R),M)$,
mapping $Y\mapsto\widehat{Y}$,
thanks to Lemma \ref{lem:id}.
The fact that the differential $d$ defined by \eqref{eq:lca-d}
commutes with taking the restriction to $R^{\otimes n}$
is immediate.
Hence, we have an isomorphism of complexes, as claimed in (a).

To say that $Y$ lies in $C_{\LC}^n(\bar{R},M)\subset C_{\LC}(R,M)$
is the same as saying that $Y$ vanishes when one of its arguments is $C$.
But by Lemma \ref{lem:id}(b)
this is the same as saying that $\widehat{Y}$
vanishes when one of its arguments lies in $\mc V(R)(C-c)$.
In turn, this is equivalent to say that $\widehat{Y}$ lies in $C_{\PV}^n(\mc V^c(R),M)$.
Claim (b) follows.
\end{proof}

We have the following analogue of Proposition \ref{prop:central-ext}.
\begin{proposition}\label{prop:central-ext-pva}
Let\/ $\mc V$ be a PVA
and\/ $C\in\mc V$ be such that\/ $\partial C=0$.
Consider the quotient PVA\/ $\mc V^c=\mc V/\mc V(C-c)$,
where\/ $c\in\mb F$.
Let\/ $M$ be a module over the PVA\/ $\mc V$
such that\/ $Cm=cm$ for every\/ $m\in M$.
Then:
\begin{enumerate}[(a)]
\item
We have canonical linear maps
\begin{equation}\label{eq:w-maps-pva}
\begin{split}
& C_{\PV}^1(\mc V^c,M)\hookrightarrow C_{\PV}^1(\mc V,M)
\,,\\
& C_{\PV}^n(\mc V^c,M)\stackrel{\sim}{\longrightarrow}C_{\PV}^n(\mc V,M)
\,\text{ for }\, n\neq1
\,.
\end{split}
\end{equation}
\item
We have canonical linear maps
\begin{equation}\label{eq:h-maps-pva}
\begin{split}
& H_{\PV}^1(\mc V^c,M)\hookrightarrow H_{\PV}^1(\mc V,M)
\,,\\
& H_{\PV}^2(\mc V^c,M)\twoheadrightarrow H_{\PV}^2(\mc V,M)
\,,\\
& H_{\PV}^n(\mc V^c,M)\stackrel{\sim}{\longrightarrow}H_{\PV}^n(\mc V,M)
\,\text{ for }\, n\neq1,2
\,.
\end{split}
\end{equation}
\item
Let\/ $U=\ker(\partial|_M)$,
and assume that, as a differential algebra, $\mc V\simeq \mb F[C]\otimes\mc V^c$.
Then
\begin{equation}\label{eq:H01-pva}
\begin{split}
& \bigl( \dim H_{\PV}^1(\mc V,M)-\dim H_{\PV}^1(\mc V^c,M) \bigr) \\
& +\bigl( \dim H_{\PV}^2(\mc V^c,M) - \dim H_{\PV}^2(\mc V,M) \bigr)
=
\dim U
\,.
\end{split}
\end{equation}
Note that, in the left-hand side of \eqref{eq:H01-pva}, both summands are non-negative by part (b).
\end{enumerate}
\end{proposition}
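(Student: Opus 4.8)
The plan is to mirror the proof of Proposition~\ref{prop:central-ext}, now working directly with PVA cochains and replacing the LCA quotient $R\twoheadrightarrow\bar R$ by the quotient of differential algebras $\pi\colon\mc V\twoheadrightarrow\mc V^c=\mc V/\mc V(C-c)$. Since $Cm=cm$ for all $m\in M$, the ideal $\mc V(C-c)$ annihilates $M$, so $M$ is a module over $\mc V^c$ and $C_{\PV}^n(\mc V^c,M)$ is defined. As $\pi$ is a surjective homomorphism of differential algebras compatible with the module actions, composition with $\pi^{\otimes n}$ carries a PVA cochain of $\mc V^c$ to one of $\mc V$, giving a canonical injective map $C_{\PV}^n(\mc V^c,M)\hookrightarrow C_{\PV}^n(\mc V,M)$, $\bar Y\mapsto Y=\bar Y\circ\pi^{\otimes n}$, whose image is precisely the cochains that vanish on the ideal $\mc V(C-c)$. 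For $n=0$ this is the identity of $M/\partial M$, and for $n=1$ the image is $\{Y\in\Der^\partial(\mc V,M)\mid Y(C)=0\}$, using $Y(1)=0$ from Lemma~\ref{lem:id}(b).

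To finish part (a) I would establish surjectivity for $n\ge2$, i.e.\ that every $Y\in C_{\PV}^n(\mc V,M)$ automatically descends to $\mc V^c$. Because $\partial C=0$, the element $C$ is torsion in the LCA $\mc V$ (annihilated by $P(\partial)=\partial$), so Lemma~\ref{lem:tor}(c) gives $Y(a_1\otimes\cdots\otimes C\otimes\cdots\otimes a_n)=0$ for $n\ge2$; together with $Y(a_1\otimes\cdots\otimes1\otimes\cdots\otimes a_n)=0$ from Lemma~\ref{lem:id}(b), this yields vanishing whenever one argument equals $C-c$. To propagate this to a general ideal element $a(C-c)$, apply the Leibniz rule \eqref{eq:leib} in that slot: one summand contains the factor $Y(\cdots\otimes(C-c)\otimes\cdots)=0$, while the other carries the prefactor $e^{\partial\partial_{\lambda_i}}(C-c)=C-c$, which acts as zero on the $M$-valued output precisely because $Cm=cm$. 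Hence $Y$ kills the whole ideal and factors through $\mc V^c$. This vanishing step, and in particular the essential use of $Cm=cm$ to annihilate the cross term in the Leibniz rule, is the main obstacle; the rest is formal homological algebra.

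Part (b) then follows from (a). Since the differential \eqref{eq:lca-d} commutes with the maps of (a), the quotient complex $Q=C_{\PV}(\mc V,M)/C_{\PV}(\mc V^c,M)$ is concentrated in degree $1$, with $Q^1=C_{\PV}^1(\mc V,M)/C_{\PV}^1(\mc V^c,M)$, so $H^1(Q)=Q^1$ and $H^n(Q)=0$ for $n\neq1$. The long exact cohomology sequence of $0\to C_{\PV}(\mc V^c,M)\to C_{\PV}(\mc V,M)\to Q\to0$ then degenerates to isomorphisms $H_{\PV}^n(\mc V^c,M)\stackrel{\sim}{\longrightarrow}H_{\PV}^n(\mc V,M)$ for $n\neq1,2$, together with the exact sequence $0\to H_{\PV}^1(\mc V^c,M)\to H_{\PV}^1(\mc V,M)\to Q^1\to H_{\PV}^2(\mc V^c,M)\to H_{\PV}^2(\mc V,M)\to0$, which yields exactly the injection and surjection asserted.

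Finally, for part (c) I would use the splitting $\mc V\simeq\mb F[C]\otimes\mc V^c$ of differential algebras to decompose $C_{\PV}^1(\mc V,M)\simeq C_{\PV}^1(\mc V^c,M)\oplus U$: a derivation is recorded by its restriction to $\mc V^c$ together with its value $Y(C)$, and commutativity with $\partial$ forces $Y(C)\in U=\ker(\partial|_M)$, while conversely any $u\in U$ extends to such a derivation. This identifies $Q^1\simeq U$, so the exact sequence above reads $0\to H_{\PV}^1(\mc V^c,M)\to H_{\PV}^1(\mc V,M)\to U\to H_{\PV}^2(\mc V^c,M)\to H_{\PV}^2(\mc V,M)\to0$; taking the alternating sum of dimensions gives \eqref{eq:H01-pva}. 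Alternatively, one can reproduce verbatim the dimension count of \eqref{0111:eq1}--\eqref{0111:eq3}, examining $\ker d$ and $\im d$ on each summand.
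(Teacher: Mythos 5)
Your proof is correct, and in substance it follows the paper's route: the paper simply declares the proof to be the same as that of Proposition \ref{prop:central-ext}, recording for part (c) the splitting $C_{\PV}^1(\mc V,M)=C_{\PV}^1(\mc V^c,M)\oplus U$. Two differences are worth noting. First, your part (a) supplies a step that the verbatim LCA argument does not cover: in the PVA setting the ideal being killed is $\mc V(C-c)$ rather than $\mb F[\partial]C$, so Lemma \ref{lem:tor}(c) alone does not suffice, and the propagation via the Leibniz rule \eqref{eq:leib} --- the cross term dies because $e^{\partial\partial_{\lambda_i}}(C-c)=C-c$ acts as zero on $M$ --- is exactly the needed ingredient; this is the same argument the paper uses in the proof of Theorem \ref{prop}(b), so you have correctly identified where the ``same proof'' claim requires supplementing. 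Second, for parts (b) and (c) you organize the homological algebra as the long exact sequence of $0\to C_{\PV}(\mc V^c,M)\to C_{\PV}(\mc V,M)\to Q\to 0$ with $Q$ concentrated in degree $1$ and $Q^1\simeq U$, whereas the paper (in Proposition \ref{prop:central-ext}) computes $\ker d$ and $\im d$ explicitly on the two summands of $C^1$. These are equivalent: your five-term exact sequence $0\to H_{\PV}^1(\mc V^c,M)\to H_{\PV}^1(\mc V,M)\to U\to H_{\PV}^2(\mc V^c,M)\to H_{\PV}^2(\mc V,M)\to 0$ encodes the paper's \eqref{0111:eq2}--\eqref{0111:eq3}, and it has the advantage of giving \eqref{eq:H01-pva} an unambiguous meaning (valid even when the cohomology spaces are infinite dimensional): the two non-negative summands are $\dim\im\bigl(H_{\PV}^1(\mc V,M)\to U\bigr)$ and $\dim\im\bigl(U\to H_{\PV}^2(\mc V^c,M)\bigr)$, whose sum is $\dim U$ by exactness at $U$.
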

\begin{proof}
The proof is the same as the proof of Proposition \ref{prop:central-ext}.
For part (c) we use the fact that, under the assumption that
$\mc V$ splits as $\mc V\simeq \mb F[C]\otimes\mc V^c$,
we have (cf. \eqref{0111:eq1})
\begin{equation}\label{eq:w-maps-pva2}
C_{\PV}^1(\mc V,M)
=
C_{\PV}^1(\mc V^c,M)\oplus U
\,,
\end{equation}
by the Leibniz rule \eqref{eq:deriv} and Lemma \ref{lem:id}.
\end{proof}

\begin{proposition}\label{prop:corollary}
Let\/ $\bar R$ be a Lie conformal algebra that is free as an\/ $\mb F[\partial]$-module,
and let\/ $R=\bar R\oplus\mb FC$ be its LCA central extension
by an element\/ $C$ such that\/ $\partial C=0$.
Consider the universal enveloping PVA\/ $\mc V=\mc V(R)$ and its quotient\/ $\mc V^c=\mc V/\mc V(C-c)$,
for\/ $c\in\mb F$.
\begin{enumerate}[(a)]
\item
If the central extension\/ $R$ of\/ $\bar R$ is trivial, then
\begin{equation}\label{eq:cor1}
\dim H^n_{\PV}(\mc V,\mc V^c)
=
\dim H^n_{\PV}(\mc V^c,\mc V^c)+\delta_{n,1}
\,,\qquad n\geq0
\,.
\end{equation}
\item
If the central extension\/ $R$ of\/ $\bar R$ is nontrivial, then
\begin{equation}\label{eq:cor2}
\dim H^n_{\PV}(\mc V,\mc V^c)
=
\dim H^n_{\PV}(\mc V^c,\mc V^c)-\delta_{n,2}
\,,\qquad n\geq0
\,.
\end{equation}
\end{enumerate}
\end{proposition}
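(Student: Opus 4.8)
The plan is to realize the cochain complex $C_{\PV}(\mc V,\mc V^c)$ as the complex $C_{\PV}(\mc V^c,\mc V^c)$ enlarged by a single one-dimensional summand in degree $1$, and then to decide in which cohomological degree the resulting discrepancy sits. First I would invoke Proposition \ref{prop:central-ext-pva} (or, equivalently, pass to Lie conformal cohomology via Theorem \ref{prop} and apply Proposition \ref{prop:central-ext}): since $R=\bar R\oplus\mb FC$ with $\partial C=0$, we have $\mc V=\mc V(R)\cong\mb F[C]\otimes\mc V^c$ as differential algebras, and $C$ acts on the module $\mc V^c$ by the scalar $c$, so the hypotheses are met. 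Here $U=\ker(\partial|_{\mc V^c})$ is just the space of constants $\mb F\cdot 1$ of the differential polynomial algebra $\mc V^c\cong S(\bar R)$, so $\dim U=1$.

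By \eqref{eq:w-maps-pva2} the only difference between the two complexes is an extra summand $\mb F D$ in degree $1$, where $D$ is the derivation with $D(C)=1$ and $D|_{\mc V^c}=0$. A short computation on generators gives $dD=\psi$, the image in $C^2_{\PV}(\mc V^c,\mc V^c)$ of the $2$-cocycle $\psi$ (valued in $\mb F\cdot 1\subset\mc V^c$) that defines the central extension $R$ of $\bar R$. Proposition \ref{prop:central-ext-pva}(b),(c) then already yields that the two cohomologies agree for $n\ne 1,2$ and that the total discrepancy equals $\dim U=1$; what remains is only to decide whether this $1$ lands in $H^1$ or in $H^2$.

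This last step is pure bookkeeping on the enlarged complex. If $\psi=dY_0$ for some $Y_0\in C^1_{\PV}(\mc V^c,\mc V^c)$, then $D-Y_0$ is a cocycle whose class is nonzero (it does not vanish on $C$) and is not in the image of $H^1_{\PV}(\mc V^c,\mc V^c)$, so $H^1$ grows by one while $\mb F\psi\subset B^2$ contributes nothing new in degree $2$: this is the $+\delta_{n,1}$ formula of (a). If instead $[\psi]\ne 0$ in $H^2_{\PV}(\mc V^c,\mc V^c)$, then $D$ admits no such correction, $H^1$ is unchanged, and $\mb F\psi$ becomes a genuinely new coboundary, so $H^2$ drops by one: this is the $-\delta_{n,2}$ formula of (b). Thus the two cases are governed precisely by the vanishing or nonvanishing of the central class $[\psi]$ in $H^2_{\PV}(\mc V^c,\mc V^c)$.

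The hard part will be matching this dichotomy with the triviality of the extension, i.e.\ showing that $[\psi]=0$ in $H^2_{\PV}(\mc V^c,\mc V^c)$ exactly in the trivial case. The trivial direction is easy: a trivialization furnishes a $1$-cochain $Y_0$ with $dY_0=\psi$. The delicate direction is detecting $[\psi]\ne 0$, and here one cannot simply appeal to nonvanishing of the extension class in the smaller group $H^2_{\LC}(\bar R,\mb F)$: for an abelian $\bar R$ the Euler (degree) derivation of $\mc V^c$ already trivializes $\psi$, so even a nonsplit central extension can give $[\psi]=0$. One therefore has to compute the image of the central class in the actual coefficients $\mc V^c$, and this detection of $[\psi]$ is the real content behind the labels \emph{trivial} and \emph{nontrivial}.
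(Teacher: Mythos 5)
Your reduction is exactly the paper's own proof: the paper likewise decomposes $C^1_{\PV}(\mc V,\mc V^c)=C^1_{\PV}(\mc V^c,\mc V^c)\oplus\mb F Z$ with $Z_\lambda(\bar R)=0$, $Z_\lambda(C)=1$, computes $(dZ)_{\lambda,\mu}(a\otimes b)=\alpha_\lambda(a\otimes b)+\langle\partial+\lambda+\mu\rangle$ where $\alpha$ is the $2$-cocycle of the extension, and then concludes with the single sentence ``The claim follows.'' Your bookkeeping of the two cases (discrepancy in $H^1$ when $[\alpha]=0$ in $H^2_{\PV}(\mc V^c,\mc V^c)$, in $H^2$ when $[\alpha]\neq0$ there) is correct and is precisely what that sentence is hiding.

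The concern in your last paragraph is therefore not a deficiency of your write-up: it is a genuine gap in the paper's final step, and your Euler-derivation observation in fact shows that part (b) is false as stated. Take $\bar R=\mb F[\partial]u$ abelian of rank one, $R$ the free boson LCA of Example \ref{ex:boson-lca}, and $c\neq0$. The extension is nontrivial, yet $d$ of the Euler derivation $Y_0=\sum_{n}u^{(n)}\partial/\partial u^{(n)}\in C^1_{\PV}(\mc V^c,\mc V^c)$ equals $2c\,\alpha$, so $[\alpha]=0$ and your case-(a) bookkeeping applies; consistently, Theorem \ref{thm:coh-bos} gives $H^2_{\PV}(\mc V^c,\mc V^c)=0$ in rank one, so by the surjection in Proposition \ref{prop:central-ext-pva}(b) no drop in $H^2$ is even possible, and formula \eqref{eq:cor2} would read $\dim H^2_{\PV}(\mc V,\mc V^c)=-1$. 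So the correct dichotomy is the one you isolate: (a) holds if and only if $[\alpha]=0$ in $H^2_{\PV}(\mc V^c,\mc V^c)$, and (b) if and only if $[\alpha]\neq0$ there. Triviality of the LCA extension does imply the former (extend the trivializing map $f\colon\bar R\to\mb F$ by the Leibniz rule to $\hat f\in C^1_{\PV}(\mc V^c,\mc V^c)$ with $d\hat f=\pm\alpha$), but nontriviality does not imply the latter, because the map $H^2_{\LC}(\bar R,\mb F)\to H^2_{\LC}(\bar R,\mc V^c)\simeq H^2_{\PV}(\mc V^c,\mc V^c)$ induced by $\mb F\hookrightarrow\mc V^c$ need not be injective when $c\neq0$. (It is injective on $[\alpha]$ in the cases the proposition is presumably aimed at: for $\bar R^{\vir}$ and for $\overline{\cur}\,\mf g$ with $\mf g$ simple, $[\alpha]$ generates $H^2_{\PV}(\mc V^c,\mc V^c)$ by Theorems \ref{thm:coh-vir} and \ref{thm:coh-aff}, so (b) is true there; and at $c=0$ with $\bar R$ abelian the differential on the small complex vanishes and (b) also holds.) So you have not missed an idea available in the paper; to get a correct statement one must either replace the hypothesis in (b) by the nonvanishing of the image of $[\alpha]$ in $H^2_{\PV}(\mc V^c,\mc V^c)$, or add hypotheses on $\bar R$ and $c$ guaranteeing that nonvanishing.
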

\begin{proof}
Note that,
as differential algebras, $\mc V\simeq\mb F[C]\otimes\mc V^c$
and $\mc V^c\simeq S(\bar R)$ is an algebra of differential polynomials.
In particular, 
$U=\ker(\partial|_{\mc V^c})=\mb F1$.
We can then apply Proposition \ref{prop:central-ext-pva}
to get that either \eqref{eq:cor1} or \eqref{eq:cor2} holds.
Moreover, by \eqref{eq:w-maps-pva} and \eqref{eq:w-maps-pva2},
$C^n_{\PV}(\mc V^c,\mc V^c)$ and $C^n_{\PV}(\mc V,\mc V^c)$ differ only at $n=1$,
and
$$
C^1_{\PV}(\mc V,\mc V^c)
=
C^1_{\PV}(\mc V^c,\mc V^c)\oplus\mb FZ
\,,
$$
where $Z$ is uniquely defined by $Z_\lambda(\bar R)=0$ and $Z_\lambda(C)=1$.
For $a,b\in\bar R$, we have
$$
(dZ)_{\lambda,\mu}(a\otimes b)
=
Z_{\lambda+\mu}([a_\lambda b])
=
\alpha_\lambda(a\otimes b)
+\langle\partial+\lambda+\mu\rangle
\,,
$$
where $\alpha_\lambda\colon \bar R\otimes\bar R\to\mb F[\lambda]$
is the $2$-cocycle defining the central extension $R$:
$$
[a_\lambda b]^R=[a_\lambda b]^{\bar R}+\alpha_\lambda(a\otimes b)C
\,,\qquad a,b\in\bar R
\,.
$$
The claim follows.
\end{proof}

\subsection{Basic PVA cohomology complex}\label{sec:bas-pva}

Now we will review the basic PVA cohomology complex introduced in \cite{DSK13}.
The discussion in this subsection will be similar to the case of basic LCA cohomology from Section \ref{sec:bas-lca}.

Let $\mc V$ be a PVA and $M$ be a $\mc V$-module. 
We let $\widetilde{C}_{\PV}^n(\mc V,M)$ be the vector superspace,
with parity $\bar p$ induced by the opposite parities $\bar p=1-p$ of $\mc V$ and $M$, 
consisting of all linear maps
\begin{equation}\label{eq:basicY}
\widetilde Y\colon \mc V^{\otimes n}\to M[\lambda_1,\dots,\lambda_n] \,,
\end{equation}
satisfying the sesquilinearity conditions \eqref{eq:sesq},
the symmetry conditions \eqref{eq:skew}
and the Leibniz rules \eqref{eq:leib}
(where all the equations are now in the space $M[\lambda_1,\dots,\lambda_n]$).
Elements of $\widetilde{C}_{\PV}^n(\mc V,M)$ are called 
\emph{basic} $n$-\emph{cochains} of $\mc V$ with coefficients in $M$.

\begin{remark}\label{rem:gen}
Suppose that the PVA $\mc V$ is a superalgebra of differential polynomials in the even or odd variables $u_i$, where $i$ is in some (possibly infinite) index set $I$.
Then, by sesquilinearity and Leibniz rules,
an element $\widetilde{Y}\in\widetilde{C}_{\PV}^n(\mc V,M)$
is uniquely determined by its values on the generators $u_i$.
In other words, $\widetilde{Y}$ is uniquely determined by the (arbitrary)
collection of polynomials
\begin{equation}\label{eq:polynY}
\widetilde{Y}_{\lambda_1,\dots,\lambda_n}(u_{i_1}\otimes\dots\otimes u_{i_n})
\,\in\,M[\lambda_1,\dots,\lambda_n]
\,,
\end{equation}
for $i_1,\dots,i_n\in I$,
satisfying only the symmetry condition \eqref{eq:skew}.
\end{remark}

The same formula \eqref{eq:tilde-partial}, as in the LCA case, defines an action of $\partial$ 
on the spaces $\widetilde{C}_{\PV}^n(\mc V,M)$.
Moreover, $\partial$ is injective on $\widetilde{C}_{\PV}^n(\mc V,M)$ for $n\ge1$ by  Lemma \ref{lem:partial-inj}.
Recall the map $\pi$ defined by \eqref{eq:piM}. Then we have the following analogue of Lemma \ref{lem:Ytilde-lca}.


\begin{lemma}\label{lem:Ytilde}
We have a well-defined linear map 
\begin{equation}\label{eq:piYtilde3}
\widetilde{C}_{\PV}^n(\mc V,M)\to {C}_{\PV}^n(\mc V,M)
\,,\qquad
\widetilde Y\mapsto\pi\circ\widetilde Y
\,.
\end{equation}
\begin{enumerate}[(a)]
\item
The map \eqref{eq:piYtilde3} has kernel $\partial\widetilde{C}_{\PV}^n(\mc V,M)$.
Hence \eqref{eq:piYtilde3} induces an injective linear map
$$
\widetilde{C}_{\PV}^n(\mc V,M)
/\partial\widetilde{C}_{\PV}^n(\mc V,M)
\,\hookrightarrow\,
{C}_{\PV}^n(\mc V,M)
\,.
$$
\item
Suppose that, as a differential superalgebra, $\mc V$
is a superalgebra of differential polynomials in even or odd variables.
Then \eqref{eq:piYtilde3} is surjective for all\/ $n\ge0$.
Hence, we get an isomorphism
$$
\widetilde{C}_{\PV}^n(\mc V,M)
/\partial\widetilde{C}_{\PV}^n(\mc V,M)
\,\stackrel{\sim}{\longrightarrow}\,
{C}_{\PV}^n(\mc V,M)
\,.
$$
\end{enumerate}
\end{lemma}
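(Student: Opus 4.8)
The plan is to mirror the structure of the LCA result (Lemma \ref{lem:Ytilde-lca}), isolating the one genuinely new ingredient, namely the compatibility of the Leibniz rule \eqref{eq:leib} with the quotient map $\pi$. First I would establish that \eqref{eq:piYtilde3} is well defined, i.e., that $\pi\circ\widetilde Y$ again lies in $C_{\PV}^n(\mc V,M)$. Sesquilinearity \eqref{eq:sesq} and symmetry \eqref{eq:skew} descend immediately, since $\pi$ is $\mb F[\lambda_1,\dots,\lambda_n]$-linear and $S_n$-equivariant (the subspace $\langle\partial+\lambda_1+\cdots+\lambda_n\rangle$ is stable under multiplication by each $\lambda_i$ and under permutations of the $\lambda$'s). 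The Leibniz rule requires the observation that for $b\in\mc V$ the operator $e^{\partial\partial_{\lambda_i}}b$ commutes with $\partial+\lambda_1+\cdots+\lambda_n$ on $M[\lambda_1,\dots,\lambda_n]$; this I would check by a short power-series computation, the point being that the extra terms produced by $\partial_{\lambda_i}$ hitting the factor $\lambda_i$ inside $\partial+\sum_j\lambda_j$ cancel against those produced by $\partial$ acting as a derivation past $b$. Consequently $e^{\partial\partial_{\lambda_i}}b$ descends to the quotient, and applying $\pi$ to both sides of \eqref{eq:leib} for $\widetilde Y$ yields \eqref{eq:leib} for $\pi\circ\widetilde Y$.

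For part (a), the inclusion $\partial\widetilde C_{\PV}^n(\mc V,M)\subseteq\ker$ is immediate because $\pi\circ\partial\widetilde Y=\pi\bigl((\partial+\sum_i\lambda_i)\widetilde Y\bigr)=0$. For the reverse inclusion I would use that $\partial+\lambda_1+\cdots+\lambda_n$ is injective on $M[\lambda_1,\dots,\lambda_n]$ for $n\ge1$: on a nonzero element its top total $\lambda$-degree part gets multiplied by the nonzero polynomial $\lambda_1+\cdots+\lambda_n$, which is injective on the free $\mb F[\lambda_1,\dots,\lambda_n]$-module $M[\lambda_1,\dots,\lambda_n]$ (the same mechanism as in Lemma \ref{lem:partial-inj}). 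Thus if $\pi\circ\widetilde Y=0$, then each value $\widetilde Y_{\lambda_1,\dots,\lambda_n}(a_1\otimes\cdots\otimes a_n)$ has a unique preimage under $\partial+\sum_i\lambda_i$; assembling these defines a linear map $Z$ with $\partial Z=\widetilde Y$, and injectivity together with the identities already used lets me transport sesquilinearity, symmetry and the Leibniz rule from $\widetilde Y$ to $Z$, so $Z\in\widetilde C_{\PV}^n(\mc V,M)$ and $\widetilde Y\in\partial\widetilde C_{\PV}^n(\mc V,M)$. The case $n=0$ reduces to the identity $C_{\PV}^0(\mc V,M)=M/\partial M$, and the induced injection is then formal.

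For part (b), I would reduce surjectivity to a lifting problem on generators. By Lemma \ref{lem:id}(a) an element of $C_{\PV}^n(\mc V,M)$ is determined by its restriction to the tuples $u_{i_1}\otimes\cdots\otimes u_{i_n}$, and by Remark \ref{rem:gen} a basic cochain may be prescribed freely on such tuples subject only to the symmetry \eqref{eq:skew}. Hence, given $Y\in C_{\PV}^n(\mc V,M)$, it suffices to lift its generator-values from $M[\lambda_1,\dots,\lambda_n]/\langle\partial+\sum_i\lambda_i\rangle$ to $M[\lambda_1,\dots,\lambda_n]$ compatibly with the $S_n$-action underlying \eqref{eq:skew}. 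Regarding $\pi$ as an $S_n$-equivariant surjection of $\mb F[S_n]$-modules and using that $\mb F$ has characteristic $0$, Maschke's theorem provides an $S_n$-equivariant section of $\pi$; composing the generator-value assignment of $Y$ with this section produces a symmetric lift, which extends via Remark \ref{rem:gen} to a basic cochain $\widetilde Y$ whose image $\pi\circ\widetilde Y$ agrees with $Y$ on generators, hence equals $Y$ by Lemma \ref{lem:id}(a).

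The main obstacle is the verification underlying well-definedness: that $e^{\partial\partial_{\lambda_i}}b$ commutes with $\partial+\lambda_1+\cdots+\lambda_n$. This is exactly what makes the Leibniz rule pass to the quotient, and it is reused in part (a) to transport the Leibniz rule to $Z$; everything else is either the elementary injectivity of $\partial+\sum_i\lambda_i$ or the soft equivariant-splitting argument of part (b).
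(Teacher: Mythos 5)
Your proof is correct, and its overall architecture is the same as the paper's: descend the three defining conditions through $\pi$ to get well-definedness, identify the kernel of \eqref{eq:piYtilde3} with $\partial\widetilde{C}_{\PV}^n(\mc V,M)$, and prove surjectivity by lifting the values of a cochain on generator tuples $u_{i_1}\otimes\dots\otimes u_{i_n}$ compatibly with the symmetry \eqref{eq:skew}, then extending by Remark \ref{rem:gen} and concluding via Lemma \ref{lem:id}(a). The differences lie in how the ingredients are supplied. For well-definedness the paper writes ``clearly,'' while you isolate and verify the only nontrivial point, that $e^{\partial\partial_{\lambda_i}}b$ commutes with $\partial+\lambda_1+\dots+\lambda_n$; this is indeed the termwise cancellation of $[\partial^M,(\partial^k b)\,\cdot\,]=(\partial^{k+1}b)\,\cdot\,$ against $(\partial^k b)[\lambda_i,\partial_{\lambda_i}^k]=-k\,(\partial^k b)\partial_{\lambda_i}^{k-1}$, and it is what lets both \eqref{eq:leib} descend and, in part (a), transport to the preimage. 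For part (a) the paper simply cites \cite[Proposition 7.3(c)]{DSK13}, whereas you give a self-contained argument from the injectivity of $\partial+\lambda_1+\dots+\lambda_n$ on $M[\lambda_1,\dots,\lambda_n]$ for $n\ge1$ (the mechanism of Lemma \ref{lem:partial-inj}), carrying sesquilinearity, symmetry and Leibniz over to the unique preimage $Z$ by that same injectivity; this is sound, including the separate treatment of $n=0$. For part (b) the paper builds an explicit $S_n$-equivariant section of $\pi$, namely $\frac1n\sum_{\ell=1}^n\iota_\ell$ with $\iota_\ell$ the substitution $\lambda_\ell\mapsto -\partial-\lambda_1-\stackrel{\ell}{\check{\dots}}-\lambda_n$ of \eqref{eq:iota-ell}, while you invoke Maschke averaging in characteristic $0$ to get an abstract equivariant section; since $\sigma\circ\iota_\ell\circ\sigma^{-1}=\iota_{\sigma(\ell)}$, the paper's formula is exactly the group average $\frac1{n!}\sum_{\sigma\in S_n}\sigma\circ\iota_1\circ\sigma^{-1}$, so the two arguments implement the same mechanism. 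Your route buys brevity and makes clear that nothing special about $\pi$ is needed beyond equivariance and characteristic $0$; the paper's explicit section has the advantage of being directly usable in later computations (the substitution $\lambda_n\mapsto-\partial-\lambda_1-\dots-\lambda_{n-1}$ reappears, e.g., in the proof of Theorem \ref{thm:coh-vir}).
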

\begin{proof}
Clearly, if $\widetilde Y$ satisfies equations \eqref{eq:sesq},
\eqref{eq:skew} and \eqref{eq:leib}
in $M[\lambda_1,\dots,\lambda_n]$,
so does $\pi\circ\widetilde Y$ in
$M[\lambda_1,\dots,\lambda_n]/\langle\partial+\lambda_1+\dots+\lambda_n\rangle$.
Hence, composing with $\pi$
defines a linear map
$\widetilde{C}_{\PV}^n(\mc V,M)\to {C}_{\PV}^n(\mc V,M)$.
Claim (a) is proved in \cite[Proposition 7.3(c)]{DSK13}.

Claim (b) is obvious for $n=0$, hence to prove it we may assume that $n\ge1$.
%
Let $Y\in{C}_{\PV}^n(\mc V,M)$.
We construct its preimage $\widetilde{Y}\in\widetilde{C}_{\PV}^n(\mc V,M)$
as follows.
For every $\ell=1,\dots,n$, we have the identification
\begin{equation}\label{eq:iota-ell}
\iota_\ell
\colon
M[\lambda_1,\dots,\lambda_n]/\langle\partial+\lambda_1+\dots+\lambda_n\rangle
\stackrel{\sim}{\longrightarrow}
M[\lambda_1,\stackrel{\ell}{\check{\dots}},\lambda_n]
\subset
M[\lambda_1,\dots,\lambda_n]
\,,
\end{equation}
obtained by replacing $\lambda_\ell$
by $-\lambda_1-\stackrel{\ell}{\check{\dots}}-\lambda_n-\partial$.
Obviously, we have
\begin{equation}\label{eq:pi-iota}
\pi\circ\iota_\ell=\id
\,\,\text{ for every }\, \ell=1,\dots,n
\,.
\end{equation}

By assumption, $\mc V = \mb F \bigl[ u_i^{(k)} \bigr]$
is a superalgebra of differential polynomials.
We let (cf.\ \cite[Remark 6.6]{DSK13}):
\begin{equation*}
\widetilde{Y}_{\lambda_1,\dots,\lambda_n}(u_{i_1}\otimes\dots\otimes u_{i_n})
=
\frac1n
\sum_{\ell=1}^n
\iota_\ell
\big({Y}_{\lambda_1,\dots,\lambda_n}(u_{i_1}\otimes\dots\otimes u_{i_n})\big)
\,\in\,M[\lambda_1,\dots,\lambda_n]
\,.
\end{equation*}
It is immediate to check that 
$\widetilde{Y}$
satisfies the symmetry conditions \eqref{eq:skew},
since $Y$ does. By Remark \ref{rem:gen},
$\widetilde{Y}$ extends uniquely to a linear map
$\widetilde{Y}\colon \mc V^{\otimes n}\to M[\lambda_1,\dots,\lambda_n]$
using the sesquilinearity conditions and the Leibniz rules.
Hence, $\widetilde{Y}$ is a well-defined element of
$\widetilde{C}_{\PV}^n(\mc V,M)$.
By equation \eqref{eq:pi-iota}, $\pi\circ \widetilde{Y}$ and $Y$
have the same value
on all $u_{i_1}\otimes\dots\otimes u_{i_n}$;
therefore they must coincide: $\pi\circ\widetilde{Y}=Y$.
This proves surjectivity. 
\end{proof}


The basic PVA cohomology \emph{differential} $\widetilde d\colon\widetilde{C}_{\PV}^{n}(\mc V,M)\to\widetilde{C}_{\PV}^{n+1}(\mc V,M)$ 
is defined again by \eqref{eq:lca-d}, viewed as an equation in $M[\lambda_0,\dots,\lambda_n]$.
Then Lemma \ref{lem:basic-d} holds as well. We let
\begin{equation}\label{eq:tildeW-pv}
\widetilde C_{\PV}(\mc V,M)=\bigoplus_{n\geq0} \widetilde C_{\PV}^n(\mc V,M)
\,.
\end{equation}

\begin{definition}\label{def:pva-bas}
Given a module $M$ over the PVA $\mc V$,
the cohomology of the complex $\bigl(\widetilde C_{\PV}(\mc V,M),\widetilde d\bigr)$
is called the \emph{basic PVA cohomology} of $\mc V$ with coefficients in $M$:
\begin{equation}\label{eq:h-pva-bas}
\begin{split}
\widetilde H_{\PV}(\mc V,M)
&=
\bigoplus_{n\geq0}
\widetilde H_{\PV}^n(\mc V,M)
\,,\\
\widetilde H_{\PV}^n(\mc V,M)
&=
\ker\big(\widetilde d|_{\widetilde C_{\PV}^{n}(\mc V,M)}\big)
/
\widetilde d\big(\widetilde C_{\PV}^{n-1}(\mc V,M)\big)
\,.
\end{split}
\end{equation}
\end{definition}
\begin{remark}\label{rem:lca-bas-pva}
Note that, by definition, the basic PVA complex $\bigl(\widetilde C_{\PV}(\mc V,M), \widetilde d\bigr)$ is a subcomplex of the
basic LCA complex $\bigl(\widetilde C_{\LC}(\mc V,M), \widetilde d\bigr)$, where in the latter, $\mc V$ is viewed as an LCA.
\end{remark}

Having a short exact sequence of complexes
\begin{equation}\label{eq:tildeW-ses}
0 \to \partial\widetilde C_{\PV}(\mc V,M) \to \widetilde C_{\PV}(\mc V,M) \to \widetilde C_{\PV}(\mc V,M) / \partial\widetilde C_{\PV}(\mc V,M) \to 0
\end{equation}
leads to a long exact sequence of cohomology. 
Under the assumptions of Lemma \ref{lem:Ytilde}
we obtain the long exact sequence
\begin{equation}\label{eq:tildeW-les}
\begin{split}
0 \to H^0\big(\partial\widetilde C_{\PV}(\mc V,M)\big)
&\to \widetilde H_{\PV}^0(\mc V,M) \to H_{\PV}^0(\mc V,M) \to
\\
\to H^1\big(\partial\widetilde C_{\PV}(\mc V,M)\big) 
&\to \widetilde H_{\PV}^1(\mc V,M) \to H_{\PV}^1(\mc V,M) \to
\\
\to H^2\big(\partial\widetilde C_{\PV}(\mc V,M)\big) 
&\to \widetilde H_{\PV}^2(\mc V,M) \to H_{\PV}^2(\mc V,M) \to\cdots
\,.
\end{split}
\end{equation}
By Lemmas \ref{lem:partial-inj} and \ref{lem:basic-d}(b), we have
\begin{equation}\label{eq:tildeW-les2}
H^n\big(\partial\widetilde C_{\PV}(\mc V,M)\big)
\simeq \widetilde H_{\PV}^n(\mc V,M) \,, \qquad n\ge 0 \,.
\end{equation}
Hence, as an immediate consequence, 
we obtain from \eqref{eq:tildeW-les} the following. 

\begin{proposition}\label{prop:Ytilde}
Assume that, as a differential superalgebra, $\mc V$
is a superalgebra of differential polynomials in even or odd variables,
and that\/ $\widetilde H_{\PV}^n(\mc V,M)=0$ for all\/ $n\ge0$. Then
$H_{\PV}^n(\mc V,M)=0$ for all\/ $n\ge0$.
\end{proposition}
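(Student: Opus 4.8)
The plan is to read the conclusion directly off the long exact sequence \eqref{eq:tildeW-les}, which is precisely what the machinery of this subsection was set up to deliver. First I would invoke the hypothesis that $\mc V$ is a superalgebra of differential polynomials in order to place us under the assumptions of Lemma \ref{lem:Ytilde}(b). That lemma supplies the isomorphism of complexes $\widetilde C_{\PV}^n(\mc V,M)/\partial\widetilde C_{\PV}^n(\mc V,M) \stackrel{\sim}{\longrightarrow} C_{\PV}^n(\mc V,M)$ for every $n\ge0$; consequently the cohomology of the quotient complex appearing in the short exact sequence \eqref{eq:tildeW-ses} is exactly $H_{\PV}^n(\mc V,M)$.

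Next I would record the identification \eqref{eq:tildeW-les2}, namely $H^n\big(\partial\widetilde C_{\PV}(\mc V,M)\big) \simeq \widetilde H_{\PV}^n(\mc V,M)$ for all $n\ge0$. This rests on Lemma \ref{lem:partial-inj}, giving injectivity of $\partial$ on basic cochains for $n\ge1$, together with Lemma \ref{lem:basic-d}(b), which says that $\partial$ commutes with $\widetilde d$; these identify the subcomplex $\partial\widetilde C_{\PV}(\mc V,M)$ with a copy of the basic complex. With both identifications in hand, the three cohomology terms entering \eqref{eq:tildeW-les} become $\widetilde H_{\PV}^n$ (from the sub), $\widetilde H_{\PV}^n$ (from the middle), and $H_{\PV}^n$ (from the quotient).

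Finally I would impose the vanishing hypothesis $\widetilde H_{\PV}^n(\mc V,M)=0$ for all $n\ge0$. In the long exact sequence each space $H_{\PV}^n(\mc V,M)$ is then squeezed between $\widetilde H_{\PV}^n(\mc V,M)=0$ on its left and $\widetilde H_{\PV}^{n+1}(\mc V,M)=0$ on its right, the latter being the target of the connecting homomorphism; exactness therefore forces $H_{\PV}^n(\mc V,M)=0$.

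I do not expect any genuine obstacle in this argument: all the substance has already been invested in proving Lemma \ref{lem:Ytilde} (surjectivity of the reduction map when $\mc V$ is an algebra of differential polynomials) and the identification \eqref{eq:tildeW-les2}. What remains here is a purely formal diagram chase, entirely parallel to the LCA statement Proposition \ref{prop:Ytilde-lca}, whose proof runs identically off the long exact sequence \eqref{eq:lca-les}.
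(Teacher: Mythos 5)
Your proposal is correct and follows exactly the paper's own argument: the paper derives this proposition as an immediate consequence of the long exact sequence \eqref{eq:tildeW-les} (available by Lemma \ref{lem:Ytilde} under the differential-polynomial hypothesis) together with the identification \eqref{eq:tildeW-les2} of $H^n\big(\partial\widetilde C_{\PV}(\mc V,M)\big)$ with $\widetilde H_{\PV}^n(\mc V,M)$, and then the same squeeze between vanishing terms. There is nothing to add or correct.
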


For $a\in\mc V$ and $\widetilde Y\in\widetilde C_{\PV}^n(\mc V,M)$, we define $a_\la\widetilde Y$ and $\iota_\la(a) \widetilde Y$
by the same formulas \eqref{eq:lca-modstr} and \eqref{eq:lca-iota2}, respectively, as in the LCA case.
Obviously, if $\widetilde Y$ satisfies the Leibniz rule \eqref{eq:leib}, then so does $\iota_\la(a) \widetilde Y$. By Cartan's formula
\eqref{eq:cartan-lc}, the same holds for $a_\la\widetilde Y$ (or this can be easily checked directly).
Thus, we obtain an LCA action of $\mc V$ (viewed as an LCA) on the basic PVA complex 
$\widetilde C_{\PV}(\mc V,M)$,
where the $\lambda$-action is by formal power series in $\lambda$. 
As before, this induces a trivial action on the basic
PVA cohomology $\widetilde H_{\PV}(\mc V,M)$; see Corollary \ref{ccartan}.
To summarize, we get the following PVA analogue of Corollary \ref{ccartan} and Proposition \ref{pcartan2}.
\begin{proposition}\label{ccartan-pva}
\begin{enumerate}[(a)]
\item
The\/ $\lambda$-action\/ $a_\lambda\widetilde Y$ of\/ $\mc V$ on the basic complex\/ $\widetilde C_{\PV}(\mc V,M)$ commutes with the differential\/ $\widetilde d$, and it induces a trivial action on its cohomology. 
\item
We have a Lie algebra action of\/ $\mc V/\partial \mc V$ on\/ $C_{\PV}(\mc V,M)$,
given by the zero modes $($of parity $p(a)){:}$
\begin{equation}\label{eq:pva-modstr}
\begin{split}
(a_{(0)} Y&)_{\lambda_1,\dots,\lambda_n}
(a_1\otimes\cdots\otimes a_n) 
=
a_{(0)}
\bigl(Y_{\lambda_1,\dots,\lambda_n}
(a_1\otimes\cdots\otimes a_n) \bigr)
\\
& +
\sum_{i=1}^n
(-1)^{\delta_i}
Y_{\lambda_1,\dots,\lambda_n}
(a_1\otimes\cdots\otimes (a_{(0)}a_i)\otimes\cdots\otimes a_n) 
\,,
\end{split}
\end{equation}
where\/ $\delta_i$ are as in \eqref{eq:deltai}.
We also have well-defined contraction operators 
$\iota_0(a) \colon C_{\PV}^n(\mc V,M) \to C_{\PV}^{n-1}(\mc V,M)$,
of parity $\bar p(a)$, given by{\rm{:}} 
\begin{equation}\label{eq:pva-iota3}
\begin{split}
\bigl(\iota_0(a) Y &\bigr)_{\lambda_1,\dots,\lambda_{n-1}}
(a_1\otimes\cdots\otimes a_{n-1}) 
\\
&= (-1)^{\bar p(a)\bar p(Y)} \,
Y_{0,\lambda_1,\dots,\lambda_{n-1}}
(a\otimes a_1 \otimes\cdots\otimes a_{n-1})
\,.
\end{split}
\end{equation}
The following Cartan's formula holds on\/ $C_{\PV}(\mc V,M)${\rm{:}}
\begin{equation}\label{eq:cartan2-pva}
a_{(0)} = [\iota_0(a), d]
:= \iota_0(a) \, d - (-1)^{\bar p(a)} d \, \iota_0(a)
\,, \qquad a\in \mc V\,.
\end{equation}
Thus, the action of the Lie algebra\/ $\mc V/\partial \mc V$ on\/ $C_{\PV}(\mc V,M)$ 
by the zero modes \eqref{eq:pva-modstr} commutes with the differential\/ $d$ and induces the
trivial action on the cohomology\/ $H_{\PV}(\mc V,M)$.
\end{enumerate}
\end{proposition}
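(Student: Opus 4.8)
The plan is to deduce every assertion by restriction from the corresponding Lie conformal algebra results, exploiting the fact (Remark \ref{rem:lca-bas-pva}) that the basic PVA complex $\widetilde C_{\PV}(\mc V,M)$ is a subcomplex of the basic LCA complex $\widetilde C_{\LC}(\mc V,M)$, and likewise $C_{\PV}(\mc V,M)\subset C_{\LC}(\mc V,M)$, where $\mc V$ is regarded as an LCA. Since the Lie derivatives and contractions on the PVA side are defined by the very same formulas \eqref{eq:lca-modstr}, \eqref{eq:lca-iota2} (and their zero-mode versions \eqref{eq:pva-modstr}, \eqref{eq:pva-iota3}) as on the LCA side, Propositions \ref{pcartan}, \ref{pcartan2}, Corollary \ref{ccartan}, and the module axioms of Proposition \ref{pmodstr} already hold as operator identities on the ambient LCA complexes. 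The only genuinely new point is therefore to check that these operators preserve the Leibniz subspaces; once that is done, each claim follows by restricting the LCA identities.

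For part (a), first I would verify that the contraction $\iota_\la(a)$ sends $\widetilde C_{\PV}^n(\mc V,M)$ into $\widetilde C_{\PV}^{n-1}(\mc V,M)[[\la]]$. This is immediate: $\iota_\la(a)\widetilde Y$ merely inserts $a$ into the first argument and relabels the variables, so the Leibniz rule \eqref{eq:leib} for $\widetilde Y$ in positions $2,\dots,n$ becomes the Leibniz rule for $\iota_\la(a)\widetilde Y$ in positions $1,\dots,n-1$, the only subtlety being the sign, which matches because $\bar p(a)$ enters uniformly through the prefactor. Next, since the basic differential $\widetilde d$ preserves $\widetilde C_{\PV}$ (the PVA analogue of Lemma \ref{lem:basic-d}(a)), Cartan's formula \eqref{eq:cartan-lc}, namely $a_\la=\iota_\la(a)\widetilde d-(-1)^{\bar p(a)}\widetilde d\,\iota_\la(a)$, exhibits $a_\la$ as a composition of maps preserving $\widetilde C_{\PV}$, so $a_\la$ preserves $\widetilde C_{\PV}$ as well. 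Finally, writing $a_\la=[\iota_\la(a),\widetilde d]$ and using $\widetilde d^2=0$ with the graded Jacobi identity gives $[a_\la,\widetilde d]=0$; and for a cocycle $\widetilde Y$ one has $a_\la\widetilde Y=-(-1)^{\bar p(a)}\widetilde d\bigl(\iota_\la(a)\widetilde Y\bigr)$, a coboundary. This is exactly the argument of Corollary \ref{ccartan}, now valid on $\widetilde C_{\PV}(\mc V,M)$, which proves (a).

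For part (b), the same scheme applies to the reduced complex. The zero-mode contraction $\iota_0(a)$ of \eqref{eq:pva-iota3} preserves the Leibniz subspace $C_{\PV}^n(\mc V,M)\subset C_{\LC}^n(\mc V,M)$ by the computation above specialized to $\la=0$, and $d$ preserves $C_{\PV}$ by Proposition \ref{prop:PVAd}. Hence Cartan's formula \eqref{eq:cartan2} of Proposition \ref{pcartan2}, which holds on $C_{\LC}(\mc V,M)$, restricts to give \eqref{eq:cartan2-pva} on $C_{\PV}(\mc V,M)$; in particular $a_{(0)}=[\iota_0(a),d]$ preserves $C_{\PV}$, commutes with $d$, and acts trivially on $H_{\PV}(\mc V,M)$. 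That \eqref{eq:pva-modstr} defines a Lie algebra action of $\mc V/\partial\mc V$ follows by restricting the relations $(\partial a)_{(0)}=0$ and $[a_{(0)},b_{(0)}]=(a_{(0)}b)_{(0)}$, the $\la=\mu=0$ specializations of Proposition \ref{pmodstr}(a),(b) for $R=\mc V$, which already hold on $C_{\LC}$. I expect the only real bookkeeping to be the sign check showing that $\iota_\la(a)$, equivalently $\iota_0(a)$, respects \eqref{eq:leib}; everything else is a formal consequence of Cartan's formula together with $d^2=\widetilde d^2=0$, so no essentially new computation is required beyond what was done in the LCA case.
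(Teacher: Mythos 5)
Your proposal is correct and takes essentially the same route as the paper: the paper also defines the PVA Lie derivatives and contractions by the LCA formulas \eqref{eq:lca-modstr}, \eqref{eq:lca-iota2}, observes that $\iota_\la(a)$ manifestly preserves the Leibniz rule \eqref{eq:leib}, uses Cartan's formula \eqref{eq:cartan-lc} to conclude that $a_\la$ does as well, and then deduces all assertions by restricting the LCA results (Corollary \ref{ccartan} and Proposition \ref{pcartan2}) to the PVA subcomplexes. There is no gap; your write-up just spells out in more detail what the paper compresses into the paragraph preceding the proposition.
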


\subsection{Virasoro element and conformal weights}
\label{sec:vir}

%


The following notion will play an important role through the rest of the paper.

\begin{definition}\label{def:vir}
A \emph{Virasoro element} in a PVA $\mc V$ is an even element $L\in\mc V$
such that 
$$
[L_\lambda L]=(\partial+2\lambda)L+\frac{c}{12}\lambda^3
\,,\,\,\text{ for some } c\in\mb F \,\,(\text{the \emph{central charge} of } L)
\,,
$$
(cf.\ Examples \ref{ex:virasoro-lca} and \ref{ex:virasoro-pva}),
and such that 
\begin{equation}\label{eq:l1}
L_{(0)}
:=[L_\lambda\,\cdot\,]\big|_{\lambda=0}
=\partial
\,,\,\,\,\text{ and }\,\,
L_{(1)}
:=\frac{d}{d\lambda}[L_\lambda\,\cdot\,]\big|_{\lambda=0}
\,\in\End\mc V
\,\text{ is diagonalizable.}
\end{equation}
A PVA $\mc V$ is called \emph{conformal} if it is endowed 
with a Virasoro element $L\in\mc V$.
One also says that $a\in\mc V$ 
has \emph{conformal weight} $\Delta(a)\in\mb F$
if it is an eigenvector of $L_{(1)}$ of eigenvalue $\Delta(a)$.
A PVA-module $M$ over $\mc V$ is called \emph{conformal}
with respect to the Virasoro element $L\in\mc V$ if
\begin{equation}\label{eq:m1}
L_{(0)}^M
:=(L_\lambda\,\cdot\,)\big|_{\lambda=0}
=\partial^M
\,\,,\,\,\text{ and }\,\,
L_{(1)}^M
:=\frac{d}{d\lambda}(L_\lambda\,\cdot\,)\big|_{\lambda=0}
\,\in\End M
\,\text{ is diagonalizable.}
\end{equation}
As before, one says that $m\in M$ 
has \emph{conformal weight} $\Delta(m)\in\mb F$
if it is an eigenvector of $L_{(1)}^M$ of eigenvalue $\Delta(m)$.
 \end{definition}
%
%
We also extend the notion of conformal weight to the spaces of polynomials
$\mc V[\lambda]$ and $M[\lambda]$ by letting $\Delta(a\lambda^n)=\Delta(a)+n$
(i.e., we assign to $\lambda$ conformal weight $1$ and extend in the obvious way).
In other words, the conformal weights in $M[\lambda]$ are the eigenvalues
of the operator
\begin{equation}\label{eq:euler}
E:=L_{(1)}^M+\lambda\frac{d}{d\lambda}
\,.
\end{equation}

Throughout the remainder of this subsection,
we let $\mc V$ be a conformal PVA and $M$ be a conformal $\mc V$-module.
\begin{lemma}\label{lem:conf-weight}
Let\/ $a\in\mc V$ and\/ $m\in M$ have conformal weights\/ $\Delta(a)$ and\/ $\Delta(m)$.
Then{\rm:}
\begin{enumerate}[(a)]
\item
the unit element\/ $1\in\mc V$ has conformal weight\/ $\Delta(1)=0;$
\item
$\Delta(\partial a)=\Delta(a)+1$, and\/
$\Delta(\partial m)=\Delta(m)+1;$
\item
$\Delta(a_\lambda m)=\Delta(a)+\Delta(m)-1;$
\item
$\Delta(am)=\Delta(a)+\Delta(m)$.
\end{enumerate}
\end{lemma}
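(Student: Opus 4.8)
The plan is to translate each statement into the extraction of the coefficient of $\lambda^1$ in an appropriate $\lambda$-bracket or $\lambda$-action, using the expansion $[L_\lambda a]=\sum_{n\ge0}\frac{\lambda^n}{n!}L_{(n)}a$ of \eqref{eq:n-prod} together with $L_{(0)}=\partial$ and the hypotheses $L_{(1)}a=\Delta(a)a$ and $L_{(1)}^M m=\Delta(m)m$. Throughout I would freely use that, by Remark \ref{rem:der}, each operator $L_{(n)}$ is a derivation of the commutative product of $\mc V$, and that the analogous operators act on $M$ through the module axioms M1--M3.

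For part (a), since $L_{(1)}$ is a derivation of the product, $L_{(1)}(1)=L_{(1)}(1\cdot1)=2\,L_{(1)}(1)$, whence $L_{(1)}(1)=0$ and $\Delta(1)=0$. For part (b), I apply the sesquilinearity axiom L1, which gives $[L_\lambda\partial a]=(\partial+\lambda)[L_\lambda a]$; extracting the coefficient of $\lambda$ yields $L_{(1)}(\partial a)=\partial\,L_{(1)}a+L_{(0)}a=\partial\,L_{(1)}a+\partial a$, so for an eigenvector $a$ one gets $L_{(1)}(\partial a)=(\Delta(a)+1)\partial a$. The statement for $m\in M$ is identical, using the module sesquilinearity M1 in the form $L_\lambda(\partial m)=(\lambda+\partial)(L_\lambda m)$. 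For part (d), the left Leibniz rule M3 with $a=L$ gives $L_\lambda(am)=[L_\lambda a]m+a(L_\lambda m)$, the parity sign being trivial since $L$ is even; the coefficient of $\lambda$ reads $L_{(1)}^M(am)=(L_{(1)}a)m+a(L_{(1)}^M m)$, which for eigenvectors equals $(\Delta(a)+\Delta(m))\,am$.

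The substantive case is (c), where conformal weight in $M[\lambda]$ is measured by the Euler operator $E=L_{(1)}^M+\lambda\frac{d}{d\lambda}$ of \eqref{eq:euler}. Here I would start from the module Jacobi identity M2 with $a=L$, namely $L_\lambda(a_\mu m)-a_\mu(L_\lambda m)=[L_\lambda a]_{\lambda+\mu}m$, and apply $\frac{d}{d\lambda}\big|_{\lambda=0}$. The left-hand side becomes $L_{(1)}^M(a_\mu m)-a_\mu(L_{(1)}^M m)$. On the right-hand side I expand $[L_\lambda a]=\partial a+\lambda\,L_{(1)}a+O(\lambda^2)$ and use the module sesquilinearity $(\partial a)_{\lambda+\mu}m=-(\lambda+\mu)\,a_{\lambda+\mu}m$; differentiating at $\lambda=0$ and applying the chain rule to the shifted variable $\lambda+\mu$ produces the term $-\mu\frac{d}{d\mu}(a_\mu m)$ alongside $-a_\mu m$ and $(L_{(1)}a)_\mu m$. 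Rearranging gives $E(a_\mu m)=L_{(1)}^M(a_\mu m)+\mu\frac{d}{d\mu}(a_\mu m)=(L_{(1)}a)_\mu m+a_\mu(L_{(1)}^M m)-a_\mu m$, which for eigenvectors equals $(\Delta(a)+\Delta(m)-1)\,a_\mu m$, as desired.

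I expect the only genuine obstacle to be the bookkeeping in part (c): one must correctly interpret $[L_\lambda a]_{\lambda+\mu}m$ (expand the bracket in $\lambda$ first, then act with spectral parameter $\lambda+\mu$) and track the chain rule in the shifted variable, so that the $-\mu\frac{d}{d\mu}$ contribution forced by sesquilinearity on $\partial a$ is exactly the $\lambda\frac{d}{d\lambda}$ piece of $E$. Everything else reduces to a routine reading off of the linear coefficient in $\lambda$.
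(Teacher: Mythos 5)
Your proposal is correct and follows essentially the same route as the paper: the paper likewise derives (b) from the sesquilinearities L1 and M1, (d) from the Leibniz rule M3, and (c) by differentiating the module Jacobi identity M2 (with $L$ in the first slot) at $\lambda=0$, with the $-\mu\frac{d}{d\mu}(a_\mu m)$ term arising exactly as you describe from $(L_{(0)}a)_{\lambda+\mu}m=-(\lambda+\mu)a_{\lambda+\mu}m$. Your derivation-of-the-product argument for (a), which the paper dismisses as obvious, is a valid way to fill in that detail.
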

\begin{proof}
Part (a) is obvious, and (b) follows from the sesquilinearities L1 and M1.
For part (c), by the Jacobi identity M2, we have
\begin{align*}
& L_{(1)}^M(a_\mu m)
=
\frac{d}{d\lambda}([L_\lambda a]_{\lambda+\mu} m)\big|_{\lambda=0}
+\frac{d}{d\lambda}a_\mu (L_\lambda m)\big|_{\lambda=0} \\
& =
(L_{(1)}a)_{\mu} m
+\frac{d}{d\mu} (L_{(0)}a)_{\mu} m
+a_\mu (L_{(1)}^M m) \\
& =
(\Delta(a)+\Delta(m)) a_{\mu} m
-\frac{d}{d\mu} (\mu \, a_{\mu} m) \\
& =
(\Delta(a)+\Delta(m)-1) a_{\mu} m
-\mu\frac{d}{d\mu}(a_\mu m)
\,.
\end{align*}
Then (c) follows from \eqref{eq:euler}.
Claim (d) is an immediate consequence of the Leibniz rule M3.
\end{proof}


\begin{lemma}\label{lem:E}
The linear operator
\begin{equation}\label{eq:euler-k}
E
:=
L_{(1)}^M+\sum_{i=1}^n\lambda_i\frac{d}{d\lambda_i}
\,,
\end{equation}
is a diagonalizable even endomorphism of\/ $M[\lambda_1,\dots,\lambda_n]$,
which leaves invariant
the image of the operator\/ $\partial+\lambda_1+\dots+\lambda_n$.
Hence, it induces a diagonalizable even endomorphism,
still denoted by\/ $E$, on the quotient space
$$
M[\lambda_1,\dots,\lambda_n]/\langle\partial+\lambda_1+\dots+\lambda_n\rangle
\,.
$$
\end{lemma}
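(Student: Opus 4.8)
The plan is to prove the three assertions of the lemma separately: that $E$ is a diagonalizable even endomorphism of $M[\lambda_1,\dots,\lambda_n]$, that it preserves the image of $\partial+\lambda_1+\dots+\lambda_n$, and that the induced operator on the quotient remains diagonalizable. Only the middle assertion requires a genuine computation; the other two are formal.

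First I would settle diagonalizability on the full polynomial space by writing $M[\lambda_1,\dots,\lambda_n]=M\otimes\mb F[\lambda_1,\dots,\lambda_n]$ and observing that
$$
E=L_{(1)}^M\otimes\id+\id\otimes\Big(\sum_{i=1}^n\lambda_i\frac{d}{d\lambda_i}\Big).
$$
The first summand is diagonalizable because $M$ is a conformal module (Definition \ref{def:vir}), with an eigenbasis consisting of conformal-weight vectors; the second is the Euler operator, diagonal in the monomial basis $\lambda_1^{k_1}\cdots\lambda_n^{k_n}$ with eigenvalue $k_1+\dots+k_n$. These two operators act on different tensor factors and hence commute, so their sum is diagonalizable, with eigenvalue $\Delta(m)+k_1+\dots+k_n$ on $m\,\lambda_1^{k_1}\cdots\lambda_n^{k_n}$, matching the definition \eqref{eq:euler-k}. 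Evenness is clear, since $L_{(1)}^M$ is even (as $L$ is even) and each $\lambda_i\frac{d}{d\lambda_i}$ preserves parity.

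The key step is to show that $E$ preserves $\langle\partial+\lambda_1+\dots+\lambda_n\rangle$, which I would obtain by computing the commutator of $E$ with $D:=\partial+\lambda_1+\dots+\lambda_n$. The only nonformal input is the identity $[L_{(1)}^M,\partial]=\partial$, which is precisely the content of Lemma \ref{lem:conf-weight}(b), namely $\Delta(\partial m)=\Delta(m)+1$. Combining this with the facts that $L_{(1)}^M$ commutes with multiplication by each $\lambda_j$, that the Euler operator commutes with $\partial$ (which acts coefficientwise on $M$), and that $[\sum_i\lambda_i\frac{d}{d\lambda_i},\lambda_j]=\lambda_j$, summation of the four contributions yields
$$
[E,D]=\partial+\lambda_1+\dots+\lambda_n=D,
$$
equivalently $E\,D=D\,(E+1)$. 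Hence, for any $x\in M[\lambda_1,\dots,\lambda_n]$ we get $E(Dx)=D(Ex+x)\in\langle D\rangle$, so $E$ leaves the image of $D$ invariant and therefore descends to the quotient.

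Finally, the induced operator on $M[\lambda_1,\dots,\lambda_n]/\langle D\rangle$ is again diagonalizable: writing the full space as the direct sum $\bigoplus_\mu V_\mu$ of the conformal-weight eigenspaces of $E$, the invariant subspace decomposes as $\langle D\rangle=\bigoplus_\mu(\langle D\rangle\cap V_\mu)$, so the quotient is $\bigoplus_\mu V_\mu/(\langle D\rangle\cap V_\mu)$ with $E$ acting by the scalar $\mu$ on each summand. I do not expect a serious obstacle; the commutator identity $[E,D]=D$ is the crux, and the one point meriting care is that $M$ may be infinite dimensional, so throughout \emph{diagonalizable} must be read as \emph{admitting an eigenbasis}, i.e.\ being the direct sum of its eigenspaces, after which the decomposition of an invariant subspace along the eigenspaces is the standard fact invoked above.
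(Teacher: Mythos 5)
Your proposal is correct and follows essentially the same route as the paper: the crux in both is the commutation rule $E\circ(\partial+\lambda_1+\dots+\lambda_n)=(\partial+\lambda_1+\dots+\lambda_n)\circ(E+1)$, derived from Lemma \ref{lem:conf-weight}(b) (i.e.\ $[L_{(1)}^M,\partial]=\partial$), from which invariance of the image and descent to the quotient follow. Your additional care about diagonalizability in the infinite-dimensional setting (eigenspace decomposition of an invariant subspace) is exactly the content the paper leaves implicit in ``The claim follows.''
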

\begin{proof}
By Lemma \ref{lem:conf-weight}(b),
we have the following commutation rule
\begin{equation}\label{eq:euler-comm}
E
\circ
(\partial+\lambda_1+\dots+\lambda_n)
=
(\partial+\lambda_1+\dots+\lambda_n)
\circ 
(E+1)
\,.
\end{equation}
The claim follows.
\end{proof}

We will call the operator $E$ given by  \eqref{eq:euler-k} the \emph{energy operator}, and its eigenvalues will be called
\emph{conformal weights}. 
We denote by $\Delta(P(\lambda_1,\dots,\lambda_n))$ the eigenvalue
of the eigenvector $P(\lambda_1,\dots,\lambda_n)$.

Consider the cohomology complex $C_{\PV}(\mc V,M)$
and define the linear operator $E$ on it by
\begin{equation}\label{eq:euler2}
\begin{split}
(EY)_{\lambda_1,\dots,\lambda_n}(a_1&\otimes\dots\otimes a_n)
=
(E+n) \big(
Y_{\lambda_1,\dots,\lambda_n}(a_1\otimes\dots\otimes a_n)
\big) \\
& -
\sum_{i=1}^n
Y_{\lambda_1,\dots,\lambda_n}(a_1\otimes\dots\otimes L_{(1)}a_i  \otimes\dots\otimes a_n)
\,,
\end{split}
\end{equation}
which we will call again the \emph{energy operator}.
%
%
By Lemma \ref{lem:E}, $E$ is diagonalizable on $C_{\PV}(\mc V,M)$.
%
As before, we call \emph{conformal weights} the eigenvalues of the energy operator $E$
in \eqref{eq:euler2},
and we denote by $\Delta(Y)$ the eigenvalue
of the eigenvector $Y\in C_{\PV}^n(\mc V,M)$.
By \eqref{eq:euler2}, we have:
\begin{equation}\label{eq:DY}
\Delta(Y_{\lambda_1,\dots,\lambda_n}(a_1\otimes\dots\otimes a_n))
=
\Delta(Y)+\Delta(a_1)+\dots+\Delta(a_n)-n
\,.
\end{equation}

\begin{lemma}\label{lem:Ed}
The energy operator\/ $E\in\End C_{\PV}(\mc V,M)$, defined by \eqref{eq:euler2},
commutes with the differential\/ $d\colon C_{\PV}^{n}(\mc V,M)\to C_{\PV}^{n+1}(\mc V,M)$
in \eqref{eq:lca-d}.
As a consequence, $E$ induces a diagonalizable endomorphism
in cohomology:
\begin{equation*}
E\,\in\,\End H_{\PV}^{n}(\mc V,M)
\,.
\end{equation*}
\end{lemma}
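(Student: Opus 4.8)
The plan is to show that the differential $d$ preserves conformal weight, i.e.\ that it sends $E$-eigenvectors to $E$-eigenvectors of the same eigenvalue; this yields $[E,d]=0$ at once, since by Lemma~\ref{lem:E} the space $C_{\PV}^n(\mc V,M)$ is spanned by $E$-eigenvectors. Concretely, I would fix a homogeneous cochain $Y\in C_{\PV}^n(\mc V,M)$ of conformal weight $\Delta(Y)$ and homogeneous arguments $a_0,\dots,a_n$ of weights $\Delta(a_i)$, and compute the conformal weight of each of the two sums in the definition \eqref{eq:lca-d} of $(dY)_{\lambda_0,\dots,\lambda_n}(a_0\otimes\dots\otimes a_n)$, using the defining identity \eqref{eq:DY} together with Lemma~\ref{lem:conf-weight}.

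For the first sum, the $i$-th summand is a scalar multiple of ${a_i}_{\lambda_i}\,Y_{\lambda_0,\dots,\lambda_n}(a_0\otimes\cdots\otimes a_n)$ with $a_i$ and $\lambda_i$ omitted inside $Y$. By \eqref{eq:DY} the inner cochain value has weight $\Delta(Y)+\sum_{j\neq i}\Delta(a_j)-n$, and applying the module action ${a_i}_{\lambda_i}$ raises the weight by $\Delta(a_i)-1$ by Lemma~\ref{lem:conf-weight}(c), recalling that $\lambda_i$ carries weight $1$. Hence every term of the first sum has weight $\Delta(Y)+\sum_j\Delta(a_j)-(n+1)$. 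For the second sum I would expand $[{a_i}_{\lambda_i}a_j]=\sum_{k\ge0}\frac{\lambda_i^k}{k!}(a_i)_{(k)}a_j$; the mode $(a_i)_{(k)}a_j$ has weight $\Delta(a_i)+\Delta(a_j)-1-k$ (again Lemma~\ref{lem:conf-weight}(c)), and feeding it into $Y$, whose first formal variable is $\lambda_i+\lambda_j$ of weight $1$, together with the explicit factor $\lambda_i^k$, gives total weight $\Delta(Y)+\sum_j\Delta(a_j)-(n+1)$ as well. Since every term of $dY$ is homogeneous of this common weight, \eqref{eq:DY} applied to the $(n+1)$-cochain $dY$ forces $\Delta(dY)=\Delta(Y)$, that is $E(dY)=\Delta(Y)\,dY=d(EY)$; as $E$ is diagonalizable, this establishes $[E,d]=0$.

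The asserted consequence is then formal. Since $E$ commutes with $d$ and is diagonalizable on $C_{\PV}^n(\mc V,M)$, the subspaces $\ker d$ and $\im d$ are $E$-invariant and decompose compatibly with the eigenspace decomposition; the operator induced by $E$ on $H^n_{\PV}(\mc V,M)=\ker d/\im d$ is therefore diagonalizable, with the conformal weight grading descending to cohomology.

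The main obstacle is the bookkeeping of conformal weights in the second sum of \eqref{eq:lca-d}: the variable $\lambda_i$ appears simultaneously inside the bracket $[{a_i}_{\lambda_i}a_j]$ and, through $\lambda_i+\lambda_j$, as the first argument of $Y$, so one must verify that the power of $\lambda_i$ generated by the bracket exactly compensates the weight deficit $-k$ of the mode $(a_i)_{(k)}a_j$. A more conceptual alternative, which I would mention only as motivation, is to observe that $E$ coincides with the first Fourier mode $L_{(1)}=\frac{d}{d\lambda}L_\lambda\big|_{\lambda=0}$ of the Lie-derivative action of the Virasoro element $L$ on the basic complex; then $[E,\widetilde d]=0$ is an instance of Cartan's formula (Corollary~\ref{ccartan}), and one descends along $\pi\circ\widetilde d=d\circ\pi$. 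That route, however, relies on surjectivity of the projection from the basic complex, which is guaranteed only when $\mc V$ is an algebra of differential polynomials, so for a general conformal PVA the direct weight computation is the safer argument.
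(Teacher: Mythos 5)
Your proposal is correct and follows essentially the same route as the paper's proof: both reduce, via diagonalizability of $E$, to showing that the differential sends $E$-eigenvectors to eigenvectors of the same weight, and then verify term by term in \eqref{eq:lca-d} (using \eqref{eq:DY} and Lemma \ref{lem:conf-weight}) that all summands of $dY$ have the common weight $\Delta(Y)+\sum_i\Delta(a_i)-(n+1)$. Your explicit Fourier-mode expansion of $[{a_i}_{\lambda_i}a_j]$ is just a more detailed justification of the weight count that the paper records in its equation \eqref{eq:E2}, and your caveat about the basic-complex alternative matches the paper's reason for giving the direct argument here.
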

\begin{proof}
Since the linear operator $E\in\End C_{\PV}^n(\mc V,M)$ 
is diagonalizable, 
it suffices to prove that if $Y\in C_{\PV}^n(\mc V,M)$ is an eigenvector of $E$
with eigenvalue $\Delta(Y)$,
then $dY\in C_{\PV}^{n+1}(\mc V,M)$ is also an $E$-eigenvector
with the same eigenvalue.
Let then $a_0,\dots,a_n\in\mc V$ have conformal weights $\Delta(a_0),\dots,\Delta(a_n)$.
By equation \eqref{eq:DY} and Lemma \ref{lem:conf-weight}, we have:
\begin{equation}\label{eq:E1}
\begin{split}
\Delta\big(
{a_i}_{\lambda_i}
& Y_{\lambda_0,\stackrel{i}{\check{\dots}},\lambda_n}
(a_0\otimes\stackrel{i}{\check{\dots}}\otimes a_n) 
\big)
=
\Delta(a_i)
+\Delta(Y_{\lambda_0,\stackrel{i}{\check{\dots}},\lambda_n}
(a_0\otimes\stackrel{i}{\check{\dots}}\otimes a_n) )
-1 \\
& =
\Delta(Y)+\Delta(a_0)+\dots+\Delta(a_n)-n-1
\,,
\end{split}
\end{equation}
and
\begin{equation}\label{eq:E2}
\begin{split}
\Delta\big(
& Y_{\lambda_i+\lambda_j,\lambda_0,
\stackrel{i}{\check{\dots}}\stackrel{j}{\check{\dots}},\lambda_n}
([{a_i}_{\lambda_i}a_j]\otimes a_0\otimes
\stackrel{i}{\check{\dots}}\stackrel{j}{\check{\dots}}
\otimes a_n) 
\big)
=
\Delta(Y)+\Delta([{a_i}_{\lambda_i}a_j]) \\
& +\Delta(a_0)+\stackrel{i}{\check{\dots}}\stackrel{j}{\check{\dots}}+\Delta(a_n)-n 
=
\Delta(Y)+\Delta(a_0)+\dots+\Delta(a_n)-n-1
\,.
\end{split}
\end{equation}
Combining \eqref{eq:E1} and \eqref{eq:E2},
and recalling the definition \eqref{eq:lca-d} of the differential $d$,
we get that
$$
\Delta((dY)_{\lambda_0,\dots,\lambda_n}(a_0\otimes\dots\otimes a_n))
-\Delta(a_0)-\dots-\Delta(a_n)+n+1
$$
is well defined, it is independent of the 
$L_{(1)}$-eigenvectors $a_0,\dots,a_n$,
and it is equal to $\Delta(Y)$.
Recalling \eqref{eq:DY},
this precisely means that $dY$ is an eigenvector of $E$ of eigenvalue $\Delta(Y)$,
i.e., $dY$ has conformal weight $\Delta(dY)=\Delta(Y)$.
\end{proof}

As before, we call \emph{conformal weights} the eigenvalues of $E$ in $H_{\PV}(\mc V,M)$
and we denote by $\Delta([Y])$ the conformal weight
of the cohomology class $[Y]\in H_{\PV}^n(\mc V,M)$.

The following result will be the main tool,
in the next Section \ref{sec:com-pva},
for computing the variational PVA cohomology in all the examples considered.
\begin{theorem}\label{thm:Delta}
Let\/ $\mc V$ be a conformal PVA and\/ $M$ be a conformal\/ $\mc V$-module.
Assume that, as a differential superalgebra, $\mc V$
is a superalgebra of differential polynomials in even or odd variables.
Then the energy operator\/ $E\in\End H_{\PV}(\mc V,M)$
is diagonalizable with
only eigenvalues\/ $0$ and\/ $1$.
\end{theorem}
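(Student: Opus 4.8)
The plan is to lift the problem to the basic PVA complex $\widetilde C_{\PV}(\mc V,M)$, where the Virasoro element $L$ acts by its full $\lambda$-action and Cartan's formula is available, and then to descend via the long exact sequence \eqref{eq:tildeW-les}. First I would define on each $\widetilde C_{\PV}^n(\mc V,M)$ the energy operator $\widetilde E$ by the same formula \eqref{eq:euler2}, now read in $M[\lambda_1,\dots,\lambda_n]$ with target operator $L_{(1)}^M+\sum_i\lambda_i\frac{d}{d\lambda_i}$. Exactly as in Lemmas \ref{lem:E} and \ref{lem:Ed}, one checks that $\widetilde E$ is a diagonalizable even endomorphism commuting with $\widetilde d$; by the commutation rule \eqref{eq:euler-comm} it descends, under the map \eqref{eq:piYtilde3}, to the operator $E$ on $C_{\PV}(\mc V,M)$, and it satisfies $[\widetilde E,\partial]=\partial$ for the operator $\partial$ of \eqref{eq:tilde-partial}.

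The key identity is that $\widetilde E$ coincides with the mode $L_{(1)}$ of the Lie derivative of the Virasoro element $L$ on the basic complex. To see this I would expand the Lie derivative $L_\lambda\widetilde Y$ of \eqref{eq:lca-modstr} in powers of $\lambda$ and extract the coefficient of $\lambda$. Using $[L_\lambda a_i]=\partial a_i+\lambda L_{(1)}a_i+O(\lambda^2)$, the sesquilinearity \eqref{eq:sesq}, and $L_{(0)}^M=\partial^M$, the derivative $\frac{d}{d\lambda}\big|_{\lambda=0}$ of the shifted term $\widetilde Y_{\dots,\lambda+\lambda_i,\dots}(\cdots[L_\lambda a_i]\cdots)$ produces exactly the three contributions $\widetilde Y_{\dots}$, $\lambda_i\frac{d}{d\lambda_i}\widetilde Y_{\dots}$ and $-\widetilde Y_{\dots}(\cdots L_{(1)}a_i\cdots)$, after inserting the sign $(-1)^{\delta_i}=-1$ (recall $\delta_i=1$ since $L$ is even, see \eqref{eq:deltai-even}). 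Summing over $i$ and adding the diagonal term $L_{(1)}^M(\widetilde Y_{\dots})$ reproduces \eqref{eq:euler2} verbatim, so $\widetilde E=L_{(1)}$ on $\widetilde C_{\PV}^n(\mc V,M)$. Now Cartan's formula \eqref{eq:cartan-lc}, valid on the basic PVA complex (see the discussion preceding Proposition \ref{ccartan-pva}), reads $L_\lambda=\iota_\lambda(L)\widetilde d+\widetilde d\,\iota_\lambda(L)$ because $L$ is even; extracting the coefficient of $\lambda$ gives $\widetilde E=L_{(1)}=\iota_{(1)}(L)\widetilde d+\widetilde d\,\iota_{(1)}(L)$, a graded commutator with $\widetilde d$, hence null-homotopic. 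Therefore $\widetilde E=0$ on $\widetilde H_{\PV}^n(\mc V,M)$ for all $n$.

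It remains to transfer this to $H_{\PV}$. Since $\mc V$ is an algebra of differential polynomials, Lemma \ref{lem:Ytilde} applies and we have the long exact sequence \eqref{eq:tildeW-les} together with the isomorphisms \eqref{eq:tildeW-les2}; all its maps commute with $\widetilde E$, as $\widetilde E$ is compatible with the inclusion $\partial\widetilde C_{\PV}\hookrightarrow\widetilde C_{\PV}$, with the quotient onto $C_{\PV}$, and with $\widetilde d$. On $\widetilde H_{\PV}^n$ the operator $\widetilde E$ vanishes, so these spaces are pure of weight $0$; on $H^m(\partial\widetilde C_{\PV})$ with $m\ge1$, the isomorphism \eqref{eq:tildeW-les2} is induced by multiplication by $\partial$ (injective by Lemma \ref{lem:partial-inj}, commuting with $\widetilde d$ by Lemma \ref{lem:basic-d}(b)), and since $[\widetilde E,\partial]=\partial$ raises the weight by $1$, these spaces are pure of weight $1$. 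Fixing $n$ and considering the segment $\widetilde H_{\PV}^n\xrightarrow{\,\beta\,}H_{\PV}^n\xrightarrow{\,\gamma\,}H^{n+1}(\partial\widetilde C_{\PV})$ of \eqref{eq:tildeW-les}, all maps are weight-preserving, so $\im\beta$ is pure of weight $0$ and $\gamma$ annihilates every weight $\neq1$. Thus for $w\notin\{0,1\}$ the eigenspace $(H_{\PV}^n)_w$ lies in $\ker\gamma=\im\beta$, which is of weight $0$, forcing $(H_{\PV}^n)_w=0$. By Lemma \ref{lem:Ed}, $E$ is already diagonalizable on $H_{\PV}^n$, so its eigenvalues are precisely $0$ and $1$.

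The main obstacle is the identity $\widetilde E=L_{(1)}$ of the second paragraph: the bookkeeping of the $\lambda$-expansion of \eqref{eq:lca-modstr}, combining the shift $\lambda_i\mapsto\lambda+\lambda_i$ in the arguments of $\widetilde Y$ with the expansion of $[L_\lambda a_i]$ and with sesquilinearity, must be carried out carefully so as to match \eqref{eq:euler2} on the nose, in particular to produce the constant $+n$ and the correct signs. Once this identification is in place, the homotopy-triviality of $\widetilde E$ via Cartan's formula and the weight argument on the long exact sequence are formal.
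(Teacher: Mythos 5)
Your proposal is correct and follows essentially the same route as the paper's own proof: identifying the energy operator on the basic complex with the mode $L_{(1)}$ of the Lie derivative (the paper's Lemma \ref{lem:Etilde}, proved in the opposite direction), killing it on basic cohomology via Cartan's formula, noting that $\partial$ shifts the $\widetilde E$-weight by $1$ so that $H^{n+1}\big(\partial\widetilde C_{\PV}(\mc V,M)\big)$ is pure of weight $1$, and then running the weight argument on the segment $\widetilde H_{\PV}^n \to H_{\PV}^n \to H^{n+1}\big(\partial\widetilde C_{\PV}\big)$ of the long exact sequence, exactly as in the paper (cf.\ \eqref{eq:phipsiE}). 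No gaps.
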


In the proof of the theorem, we will use the LCA action $L_\la$ of the element $L\in\mc V$ on the basic PVA complex $\widetilde{C}_{\PV}(\mc V,M)$; see Section \ref{sec:bas-pva}. 
We define the energy operator on $\widetilde{C}_{\PV}(\mc V,M)$ by
\begin{equation}\label{eq:Etilde}
\widetilde{E}=\frac{d}{d\lambda}L_{\lambda}\big|_{\lambda=0} = L_{(1)}
\,,
\end{equation}
the coefficient of $\lambda$ in the map $L_\lambda$ defined by \eqref{eq:lca-modstr}.
Note that, by Cartan's formula \eqref{eq:cartan-lc},
$\widetilde E$ commutes with the action of $\widetilde d$
(cf.\ Corollary \ref{ccartan}).
\begin{lemma}\label{lem:Etilde}
The energy operator\/ $\widetilde{E}$ is given explicitly by \eqref{eq:euler2}, 
where we replace\/ $E$ with\/ $\widetilde{E}$ and view both sides as elements of\/ $M[\la_1,\dots,\la_n]$.
\end{lemma}
\begin{proof}
By \eqref{eq:Etilde} and \eqref{eq:lca-modstr}, we have
\begin{align*}
& (\widetilde E\widetilde Y)_{\lambda_1,\dots,\lambda_n}
(a_1\otimes\cdots\otimes a_n) 
=
\frac{d}{d\lambda}(L_\lambda\widetilde Y)_{\lambda_1,\dots,\lambda_n}
(a_1\otimes\cdots\otimes a_n)\Big|_{\lambda=0} \\
& =
L_{(1)}^M\!
\bigl(\widetilde Y_{\lambda_1,\dots,\lambda_n}\!
(a_1\!\otimes\!\cdots\!\otimes\! a_n) \bigr) 
-
\sum_{i=1}^n
\frac{d}{d\lambda}
\widetilde Y_{\lambda_1,\dots,\lambda+\lambda_i,\dots,\lambda_n}
(a_1\!\otimes\!\cdots [L_{\lambda}a_i] \cdots\!\otimes\! a_n) \Big|_{\lambda=0} \\
& =
L_{(1)}^M\!
\bigl(\widetilde Y_{\lambda_1,\dots,\lambda_n\!}
(a_1\!\otimes\!\cdots\!\otimes\! a_n) \bigr) 
-
\sum_{i=1}^n
\widetilde Y_{\lambda_1,\dots,\lambda_i,\dots,\lambda_n}
(a_1\!\otimes\!\cdots\!\otimes (L_{(1)}a_i) \otimes\!\cdots\!\otimes\! a_n) \\
&\quad -
\sum_{i=1}^n
\frac{d}{d\lambda_i}
\widetilde Y_{\lambda_1,\dots,\lambda_i,\dots,\lambda_n}
(a_1 \otimes\cdots\otimes (\partial a_i) \otimes\cdots\otimes a_n) 
\,.
\end{align*}
By the sesquilinearity condition \eqref{eq:sesq},
the last term above 
becomes
$$
\Bigl(n+ \sum_{i=1}^n
\lambda_i\frac{d}{d\lambda_i}
\Bigr)
\widetilde Y_{\lambda_1,\dots,\lambda_i,\dots,\lambda_n}
(a_1 \otimes\cdots\otimes a_n) \,,
$$
thus proving the claim.
\end{proof}
\begin{lemma}\label{lem:tildeE}
\begin{enumerate}[(a)]
\item
The energy operator\/ $\widetilde{E}$ is diagonalizable on\/ $\widetilde{C}_{\PV}^{n}(\mc V,M)$.
\item
If\/ $\widetilde{Y}\in\widetilde{C}_{\PV}^{n}(\mc V,M)$ is an eigenvector of\/ $\widetilde{E}$
with eigenvalue\/ $\Delta$, then\/ $\partial\widetilde{Y}$ is an eigenvector of\/ $\widetilde{E}$
with eigenvalue\/ $\Delta+1$.
\item
For\/ $\widetilde{Y}\in\widetilde{C}_{\PV}^{n}(\mc V,M)$, we have
\begin{equation}\label{eq:tildeE}
\pi\circ(\widetilde{E}\,\widetilde{Y})
=
E(\pi\circ\widetilde{Y})
\,.
\end{equation}
\item
As a consequence,
if\/ $\widetilde{Y}\in\widetilde{C}_{\PV}^{n}(\mc V,M)$ is an eigenvector of\/ $\widetilde{E}$
with eigenvalue\/ $\Delta$, then\/ $\pi\circ\widetilde{Y}$ is an eigenvector of\/ $E$
with the same eigenvalue\/ $\Delta$.
\end{enumerate}
\end{lemma}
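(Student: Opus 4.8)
The plan is to derive all four parts from the explicit description of $\widetilde E$ obtained in Lemma \ref{lem:Etilde}, together with the commutation rule \eqref{eq:euler-comm} established in the proof of Lemma \ref{lem:E}. It is convenient to write $E_{\mathrm{poly}} = L_{(1)}^M + \sum_{i=1}^n \la_i \frac{d}{d\la_i}$ for the operator \eqref{eq:euler-k} acting on $M[\la_1,\dots,\la_n]$, so that Lemma \ref{lem:Etilde} reads $(\widetilde E\widetilde Y)_{\la_1,\dots,\la_n}(a_1\otimes\dots\otimes a_n) = (E_{\mathrm{poly}}+n)\,\widetilde Y_{\la_1,\dots,\la_n}(a_1\otimes\dots\otimes a_n) - \sum_{i=1}^n \widetilde Y_{\la_1,\dots,\la_n}(a_1\otimes\dots\otimes L_{(1)}a_i\otimes\dots\otimes a_n)$, i.e.\ formula \eqref{eq:euler2} read in $M[\la_1,\dots,\la_n]$ rather than in the quotient. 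Every step below is a manipulation of this one formula.

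For part (a), I would put the conformal-weight grading on cochains. Since $\mc V$ and $M$ are conformal, $L_{(1)}$ and $L_{(1)}^M$ are diagonalizable, giving $\mc V=\bigoplus_\Delta\mc V_\Delta$, $M=\bigoplus_\Delta M_\Delta$, and, by Lemma \ref{lem:E}, $M[\la_1,\dots,\la_n]=\bigoplus_\mu M[\la]_\mu$ into $E_{\mathrm{poly}}$-eigenspaces. Call $\widetilde Y$ homogeneous of weight $\Delta$ if $\widetilde Y_{\la_1,\dots,\la_n}(a_1\otimes\dots\otimes a_n)$ lies in $M[\la]_{\Delta+\Delta(a_1)+\dots+\Delta(a_n)-n}$ for all homogeneous $a_i$ (cf.\ \eqref{eq:DY}). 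A one-line computation with the displayed formula shows that such $\widetilde Y$ is an $\widetilde E$-eigenvector of eigenvalue $\Delta$: on homogeneous arguments $E_{\mathrm{poly}}$ acts by $\Delta+\sum_i\Delta(a_i)-n$, the $+n$ cancels the $-n$, and the internal terms contribute $-\sum_i\Delta(a_i)$ because $L_{(1)}a_i=\Delta(a_i)a_i$. To obtain the full eigenspace decomposition I would invoke Remark \ref{rem:gen}: a cochain is the same as an arbitrary symmetric assignment of polynomials to tuples of generators; splitting each such polynomial into its $E_{\mathrm{poly}}$-homogeneous pieces and regrouping by $\Delta$ exhibits $\widetilde Y$ as a sum of homogeneous cochains, so $\widetilde E$ is diagonalizable.

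Parts (b), (c), (d) then fall out quickly. For (b) I would establish the commutation relation $\widetilde E\,\partial=\partial(\widetilde E+1)$ on cochains by computing $\widetilde E(\partial\widetilde Y)$ directly from \eqref{eq:tilde-partial}: pushing the output factor $(\partial+\la_1+\dots+\la_n)$ past $E_{\mathrm{poly}}$ via \eqref{eq:euler-comm} produces exactly one extra copy of $\widetilde Y$, while the $+n$ term and the internal $L_{(1)}$-terms commute with that factor; collecting gives $\widetilde E(\partial\widetilde Y)=\partial(\widetilde E\widetilde Y)+\partial\widetilde Y$, whence $\widetilde E\widetilde Y=\Delta\widetilde Y$ implies $\widetilde E(\partial\widetilde Y)=(\Delta+1)\partial\widetilde Y$. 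For (c) I would observe that the formula defining $E$ on $C_{\PV}(\mc V,M)$ in \eqref{eq:euler2} and the formula for $\widetilde E$ above are literally the same expression, one read in $M[\la]/\langle\partial+\la_1+\dots+\la_n\rangle$ and the other in $M[\la]$; since by Lemma \ref{lem:E} the operator $E_{\mathrm{poly}}$ descends through the projection $\pi$ of \eqref{eq:piM} to $E$ on the quotient, and $\pi$ commutes both with $L_{(1)}^M$ and with inserting $L_{(1)}a_i$ in the $i$-th slot, applying $\pi$ to the formula for $\widetilde E\widetilde Y$ yields precisely the formula for $E(\pi\circ\widetilde Y)$, i.e.\ $\pi\circ(\widetilde E\widetilde Y)=E(\pi\circ\widetilde Y)$. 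Part (d) is then immediate: $\widetilde E\widetilde Y=\Delta\widetilde Y$ gives $E(\pi\circ\widetilde Y)=\pi\circ(\Delta\widetilde Y)=\Delta\,\pi\circ\widetilde Y$.

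The main obstacle is the decomposition step in part (a): one must check that the weight components, defined on generators via Remark \ref{rem:gen}, reassemble into cochains of pure weight on all of $\mc V^{\otimes n}$, i.e.\ that extension by the Leibniz rule \eqref{eq:leib} and the sesquilinearity \eqref{eq:sesq} preserves homogeneity. This follows from the weight bookkeeping of Lemma \ref{lem:conf-weight} (products add conformal weights, and $e^{\partial\partial_{\la_i}}$ preserves the weight assignment since $\partial$ and $\partial_{\la_i}$ shift it oppositely), but it is the one point demanding a careful, if routine, verification. I would also note that in all examples of interest $\mc V$ is finitely generated, so the sum over $\Delta$ is finite and the diagonalizability is unambiguous.
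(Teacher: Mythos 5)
Your parts (b), (c) and (d), and the computation showing that weight-homogeneous cochains are $\widetilde E$-eigenvectors, are correct and are exactly what the paper intends: the paper's entire proof of this lemma is the sentence ``All of these claims are immediate consequences of the definitions and of Lemma~\ref{lem:Etilde},'' and your details --- the relation $\widetilde E\circ\partial=\partial\circ(\widetilde E+1)$ obtained by pushing the factor $(\partial+\la_1+\dots+\la_n)$ of \eqref{eq:tilde-partial} through $E_{\mathrm{poly}}$ via \eqref{eq:euler-comm}, and part (c) obtained by applying $\pi$ to the formula of Lemma~\ref{lem:Etilde} and comparing with \eqref{eq:euler2} --- are precisely the suppressed computations.

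The one genuine issue is the decomposition step in part (a). The standing hypotheses of this lemma are only that $\mc V$ is a conformal PVA and $M$ a conformal $\mc V$-module; Remark~\ref{rem:gen}, on which your decomposition rests, additionally requires $\mc V$ to be a superalgebra of differential polynomials --- that is a hypothesis of Theorem~\ref{thm:Delta}, where the lemma is applied, but not of the lemma itself. The detour through generators is avoidable: define $\widetilde Y^{(\Delta)}$ by projecting each value $\widetilde Y_{\la_1,\dots,\la_n}(a_1\otimes\dots\otimes a_n)$, for $L_{(1)}$-homogeneous $a_i$, onto the $E_{\mathrm{poly}}$-eigenspace of eigenvalue $\Delta+\Delta(a_1)+\dots+\Delta(a_n)-n$; each of the conditions \eqref{eq:sesq}, \eqref{eq:skew}, \eqref{eq:leib} is weight-compatible (multiplication by $\la_i$ raises $E_{\mathrm{poly}}$-weight by $1$, and the operator $e^{\partial\partial_{\la_i}}b_i$ raises it by exactly $\Delta(b_i)$ --- the bookkeeping you already outline), so each $\widetilde Y^{(\Delta)}$ is again a basic cochain, with no freeness assumption needed. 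What neither route removes is the finiteness issue you mention only in passing: $\widetilde Y=\sum_\Delta\widetilde Y^{(\Delta)}$ is in general only a pointwise-finite sum. If $\mc V$ is finitely generated as a differential algebra, then values on the finitely many generator tuples determine $\widetilde Y$ (the argument of Lemma~\ref{lem:id}(a) applies verbatim to basic cochains) and involve only finitely many weights, so the sum is finite and $\widetilde E$ is diagonalizable in the strict sense; with infinitely many differential generators one can write down cochains of infinite weight spread, which lie in no finite sum of eigenspaces. So, strictly, your argument (and any argument) establishes (a) only under a finite-generation hypothesis absent from the statement. This imprecision originates in the paper itself, which asserts diagonalizability both here and for $E$ after \eqref{eq:euler2} without proof, and it is harmless in all of the paper's applications, where $\mc V$ is finitely generated; but your proof should state the restriction as a hypothesis rather than as a remark about ``examples of interest.''
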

\begin{proof}
All of these claims are immediate consequences of the definitions
and of Lemma \ref{lem:Etilde}.
\end{proof}

\begin{proof}[Proof of Theorem \ref{thm:Delta}]
By Corollary \ref{ccartan}, the energy operator $\widetilde{E}$ 
induces a trivial action on the basic PVA cohomology.
Hence, for any cohomology class $[\widetilde Y]\in\widetilde{H}^{n}_{PV}(\mc V,M)$,
its representative $\widetilde Y\in\widetilde{C}_{\PV}^{n}(\mc V,M)$
is a sum of $\widetilde E$-eigenvectors
and we can pick them of eigenvalue $0$, i.e., $\widetilde E\widetilde Y=0$.

Recall that, by Lemmas \ref{lem:partial-inj} and \ref{lem:basic-d}(b), the map $\partial$ is an isomorphism of complexes
from $\widetilde{C}_{\PV}(\mc V,M)$ to $\partial\widetilde{C}_{\PV}(\mc V,M)$ in degree $\ge1$.
Then, for $n\ge1$, a cohomology class $[\widetilde Z] \in H^{n}(\partial\widetilde{C}_{\PV}(\mc V,M))$
has a representative of the form $\widetilde Z=\partial\widetilde Y$ 
for some $\widetilde Y \in \widetilde{C}_{\PV}^{n}(\mc V,M)$
with $\widetilde{E} \widetilde{Y}=0$. Hence, by Lemma \ref{lem:tildeE}(b), we get $\widetilde{E} \widetilde{Z}=\widetilde{Z}$.

As part of the long exact sequence \eqref{eq:tildeW-les}, we have for each $n\ge1$:
$$
\widetilde H_{\PV}^n(\mc V,M) \xrightarrow{\varphi} H_{\PV}^n(\mc V,M)
\xrightarrow{\psi} H^{n+1}\big(\partial\widetilde W_{\PV}(\mc V,M)\big) \,.
$$
Let us prove that the maps $\varphi$ and $\psi$ are compatible with the actions of 
the energy operators $E$ and $\widetilde E$:
\begin{equation}\label{eq:phipsiE}
\varphi\circ\widetilde E=E\circ\varphi
\,,\qquad
\widetilde E\circ\psi=\psi\circ E
\,.
\end{equation}
By definition, the map $\varphi$ is given by
$$
\varphi([\widetilde Y])=[\pi\circ\widetilde Y]
\,.
$$
Hence, by Lemma \ref{lem:tildeE}(c), we have
$$
(\varphi\circ\widetilde E)([\widetilde Y])
=
\varphi([\widetilde E\widetilde Y])
=
[\pi\circ\widetilde E\widetilde Y]
=
[E(\pi\circ\widetilde Y)]
=
(E\circ\varphi)([\widetilde Y])
\,,
$$
proving the first equation in \eqref{eq:phipsiE}.
Next, recall the definition of the connecting homomorphism $\psi$.
By the surjectivity of $\pi$, any element of $H_{\PV}^n(\mc V,M)$ is of the form
$[\pi\circ\widetilde Y]$ for some $\widetilde Y\in\widetilde C^{n}_{PV}(\mc V,M)$,
and $\widetilde d\widetilde Y$ lies in $\partial\widetilde C^{n+1}_{PV}(\mc V,M)$.
Then,
$$
\psi([\pi\circ\widetilde Y])=[\widetilde d\widetilde Y]
\,.
$$
Again by Lemma \ref{lem:tildeE}(c), we have
\begin{align*}
(\psi\circ E)([\pi\circ\widetilde Y])
& =
\psi([E(\pi\circ\widetilde Y)])
=
\psi([\pi\circ(\widetilde E\widetilde Y)])
=
[\widetilde d(\widetilde E\widetilde Y)] \\
& =
\widetilde E[\widetilde d\widetilde Y]
=
(\widetilde E\circ\psi)([\pi\circ\widetilde Y])
\,,
\end{align*}
proving the second equation in \eqref{eq:phipsiE}.

Now consider an element $[Y] \in H_{\PV}^n(\mc V,M)$ with $EY=\Delta Y$, where 
$\Delta\in\mb F$ and assume that $\Delta\ne 0$ or $1$. Since $\Delta\ne 1$, we have
$\psi([Y])=0$. Hence, $[Y] = \varphi([\widetilde Y])$ is in the image of $\varphi$. But $\Delta\ne0$ implies $[\widetilde Y]=0$; therefore, $[Y] =0$, completing the proof of the theorem. 
\end{proof}

\begin{theorem}\label{thm:Delta2}
Let\/ $\mc V$ be a conformal PVA, which as a differential superalgebra
is a superalgebra of differential polynomials in finitely many even or odd variables
with positive $($rational or real if\/ $\mb R\subset\mb F)$ conformal weights.
Let\/ $M$ be a conformal\/ $\mc V$-module, which
is finitely generated as a module over the differential superalgebra\/ $\mc V$.
Then\/ $\dim H_{\PV}^n(\mc V,M) < \infty$
for all\/ $n\ge0$.
\end{theorem}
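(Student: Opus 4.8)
The plan is to leverage Theorem~\ref{thm:Delta} in order to reduce the statement to a finite‑dimensionality count of conformal weight spaces. Since the energy operator $E$ is diagonalizable on $C_{\PV}(\mc V,M)$ (Lemma~\ref{lem:E}) and commutes with the differential $d$ (Lemma~\ref{lem:Ed}), the complex splits as a direct sum of its conformal weight eigenspaces, and hence so does the cohomology: $H^n_{\PV}(\mc V,M)=\bigoplus_{\Delta}H^n_{\PV}(\mc V,M)_{\Delta}$, where each summand is a subquotient of the weight space $C^n_{\PV}(\mc V,M)_{\Delta}=\{Y\mid EY=\Delta Y\}$. By Theorem~\ref{thm:Delta}, the only weights $\Delta$ with $H^n_{\PV}(\mc V,M)_{\Delta}\neq 0$ are $\Delta=0$ and $\Delta=1$, so it suffices to prove that $C^n_{\PV}(\mc V,M)_{0}$ and $C^n_{\PV}(\mc V,M)_{1}$ are finite dimensional. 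This reduction is essential: the full cochain space $C^n_{\PV}(\mc V,M)$ is infinite dimensional, with infinitely many weights occurring, and it is precisely the collapse to the two weights $0,1$ in cohomology that yields finiteness.

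Next I would establish that the relevant conformal weight spaces of $\mc V$ and of $M$ are finite dimensional and bounded below (the hypothesis that the weights are rational or real guarantees they lie in an ordered field, so these notions are meaningful). Writing $\mc V=\mb F[u_i^{(k)}\mid i\in I,\,k\in\mb Z_+]$ with $I$ finite and $\Delta(u_i)>0$, Lemma~\ref{lem:conf-weight}(b) gives $\Delta(u_i^{(k)})=\Delta(u_i)+k$, so every generator $u_i^{(k)}$ has weight at least $\epsilon:=\min_{i\in I}\Delta(u_i)>0$. A monomial of total weight $w$ therefore involves at most $w/\epsilon$ generators, each of weight at most $w$, and there are only finitely many such generators; by Lemma~\ref{lem:conf-weight}(d) weights add under multiplication, so each weight space $\mc V_{w}$ is finite dimensional and $\mc V_w=0$ for $w<0$. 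Since $M$ is a conformal $\mc V$-module, $L^M_{(1)}$ is diagonalizable, so a finite generating set of $M$ over $\mc V$ may be refined to one consisting of weight homogeneous elements $m_1,\dots,m_r$ of weights $\mu_1,\dots,\mu_r$. Then $M_{\mu}$ is spanned by the elements $v\,m_j$ with $v\in\mc V_{\mu-\mu_j}$, whence every weight space $M_{\mu}$ is finite dimensional and $M_{\mu}=0$ for $\mu<\min_j\mu_j$.

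Finally I would bound $\dim C^n_{\PV}(\mc V,M)_{\Delta}$. By Lemma~\ref{lem:id}(a), a cochain $Y\in C^n_{\PV}(\mc V,M)$ is uniquely determined by its values $Y_{\lambda_1,\dots,\lambda_n}(u_{i_1}\otimes\dots\otimes u_{i_n})$ on the finitely many tuples $(i_1,\dots,i_n)\in I^n$. If $EY=\Delta Y$, then by \eqref{eq:DY} each such value lies in the conformal weight space of $M[\lambda_1,\dots,\lambda_n]/\langle\partial+\lambda_1+\dots+\lambda_n\rangle$ of weight $\delta=\Delta+\Delta(u_{i_1})+\dots+\Delta(u_{i_n})-n$. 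This weight space is finite dimensional: its preimage in $M[\lambda_1,\dots,\lambda_n]$ is spanned by monomials $m\,\lambda_1^{a_1}\cdots\lambda_n^{a_n}$ with $\Delta(m)+a_1+\dots+a_n=\delta$, and since the weights of $M$ are bounded below and the $a_i$ are non‑negative, there are only finitely many admissible exponent tuples $(a_1,\dots,a_n)$, each paired with a finite dimensional weight space of $M$. Thus the assignment $Y\mapsto\big(Y_{\lambda_1,\dots,\lambda_n}(u_{i_1}\otimes\dots\otimes u_{i_n})\big)_{(i_1,\dots,i_n)\in I^n}$ embeds $C^n_{\PV}(\mc V,M)_{\Delta}$ into a finite product of finite dimensional spaces, so it is finite dimensional. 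Combining with the first paragraph yields $\dim H^n_{\PV}(\mc V,M)\le\dim C^n_{\PV}(\mc V,M)_0+\dim C^n_{\PV}(\mc V,M)_1<\infty$.

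I expect the main point---rather than a genuine obstacle---to be the bookkeeping of the second paragraph: carefully using the positivity of the generator weights to guarantee that the conformal weight spaces of $\mc V$, and then of the finitely generated module $M$, are finite dimensional and bounded below. Once this is in place, everything else follows formally, with Theorem~\ref{thm:Delta} doing the essential work of restricting the cohomology to two weights.
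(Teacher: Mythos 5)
Your proposal is correct and follows essentially the same route as the paper: use Theorem \ref{thm:Delta} to reduce to conformal weights $0$ and $1$, note that cochains are determined by their values on the finitely many generators (Lemma \ref{lem:id}(a)), and use positivity of the generators' weights together with finite generation of $M$ to show the relevant weight spaces of values in $M[\lambda_1,\dots,\lambda_n]/\langle\partial+\lambda_1+\dots+\lambda_n\rangle$ are finite dimensional. The only cosmetic difference is that you make the $E$-eigenspace decomposition of the cohomology explicit and bound $\dim H^n_{\PV}$ by $\dim C^n_{\PV}(\mc V,M)_0+\dim C^n_{\PV}(\mc V,M)_1$, whereas the paper replaces each cocycle by an equivalent one of weight $0$ or $1$ and bounds the space of such cocycles directly.
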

\begin{proof}
By assumption, as a differential superalgebra,
$$
\mc V = \mb F\bigl[u_i^{(k)} \,\big|\, i=1,\dots,N,\,k\in\mb Z_+\bigr]
\,, \qquad u_i^{(k)} = \partial^k u_i \,.
$$
Let $\Delta_{\mc V} \in\mb Q$ (or $\mb R$) be such that
$0 < \Delta(u_i) \le \Delta_{\mc V}$ for all $i=1,\dots,N$.
Let $\{m_1,\dots,m_L\}$ be a set of generators of $M$ as a module over the differential superalgebra $\mc V$, which are eigenvectors of $L^M_{(1)}$.
Every vector in $M$ is a linear combination of monomials of the form
$$
m = u_{i_1}^{(k_1)} \cdots u_{i_s}^{(k_s)} m_j \,,
\qquad 0\le s \,, \; 1\le j\le L \,, \; 0\le k_t \,, \; 1\le i_t \le N \;\; (1\le t \le s) \,.
$$
By Lemma \ref{lem:conf-weight}, we have
$$
\Delta(m) = \Delta(m_j) + \sum_{t=1}^s (\Delta(u_{i_t}) + k_t) \,.
$$
This implies that $\dim M_\delta < \infty$ for every $\delta\in\mb F$, where $M_\delta$ is the span of all vectors $m\in M$ such that $\delta-\Delta(m)$
is a positive (rational or real) number.

By Lemma \ref{lem:id}(a), any $n$-cocycle $Y\in C_{\PV}^{n}(\mc V, M)$ is uniquely determined by its values on the generators:
$$
Y_{\la_1,\dots,\la_n}^{i_1,\dots,i_n} :=
Y_{\la_1,\dots,\la_n}(u_{i_1} \otimes\dots\otimes u_{i_n}) 
\in M[\la_1,\dots,\la_n] / \langle \partial+\la_1+\dots+\la_n \rangle
\,.
$$
By \eqref{eq:DY} and Theorem \ref{thm:Delta}, we can replace $Y$ with an equivalent cocycle (denoted again $Y$) such that $\Delta(Y)=0$ or $1$,
hence 
$$
\Delta\bigl( Y_{\la_1,\dots,\la_n}^{i_1,\dots,i_n} \bigr)
= \Delta(Y) + \Delta(u_{i_1}) +\dots+ \Delta(u_{i_n}) - n
\le n (\Delta_{\mc V}-1) + 1 \,. 
$$
Thus,
$$
Y_{\la_1,\dots,\la_n}^{i_1,\dots,i_n} 
\in M_{n (\Delta_{\mc V}-1) + 1}[\la_1,\dots,\la_n] / \langle \partial+\la_1+\dots+\la_n \rangle
\,.
$$
Hence, the space of all such $n$-cocycles is finite dimensional.
\end{proof}

\section{Computations of variational PVA cohomology}\label{sec:com-pva}

In this section, we compute the variational PVA cohomology of several examples of Poisson vertex algebras.
For each of them, we first show that the PVA is conformal (see Definition \ref{def:vir}) 
and then use Theorem \ref{thm:Delta}.

\subsection{Cohomology of the free superboson PVA}\label{sec:coh-bos}

Consider the free superboson PVA $\mc B_{\mf h}$ introduced in Example \ref{ex:boson-pva}.
We shall denote by $\Pi\mf h$ the vector superspace $\mf h$ with reversed parity.
Recall that, by assumption, the bilinear form $(\cdot\,|\,\cdot)$ is supersymmetric
and $\mf h_{\bar0}\perp\mf h_{\bar1}$.
This implies 
\begin{equation}\label{eq:symmetry}
(a|b)=(-1)^{p(a)} (b|a)
\,,\qquad a,b\in\mf h\,.
\end{equation}
Moreover, since $(\cdot|\cdot)$ is nondegenerate, 
it induces an isomorphism of vector superspaces $\mf h^* \simeq \mf h$. 

Let $\{u_1,\dots,u_N\}$ be a basis for $\mf h$ homogeneous with respect to parity, 
and $\{u^1,\dots,u^N\}$ be its dual basis, 
so that $(u_i|u^j)=\delta_i^j$. 
Then for every $a\in\mf h$, we have
\begin{equation}\label{eq:dualb}
a = \sum_{j=1}^N (u_j | a) u^j = \sum_{j=1}^N (a | u^j) u_j
\,.
\end{equation}

\begin{proposition}\label{prop:conf-bos}
The PVA\/ $\mc B_{\mf h}$ is conformal with central charge\/ $0$
and the Virasoro vector
\begin{equation}\label{eq:vir-bos}
L = \frac12\sum_{j=1}^N u^j u_j
\end{equation}
(which is independent of the choice of basis).
The generators\/ $a\in\mf h$ of\/ $\mc B_{\mf h}$ have conformal weight\/ $\Delta(a)=1$.
\end{proposition}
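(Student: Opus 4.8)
The plan is to verify directly the three defining conditions of Definition \ref{def:vir} for the element $L=\frac12\sum_j u^j u_j$, the key computational input being the $\lambda$-bracket $[L_\la a]$ for a generator $a\in\mf h$. First I would compute $[L_\la a]$ using the right Leibniz rule L4$'$, which is the natural tool since $L$ is a product. Writing $[L_\la a]=\frac12\sum_j[{u^j u_j}_\la a]$ and applying L4$'$, each term produces $(e^{\partial\partial_\la}u^j)[{u_j}_\la a]+(-1)^{p(u_j)}(e^{\partial\partial_\la}u_j)[{u^j}_\la a]$, where I use that $p(u^j)=p(u_j)$ since the form pairs elements of equal parity. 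Because $[{u_j}_\la a]=\la(u_j|a)$ and $[{u^j}_\la a]=\la(u^j|a)$ are linear in $\la$, the exponential $e^{\partial\partial_\la}$ truncates to its zeroth and first order terms, contributing $u^j\la(u_j|a)+(\partial u^j)(u_j|a)$ from the first summand and analogously from the second. Summing over $j$ and invoking the duality \eqref{eq:dualb} together with the supersymmetry \eqref{eq:symmetry} (which rewrites $(u^j|a)$ as $(-1)^{p(u_j)}(a|u^j)$, thereby canceling the sign $(-1)^{p(u_j)}$), both halves collapse to $(\partial+\la)a$, yielding $[L_\la a]=(\partial+\la)a$.

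From this identity the values $L_{(0)}a=\partial a$ and $L_{(1)}a=a$ are immediate, so $\Delta(a)=1$ for $a\in\mf h$. To establish diagonalizability of $L_{(1)}$ on all of $\mc B_{\mf h}$, I would use sesquilinearity L1 to get $[L_\la\partial^k a]=(\partial+\la)^{k+1}a$, hence $L_{(1)}\partial^k a=(k+1)\partial^k a$; since $L_{(1)}$ is a derivation of the commutative product by Remark \ref{rem:der}, every monomial in the $\partial^k u_i$ is an eigenvector, and the monomial basis of $\mc B_{\mf h}$ diagonalizes $L_{(1)}$.

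For the Virasoro relation I would expand, using the left Leibniz rule L4 and the brackets $[L_\la u^k]=(\partial+\la)u^k$, $[L_\la u_k]=(\partial+\la)u_k$ from the first step, the quantity $[L_\la L]=\frac12\sum_k\bigl([L_\la u^k]u_k+(-1)^{p(u_k)}[L_\la u_k]u^k\bigr)$. The $\la$-linear contribution recombines, via supercommutativity $u^k u_k=(-1)^{p(u_k)}u_k u^k$, into $2\la L$, while the $\partial$-terms assemble into $\partial\bigl(\frac12\sum_k u^k u_k\bigr)=\partial L$; thus $[L_\la L]=(\partial+2\la)L$, confirming the Virasoro axiom with central charge $c=0$. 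Finally, the independence of $L$ from the choice of basis follows because the symmetrized tensor $\sum_j u^j\otimes u_j$ is the canonical element corresponding to the form $(\cdot\,|\,\cdot)$ under the isomorphism $\mf h\simeq\mf h^*$, and is therefore unchanged when passing to any other pair of dual bases.

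The main obstacle will be the careful bookkeeping of the super signs: one must ensure that the factors $(-1)^{p(u_k)}$ generated by the Leibniz rules combine correctly with the supersymmetry \eqref{eq:symmetry} and with the supercommutativity of the product, so that the two summands in each computation genuinely coincide rather than merely looking symmetric. Once the sign bookkeeping is organized (exploiting $p(u^j)=p(u_j)$ and the restriction of the form to parity-homogeneous pairs), the remaining manipulations are routine applications of \eqref{eq:dualb}.
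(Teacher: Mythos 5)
Your proof is correct and takes essentially the same approach as the paper: the only cosmetic difference is that you compute $[L_\la a]$ directly via the right Leibniz rule L4', whereas the paper computes $[a_\la L]$ via the left Leibniz rule L4 and then invokes skewsymmetry L2 --- the same argument, since L4' is by definition L4 combined with skewsymmetry. Your sign bookkeeping, the derivation argument extending $L_{(0)}=\partial$ and the diagonalizability of $L_{(1)}$ from generators to all of $\mc B_{\mf h}$, and the computation $[L_\la L]=(\partial+2\la)L$ all match the paper's proof.
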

\begin{proof}
First, in order to check \eqref{eq:l1}, we compute for $a\in\mf h$, using the left Leibniz rule L4, \eqref{eq:boson} and \eqref{eq:dualb}:
\begin{align*}
[a_\la L]
&= \frac12\sum_{j=1}^N \big(
[a_\la u^j]u_j+(-1)^{p(u_j)}[a_\la u_j]u^j
\big) \\
& = 
\frac12\sum_{j=1}^N \big(
(a|u^j)u_j+(-1)^{p(u_j)}(a|u_j)u^j
\big)\la
= 
a\la
\,.
\end{align*}
Hence, by skewsymmetry,
$[L_\la a]=(\partial+\la)a$,
so that
$L_{(0)} a = \partial a$ and $L_{(1)} a = a$. 
Then \eqref{eq:l1} follows from the fact that $L_{(0)}$ and $L_{(1)}$ are derivations of the product in $\mc B_{\mf h}$ and $\mc B_{\mf h}$ is generated as a differential algebra by $\mf h$.

Next, we compute $[L_\la L]$ using the Leibniz rule L4 and the fact that $\partial$ is a derivation of the product:
\begin{align*}
[L_\la L] 
&= \frac12\sum_{j=1}^N [L_\la u^j] u_j
+ \frac12\sum_{j=1}^N u^j [L_\la u_j]
\\
&= \frac12\sum_{j=1}^N \bigl( (\partial + \la)u^j \bigr) u_j
+ \frac12\sum_{j=1}^N u^j \bigl( (\partial + \la)u_j \bigr) 
\\
&= (\partial + 2\la) L
\,.
\end{align*}
This completes the proof.
\end{proof}

\begin{theorem}\label{thm:coh-bos}
For the free superboson PVA\/ $\mc B_{\mf h}$, we have
$$
H_{\PV}^n(\mc B_{\mf h}, \mc B_{\mf h}) \simeq (S^n(\Pi\mf h))^* \oplus (S^{n+1}(\Pi\mf h))^* 
\,, \qquad n\ge0 \,.
$$
Explicitly, 
an element $\alpha+\beta\in (S^n(\Pi\mf h))^* \oplus (S^{n+1}(\Pi\mf h))^*$
corresponds under this isomorphism to the\/ $n$-cocycle\/ 
$Y \in C_{\PV}^{n}(\mc B_{\mf h}, \mc B_{\mf h})$, 
uniquely defined by
$$
Y_{\la_1,\dots,\la_n}(u)
= \al(u) + \sum_{j=1}^N \be(u\otimes u^j) u_j 
+ \langle\partial+\la_1+\dots+\la_n\rangle 
\,,\qquad u\in\mf h^{\otimes n}
\,.
$$
\end{theorem}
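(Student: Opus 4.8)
The plan is to exploit that $\mc B_{\mf h}$ is a conformal PVA whose generators $a\in\mf h$ all have conformal weight $1$ (Proposition \ref{prop:conf-bos}), that it is conformal as a module over itself (since $L_{(1)}$ is diagonalizable on $\mc B_{\mf h}$), and that, as a differential superalgebra, it is an algebra of differential polynomials; hence Theorem \ref{thm:Delta} applies and the energy operator $E$ on $H_{\PV}(\mc B_{\mf h},\mc B_{\mf h})$ has only the eigenvalues $0$ and $1$. Rather than analyzing the quotients $M[\la_1,\dots,\la_n]/\langle\partial+\la_1+\dots+\la_n\rangle$ directly, I would pass to the basic complex $\widetilde C_{\PV}(\mc B_{\mf h},\mc B_{\mf h})$, compute its much simpler cohomology, and then recover $H_{\PV}$ from the long exact sequence \eqref{eq:tildeW-les}, using \eqref{eq:tildeW-les2} and Lemma \ref{lem:Ytilde}(b).

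The first main step is to show $\widetilde H_{\PV}^n(\mc B_{\mf h},\mc B_{\mf h})\simeq (S^n(\Pi\mf h))^*$. By the basic-complex analogue of Corollary \ref{ccartan} (see Proposition \ref{ccartan-pva}(a)) together with \eqref{eq:Etilde}, the operator $\widetilde E=L_{(1)}$ acts trivially on $\widetilde H_{\PV}$, so the basic cohomology is concentrated in conformal weight $0$. A weight-$0$ basic $n$-cochain takes values in the weight-$0$ subspace $\mb F\cdot 1\subset\mc B_{\mf h}[\la_1,\dots,\la_n]$, hence is given by a $\bar p$-symmetric map $\al\colon\mf h^{\otimes n}\to\mb F$, i.e.\ by an element of $(S^n(\Pi\mf h))^*$. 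The key computation is that $\widetilde d$ vanishes on every such cochain: in \eqref{eq:lca-d} the first sum dies because the $\la$-action of $\mf h$ on constants is zero, and the second sum dies because $[{a_i}_{\la_i}a_j]\in\mb F\cdot 1$ and any $n$-cochain vanishes on $1$ (Lemma \ref{lem:id}). Thus in weight $0$ every cochain is a cocycle and there are no coboundaries, giving $\widetilde H_{\PV}^n\simeq(S^n(\Pi\mf h))^*$.

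The second step assembles the answer. Since $\widetilde H_{\PV}$ sits in weight $0$ while $\partial$ raises $\widetilde E$-weight by $1$ (Lemma \ref{lem:tildeE}(b)), the endomorphism of $\widetilde H_{\PV}^n$ induced by $\partial$ is zero; consequently the map $H^n(\partial\widetilde C_{\PV})\to\widetilde H_{\PV}^n$ appearing in \eqref{eq:tildeW-les} (induced by the inclusion $\partial\widetilde C_{\PV}\hookrightarrow\widetilde C_{\PV}$) vanishes. Combined with \eqref{eq:tildeW-les2}, the long exact sequence breaks into short exact sequences $0\to\widetilde H_{\PV}^n\to H_{\PV}^n\to\widetilde H_{\PV}^{n+1}\to 0$, which split over the field $\mb F$ compatibly with the weight grading, yielding $H_{\PV}^n(\mc B_{\mf h},\mc B_{\mf h})\simeq(S^n(\Pi\mf h))^*\oplus(S^{n+1}(\Pi\mf h))^*$.

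Finally I would match this with the explicit cocycles. The weight-$0$ summand is the image under $\pi$ of the basic $\al$-cochain, namely $Y_{\la_1,\dots,\la_n}(u)=\al(u)+\langle\partial+\la_1+\dots+\la_n\rangle$. For the weight-$1$ summand I would verify directly that the stated $\be$-cochain $Y_{\la_1,\dots,\la_n}(u)=\sum_j\be(u\otimes u^j)u_j+\langle\partial+\la_1+\dots+\la_n\rangle$ is a cocycle: computing $dY$ on generators, the bracket terms vanish as above, while the module-action terms contract via the form to $\sum_i(-1)^{\gamma_i}\be(u_{k_0}\otimes\stackrel{i}{\check{\dots}}\otimes u_{k_n}\otimes u_{k_i})\la_i$, which equals a scalar multiple of $\la_0+\dots+\la_n$, hence $0$ in the quotient, precisely because $\be$ is totally $\bar p$-symmetric. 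I expect the main obstacle to be exactly this sign bookkeeping: confirming that the Koszul signs $\gamma_i$ in \eqref{eq:gammai} conspire so that total symmetry of $\be$ is equivalent to the cocycle condition, and that the relevant map of \eqref{eq:tildeW-les} sends this class to the generator of $\widetilde H_{\PV}^{n+1}\simeq(S^{n+1}(\Pi\mf h))^*$. The degree $n=0$ case should be checked separately (as $H^0_{\PV}=\Cas$), since the complex isomorphism induced by $\partial$ degenerates in degree $0$.
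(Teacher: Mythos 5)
Your proposal is correct, but it follows a genuinely different route from the paper's proof. The paper never passes to the basic complex: it uses Theorem \ref{thm:Delta} together with \eqref{eq:DY} to reduce any cohomology class to a representative of the form \eqref{eq:coh-bos-3}, with components $\al$, $\be$, $\ga$ of conformal weight $0$ and $1$, and then computes the differential explicitly on these components (formula \eqref{eq:coh-bos-4}): $d$ annihilates the $\al$- and $\ga$-parts, maps the $\be$-part into the $\ga$-space with kernel exactly the totally $\bar p$-symmetric $\be$'s, and is surjective onto the $\ga$-space, which disposes of the $\ga$-summand. You instead open up the machinery behind Theorem \ref{thm:Delta}: you compute the basic cohomology $\widetilde H^n_{\PV}(\mc B_{\mf h},\mc B_{\mf h})\simeq(S^n(\Pi\mf h))^*$ directly (it is concentrated in $\widetilde E$-weight $0$, where $\widetilde d$ vanishes because $[a_\la b]\in\mb F[\la]\,1$ for $a,b\in\mf h$ and cochains kill the unit), observe that the inclusion-induced maps $H^n\big(\partial\widetilde C_{\PV}\big)\to\widetilde H^n_{\PV}$ vanish because $\partial$ raises weight while basic classes live in weight $0$, and thereby split the long exact sequence \eqref{eq:tildeW-les} into short exact sequences $0\to\widetilde H^n_{\PV}\to H^n_{\PV}\to\widetilde H^{n+1}_{\PV}\to0$. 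Your route is more conceptual: it exhibits the two summands as basic cohomology in adjacent degrees and avoids the $\ga$-analysis and the surjectivity computation altogether; the paper's route is more explicit, and its intermediate output (the $\al/\be/\ga$ decomposition and the surjectivity of $d$ from the $\be$-space onto the $\ga$-space) is reused verbatim in the affine case, Theorem \ref{thm:coh-aff}. The two details you flagged do need attention but come out fine: the explicit weight-$1$ representatives require computing the connecting map $\psi$, which amounts to the same sign computation as the paper's \eqref{eq:coh-bos-4} (one finds $\widetilde d\widetilde Y=\partial\widetilde Z$ with $\widetilde Z$ the weight-$0$ basic $(n{+}1)$-cochain attached to $\be$, so total symmetry of $\be$ is precisely the cocycle condition and $\psi$ hits the generator as you predict); and you are right to invoke \eqref{eq:tildeW-les2} only in degrees $\ge1$ --- this matters, since at degree $0$ it actually fails for $\mc B_{\mf h}$ (here $H^0\big(\partial\widetilde C_{\PV}\big)=\partial(\mb F\,1)=0$ while $\widetilde H^0_{\PV}=\mb F$), but your short exact sequence at $n=0$ never uses it, so in fact no separate treatment of $n=0$ is needed.
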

\begin{proof}
First, note that by Lemma \ref{lem:id}(a), every $n$-cochain $Y$
is uniquely determined by its restriction to $\mf h^{\otimes n}$. 
By \eqref{eq:DY} and Theorem \ref{thm:Delta}, every cohomology class in 
$H_{\PV}^n(\mc B_{\mf h}, \mc B_{\mf h})$ has a representative $Y$ such that
$$
\Delta\bigl( Y_{\la_1,\dots,\la_n}(u) \bigr)
= \Delta(Y) = 0 \text{ or } 1 
\,, \qquad u\in\mf h^{\otimes n} \,.
$$
By definition (see \eqref{eq:euler-k}), this means that $Y$ has the form
\begin{equation}\label{eq:coh-bos-3}
\begin{split}
Y_{\la_1,\dots,\la_n}(u) 
&= \al(u) + \sum_{j=1}^N \be(u\otimes u^j) u_j 
+ \sum_{i=1}^n \ga(u \otimes e_i) \la_i
+ \langle\partial+\la_1+\dots+\la_n\rangle
\,,
\end{split}
\end{equation}
for $u\in\mf h^{\otimes n}$ and 
for some linear maps
$$
\al\colon \mf h^{\otimes n} \to \mb F \,,\qquad
\be\colon \mf h^{\otimes (n+1)} \to \mb F \,,\qquad
\ga\colon \mf h^{\otimes n} \otimes \mb F^n \to \mb F \,,
$$
where $\{e_1,\dots,e_n\}$ is the standard basis for $\mb F^n$.
Notice that
$\al$ and $\be$ are uniquely determined from $Y$,
while $\ga$ is determined up to adding $\bar\gamma\otimes\epsilon$,
where $\bar\ga\colon\mf h^{\otimes n}\to\mb F$ is an arbitrary linear map
and $\epsilon\colon\mb F^n\to\mb F$ is the linear map given by
$\epsilon(e_i)=1$ for all $i=1,\dots,n$.

The symmetry conditions \eqref{eq:skew} for $Y$ 
can be translated in terms of the linear maps $\alpha$, $\beta$ and $\gamma$ as follows.
The maps $\alpha$ and $\beta$ are invariant with respect to the usual action of 
the symmetric group $S_n$ on the vector superspace $(\Pi\mf h)^{\otimes n}$
(where we add a minus sign every time we exchange two odd factors):
\begin{equation*}
\alpha\in (S^n(\Pi\mf h))^*
\,,\qquad
\beta\in (S^n(\Pi\mf h)\otimes\Pi\mf h)^*
\,,
\end{equation*}
while $\gamma$ satisfies the $S_n$-equivariance
\begin{equation}\label{eq:gamma0}
\gamma(\sigma(u)\otimes e_{\sigma(i)})=\gamma(u\otimes e_i)
\,, \qquad
\sigma\in S_n \,, \; u \in (\Pi\mf h)^{\otimes n} \,, \; i=1,\dots,n
\,.
\end{equation}
The map $\gamma$
is defined modulo elements of the form $\bar\gamma\otimes\epsilon$, 
where $\bar\ga \in (S^n(\Pi\mf h))^*$.

Using the definition of the differential \eqref{eq:lca-d}, the $\la$-bracket \eqref{eq:boson},
and Lemma \ref{lem:id}(b),
we can write down an explicit formula for $dY$.
If $Y$ is as in \eqref{eq:coh-bos-3}, we have for $a_i\in\mf h$:
\begin{align}
\notag
(&d Y)_{\la_0,\dots,\la_n} (a_0 \otimes\dots\otimes a_n) 
\\ \label{eq:coh-bos-4}
&= \sum_{i=0}^n \sum_{j=1}^N (-1)^{\ga_i} 
\la_i ( a_i | u_j) \, \be(a_0\otimes\stackrel{i}{\check{\dots}}\otimes a_n \otimes u^j) 
 + \langle\partial+\la_1+\dots+\la_n\rangle
\\ \notag
&= \sum_{i=0}^n (-1)^{\bar p(a_i) (\bar p(a_{i+1})+\cdots+\bar p(a_{n}))} 
\be(a_0\otimes\stackrel{i}{\check{\dots}}\otimes a_n\otimes a_i) 
\,\la_i + \langle\partial+\la_1+\dots+\la_n\rangle
\,,
\end{align}
where $\ga_i$ is given by \eqref{eq:gammai}.
For the last equality, we used \eqref{eq:gammai}, \eqref{eq:symmetry}, \eqref{eq:dualb}, and the fact
that all nonzero summands satisfy
$\bar p(Y)=\bar p(\be)+\bar p(u_j)$, 
$\bar p(\beta)=\bar p(a_0)+\dots+\bar p(a_n)$, and $\bar p(u_j)=\bar p(a_i)$.

Let us denote by $A^{n}$, $B^{n}$ and $C^{n}$ the subspaces of 
$C_{\PV}^{n}(\mc B_{\mf h}, \mc B_{\mf h})$ consisting of $n$-cohains $Y$
corresponding to maps $\al$, $\be$ and $\ga$, respectively.
By \eqref{eq:coh-bos-4}, we have
$$
d(A^{n}) = 0 \,, \qquad
d(B^{n}) \subset C^{n+1} \,, \qquad
d(C^{n}) = 0 \,.
$$
Hence,
\begin{equation*}
H^n_{\PV}(\mc B_{\mf h},\mc B_{\mf h})
\simeq
A^{n} \oplus \ker(d\colon B^{n} \to C^{n+1}) \oplus C^{n} / d(B^{n-1})
\,.
\end{equation*}
By definition, $dY=0$ if and only if all coefficients in front of $\la_i$ in the right-hand side 
of \eqref{eq:coh-bos-4} are equal.
This is equivalent to the condition that $\be \in (S^{n+1}(\Pi\mf h))^*$.

Finally, we claim that $d(B^{n}) = C^{n+1}$.
Indeed, denote the standard basis for $\mb F^{n+1}$ by $\{e_0,\dots,e_n\}$.
By \eqref{eq:gamma0} (with $n$ replaced by $n+1$), 
an $S_{n+1}$-equivariant linear map 
$\ga \colon (\Pi\mf h)^{\otimes (n+1)} \otimes \mb F^{n+1}  \to \mb F$
is uniquely determined by the linear map
\begin{equation*}
\beta\in ((S^n\Pi\mf h)\otimes\Pi\mf h)^* \,, \qquad
\beta(u\otimes a) = \gamma(u\otimes a\otimes e_n) \,.
\end{equation*}
Then, by \eqref{eq:coh-bos-4}, 
the element $Y\in B^{n}$ associated to $\be$ maps to $\ga$ under the differential $d$.
This completes the proof of the theorem.
\end{proof}

\begin{remark}\label{rem:coh-bos}
When $\mf h$ is purely even, we can identify the symmetric powers $S^n(\Pi\mf h)$ with the exterior powers $\bigwedge^n \mf h$. In this case, Theorem \ref{thm:coh-bos} was proved in \cite{DSK12}.
\end{remark}

\begin{corollary}\label{cor:coh-bos}
\begin{enumerate}[(a)]
\item
Every Casimir element of the PVA\/ $\mc B_{\mf h}$ is a linear combination of\/ $\tint 1$ and\/ $\tint u_i$
$(i=1,\dots,N)$.

\item
Every derivation of the PVA\/ $\mc B_{\mf h}$ is a linear combination of an inner derivation and derivations of the form
$$
\frac{\partial}{\partial u_i} \,, \quad 
\sum_{n\in\mb Z_+} \Bigl( u^{i(n)} \frac{\partial}{\partial u_j^{(n)}} 
+ (-1)^{\bar p(u_i) \bar p(u_j)} u^{j(n)} \frac{\partial}{\partial u_i^{(n)}} \Bigr) \,,
\qquad 1 \le i \le j \le N \,,
$$
where, as before, $a^{(n)} = \partial^n a$.
\end{enumerate}
\end{corollary}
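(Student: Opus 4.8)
The plan is to deduce both parts directly from the explicit cohomology computation of Theorem \ref{thm:coh-bos}, combined with the low-degree interpretation of the variational PVA cohomology in Theorem \ref{thm:lowcoho2}. Since $\mc B_{\mf h}$ is an algebra of differential polynomials, every cohomology class has a representative in the normal form displayed in Theorem \ref{thm:coh-bos}, and the whole task reduces to recognizing these representatives concretely: as Casimir elements in degree $0$, and as the listed derivations in degree $1$.

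For part (a), I would set $n=0$ in Theorem \ref{thm:coh-bos}. Here $C_{\PV}^0(\mc B_{\mf h},\mc B_{\mf h}) = \mc B_{\mf h}/\partial\mc B_{\mf h}$, and by Theorem \ref{thm:lowcoho2}(a) this degree computes precisely $\Cas(\mc B_{\mf h},\mc B_{\mf h})$. The empty tensor $u\in\mf h^{\otimes 0}=\mb F$ makes the normal form collapse to $\alpha\cdot 1 + \sum_j \beta(u^j)u_j$ modulo $\partial\mc B_{\mf h}$, with $\alpha\in\mb F$ and $\beta\in(\Pi\mf h)^*$. As $\beta$ ranges over $(\Pi\mf h)^*\cong\mf h$, the vector $\sum_j\beta(u^j)u_j$ ranges over all of $\mf h$ by the dual-basis identity \eqref{eq:dualb}; hence every Casimir is of the form $\tint(c+v)$ with $c\in\mb F$ and $v\in\mf h$, which is exactly part (a).

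For part (b), I would set $n=1$, so that $C_{\PV}^1 = \Der^\partial(\mc B_{\mf h},\mc B_{\mf h})$ and Theorem \ref{thm:lowcoho2}(b) gives $H^1_{\PV}=\Der/\Inder$. A representative derivation satisfies $Y(u)=\alpha(u)+\sum_j\beta(u\otimes u^j)u_j$ on $u\in\mf h$, with $\alpha\in(\Pi\mf h)^*$ and $\beta\in(S^2\Pi\mf h)^*$. By \eqref{eq:DY} this splits according to conformal weight. The $\alpha$-term has weight $0$ and takes constant values on $\mf h$; since a $\partial$-commuting derivation is determined by its values on the generators (Lemma \ref{lem:id}(a)), it equals $\sum_i\alpha(u_i)\frac{\partial}{\partial u_i}$, recovering the first family. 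The $\beta$-term has weight $1$ and maps $\mf h$ linearly into $\mf h$; I would identify these with the derivations $D_{ij}=\sum_{n\in\mb Z_+}\bigl(u^{i(n)}\frac{\partial}{\partial u_j^{(n)}}+(-1)^{\bar p(u_i)\bar p(u_j)}u^{j(n)}\frac{\partial}{\partial u_i^{(n)}}\bigr)$ for $i\le j$.

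The main obstacle is this last matching. Applying the bracket-derivation property to $[{u_k}_\lambda u_l]=\lambda(u_k|u_l)$ shows that a weight-$1$ derivation preserving $\mf h$ is a PVA derivation exactly when it is graded skew-adjoint for $(\cdot|\cdot)$; under $\Pi$ this skew-adjointness is dual to the symmetry of $\beta\in(S^2\Pi\mf h)^*$, and nondegeneracy of the form converts $\beta$ into the endomorphism $u\mapsto\sum_l\beta(u\otimes u^l)u_l$ via \eqref{eq:dualb}. The delicate point is the parity bookkeeping: the sign $(-1)^{\bar p(u_i)\bar p(u_j)}$ is precisely what forces $D_{ii}=0$ when $u_i$ is even (so only the pairs $i<j$ contribute, matching $\dim(S^2\Pi\mf h)^*$) while keeping $D_{ii}\neq0$ when $u_i$ is odd. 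One must then check that the $D_{ij}$ with $i\le j$ are independent modulo inner derivations and, together with the $\frac{\partial}{\partial u_i}$, give a full set of representatives of $H^1_{\PV}$. Since these representatives live in $\Der/\Inder$, the conclusion is exactly that every PVA derivation of $\mc B_{\mf h}$ is an inner derivation plus a linear combination of the two listed families.
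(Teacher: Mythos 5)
Your proposal is correct and follows exactly the route the paper intends: the corollary is stated there without proof as an immediate consequence of Theorem \ref{thm:coh-bos} (normal-form representatives in degrees $0$ and $1$) combined with Theorem \ref{thm:lowcoho2} (identification of $H^0_{\PV}$ with Casimirs and $H^1_{\PV}$ with $\Der/\Inder$), which is precisely your argument. Your additional bookkeeping --- extending the $\alpha$-part to $\sum_i\alpha(u_i)\frac{\partial}{\partial u_i}$, matching supersymmetric $\beta\in(S^2(\Pi\mf h))^*$ with the span of the $D_{ij}$ via skew-adjointness for $(\cdot|\cdot)$, and the sign/dimension check showing $D_{ii}=0$ exactly for even $u_i$ --- fills in the details the paper leaves implicit, and is sound.
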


\subsection{Cohomology of the free superfermion PVA}\label{sec:coh-fer}

Consider now the free superfermion PVA $\mc F_{\mf h}$, defined in Example \ref{ex:fermion-pva}.
Let again $\{u_1,\dots,u_N\}$ be a basis for $\mf h$, which is homogeneous with respect to parity,
and let $\{u^1,\dots,u^N\}$ be its dual basis, so that $(u_i|u^j)=\delta_i^j$. 
Note that \eqref{eq:dualb} still holds, but now 
\begin{equation}\label{eq:symmetry2}
(a|b)=-(-1)^{p(a)} (b|a)
\,,\qquad a,b\in\mf h\,.
\end{equation}

\begin{proposition}\label{prop:conf-fer}
The PVA\/ $\mc F_{\mf h}$ is conformal with central charge\/ $0$
and the Virasoro vector
\begin{equation}\label{eq:vir-fer}
L = \frac12\sum_{j=1}^N (\partial u^j) u_j
\end{equation}
(which is independent of the choice of basis).
The generators\/ $a\in\mf h$ of\/ $\mc F_{\mf h}$ have conformal weight\/ $\Delta(a)=1/2$.
\end{proposition}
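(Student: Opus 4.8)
The plan is to follow the proof of Proposition \ref{prop:conf-bos} for the free superboson, adapting every sign to the super-skewsymmetric form \eqref{eq:symmetry2} (which is skew on $\mf h_{\bar0}$ but symmetric on $\mf h_{\bar1}$). First I would compute $[a_\la L]$ for a generator $a\in\mf h$ of homogeneous parity $p(a)$, applying the left Leibniz rule L4 to $L=\frac12\sum_j(\partial u^j)u_j$ together with sesquilinearity L1. The key feature of the fermionic case is that $[a_\la u^j]=(a|u^j)$ and $[a_\la u_j]=(a|u_j)$ are \emph{constants}, so $\partial$ annihilates them; this is precisely why the derivative $\partial$ must appear in $L$. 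One sum then collapses to $\frac12\la\sum_j(a|u^j)u_j=\frac12\la a$ by \eqref{eq:dualb}. In the other, the sign $(-1)^{p_j}$ produced by L4 (with $p_j=p(u_j)=p(u^j)$) combines with $(a|u_j)=-(-1)^{p(a)}(u_j|a)$ from \eqref{eq:symmetry2} and the fact that the surviving terms have $p_j=p(a)$, turning it into $-\partial\sum_j(u_j|a)u^j=-\partial a$, again by \eqref{eq:dualb}. Thus $[a_\la L]=\frac12(\la-\partial)a$.

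Next, applying skewsymmetry L2 with $p(L)=\bar0$ to interchange the slots, I would read off $[L_\la a]=(\partial+\frac12\la)a$, hence $L_{(0)}a=\partial a$ and $L_{(1)}a=\frac12 a$, which already yields $\Delta(a)=\frac12$. To verify the requirements \eqref{eq:l1} on all of $\mc F_{\mf h}$, I would invoke Remark \ref{rem:der}: $L_{(0)}$ and $L_{(1)}$ are even derivations of the product. Since $L_{(0)}$ commutes with $\partial$ (by L1) and agrees with $\partial$ on the generators $u_i$, it agrees with $\partial$ on every $\partial^n u_i$ and hence everywhere, so $L_{(0)}=\partial$. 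For $L_{(1)}$, the mode identity $L_{(1)}\partial b=\partial L_{(1)}b+L_{(0)}b$ coming from L1 gives, by induction, $L_{(1)}\partial^n u_i=(n+\frac12)\partial^n u_i$; thus the generators $\partial^n u_i$ of the polynomial superalgebra $\mc F_{\mf h}=S(\mb F[\partial]\mf h)$ are $L_{(1)}$-eigenvectors, and since $L_{(1)}$ is a derivation every monomial is an eigenvector, so $L_{(1)}$ is diagonalizable.

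Finally I would compute $[L_\la L]$ from L4 using $[L_\la u_j]=(\partial+\frac12\la)u_j$ and $[L_\la\partial u^j]=(\partial+\la)(\partial+\frac12\la)u^j$. Collecting powers of $\la$, the sign $(-1)^{p_j}$ from L4 cancels against the sign obtained by supercommuting $u_j$ (resp.\ $\partial u_j$) past $\partial u^j$: the $\la^0$ term becomes $\frac12\sum_j\partial[(\partial u^j)u_j]=\partial L$, and the $\la^1$ term becomes $\sum_j(\partial u^j)u_j=2L$. The $\la^2$ term is $\frac12\sum_j u^j u_j$, and this vanishes: restricting to $\mf h_{\bar0}$ the inverse Gram matrix of the form is skewsymmetric while $u_iu_j$ is symmetric, and restricting to $\mf h_{\bar1}$ the inverse Gram matrix is symmetric while $u_iu_j$ is antisymmetric, so in both cases $\sum_j u^j u_j=0$. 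Hence $[L_\la L]=(\partial+2\la)L$, i.e.\ the central charge is $0$. Basis-independence of $L$ follows as usual by identifying $\sum_j u^j\otimes u_j$ with the canonical element of $\mf h\otimes\mf h$ determined by the form.

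The hard part will be purely the sign bookkeeping: every cancellation that forces the central charge to vanish, fixes the weight at $\frac12$, and makes $L$ well defined relies on the interplay of the L4 sign $(-1)^{p(b)p(c)}$, the supercommutativity of the product, and the super-skewsymmetry \eqref{eq:symmetry2}. I would therefore carry the parities $p_j$ explicitly through each step, rather than treating only the purely odd case, since the even and odd parts of $\mf h$ contribute with opposite symmetry behavior.
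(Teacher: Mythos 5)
Your proposal is correct and follows essentially the same route as the paper's proof: compute $[a_\la L]=\frac12(\la-\partial)a$ from the Leibniz rule L4 and sesquilinearity, deduce $[L_\la a]=(\partial+\frac12\la)a$ by skewsymmetry (whence \eqref{eq:l1} and $\Delta(a)=1/2$, via the derivation property exactly as in Proposition \ref{prop:conf-bos}), and reduce $[L_\la L]=(\partial+2\la)L$ to the identity $\sum_j u^j u_j=0$ --- which the paper proves by the dual-basis flip $\{u^j\}\mapsto\{(-1)^{p(u_j)+1}u_j\}$ while you prove it by the even/odd symmetry of the inverse Gram matrix, both equally valid. The only blemish is harmless: the coefficient of $\la^2$ in $[L_\la L]$ is $\frac14\sum_j u^j u_j$, not $\frac12\sum_j u^j u_j$, which is immaterial since that sum vanishes.
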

\begin{proof}
The proof is similar to that of Proposition \ref{prop:conf-bos}.
First, by the left Leibniz rule L4, the sesquilinearity L1,
\eqref{eq:fermion}, \eqref{eq:dualb} and \eqref{eq:symmetry2}, we have for $a\in\mf h$:
$$
[a_\la L]
= \frac12\sum_{j=1}^N \big(\lambda(a|u^j)u_j+(-1)^{p(u_j)}(a|u_j)\partial u^j\big)
=\frac12(\lambda-\partial)a
\,.
$$
Hence, from skewsymmetry, $[L_\lambda a]=(\partial+\frac12\lambda)a$,
which implies \eqref{eq:l1} and $\Delta(a)=1/2$.

Next, using the Leibniz rule L4 and the sesquilinearity L1, we compute:
\begin{align*}
[L_\la L] 
&= \frac12\sum_{j=1}^N [L_\la (\partial u^j)] u_j
+ \frac12\sum_{j=1}^N (\partial u^j) [L_\la u_j]
\\
&= \frac12\sum_{j=1}^N \Bigl( (\partial + \la) \Bigl(\partial + \frac12 \la\Bigr) u^j \Bigr) u_j
+ \frac12\sum_{j=1}^N (\partial u^j) \Bigl( \Bigl(\partial + \frac12 \la\Bigr) u_j \Bigr) 
\\
&= (\partial + 2\la) L
\,.
\end{align*}
For the last equality, we used that $\partial$ is a derivation of the product and that
\begin{equation}\label{eq:vir-bos2}
\sum_{j=1}^N u^j u_j = 0 \,.
\end{equation}
To prove \eqref{eq:vir-bos2}, notice that its left side is independent of the choice of basis.
If we start with the basis $\{u^j\}_{j=1,\dots,N}$, then its dual basis is
$\{(-1)^{p(u_j)+1} u_j\}_{j=1,\dots,N}$. Hence,
$$
\sum_{j=1}^N u^j u_j = \sum_{j=1}^N (-1)^{p(u_j)+1} u_j u^j =
- \sum_{j=1}^N u^j u_j \,,
$$
which completes the proof.
\end{proof}

Let $\Phi\colon\mf h\to\mf h^*$ be the vector superspace isomorphism given by
$\Phi(v)(a) = (v|a)$ for $v,a\in\mf h$.
The Lie superalgebra $\mf{gl}(\mf h)$ can be identified with $\mf h\otimes\mf h^*$, so that its standard representation on $\mf h$ is given by $(u\otimes \varphi) \cdot a = \varphi(a) u$. After applying $\Phi$, we have $\mf{gl}(\mf h) \simeq \mf h\otimes\mf h$ and it acts on $\mf h$ by 
$(u\otimes v) \cdot a = (v|a) u$. Let us embed the symmetric square $S^2 \mf h$ into $\mf h\otimes\mf h$ via the map 
$$
uv \mapsto u \otimes v + (-1)^{p(u) p(v)} v \otimes u 
\,, \qquad u,v\in\mf h \,.
$$
Then $S^2 \mf h$ is a subalgebra of the Lie superalgebra $\mf{gl}(\mf h)$, isomorphic to 
\begin{equation}\label{eq:spo}
\mf{spo}(\mf h) = \bigl\{ x\in\mf{gl}(\mf h) \,\big|\, 
(x \cdot a | b) + (-1)^{p(x) p(a)} (a | x \cdot b) = 0 \;\;\text{for all}\;\; a,b\in\mf h \bigr\}
\end{equation}
(see \cite{K77}).
We shall need the following lemma.

\begin{lemma}\label{lem:spo-fer}
The map that sends\/ $x \in S^2 \mf h \subset\mc F_{\mf h}$ 
to\/ $x_{(0)} \in\End\mc F_{\mf h}$ corresponds,
via the identification $S^2\mf h\simeq \mf{spo}(\mf h)$,
to a representation of the Lie superalgebra\/ $\mf{spo}(\mf h)$ on\/ $\mc F_{\mf h}$, 
which coincides with the standard representation when restricted to\/ $\mf h$.
\end{lemma}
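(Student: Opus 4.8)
The plan is to combine two general facts about PVA zero modes with one direct computation and a faithfulness argument. Recall from Remark~\ref{rem:der} that every zero mode $a_{(0)}$ is a derivation of the commutative product of $\mc F_{\mf h}$, and that (as the module axiom M2 at $\lambda=\mu=0$, cf.\ Proposition~\ref{pcartan2} and Proposition~\ref{ccartan-pva}) one has $(\partial a)_{(0)}=0$ by sesquilinearity and $[a_{(0)},b_{(0)}]=(a_{(0)}b)_{(0)}$. Thus $a\mapsto a_{(0)}$ descends to a Lie superalgebra homomorphism $\mc F_{\mf h}/\partial\mc F_{\mf h}\to\End\mc F_{\mf h}$. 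The remaining task is to show that $S^2\mf h$ is a Lie subalgebra for the induced bracket, isomorphic as such to $\mf{spo}(\mf h)$, and that its action on $\mf h$ is the standard one.

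First I would compute the action on generators. For $u,v,w\in\mf h$, the right Leibniz rule L4' together with $[u_\la w]=(u|w)$ and $[v_\la w]=(v|w)$ (both independent of $\la$) gives
\[
(uv)_{(0)}w=(v|w)\,u+(-1)^{p(u)p(v)}(u|w)\,v .
\]
Under the embedding $uv\mapsto u\otimes v+(-1)^{p(u)p(v)}v\otimes u\in\mf{gl}(\mf h)$ and the standard action $(u\otimes v)\cdot w=(v|w)u$, the right-hand side is precisely $(uv)\cdot w$. Hence $x_{(0)}|_{\mf h}=x\cdot$ for every $x\in S^2\mf h$, which is the last assertion of the lemma.

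Next I would check that $S^2\mf h$ is preserved. Since $x_{(0)}$ is a derivation and maps $\mf h$ into $\mf h$, it maps $\mf h\cdot\mf h=S^2\mf h$ into itself, via $x_{(0)}(y_1y_2)=(x_{(0)}y_1)y_2+(-1)^{p(x)p(y_1)}y_1(x_{(0)}y_2)$. As $S^2\mf h$ meets $\partial\mc F_{\mf h}$ trivially (every nonzero element of $\partial\mc F_{\mf h}$ involves at least one derivative), $S^2\mf h$ embeds into $\mc F_{\mf h}/\partial\mc F_{\mf h}$ and is a subalgebra for the bracket $x_{(0)}y$. To identify this bracket with that of $\mf{spo}(\mf h)$, I would use that the standard representation $\mf{spo}(\mf h)\hookrightarrow\mf{gl}(\mf h)$ is faithful: for $x,y\in S^2\mf h$ the element $x_{(0)}y$ acts on $\mf h$ (by the previous paragraph applied to $x_{(0)}y$) as $(x_{(0)}y)\cdot$, whereas $(x_{(0)}y)_{(0)}=[x_{(0)},y_{(0)}]$ restricts on $\mf h$ to the supercommutator $[x\cdot,y\cdot]=[x,y]\cdot$. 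Faithfulness then forces $x_{(0)}y=[x,y]$ in $\mf{spo}(\mf h)$, so $x\mapsto x_{(0)}$ restricts to a representation of $\mf{spo}(\mf h)$.

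The main obstacle is purely the sign bookkeeping: one must track the Koszul signs in L4' and in the supercommutator so that the direct computation of $(uv)_{(0)}w$ matches the stated embedding and the normalization $(u\otimes v)\cdot w=(v|w)u$ of the standard action. Once the signs are verified, the closure of $S^2\mf h$ under $x_{(0)}(\cdot)$ and the faithfulness identification of the brackets are formal, and the lemma follows.
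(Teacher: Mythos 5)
Your proof is correct, and its skeleton coincides with the paper's: you perform the same computation $(uv)_{(0)}w=(v|w)u+(-1)^{p(u)p(v)}(u|w)v=(uv)\cdot w$ via the right Leibniz rule L4', and you make the same reduction, via the identity $[x_{(0)},y_{(0)}]=(x_{(0)}y)_{(0)}$, to proving $x_{(0)}y=[x,y]$ for $x,y\in S^2\mf h$. Where you genuinely diverge is in how that last identity is established. The paper computes $x_{(0)}(ab)=[x_\la (ab)]\big|_{\la=0}=(x\cdot a)b+(-1)^{p(x)p(a)}a(x\cdot b)$ by the left Leibniz rule and then simply recognizes the right-hand side as the bracket $[x,ab]$ in $\mf{spo}(\mf h)$ --- an identification that implicitly uses the invariance condition \eqref{eq:spo}, since that is what makes the adjoint action of $\mf{spo}(\mf h)$ on $S^2\mf h\simeq\mf{spo}(\mf h)$ equal to the derivation extension of the standard action. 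You instead avoid writing down any formula for the adjoint action: you note that $x_{(0)}$ preserves $S^2\mf h$ (derivation property plus $x_{(0)}\mf h\subset\mf h$), restrict the operator identity $(x_{(0)}y)_{(0)}=[x_{(0)},y_{(0)}]$ to $\mf h$ to obtain $(x_{(0)}y)\cdot\,=[x\cdot,y\cdot]=[x,y]\cdot$\,, and conclude by injectivity of $S^2\mf h\hookrightarrow\mf{gl}(\mf h)$. This buys you freedom from the one piece of sign bookkeeping the paper leaves implicit (the Lie-theoretic fact about the adjoint action), at the modest cost of invoking closure and faithfulness; both arguments are equally short. One small remark: your observations that $(\partial a)_{(0)}=0$ and that $S^2\mf h\cap\partial\mc F_{\mf h}=0$ are correct but not actually needed --- once you have $x_{(0)}y=[x,y]$ as elements of $\mc F_{\mf h}$, combining it with $(x_{(0)}y)_{(0)}=[x_{(0)},y_{(0)}]$ already gives the homomorphism property $[x,y]_{(0)}=[x_{(0)},y_{(0)}]$ without passing to the quotient $\mc F_{\mf h}/\partial\mc F_{\mf h}$.
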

\begin{proof}
Take $x=uv \in S^2 \mf h \subset \mc F_{\mf h}$, where $u,v\in\mf h$.
Using again the right Leibniz rule L4', the sesquilinearity L1, and
\eqref{eq:fermion}, we see that
\begin{align*}
[x_\la a] &= [(uv)_\la a]
= (e^{\partial\partial_\la} u) (v | a)
+ (-1)^{p(u) p(v)} (e^{\partial\partial_\la} v) (u | a) 
\\
&= (v | a) u + (-1)^{p(u) p(v)} (u | a) v = x \cdot a
\end{align*}
is precisely the standard action of $x\in\mf{spo}(\mf h)$ on $a\in\mf h$.
Then, in particular, $x_{(0)} a = x \cdot a$ as claimed. 

To show that we have a representation of $\mf{spo}(\mf h)$ on $\mc F_{\mf h}$, notice that $[x_{(0)}, y_{(0)}] = (x_{(0)} y)_{(0)}$, so we only need to prove that $x_{(0)} y = [x,y]$
for $x,y \in\mf{spo}(\mf h)$. 
Taking $y=ab$ for $a,b\in\mf h$, we find:
\begin{align*}
[x_\la y] &= [x_\la (ab)]
= [x_\la a] b + (-1)^{p(x)p(a)} a [x_\la b]
\\
&= (x \cdot a) b + (-1)^{p(x)p(a)} a (x \cdot b)
= [x,y]
\,,
\end{align*}
which completes the proof.
\end{proof}

Now we can determine the variational PVA cohomology of the free superfermion PVA $\mc F_{\mf h}$.

\begin{theorem}\label{thm:coh-fer}
We have
$$
H_{\PV}^0(\mc F_{\mf h}, \mc F_{\mf h}) \simeq \mb F \,\tint 1 \,, \qquad
H_{\PV}^n(\mc F_{\mf h}, \mc F_{\mf h}) = 0 \,, \qquad n\ge1 \,.
$$
\end{theorem}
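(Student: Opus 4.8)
The plan is to exploit the conformal structure. By Proposition \ref{prop:conf-fer}, $\mc F_{\mf h}$ is conformal with every generator $a\in\mf h$ of conformal weight $\Delta(a)=\tfrac12$, and it is an algebra of differential polynomials, so Theorem \ref{thm:Delta} applies: the energy operator $E$ acts on each $H_{\PV}^n(\mc F_{\mf h},\mc F_{\mf h})$ with eigenvalues only $0$ and $1$. Hence every class has a representative $Y$ with $\Delta(Y)\in\{0,1\}$, and by Lemma \ref{lem:id}(a) such a $Y$ is determined by its restriction to $\mf h^{\otimes n}$. The decisive numerical input is \eqref{eq:DY}, which for generators reads
\[
\Delta\bigl(Y_{\lambda_1,\dots,\lambda_n}(u_{i_1}\otimes\cdots\otimes u_{i_n})\bigr)
=\Delta(Y)+\tfrac n2-n=\Delta(Y)-\tfrac n2 .
\]
Since the conformal weights occurring in $\mc F_{\mf h}[\lambda_1,\dots,\lambda_n]$ are nonnegative, a representative with $\Delta(Y)=0$ must vanish on $\mf h^{\otimes n}$ once $n\ge1$, and one with $\Delta(Y)=1$ must vanish once $n\ge3$. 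This already yields $H_{\PV}^n(\mc F_{\mf h},\mc F_{\mf h})=0$ for all $n\ge3$ and, in degrees $0,1,2$, restricts attention to the weight-$1$ part (the weight-$0$ part surviving only in degree $0$).

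In degree $0$ we have $H^0_{\PV}=\Cas(\mc F_{\mf h},\mc F_{\mf h})$. The weight-$0$ part of $\mc F_{\mf h}/\partial\mc F_{\mf h}$ is $\mb F\tint 1$, and $\tint 1$ is a Casimir since $a_\lambda 1=0$. A weight-$1$ representative lies in the span of the products $u_iu_j$; using the constant bracket $[a_\lambda b]=(a|b)$ and nondegeneracy, pairing the Casimir condition $\mc F_{\mf h}{}_{-\partial}\,m=0$ against the dual basis forces $m=0$. Thus $H^0_{\PV}\cong\mb F\tint 1$. In degree $1$, by \eqref{eq:W-psmall} a weight-$1$ cochain is a $\partial$-linear derivation $D$ with $D(\mf h)\subseteq\mf h$; its cocycle equation $dD=0$ reduces, since $D$ kills constants and $[u_i{}_\lambda u_j]=(u_i|u_j)$ is constant, to $(Du_i|u_j)+(-1)^{\bar p(u_i)}(u_i|Du_j)=0$, i.e.\ $D|_{\mf h}\in\mf{spo}(\mf h)$. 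By Lemma \ref{lem:spo-fer} the inner derivations $x_{(0)}$ with $x\in S^2\mf h\simeq\mf{spo}(\mf h)$ realize exactly these maps on $\mf h$, so $D$ and a suitable $x_{(0)}$ agree on generators, hence everywhere; therefore every weight-$1$ derivation is inner and $H^1_{\PV}=0$.

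In degree $2$ a weight-$1$ cochain is given by constants $Y_{\lambda_1,\lambda_2}(u_i\otimes u_j)=c_{ij}$ with $c_{ij}=(-1)^{\bar p(u_i)\bar p(u_j)}c_{ji}$. Because $C^3_{\PV}$ has no weight-$1$ part (weight $-\tfrac12$ on generators) and $d$ preserves weight, every such $Y$ is automatically a cocycle. On the other hand, the coboundary of a weight-$1$ derivation $Z$, writing $D=Z|_{\mf h}$, is computed by the same mechanism to be the $\mf{spo}$-symmetrization $(u_i,u_j)\mapsto\pm\bigl((u_i|Du_j)+(-1)^{\bar p(u_i)}(u_j|Du_i)\bigr)$. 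Since $(\cdot|\cdot)$ is nondegenerate, $D\mapsto[(v,w)\mapsto(v|Dw)]$ identifies $\End\mf h$ with all bilinear forms, and (as $\operatorname{char}\mb F=0$) symmetrization surjects onto the $\bar p$-symmetric forms; hence $d\colon C^1_{\PV}\to C^2_{\PV}$ is onto the weight-$1$ $2$-cocycles, giving $H^2_{\PV}=0$.

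The main obstacle is concentrated in degrees $1$ and $2$, since $n\ge3$ is dispatched by pure weight counting. The crucial points are the identification of the degree-$1$ cocycles with $\mf{spo}(\mf h)$ combined with Lemma \ref{lem:spo-fer} (which makes them all inner), and the surjectivity of the degree-$2$ coboundary, i.e.\ that the symmetrization map onto $\bar p$-symmetric bilinear forms is onto — exactly where nondegeneracy of $(\cdot|\cdot)$ and $\operatorname{char}\mb F=0$ are used. Care is needed only in tracking the parity signs $\bar p$ so that the arguments hold for an arbitrary (super)fermion $\mf h$, not just the purely odd case.
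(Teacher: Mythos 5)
Your proposal is correct and follows essentially the same route as the paper's own proof: Theorem \ref{thm:Delta} together with the weight count \eqref{eq:DY} kills all degrees $n\ge3$ and reduces degrees $0,1,2$ to exactly the same identifications the paper makes — Casimirs $=\mb F\tint 1$ (no weight-$1$ Casimirs in $S^2\mf h$ by Lemma \ref{lem:spo-fer}), degree-$1$ cocycles $=\mf{spo}(\mf h)$ which are all inner by Lemma \ref{lem:spo-fer}, and degree-$2$ cocycles $=(S^2(\Pi\mf h))^*$ exhausted by coboundaries. The only cosmetic differences are that you justify surjectivity onto the degree-$2$ cocycles by char-$0$ symmetrization rather than the paper's dimension count $\dim S^2(\Pi\mf h)=\dim\mf{gl}(\mf h)-\dim\mf{spo}(\mf h)$, you derive $d=0$ on weight-$1$ $2$-cochains from weight preservation of $d$ (a nice shortcut), and your displayed sign $(-1)^{\bar p(u_i)}$ in the degree-$1$ cocycle condition should be $(-1)^{p(D)p(u_i)}$ as in \eqref{eq:spo} — a bookkeeping slip that does not affect the argument, since your conclusion $D|_{\mf h}\in\mf{spo}(\mf h)$ is the correct one.
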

\begin{proof}
By Lemma \ref{lem:id}(a), any $n$-cochain $Y$
is uniquely determined by its restriction to $\mf h^{\otimes n}$. 
By \eqref{eq:DY} and Theorem \ref{thm:Delta}, every cohomology class in 
$H_{\PV}^n(\mc F_{\mf h}, \mc F_{\mf h})$ has a representative $Y$ such that
\begin{equation}\label{eq:coh-fer}
\Delta( Y_{\la_1,\dots,\la_n}(u) )
+ \frac{n}2 = \Delta([Y]) = 0 \text{ or } 1 
\,, \qquad u\in\mf h^{\otimes n} \,.
\end{equation}
Since $\Delta(Y_{\la_1,\dots,\la_n}(u)) \ge 0$
(see \eqref{eq:euler-k}), we obtain that $Y$ is trivial for $n\ge 3$.
We will consider separately the three cases $n=0,1$ and $2$.

For $n=0$, a $0$-cocycle $Y\in \mc F_{\mf h} / \partial \mc F_{\mf h}$ 
is the same as a Casimir element (see Theorem \ref{thm:lowcoho2}(a)).
Since $\Delta(Y)=0$ or $1$, we have that $Y\in\mb F \tint 1$ or $Y=\tint x$ for some $x\in S^2 \mf h$.
But, by Lemma \ref{lem:spo-fer}, $x_{(0)} \ne 0$ for every nonzero $x\in S^2 \mf h$.
Hence, any Casimir element is a scalar multiple of $\tint 1$.
This proves that 
$H_{\PV}^0(\mc F_{\mf h}, \mc F_{\mf h}) \simeq \mb F \,\tint 1$,
as claimed.

For $n=1$, formula \eqref{eq:coh-fer} implies that 
$Y_\la(u) = \be(u)$ for $u\in\mf h$, where $\be\colon\mf h \to \mf h$ is a linear map.
A simple calculation using 
\eqref{eq:lca-d}, \eqref{eq:fermion} and Lemma \ref{lem:id}(b) gives
\begin{equation}\label{eq:coh-fer2}
\begin{split}
(dY&)_{\la_0,\la_1}(a_0 \otimes a_1) 
= (-1)^{\ga_0} (a_0 | \beta(a_1))
+ (-1)^{\ga_1} (a_1 | \beta(a_0))
+ \langle \la_0+\la_1 \rangle
\\
&= 
(-1)^{p(a_1)}
\big(
(\beta(a_0)|a_1)+(-1)^{p(\beta)p(a_0)}(a_0|\beta(a_1))
\big)
+ \langle \la_0+\la_1 \rangle
\,,
\end{split}
\end{equation}
where $a_i\in\mf h$ and the $\ga_i$ are given by \eqref{eq:gammai}.
Hence, $dY=0$ if and only if $\beta\in\mf{spo}(\mf h)$.
However, all elements of $\mf{spo}(\mf h)$ correspond to coboundaries,
because they give inner derivations, due to Lemma \ref{lem:spo-fer}.
We conclude that 
$H_{\PV}^1(\mc F_{\mf h}, \mc F_{\mf h}) = 0$.

Finally, consider the case $n=2$. 
In this case \eqref{eq:coh-fer} implies that 
any $2$-cocycle $Y$ is equivalent to a cocycle of the form
$$
Y_{\la_0,\la_1}(a_0 \otimes a_1) = \ga(a_0 \otimes a_1)
+\langle\lambda_0+\lambda_1\rangle
\,, \qquad a_0, a_1 \in\mf h \,,
$$
for some linear map $\ga\colon S^2(\Pi\mf h) \to\mb F$.
All such maps are cocycles.
By \eqref{eq:coh-fer2},
the vector space of coboundaries is isomorphic to $\mf{gl}(\mf h)/\mf{spo}(\mf h)$.
Since
$$
\dim S^2(\Pi\mf h)
=
\dim\mf{gl}(\mf h)-\dim\mf{spo}(\mf h)
\,,
$$
we conclude that $H_{\PV}^2(\mc F_{\mf h}, \mc F_{\mf h}) = 0$.
This completes the proof.
\end{proof}

\begin{corollary}\label{cor:coh-fer}
\begin{enumerate}[(a)]
\item
Every Casimir element of the PVA\/ $\mc F_{\mf h}$ is a scalar multiple of~$\,\tint 1$.
\item
Every derivation of the PVA\/ $\mc F_{\mf h}$ is inner.
\item
Any first-order deformation of\/ $\mc F_{\mf h}$ that preserves the product and the\/ $\mb F[\partial]$-module structure is trivial.
\end{enumerate}
\end{corollary}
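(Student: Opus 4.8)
The plan is to read off each of the three statements directly from Theorem \ref{thm:coh-fer}, by translating the cohomology computation there into concrete algebraic assertions via the low-degree interpretation supplied by Theorem \ref{thm:lowcoho2}. All of the genuine work has already been done in the proof of Theorem \ref{thm:coh-fer}, so the corollary amounts to unwinding definitions in degrees $0$, $1$ and $2$.

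For part (a), I would invoke Theorem \ref{thm:lowcoho2}(a), which identifies $H^0_{\PV}(\mc F_{\mf h},\mc F_{\mf h})$ with the space $\Cas(\mc F_{\mf h},\mc F_{\mf h})$ of Casimir elements. Since Theorem \ref{thm:coh-fer} gives $H^0_{\PV}(\mc F_{\mf h},\mc F_{\mf h})\simeq\mb F\,\tint 1$, it follows that every Casimir element is a scalar multiple of $\tint 1$. For part (b), I would apply Theorem \ref{thm:lowcoho2}(b), which identifies $H^1_{\PV}(\mc F_{\mf h},\mc F_{\mf h})$ with the quotient $\Der(\mc F_{\mf h})/\Inder(\mc F_{\mf h})$; as Theorem \ref{thm:coh-fer} asserts this group vanishes, every derivation of $\mc F_{\mf h}$ must be inner.

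For part (c), I would appeal to Theorem \ref{thm:lowcoho2}(c), which identifies $H^2_{\PV}(\mc F_{\mf h},\mc F_{\mf h})$ with the space of equivalence classes of first-order deformations of $\mc F_{\mf h}$ preserving both the product and the $\mb F[\partial]$-module structure. The vanishing of this group in Theorem \ref{thm:coh-fer} then forces every such deformation to be trivial.

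There is no real obstacle at this stage: the substantive input is the conformal-weight restriction of Theorem \ref{thm:Delta} (which collapses all cohomology above degree $2$) together with the $\mf{spo}(\mf h)$-analysis of Lemma \ref{lem:spo-fer} used to rule out nontrivial Casimirs and derivations, both already established. Thus the entire proof consists of three one-line applications of Theorem \ref{thm:lowcoho2} combined with the explicit values of the cohomology groups from Theorem \ref{thm:coh-fer}.
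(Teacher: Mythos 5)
Your proposal is correct and matches the paper's intended argument exactly: the corollary is an immediate consequence of the low-degree interpretation in Theorem \ref{thm:lowcoho2} combined with the computation $H^0_{\PV}(\mc F_{\mf h},\mc F_{\mf h})\simeq\mb F\,\tint 1$ and $H^n_{\PV}(\mc F_{\mf h},\mc F_{\mf h})=0$ for $n\ge1$ from Theorem \ref{thm:coh-fer}, which is precisely why the paper states it without a separate proof.
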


\subsection{Cohomology of the affine PVA}\label{coh:coh-aff}

Consider now the affine PVA $\mc V^k_{\mf g}$ at level $k\in\mb F$ from Example \ref{ex:affine-pva},
where $\mf g$ is a finite-dimensional Lie algebra with a nondegenerate symmetric invariant bilinear form $(\cdot|\cdot)$.
Let $\{u_1,\dots,u_N\}$ be a basis for $\mf g$, and $\{u^1,\dots,u^N\}$ be its dual basis with respect to $(\cdot|\cdot)$.

\begin{proposition}\label{prop:conf-aff}
For\/ $k\ne0$, the PVA\/ $\mc V^k_{\mf g}$ is conformal with central charge\/ $0$
and the Virasoro vector
\begin{equation}\label{eq:vir-aff}
L = \frac1{2k} \sum_{j=1}^N u^j u_j
\end{equation}
(which is independent of the choice of basis).
The generators\/ $a\in\mf g$ of\/ $\mc V^k_{\mf g}$ have conformal weight\/ $\Delta(a)=1$.
\end{proposition}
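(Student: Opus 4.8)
The plan is to follow verbatim the strategy of the proof of Proposition \ref{prop:conf-bos}, checking the defining conditions of a Virasoro element on the generators $\mf g$ and then extending by the Leibniz and sesquilinearity rules. First I would compute $[a_\lambda L]$ for $a\in\mf g$ using the left Leibniz rule L4 and the affine $\lambda$-bracket \eqref{eq:current}. Since $\mf g$ is purely even there are no sign factors, and
\begin{equation*}
[a_\lambda L]=\frac1{2k}\sum_{j=1}^N\bigl([a_\lambda u^j]u_j+u^j[a_\lambda u_j]\bigr),
\end{equation*}
where each $[a_\lambda u^j]=[a,u^j]+\lambda(a|u^j)k$ splits into a $\lambda$-free part and a part linear in $\lambda$.

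The part linear in $\lambda$ contributes $\frac\lambda2\bigl(\sum_j(a|u^j)u_j+\sum_j(a|u_j)u^j\bigr)$, which by \eqref{eq:dualb} and the symmetry of the form equals $\lambda a$, exactly as in the bosonic case. The crucial new point is that the $\lambda$-free part $\frac1{2k}\sum_j\bigl([a,u^j]u_j+u^j[a,u_j]\bigr)$ vanishes: this is precisely the statement that the adjoint action of $a$ annihilates the canonical (hence basis-independent) element $\sum_j u^j\otimes u_j\in\mf g\otimes\mf g$, which holds because the bilinear form is invariant, and applying the commutative product of $\mc V^k_{\mf g}$ kills this term. Hence $[a_\lambda L]=\lambda a$, and by skewsymmetry L2 we get $[L_\lambda a]=(\partial+\lambda)a$, so that $L_{(0)}a=\partial a$ and $L_{(1)}a=a$ for $a\in\mf g$. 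Since $L_{(0)}$ and $L_{(1)}$ are derivations of the product and $\mc V^k_{\mf g}$ is generated as a differential algebra by $\mf g$, condition \eqref{eq:l1} follows, with conformal weight $\Delta(a)=1$; diagonalizability of $L_{(1)}$ is immediate because the monomials in the $u_i^{(n)}$ are eigenvectors by Lemma \ref{lem:conf-weight}. (The basis-independence of $L$ itself follows from that of $\sum_j u^j\otimes u_j$.)

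It then remains to compute $[L_\lambda L]$. Using the exact bracket $[L_\lambda u^j]=(\partial+\lambda)u^j$ just obtained, the left Leibniz rule and the fact that $\partial$ is a derivation of the product give
\begin{equation*}
[L_\lambda L]=\frac1{2k}\sum_{j=1}^N\bigl(((\partial+\lambda)u^j)u_j+u^j((\partial+\lambda)u_j)\bigr)=(\partial+2\lambda)\,\frac1{2k}\sum_{j=1}^N u^j u_j=(\partial+2\lambda)L.
\end{equation*}
As there is no $\lambda^3$ term, the central charge is $0$, which is the classical analogue of the Sugawara construction without quantum correction.

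I expect the only genuine obstacle to be the vanishing of the $\lambda$-free term above; everything else is a routine adaptation of Proposition \ref{prop:conf-bos}. I would therefore isolate that identity and prove it cleanly from the $\mf g$-invariance of $\sum_j u^j\otimes u_j$, noting that this is exactly the place where the hypothesis $k\neq0$ (needed to define $L$) and the invariance of the form enter.
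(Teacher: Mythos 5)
Your proposal is correct and follows essentially the same route as the paper's proof: the key step in both is the $\mf g$-invariance of the Casimir element $\sum_j u^j u_j$ (equivalently, of $\sum_j u^j\otimes u_j$ in $\mf g\otimes\mf g$), which kills the $\lambda$-free part of $[a_\lambda L]$ and yields $[a_\lambda L]=\lambda a$, after which skewsymmetry and the derivation property of $L_{(0)},L_{(1)}$ finish the argument exactly as in Proposition \ref{prop:conf-bos}. The only difference is presentational: the paper cites the bosonic proof for the remaining steps, while you spell out the $[L_\lambda L]$ computation and the diagonalizability of $L_{(1)}$ explicitly.
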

\begin{proof}
Using the left Leibniz rule L4, \eqref{eq:current} and \eqref{eq:dualb},
we compute for $a\in\mf g$:
$$
[a_\lambda L]
=
\frac1{2k}
\sum_{j=1}^N\big(
[a,u^j]u_j+u^j[a,u_j]+k\lambda(a|u^j)u_j+k\lambda(a|u_j)u^j
\big)
=
\lambda a
\,.
$$
In the second equality, we used that the Casimir element $\sum_{j=1}^N u^j u_j \in S^2 \mf g$ is invariant under the adjoint action of $\mf g$:
$$
\sum_{j=1}^N \bigl( [a,u^j] u_j + u^j [a,u_j] \bigr) = 0 \,.
$$
Hence, by skewsymmetry, $[L_\lambda a]=(\partial+\lambda a)$.
The rest of the proof is exactly the same as for Proposition \ref{prop:conf-bos}.
\end{proof}

\begin{theorem}\label{thm:coh-aff}
For any finite-dimensional Lie algebra\/ $\mf g$
with a nondegenerate symmetric invariant bilinear form,
and any nonzero level\/ $k\in\mb F$, we have
$$
H_{\PV}^n(\mc V^k_{\mf g}, \mc V^k_{\mf g}) 
\simeq H^n(\mf g, \mb F) \oplus H^{n+1}(\mf g, \mb F)
\,, \qquad n\ge0 \,.
$$
Explicitly, 
an element\/ $[\alpha]+[\beta]\in H^n(\mf g, \mb F)\oplus H^{n+1}(\mf g, \mb F)$
corresponds under this isomorphism to the\/ $n$-cocycle\/ 
$Y \in C_{\PV}^{n}(\mc V^k_{\mf g}, \mc V^k_{\mf g})$, 
uniquely defined by
$$
Y_{\la_1,\dots,\la_n}(u)
= \al(u) + \sum_{j=1}^N \be(u\otimes u^j) u_j 
+ \langle\partial+\la_1+\dots+\la_n\rangle 
\,,\qquad u\in\mf g^{\otimes n}
\,.
$$
\end{theorem}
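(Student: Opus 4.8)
The plan is to follow closely the proof of the free superboson case, Theorem \ref{thm:coh-bos}, the essential new ingredient being the appearance of the Chevalley--Eilenberg differential of $\mf g$ coming from the Lie bracket term in the affine $\la$-bracket \eqref{eq:current}. By Proposition \ref{prop:conf-aff} the PVA $\mc V^k_{\mf g}$ is conformal with $\Delta(a)=1$ for $a\in\mf g$, so Theorem \ref{thm:Delta} together with \eqref{eq:DY} lets me replace any cohomology class by a representative $Y$ with $\Delta(Y)\in\{0,1\}$. Since $\mf g$ is purely even I may write $S^m(\Pi\mf g)\simeq\bigwedge^m\mf g$, and exactly as in the derivation of \eqref{eq:coh-bos-3} such a $Y$ is determined by its restriction to $\mf g^{\otimes n}$ (Lemma \ref{lem:id}) and has the form $Y_{\la_1,\dots,\la_n}(u)=\al(u)+\sum_j\be(u\otimes u^j)u_j+\sum_i\ga(u\otimes e_i)\la_i+\langle\partial+\la_1+\dots+\la_n\rangle$, parametrized by $\al\in(\bigwedge^n\mf g)^*$, by $\be\in(\bigwedge^n\mf g\otimes\mf g)^*$, and by an equivariant $\ga$ defined modulo $\bar\ga\otimes\epsilon$.

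Next I would compute $dY$ from \eqref{eq:lca-d} using \eqref{eq:current} and \eqref{eq:dualb}. The term $k\la(a|b)$ of the bracket behaves precisely as in \eqref{eq:coh-bos-4}, while the new term $[a,b]$ produces Lie-algebra-cohomology contributions. Writing $B(a_1,\dots,a_n)=\sum_j\be(a_1\otimes\cdots\otimes a_n\otimes u^j)u_j\in\mf g$, I expect the three components of $dY$ to be: the scalar (weight $0$) component equal to $d_{\mf g}\al$, the Chevalley--Eilenberg differential of $\al$ with trivial coefficients; the $\mf g$-valued (weight $1$, no $\la$) component equal to the Chevalley--Eilenberg differential of $B$ with adjoint coefficients; and the $\la$-component a sum of a Chevalley--Eilenberg-type differential of $\ga$ together with a term $k\,(a_i\,|\,B(\cdots))$ pairing $B$ into the $\ga$-slot via the invariant form. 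Crucially, nothing feeds into the scalar slot except $\al$, and $\al$ feeds only the scalar slot, so $Y\mapsto\al$ is a surjective chain map onto the Chevalley--Eilenberg complex $(\bigwedge^\bullet\mf g^*,d_{\mf g})$ with kernel the subcomplex on which $\al=0$.

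This yields a short exact sequence of complexes whose quotient is $(\bigwedge^\bullet\mf g^*,d_{\mf g})$, with cohomology $H^\bullet(\mf g,\mb F)$, and whose subcomplex is the coupled $(\be,\ga)$-complex. On the latter I would argue as in the boson case: the coboundaries coming from the $\be$-data surject onto the $\ga$-data (the analogue of $d(B^n)=C^{n+1}$ in Theorem \ref{thm:coh-bos}), so $\ga$ can be gauged away and $\be$ is forced to be fully skew, i.e.\ $\be\in(\bigwedge^{n+1}\mf g)^*$; the residual action of the Lie bracket then constrains $\be$ to be $d_{\mf g}$-closed and identifies coboundaries with $d_{\mf g}$-exact forms, giving cohomology $H^{n+1}(\mf g,\mb F)$. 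Assembling the two pieces and checking that the connecting homomorphism of the long exact sequence vanishes for degree reasons produces $H^n(\mf g,\mb F)\oplus H^{n+1}(\mf g,\mb F)$, with the class $[\al]+[\be]$ represented by the cocycle displayed in the statement.

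The main obstacle is the analysis of the coupled $(\be,\ga)$-complex and the proof that its cohomology is $H^{n+1}(\mf g,\mb F)$: this is where the nondegenerate invariant form is indispensable, since it supplies the identification $\mf g\simeq\mf g^*$ needed to translate the adjoint-valued cochain $B$ into an $(n+1)$-form $\be$ and to realize the degree shift, and it makes the pairing $k\,(a_i\,|\,B(\cdots))$ the mechanism that removes $\ga$. I would cross-check the outcome in two ways: specializing to abelian $\mf g$ collapses $d_{\mf g}$ to zero and recovers Theorem \ref{thm:coh-bos}; and, via the isomorphism of complexes $C_{\PV}(\mc V^k_{\mf g},\mc V^k_{\mf g})\simeq C_{\LC}(\overline{\cur}\,\mf g,\mc V^k_{\mf g})$ furnished by Theorem \ref{prop}(b), the answer is consistent with the Lie conformal algebra computation $H^n_{\LC}(\overline{\cur}\,\mf g,\mb F)\simeq H^n(\mf g,\mb F)\oplus H^{n+1}(\mf g,\mb F)$ of Proposition \ref{pbarcurf}. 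A secondary technical point is the verification that the connecting map in the long exact sequence vanishes, so that the two contributions genuinely split.
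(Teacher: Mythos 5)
Your proposal is correct and follows essentially the same route as the paper's proof: the conformal-weight reduction via Theorem \ref{thm:Delta} to the $(\al,\be,\ga)$ parametrization, the splitting-off of the scalar part $\al$ as the trivial-coefficient Chevalley--Eilenberg complex (the paper packages this as a direct-sum decomposition $d(A^n)\subset A^{n+1}$ rather than a short exact sequence, which is also the real reason your connecting homomorphism vanishes -- the sequence splits as complexes, not ``degree reasons''), and then, on the $(\be,\ga)$ part, gauging away $\ga$ by surjectivity of $d_2$, forcing $\be\in(\bigwedge^{n+1}\mf g)^*$, and identifying $d_1$ on skew $\be$ with the Lie algebra cohomology differential. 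Your reading of the $B$-slot of the differential as the adjoint-coefficient Chevalley--Eilenberg differential and the two cross-checks are consistent with (and slightly amplify) what the paper does, but the argument is the same in all essentials.
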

\begin{proof}
Since $\Delta(a)=1$ for all $a\in\mf g$,
the portion of the proof of Theorem \ref{thm:coh-bos} that does not involve the differential $d$ translates verbatim to the current case.
In particular, every $n$-cocycle $Y$ is equivalent to one of the form
\begin{equation}\label{equation}
Y_{\la_1,\dots,\la_n}(u) 
= \al(u) + \sum_{j=1}^N \be(u\otimes u^j) u_j 
+ \sum_{i=1}^n \ga(u \otimes e_i) \la_i
+ \langle\partial+\la_1+\dots+\la_n\rangle
\,,
\end{equation}
for $u\in\mf g^{\otimes n}$ and some linear maps
$$
\al\colon \textstyle\bigwedge\nolimits^n \mf g \to \mb F \,,\qquad
\be\colon (\textstyle\bigwedge\nolimits^n \mf g)\otimes\mf g \to \mb F \,,\qquad
\ga\colon \bigl( (\Pi\mf g)^{\otimes n} \otimes \mb F^n \bigr)^{S_n} \to \mb F \,,
$$
where $\{e_1,\dots,e_n\}$ is the standard basis for $\mb F^n$.

Denote by $A^{n}$, $B^{n}$ and $C^{n}$ the subspaces of 
$C_{\PV}^{n}(\mc V^k_{\mf g}, \mc V^k_{\mf g})$ consisting of $n$-cohains $Y$
corresponding to maps $\al$, $\be$ and $\ga$, respectively.
Using \eqref{eq:lca-d}, \eqref{eq:current}, and Lemma \ref{lem:id}(b), we see that
$$
d(A^{n}) \subset A^{n+1} \,, \qquad
d(B^{n}) \subset B^{n+1} \oplus C^{n+1} \,, \qquad
d(C^{n}) \subset C^{n+1} \,.
$$
Notice that the action of $d$ on $A^{n}$ corresponds to applying 
the Lie algebra cohomology differential to $\al$,
viewed as an $n$-cochain for the Lie algebra $\mf g$ with coefficients in $\mb F$. 
As in the proof of Theorem \ref{thm:coh-bos},
this gives the summand $H^n(\mf g,\mb F)$ inside $H^n_{\PV}(\mc V^k_{\mf g},\mc V^k_{\mf g})$.

We next concentrate on the subcomplex $B^\bullet\oplus C^\bullet$.
For $Y\in B^{n}$, 
denote by
$d_1Y\in B^{n+1}$ and $d_2Y\in C^{n+1}$
the projections of $dY$ on $B^{n+1}$ and $C^{n+1}$, respectively.
Notice that $d_2\colon B^{n}\to C^{n+1}$
coincides with the map $d|_{B^{n}}$ from the proof of Theorem \ref{thm:coh-bos}.
In particular, $d_2$ is surjective.
As a consequence, we can assume that 
\begin{equation}\label{eq:coh-aff}
Y_{\la_1,\dots,\la_n}(u) 
= \sum_{j=1}^N \be(u\otimes u^j) u_j 
+ \langle\partial+\la_1+\dots+\la_n\rangle
\,,\qquad u\in\mf g^{\otimes n}
\,,
\end{equation}
where $d_2Y=0$.

Recall from the proof of Theorem \ref{thm:coh-bos} that
the condition $d_2Y=0$ is equivalent to the skewsymmetry of $\beta$,
i.e., $\beta\in (\bigwedge^{n+1}\mf g)^*$.
Then for $Y$ as in \eqref{eq:coh-aff} and $a_i\in\mf g$, we find
\begin{align*}
(d_1Y&)_{\lambda_0,\dots,\lambda_n}(a_0\otimes\dots\otimes a_n)
=
\sum_{i=0}^n(-1)^{n+i+1}\sum_{\ell=1}^N
\beta(a_0\otimes\stackrel{i}{\check{\dots}}\otimes a_n\otimes u^\ell)\, [a_i,u_\ell] \\
& +
\sum_{0\leq i<j\leq n}(-1)^{n+i+j+1}\sum_{\ell=1}^N
\beta([a_i,a_j]\otimes a_0\otimes\stackrel{i}{\check{\dots}}\stackrel{j}{\check{\dots}}\otimes a_n\otimes u^\ell)u_\ell
\,.
\end{align*}
Using that
$$
\sum_{\ell=1}^N
u^\ell\otimes [a,u_\ell]
=
-
\sum_{\ell=1}^N
[a,u^\ell]\otimes u_\ell
\,,\qquad a\in\mf g
\,,
$$
we see that the restriction of $d_1$ to $(\bigwedge^{n+1}\mf g)^*$
coincides with the Lie algebra cohomology differential for $\mf g$ with coefficients in $\mb F$.
This completes the proof.
\end{proof}
\begin{remark}
Theorem \ref{thm:coh-aff} can be easily generalized to the superalgebra case.
In the special case when $\mf g$ is an abelian Lie superalgebra, 
we recover Theorem \ref{thm:coh-bos}.
\end{remark}
\begin{remark}
Assume that $\mf g$ is a finite-dimensional simple Lie algebra.
Note that $\beta$ in \eqref{equation} can also be viewed as a linear map $\beta\colon \bigwedge^n\mf g\to\mf g$,
and the differential $d_1$ coincides with the Lie algebra cohomology differential with coefficients in $\mf g$.
Using that $H^n(\mf g,\mf g)=0$,
we can therefore assume that $\beta=0$ in equation \eqref{equation}.
In this way one can prove directly that 
$$
H^n_{\PV}(\mc V^k_{\mf g},\mc V^k_{\mf g})
\simeq
H^n_{\LC}(\overline{\cur}\,\mf g,\mb F)
\,,
$$
which is consistent with Proposition \ref{pbarcurf}.
However, this isomorphism does not hold when $\mf g$ is abelian.
\end{remark}

\begin{corollary}\label{cor:coh-aff}
Let\/ $\mf g$ be a finite-dimensional Lie algebra with a nondegenerate symmetric invariant bilinear form,
and\/ $k\in\mb F$ be nonzero.
\begin{enumerate}[(a)]
\item
Every Casimir element of the PVA\/ $\mc V^k_{\mf g}$ has the form\/ $\tint (\al 1+c)$,
where\/ $\al\in\mb F$ and\/ $c\in \cent(\mf g)$, the center of the Lie algebra\/ $\mf g$.
\item
Every derivation of the PVA\/ $\mc V^k_{\mf g}$ is a sum of an inner derivation and a derivation that acts on the generators of\/ $\mc V^k_{\mf g}$ as\/ $D_1(a) + D_2(a)$, $a\in\mf g$, where\/
$D_1 \colon\mf g\to\mb F$ is such that\/
$D_1([\mf g,\mf g]) = 0$
and
$D_2 \colon\mf g\to\mf g$
is such that\/
$$
D_2[a,b] = [D_2a,b]+[a,D_2b] \,, \quad
(D_2a|b) + (a|D_2b) = 0 \,, \qquad 
a,b\in\mf g \,,
$$
i.e., $D_2 \in 
\Der(\mf g) \cap \mf o(\mf g)$.
\end{enumerate}
\end{corollary}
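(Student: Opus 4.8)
The plan is to read off both statements from the computation in Theorem \ref{thm:coh-aff} together with the low-degree interpretation in Theorem \ref{thm:lowcoho2}. By Theorem \ref{thm:lowcoho2}(a)--(b), the space of Casimirs is $H^0_{\PV}(\mc V^k_{\mf g},\mc V^k_{\mf g})$ and the space of derivations modulo inner ones is $H^1_{\PV}(\mc V^k_{\mf g},\mc V^k_{\mf g})$. Setting $n=0$ and $n=1$ in Theorem \ref{thm:coh-aff} and recalling the standard identifications $H^0(\mf g,\mb F)=\mb F$ and $H^1(\mf g,\mb F)=(\mf g/[\mf g,\mf g])^*$, I would translate the explicit cocycle representatives into the asserted forms. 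The only real work is to rewrite the functionals $\alpha,\beta$ appearing there as the elements $c$, $D_1$, $D_2$ of the statement, using the nondegenerate invariant form $(\cdot|\cdot)$.

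For part (a), I would take the explicit $0$-cocycle $Y=\alpha\,1+\sum_j\beta(u^j)u_j$ modulo $\partial$, where $\alpha\in\mb F\simeq H^0(\mf g,\mb F)$ and $\beta\in(\mf g/[\mf g,\mf g])^*\simeq H^1(\mf g,\mb F)$. By \eqref{eq:dualb}, $\sum_j\beta(u^j)u_j=c$, where $c\in\mf g$ is the unique element with $(c|x)=\beta(x)$. The key step is to observe that the $1$-cocycle condition $\beta([\mf g,\mf g])=0$ reads $(c|[a,b])=0$ for all $a,b$; invariance of the form turns this into $([c,a]|b)=0$ for all $a,b$, and nondegeneracy gives $[c,a]=0$, i.e.\ $c\in\cent(\mf g)$. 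Hence every Casimir is $\tint(\alpha 1+c)$ with $\alpha\in\mb F$ and $c\in\cent(\mf g)$, as claimed.

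For part (b), I would start from the $n=1$ representative of Theorem \ref{thm:coh-aff}: viewed as a derivation via \eqref{eq:W-psmall}, it acts on a generator $a\in\mf g$ by $a\mapsto\alpha(a)1+\sum_j\beta(a\otimes u^j)u_j$, where $\alpha\in H^1(\mf g,\mb F)$ and $\beta\in H^2(\mf g,\mb F)$. I set $D_1(a)=\alpha(a)$, so that $D_1([\mf g,\mf g])=0$ since $\alpha$ is a $1$-cocycle, and $D_2(a)=\sum_j\beta(a\otimes u^j)u_j\in\mf g$. Dualizing with the form gives $\beta(a\otimes b)=(D_2 a|b)$; the skewsymmetry of $\beta$ becomes $(D_2a|b)+(a|D_2b)=0$, i.e.\ $D_2\in\mf o(\mf g)$, while the Chevalley--Eilenberg $2$-cocycle identity for $\beta$ becomes, after repeated use of invariance and nondegeneracy, the derivation property $D_2[a,b]=[D_2a,b]+[a,D_2b]$; thus $D_2\in\Der(\mf g)\cap\mf o(\mf g)$. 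Since the isomorphism of Theorem \ref{thm:coh-aff} is at the level of cohomology, i.e.\ modulo coboundaries $=$ inner derivations, an arbitrary derivation equals such a representative $D_1+D_2$ up to an inner derivation, proving (b).

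The main obstacle is the bookkeeping in part (b): verifying that the Lie-algebra $2$-cocycle condition on $\beta$ is exactly equivalent to $D_2$ being a derivation of $\mf g$. This is a short but sign-sensitive computation that relies on systematically converting each bracket inside $\beta$ into a bracket acting on $D_2$ via the identities $([x,y]|z)=(x|[y,z])$ and $(x|y)=(y|x)$, and then matching the three cyclic terms of the cocycle condition against the two terms of the Leibniz rule; care with the skewsymmetry of $\beta$ is what makes the signs come out right. Everything else is a direct transcription of the $n=0,1$ cases of Theorem \ref{thm:coh-aff}.
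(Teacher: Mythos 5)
Your proposal is correct and is essentially the paper's own argument: the paper's proof of this corollary is exactly the reading-off of the $n=0,1$ cases of Theorem \ref{thm:coh-aff} through the interpretations of Theorem \ref{thm:lowcoho2}, and the dictionary you construct via the bilinear form ($c\leftrightarrow\al$ with $(c|x)=\al(x)$, and $D_2\leftrightarrow\be$ with $\be(a\otimes b)=(D_2a|b)$, skewsymmetry giving $D_2\in\mf o(\mf g)$ and the Chevalley--Eilenberg $2$-cocycle identity giving the derivation property) is precisely what the paper records in Remark \ref{cor:coh-12}. Your sign-sensitive step does check out: using $([x,y]|z)=(x|[y,z])$ and skewsymmetry of $\be$, the condition $(D_2[a,b]|c)=(D_2a|[b,c])-(D_2b|[a,c])$ is identical to the cyclic cocycle identity, so no gap remains.
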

\begin{proof}
This follows from the definitions and the proof of Theorem \ref{thm:coh-aff}.
\end{proof}

\begin{remark}\label{cor:coh-12}
Let $\mf g$ be a finite-dimensional Lie algebra with a nondegenerate symmetric invariant bilinear form.
Then Corollary \ref{cor:coh-aff} and Theorem \ref{thm:coh-aff} imply the following well-known isomorphisms:
\begin{align*}
H^1(\mf g,\mb F) &= \bigl( \mf g / [\mf g,\mf g] \bigr)^* \simeq \cent(\mf g) 
\,, \\
H^2(\mf g,\mb F) &\simeq \bigl( \Der(\mf g) \cap \mf o(\mf g) \bigr) \big/ \Inder(\mf g)
\,.
\end{align*}
Explicitly, $c\in \cent(\mf g)$ corresponds to the $1$-cocycle $\al(a)=(a|c)$, and 
$D \in \Der(\mf g) \cap \mf o(\mf g)$ corresponds to the $2$-cocycle 
$\be(a \otimes b)=(Da|b)$, for $a,b\in\mf g$.
\end{remark}

\begin{corollary}\label{cor:coh-aff-sim}
Let\/ $\mf g$ be a finite-dimensional simple Lie algebra, and\/ $k\in\mb F$ be nonzero.
\begin{enumerate}[(a)]
\item
Every Casimir element of the PVA\/ $\mc V^k_{\mf g}$ is a scalar multiple of~$\,\tint 1$.
\item
Every derivation of the PVA\/ $\mc V^k_{\mf g}$ is inner.
\item
Up to equivalence, any first-order deformation of\/ $\mc V^k_{\mf g}$,
which preserves the product and the\/ $\mb F[\partial]$-module structure,
corresponds to a scalar multiple of the\/ $2$-cocycle given by\/
$Y_{\la_1,\la_2}(a_1 \otimes a_2) = \la_1 (a_1 | a_2)+ \langle \partial+\la_1+\la_2\rangle$ 
for\/ $a_1,a_2\in\mf g$.
\end{enumerate}
\end{corollary}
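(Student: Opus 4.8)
The plan is to deduce all three statements from the computation of $H_{\PV}^n(\mc V^k_{\mf g},\mc V^k_{\mf g})$ in Theorem \ref{thm:coh-aff}, combined with the low-degree interpretation in Theorem \ref{thm:lowcoho2}, by specializing the general Lie algebra of Corollary \ref{cor:coh-aff} to a simple one. The only facts I need about a finite-dimensional simple Lie algebra $\mf g$ are the classical $H^0(\mf g,\mb F)=\mb F$, $H^1(\mf g,\mb F)=H^2(\mf g,\mb F)=0$ (the Whitehead lemmas, using $\mf g=[\mf g,\mf g]$ and semisimplicity), and $H^3(\mf g,\mb F)=\mb F$, generated by the Cartan $3$-cocycle $\varphi(a,b,c)=([a,b]\,|\,c)$.

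For part (a): by Theorem \ref{thm:coh-aff}, $H_{\PV}^0\cong H^0(\mf g,\mb F)\oplus H^1(\mf g,\mb F)=\mb F\oplus 0$, and by Theorem \ref{thm:lowcoho2}(a) this is $\Cas(\mc V^k_{\mf g},\mc V^k_{\mf g})$; since $\tint 1$ is a nonzero Casimir it spans the one-dimensional space, so every Casimir is a scalar multiple of $\tint 1$. For part (b): likewise $H_{\PV}^1\cong H^1(\mf g,\mb F)\oplus H^2(\mf g,\mb F)=0$, so by Theorem \ref{thm:lowcoho2}(b) the quotient $\Der/\Inder$ vanishes, i.e.\ every derivation is inner. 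Both parts are immediate once the vanishing of $H^1(\mf g,\mb F)$ and $H^2(\mf g,\mb F)$ is quoted, and they are consistent with Corollary \ref{cor:coh-aff} via $\cent(\mf g)=0$ and $\Der(\mf g)=\Inder(\mf g)$.

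For part (c) the plan is as follows. Again $H_{\PV}^2\cong H^2(\mf g,\mb F)\oplus H^3(\mf g,\mb F)=0\oplus\mb F$ is one-dimensional, so by Theorem \ref{thm:lowcoho2}(c) the first-order deformations form a line, and it suffices to show that the cocycle named in the corollary is a nonzero multiple of a generator. Under the isomorphism of Theorem \ref{thm:coh-aff} the generator of the $H^3(\mf g,\mb F)$-summand is the $\beta$-type cochain with $\beta=\varphi$; contracting with the form via $\sum_j([a_1,a_2]\,|\,u^j)u_j=[a_1,a_2]$ yields the explicit representative $Y'_{\la_1,\la_2}(a_1\otimes a_2)=[a_1,a_2]+\langle\partial+\la_1+\la_2\rangle$. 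To bring $Y'$ to the stated form I would subtract an explicit coboundary: let $Z\in C_{\PV}^1(\mc V^k_{\mf g},\mc V^k_{\mf g})$ be the degree derivation, determined by $Z(a)=a$ for $a\in\mf g$ and extended by the Leibniz rule (it commutes with $\partial$, hence lies in $C^1_{\PV}$). A direct evaluation of \eqref{eq:lca-d}, using the signs \eqref{eq:gammai-even}, gives $(dZ)_{\la_0,\la_1}(a_0\otimes a_1)=[a_0,a_1]+k(\la_0-\la_1)(a_0|a_1)+\langle\partial+\la_0+\la_1\rangle$, which, upon imposing $\la_0+\la_1\equiv-\partial$ on the constant coefficient $(a_0|a_1)$, becomes $[a_0,a_1]+2k\,\la_0(a_0|a_1)+\langle\partial+\la_0+\la_1\rangle$. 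Hence $[a_1,a_2]\equiv -2k\,\la_1(a_1|a_2)$ modulo coboundaries.

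The crux, and the only place where the hypothesis $k\neq 0$ enters, is this last congruence: since $k\neq 0$, the cocycle $Y_{\la_1,\la_2}(a_1\otimes a_2)=\la_1(a_1|a_2)+\langle\partial+\la_1+\la_2\rangle$ equals $-\frac{1}{2k}$ times the nontrivial generator $Y'$, so it too spans $H_{\PV}^2\cong\mb F$; consequently every first-order deformation is equivalent to a scalar multiple of it, which is precisely the assertion of (c). (Conceptually, $Y$ is the first-order term of the level deformation $k\mapsto k+\varepsilon$, so its cocycle property is automatic; the real content is its nontriviality, which degenerates exactly at $k=0$.) I expect the sign and normalization bookkeeping in the coboundary computation—reconciling the differential conventions of \eqref{eq:lca-d}–\eqref{eq:gammai-even} with the quotient relation $\la_0+\la_1\equiv-\partial$—to be the only delicate point.
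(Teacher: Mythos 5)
Your proof is correct and follows exactly the route the paper intends: the corollary is deduced from Theorem \ref{thm:coh-aff} and Theorem \ref{thm:lowcoho2} together with the standard facts that for a simple Lie algebra $\mf g$ one has $H^1(\mf g,\mb F)=H^2(\mf g,\mb F)=0$ and $H^0(\mf g,\mb F)\simeq H^3(\mf g,\mb F)\simeq\mb F$, the latter spanned by the Cartan cocycle $([a,b]\,|\,c)$. Your explicit coboundary computation $(dZ)_{\la_0,\la_1}(a_0\otimes a_1)\equiv [a_0,a_1]+2k\la_0(a_0|a_1)$ for the degree derivation $Z$, which identifies the level-deformation cocycle $\la_1(a_1|a_2)$ with $-\tfrac{1}{2k}$ times the image of the Cartan $3$-cocycle under the isomorphism of Theorem \ref{thm:coh-aff} (and makes visible where $k\ne0$ is used), correctly supplies the one nontrivial detail that the paper leaves implicit in part (c).
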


\begin{remark}\label{rem:barcurg}
Let $\mf g$ be a finite-dimensional simple Lie algebra.
Any Lie algebra cocycle $\be\in\bigl( \bigwedge\nolimits^{n-1} \mf g^* \bigr)^{\mf g}$
gives a PVA cochain $Z \in C_{\PV}^{n-1}(\mc V^k_{\mf g}, \mc V^k_{\mf g})$
defined by
$$
Z_{\la_1,\dots,\la_{n-1}}(a_1 \otimes\dots\otimes a_{n-1}) 
= \be(a_1 \wedge\dots\wedge a_{n-1}) L 
+ \langle \partial+\la_1+\dots+\la_{n-1} \rangle
\,, \qquad a_i\in\mf g\,.
$$
Using that $d\be=0$ in the Lie algebra cohomology complex and $[a_\la L] = \la a$ for $a\in\mf g$,
we find that
$dZ=Y$ is given by equation \eqref{curgmv7} from Proposition \ref{prop:barcurg}. This proves that $dY=0$ 
in both the variational PVA and LCA cohomology complexes. 
We claim that $Y$ is not exact in LCA cohomology. Indeed, if $Y=dX$
for some $X \in C_{\LC}^{n-1}(\overline{\cur}\,\mf g, \overline{\cur}\,\mf g)$, then
$\Delta(X)=\Delta(Y)=2$
implies that $X$ has the form
\begin{align*}
X_{\la_1,\dots,\la_{n-1}}(u) 
= \partial f(u)
+ \sum_{i=1}^{n-1} g_i(u) \la_i
+ \langle \partial+\la_1+\dots+\la_{n-1} \rangle
\,, 
\qquad u \in \mf g^{\otimes (n-1)} \,,
\end{align*}
for some linear maps 
$f,g_i\colon \mf g^{\otimes (n-1)} \to \mf g$.
Setting $\partial = -\la_1-\dots-\la_{n-1}$, we can assume that $f=0$.
Then it is straightforward to compute $dX$ and see that $dX \ne Y$.
\end{remark}

\subsection{Cohomology of the Virasoro PVA}\label{sec:coh-vir}

Recall the Virasoro PVA $\PVir^c$ of central charge $c\in\mb F$, defined in Example \ref{ex:virasoro-pva}. Obviously, it is conformal with Virasoro vector $L$ and $\Delta(L)=2$.

\begin{theorem}\label{thm:coh-vir}
For every central charge\/ $c\in\mb F$, we have{\rm:}
$$
H_{\PV}^n(\PVir^c, \PVir^c) \simeq 
H^n_{\LC}(\bar R^\vir,\mb F) = 
\begin{cases} \, \mb F \, [Y^n] \,, \quad\text{for } \; n=0,2,3, 
\\ \, 0 \,, \quad\;\;\; \text{otherwise,}
\end{cases}
$$
where
\begin{align*}
Y^0 &= 1 + \langle \partial\rangle = \tint 1 \,,
\\
Y^2_{\la_1,\la_2}(L \otimes L) 
&= \la_1^3 + \langle \partial+\la_1+\la_2\rangle \,,
\\
Y^3_{\la_1,\la_2,\la_3}(L \otimes L \otimes L) 
&= (\la_1-\la_2)(\la_1-\la_3)(\la_2-\la_3) + \langle \partial+\la_1+\la_2+\la_3\rangle \,.
\end{align*}
\end{theorem}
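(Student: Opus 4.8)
The plan is to reduce the variational PVA cohomology to an LCA cohomology already computed in Section~\ref{sec:2}. Since $\PVir^c=\mc V^c(R^\vir)$ is the quotient of the universal PVA over the Virasoro LCA $R^\vir=\mb F[\partial]L\oplus\mb F C$ by the ideal generated by $C-c$, I would first apply Theorem~\ref{prop}(b), with $R=R^\vir$, central element $C$, and coefficient module $M=\PVir^c$ (the adjoint module, on which $C$ acts as the scalar $c$). This gives a canonical isomorphism of complexes $C_{\LC}(\bar R^\vir,\PVir^c)\xrightarrow{\sim}C_{\PV}(\PVir^c,\PVir^c)$, hence $H_{\PV}(\PVir^c,\PVir^c)\simeq H_{\LC}(\bar R^\vir,\PVir^c)$, where $\PVir^c$ is now viewed as a module over the centerless Virasoro LCA $\bar R^\vir$. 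The task becomes computing this cohomology with coefficients in the large module $\PVir^c$, and the target is to show it equals $H_{\LC}(\bar R^\vir,\mb F)$, which is furnished by Proposition~\ref{pbarvir}.

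To effect the reduction of the coefficient module, I would filter $\PVir^c=\mb F[L^{(k)}\mid k\ge0]$ by the polynomial degree $d$ in the generators $L^{(k)}$, letting $D_{\le d}$ be the span of monomials of degree $\le d$. Each $L_{(n)}$ is a derivation of the product (Remark~\ref{rem:der}), and $L_{(n)}L$ lies in $\mb F[\partial]L\oplus\mb F 1$ and so has degree $\le 1$, the central term $L_{(3)}L=\tfrac{c}{2}$ being the unique degree-lowering contribution. Consequently $L_\lambda$ maps $D_{\le d}$ into $D_{\le d}[\lambda]$, so each $D_{\le d}$ is an $\bar R^\vir$-submodule and we obtain an exhaustive filtration of $\PVir^c$ by $\bar R^\vir$-submodules. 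On the associated graded the degree-lowering central term vanishes, so $\gr\PVir^c$ is isomorphic, as an $\bar R^\vir$-module, to $\PVir^0=S(\mb F[\partial]L)$ with the central-charge-zero action; this is exactly where the $c$-dependence disappears.

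I would then use the standard decomposition of the (Poisson) Virasoro vacuum module into tensor density modules, $\PVir^0\simeq\mb F\oplus\bigoplus_i M_{\Delta_i}$, where $\mb F=\mb F\cdot 1$ is the trivial module and the $M_{\Delta_i}$ correspond to the quasi-primary vectors. A conformal-weight count (the character being $\prod_{j\ge2}(1-q^j)^{-1}$) shows quasi-primaries occur only at even weights $2,4,6,\dots$, so every $\Delta_i\ge2$. Since $3r^2\pm r\ge0$ for all $r\in\mb Z$, the special values $1-(3r^2\pm r)/2$ of Proposition~\ref{pbarvir} are all $\le1<2$; hence $H_{\LC}(\bar R^\vir,M_{\Delta_i})=0$ for every $i$, and the only subquotient of $\gr\PVir^c$ with nonzero cohomology is $\mb F$. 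Feeding the short exact sequences $0\to D_{\le d-1}\to D_{\le d}\to D_d\to0$ (with $H_{\LC}(\bar R^\vir,D_d)=0$ for $d\ge1$) into the long exact cohomology sequences and passing to the colimit over $d$, I conclude $H_{\LC}(\bar R^\vir,\PVir^c)\simeq H_{\LC}(\bar R^\vir,\mb F)$, of dimension $1$ for $n=0,2,3$ and $0$ otherwise. The explicit generators $Y^0,Y^2,Y^3$ are then obtained by transporting the three LCA cocycles representing $H_{\LC}(\bar R^\vir,\mb F)$ (the Casimir $\tint 1$, the Virasoro central cocycle proportional to $\lambda_1^3$, and the degree-three cocycle of Remark~\ref{rem:label}) through the isomorphism of Theorem~\ref{prop}(b); their conformal weights $0,1,0$ are consistent with Theorem~\ref{thm:Delta}.

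The main obstacle is the second and third steps: establishing rigorously that $\gr\PVir^c$ splits as a direct sum of the trivial module and tensor density modules $M_\Delta$ with $\Delta\ge2$, and controlling the infinite, $c$-dependent module $\PVir^c$ through a filtration whose associated graded is $c$-independent. One must check that the degree filtration really is by $\bar R^\vir$-submodules (which rests on the central term being the sole degree-lowering piece) and that the homological passage from the associated graded to $H_{\LC}(\bar R^\vir,\PVir^c)$ is legitimate; this is clean here only because all but one subquotient have identically vanishing cohomology, so that no extension problems or nontrivial spectral-sequence differentials can survive.
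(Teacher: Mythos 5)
Your opening reductions are correct: Theorem \ref{prop}(b) with $R=R^\vir$, $M=\PVir^c$ does give $H_{\PV}(\PVir^c,\PVir^c)\simeq H_{\LC}(\bar R^\vir,\PVir^c)$, and the polynomial-degree filtration is indeed by $\bar R^\vir$-submodules, with $c$-independent associated graded, since the central term is the only degree-lowering part of the $\la$-action. The gap is the step you call ``standard'': the claimed direct sum decomposition $\PVir^0\simeq\mb F\oplus\bigoplus_i M_{\Delta_i}$ is \emph{false}. A generator of a summand isomorphic to $M_\Delta$ as in \eqref{mdelta} must be a \emph{primary} vector, i.e.\ $[L_\la v]=(\partial+\Delta\la)v$ with no $\la^{\ge2}$ terms, and such vectors do not exist in sufficient number. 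Concretely, in the degree-two piece $S^2(\mb F[\partial]L)$ with the $c=0$ action, the character identity $(1-q)\,\ch S^2(\mb F[\partial]L)=q^4+q^6+q^8+\cdots$ forces a summand $M_6$, but for $v=\al\, LL^{(2)}+\be\,(L^{(1)})^2+\ga\,\partial^2L^2$ one computes that the coefficients of $\la^2$ and $\la^3$ in $L_\la v$ are $(5\al+4\be+18\ga)LL^{(1)}$ and $(2\al+4\ga)L^2$; their vanishing forces $\al=\be=-2\ga$, and then $v=\ga\bigl(\partial^2L^2-2LL^{(2)}-2(L^{(1)})^2\bigr)=0$ identically, since $\partial^2L^2=2LL^{(2)}+2(L^{(1)})^2$. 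Allowing non-homogeneous primaries in all of $\PVir^0$ does not help: the only weight-$6$ primaries are the multiples of $L^3$ (a one-dimensional space), while the character count requires two independent generators at weight $6$. The source of the error is the identification of quasi-primary vectors with generators of $M_\Delta$-summands: quasi-primarity is an $\mf{sl}_2$-condition and does not make $\mb F[\partial]v$ an $\bar R^\vir$-submodule; e.g.\ $v=4LL^{(2)}-5(L^{(1)})^2$ is quasi-primary, yet $L_\la v=\partial v+6\la v+8\la^3L^2$ involves $L^2\notin\mb F[\partial]v$.

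What might survive of your strategy is a strictly weaker statement: each $S^d(\mb F[\partial]L)$, $d\ge1$, admits an increasing \emph{filtration} by submodules whose successive quotients are $M_\Delta$'s with $\Delta\ge 2d$ (the lowest-weight vector of each quotient is automatically primary \emph{in the quotient}, since $L_{(n)}$ lowers conformal weight by $n-1$). Combined with $H_{\LC}(\bar R^\vir,M_\Delta)=0$ for $\Delta\ge2$ (Proposition \ref{pbarvir}), long exact sequences, and passage to colimits (legitimate here, as cochains of $\bar R^\vir$ are determined by their single value on $L^{\otimes n}$), this would yield $H_{\LC}(\bar R^\vir,\PVir^c)\simeq H_{\LC}(\bar R^\vir,\mb F)$. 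But establishing that filtration -- including the $\mb F[\partial]$-freeness of the subquotients at every stage -- is genuine work that your proposal does not contain and cannot be cited as known. Note that the paper's proof avoids this module-theoretic analysis entirely: it uses the energy operator (Theorem \ref{thm:Delta} together with \eqref{eq:DY}) to bound the conformal weight of cocycle representatives, so that their values on $L^{\otimes n}$ involve only $1$, $L$ and $\partial L$; it removes the $L$-terms by an explicit coboundary $dZ$ with $Z_\la(L)=L$; and only then invokes Theorem \ref{prop}(b), in the opposite direction from yours, to identify the remaining scalar-valued subcomplex with $C_{\LC}(\bar R^\vir,\mb F)$.
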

\begin{proof}
By \eqref{eq:DY} and Theorem \ref{thm:Delta}, every cohomology class in 
$H_{\PV}^n(\PVir^c, \PVir^c)$ has a representative $Y$ such that
\begin{equation}\label{eq:coh-vir}
\Delta( Y_{\la_1,\dots,\la_{n}}(L^{\otimes n}) )
- n = \Delta(Y) = 0 \text{ or } 1 
\,,
\end{equation}
hence
\begin{equation}\label{eq:coh-vir1}
\Delta( Y_{\la_1,\dots,\la_{n}}(L^{\otimes n}) )
\leq n+1
\,.
\end{equation}
Recall that, as a differential algebra,
$$
\PVir^c
=
\mb F\bigl[L^{(i)}
\,\big|\, i\in\mb Z_+\bigr] 
\,, \qquad L^{(i)} = \partial^i L
\,,
$$
and $\Delta(L^{(i)}) = i+2$. 
Moreover, $\Delta(ab) = \Delta(a) + \Delta(b)$.

For $n=0$, we have $Y \in \PVir^c / \partial \PVir^c$, and \eqref{eq:coh-vir} implies $\Delta(Y) = 0$ or $1$. Hence, $Y$ is a scalar multiple of $Y^0 = \tint 1$, which is a Casimir element for $\PVir^c$.

Suppose now that $n\ge1$. Note that, by \eqref{eq:skew},
$$
Y_{\la_1,\dots,\la_{n}}(L^{\otimes n}) 
\in \PVir^c[\la_1,\dots,\la_{n}] / 
\langle \partial+\la_1+\cdots+\la_{n} \rangle
$$
is skewsymmetric with respect to $\la_1,\dots,\la_{n}$. Under the isomorphism
$$
\PVir^c[\la_1,\dots,\la_{n}] / 
\langle \partial+\la_1+\cdots+\la_{n} \rangle
\simeq \PVir^c[\la_1,\dots,\la_{n-1}] \,,
$$
we can replace $\la_n$ by $-\partial-\la_1-\cdots-\la_{n-1}$
and assume that 
$$
Y_{\la_1,\dots,\la_{n}}(L^{\otimes n})
\in \PVir^c[\la_1,\dots,\la_{n-1}]
$$
is independent of $\la_n$, and that it is skewsymmetric with respect to $\la_1,\dots,\la_{n-1}$. 
Every skewsymmetric polynomial of $\la_1,\dots,\la_{n-1}$ is divisible by $\prod_{1\le i<j \le n-1} (\la_i-\la_j)$; hence, has degree $\ge \binom{n-1}{2}$. 

Since $\binom{n-1}{2} \ge n-2$ for all $n\ge1$, we see from \eqref{eq:coh-vir1} that $Y$ has the form
\begin{equation}\label{eq:coh-vir-2}
\begin{split}
Y_{\la_1,\dots,\la_{n}}&(L^{\otimes n}) 
= \al(\la_1,\dots,\la_{n}) 1 
+ \be(\la_1,\dots,\la_{n}) L
\\
&+ \ga(\la_1,\dots,\la_{n}) \partial L
+ \langle \partial+\la_1+\cdots+\la_{n} \rangle \,.
\end{split}
\end{equation}
Here $\al$, $\be$ and $\ga$ are polynomials with coefficients in $\mb F$. Note that $\al$ is defined and is skewsymmetric modulo $\la_1+\cdots+\la_{n}$. After replacing $\partial L$ with $-(\la_1+\dots+\la_{n})L$ in \eqref{eq:coh-vir-2}, we can assume that $\ga=0$.
Then $\be$ is uniquely determined and is skewsymmetric in $\mb F[\la_1,\dots,\la_{n}]$. 

As above, we have $\deg\be\ge\binom{n}{2} \ge n-1$, with equality only for $n=2$. But \eqref{eq:coh-vir1} implies 
$$
n+1 \ge 
\deg\be+\Delta(L) \ge (n-1)+2 \,,
$$
which means that if $\be\ne0$,
then $\deg\beta=n-1$, hence $n=2$ and 
$\be(\la_1,\la_2) = (\la_1-\la_2)b$ for some $b\in\mb F$.
Consider the $1$-cochain $Z \in C_{\PV}^1(\PVir^c, \PVir^c)$
defined by $Z_\la(L)=L + \langle \partial+\la \rangle$. Then 
\begin{equation}\label{eq:coh-vir-4}
\begin{split}
(dZ)_{\la_1,\la_2} &(L \otimes L) 
= [L_{\la_1} Z_{\la_2}(L)] - [L_{\la_2} Z_{\la_1}(L)] 
- Z_{\la_1+\la_2}([L_{\la_1} L])
\\
&= (\la_1-\la_2) L + \frac{c}{12} (\la_1^3-\la_1^3) 
+ \langle \partial+\la_1+\la_2 \rangle \,.
\end{split}
\end{equation}
Replacing $Y$ with $Y-b \, dZ$, we can assume that $\be=0$
in \eqref{eq:coh-vir-2}.

We have shown that every $n$-cocycle for $\PVir^c$ is equivalent to a cocycle $Y$ such that
\begin{equation}\label{eq:coh-vir-3}
Y_{\la_1,\dots,\la_{n}}(L^{\otimes n}) 
= \al(\la_1,\dots,\la_{n}) 1 
+ \langle \partial+\la_1+\cdots+\la_{n} \rangle \,.
\end{equation}
We claim that if $Y=dX$ for some $(n-1)$-cochain $X$, then $X$ can be replaced with a cochain that also satisfies \eqref{eq:coh-vir-3} for some $\al$ (with $n$ replaced by $n-1$).
Since $\Delta(Y)=\Delta(X)=0$ or $1$, by the above discussion, we see that $X$ has the form \eqref{eq:coh-vir-2} with $\ga=0$, for some $\al$, $\be$ (again with $n$ replaced by $n-1$). But, as above, $\be$ may be nonzero only when $n-1=2$. In this case,
$$
X_{\la_1,\la_2} (L \otimes L)  
= \al(\la_1,\la_2) 1 
+ (\la_1-\la_2) bL
+ \langle \partial+\la_1+\la_2 \rangle \,,
$$
for some $b\in\mb F$. Comparing this with \eqref{eq:coh-vir-4}, we obtain
that $X-b \, dZ$ has no $L$-term. This proves the claim, as $Y=dX=d(X-b \, dZ)$.

Therefore, we can reduce the coefficients in the cohomology to $\mb F$. By Theorem \ref{prop}(b), we obtain 
$H_{\PV}^n(\PVir^c, \PVir^c) \simeq 
H^n_{\LC}(\bar R^\vir,\mb F)$.
The rest of the proof follows immediately from Proposition \ref{pbarvir} and Remark \ref{rem:label}.
\end{proof}

\begin{corollary}\label{cor:coh-vir}
For every\/ $c\in\mb F$, we have:
\begin{enumerate}[(a)]
\item
Every Casimir element of the PVA\/ $\PVir^c$ is a scalar multiple of~$\,\tint 1$.
\item
Every derivation of the PVA\/ $\PVir^c$ is inner.
\item
Up to equivalence, any first-order deformation of\/ $\PVir^c$,
which preserves the product and the\/ $\mb F[\partial]$-module structure,
corresponds to a scalar multiple of the\/ $2$-cocycle given by\/
$Y_{\la_1,\la_2}(L \otimes L) = \la_1^3 + \langle \partial+\la_1+\la_2\rangle$.
\end{enumerate}
\end{corollary}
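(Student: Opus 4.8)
The plan is to deduce all three statements directly from the explicit cohomology computation in Theorem \ref{thm:coh-vir}, by feeding it into the low-degree interpretations provided by Theorem \ref{thm:lowcoho2} with $\mc V = M = \PVir^c$. No further manipulation of cochains should be needed: the substantive work has already been carried out in establishing Theorem \ref{thm:coh-vir}, and this corollary is merely a translation of that result into the language of Casimir elements, derivations, and first-order deformations.

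First, for part (a), I would invoke Theorem \ref{thm:lowcoho2}(a), which identifies $H^0_{\PV}(\PVir^c,\PVir^c)$ with the space $\Cas(\PVir^c,\PVir^c)$ of Casimir elements. By Theorem \ref{thm:coh-vir}, this cohomology is one-dimensional, spanned by the class of $Y^0 = \tint 1$. Hence every Casimir element is a scalar multiple of $\tint 1$, which is (a). Next, for part (b), I would use Theorem \ref{thm:lowcoho2}(b), which gives $H^1_{\PV}(\PVir^c,\PVir^c) = \Der(\PVir^c)/\Inder(\PVir^c)$. Since Theorem \ref{thm:coh-vir} asserts that $H^1_{\PV}(\PVir^c,\PVir^c) = 0$, we conclude $\Der(\PVir^c) = \Inder(\PVir^c)$, i.e., every derivation of the PVA $\PVir^c$ is inner, establishing (b).

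Finally, for part (c), Theorem \ref{thm:lowcoho2}(c) states that $H^2_{\PV}(\PVir^c,\PVir^c)$ parameterizes the equivalence classes of first-order deformations that preserve the commutative associative product and the $\mb F[\partial]$-module structure. By Theorem \ref{thm:coh-vir}, this space is one-dimensional, generated by the class of the $2$-cocycle $Y^2$ with $Y^2_{\la_1,\la_2}(L\otimes L) = \la_1^3 + \langle\partial+\la_1+\la_2\rangle$. Thus every such deformation is, up to equivalence, a scalar multiple of this cocycle, which yields (c).

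The only point requiring any care — and the nearest thing to an obstacle — is bookkeeping: one must confirm that the generators $Y^0$ and $Y^2$ exhibited in Theorem \ref{thm:coh-vir} are exactly the representatives named in the corollary (a multiple of $\tint 1$ for $Y^0$, and the stated cubic cocycle for $Y^2$), so that the isomorphisms of Theorem \ref{thm:lowcoho2} transport them to the asserted Casimir, derivation-triviality, and deformation statements, with the sign and parity conventions matching. Since these representatives are written out explicitly in both places and agree on the nose, this verification is immediate and no genuine difficulty arises.
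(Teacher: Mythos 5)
Your proposal is correct and is exactly the argument the paper intends: Corollary \ref{cor:coh-vir} is stated without proof precisely because it follows by combining the interpretation of $H^0_{\PV}$, $H^1_{\PV}$, $H^2_{\PV}$ in Theorem \ref{thm:lowcoho2} with the computation $H^0_{\PV}=\mb F[Y^0]$, $H^1_{\PV}=0$, $H^2_{\PV}=\mb F[Y^2]$ from Theorem \ref{thm:coh-vir}. The representatives $Y^0=\tint 1$ and $Y^2$ named in the theorem match those in the corollary, so no further verification is needed.
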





\end{document}